\tikzset{
  set arrow inside/.code={\pgfqkeys{/tikz/arrow inside}{#1}},
  set arrow inside={end/.initial=>, opt/.initial=},
  /pgf/decoration/Mark/.style={
    mark/.expanded=at position #1 with
    {
      \noexpand\arrow[\pgfkeysvalueof{/tikz/arrow inside/opt}]{\pgfkeysvalueof{/tikz/arrow inside/end}}
    }
  },
  arrow inside/.style 2 args={
    set arrow inside={#1},
    postaction={
      decorate,decoration={
        markings,Mark/.list={#2}
      }
    }
  },
}
\numberwithin{equation}{section}
\newtheorem{theorem}{Theorem}[section]
\newtheorem{corollary}[theorem]{Corollary}
\newtheorem{lemma}[theorem]{Lemma}
\newtheorem{proposition}[theorem]{Proposition}
\newtheorem{definition}[theorem]{Definition}
\newtheorem{example}[theorem]{Example}
\newtheorem{remark}[theorem]{Remark}
\newenvironment{mainassumption}[1]
  {\innerass}
  {\endinnerass}
\newcommand{\N}{\mathbb{N}}
\newcommand{\R}{\mathbb{R}}
\newcommand{\Z}{\mathbb{Z}}
\newcommand{\CC}{\mathscr{C}}
\renewcommand{\SS}{\mathscr{S}}
\newcommand{\UU}{\mathscr{U}}
\newcommand{\cE}{{\ensuremath{\mathcal E}}}
\newcommand{\cF}{{\ensuremath{\mathcal F}}}
\newcommand{\cN}{{\ensuremath{\mathcal N}}}
\newcommand{\cP}{{\ensuremath{\mathcal P}}}
\newcommand{\cR}{{\ensuremath{\mathcal R}}}
\newcommand{\cY}{{\ensuremath{\mathcal Y}}}
\newcommand{\sfc}{{\sf c}}
\newcommand{\sfd}{{\sf d}}
\newcommand{\sfe}{{\sf e}}
\newcommand{\sfn}{{\sf n}}
\newcommand{\sfp}{{\sf p}}
\newcommand{\sfr}{{\sf r}}
\newcommand{\sfs}{{\sf s}}
\newcommand{\sft}{{\sf t}}
\newcommand{\sfv}{{\sf v}}
\newcommand{\sfD}{{\sf D}}
\newcommand{\sfE}{{\sf E}}
\newcommand{\sfQ}{{\sf Q}}
\newcommand{\sfW}{{\sf W}}
\newcommand{\sfY}{{\sf Y}}
\newcommand{\frH}{{\frak H}}
\newcommand{\rmd}{{\mathrm d}}
\newcommand{\rme}{{\mathrm e}}
\newcommand{\rmC}{{\mathrm C}}
\newcommand{\rmD}{{\mathrm D}}
\newcommand{\rmM}{{\mathrm M}}
\newcommand{\rmW}{{\mathrm W}}
\newcommand{\Kliminf}{K\kern-3pt-\kern-2pt\mathop{\rm
lim\,inf}\limits}  
\newcommand{\Klimsup}{K\kern-3pt-\kern-2pt\mathop{\rm lim\,sup}\limits}  
\newcommand{\argmin}{\mathop{\rm argmin}\limits}   
\renewcommand{\d}{{\mathrm d}}
\newcommand{\restr}[1]{\lower3pt\hbox{$|_{#1}$}}
\newcommand{\topref}[2]{\stackrel{\eqref{#1}}#2}
\newcommand{\Leb}[1]{{\mathscr L}^{#1}}      
\newcommand{\la}{{\langle}}                  
\newcommand{\ra}{{\rangle}}
\newcommand{\down}{\downarrow}              
\newcommand{\up}{\uparrow}
\newcommand{\eps}{\varepsilon}  
\newcommand{\nchi}{{\raise.3ex\hbox{$\chi$}}}
\newcommand{\media}{\mkern12mu\hbox{\vrule height4pt           %
          depth-3.2pt                                 
          width5pt}\mkern-16.5mu\int\nolimits}        
\def\qed{\ifmmode 
  \else \leavevmode\unskip\penalty9999 \hbox{}\nobreak\hfill
  \fi               
    \qquad           \hbox{\hskip.5em $\square$
                \hskip.1em}}
\def\endproofsym{\qed}
\newenvironment{proof}[1][Proof]{\def\endproofsym{\qed}\trivlist\item[\hskip\labelsep{%
\noindent{\normalfont\emph{#1}.}\hskip .321429\parindent}]\ignorespaces}
{\endproofsym\endtrivlist}
\newcommand{\Holes}{\mathfrak H}
\newcommand{\Pf}{\mathfrak P_f}
\newcommand{\leftl}{-}
\newcommand{\rightl}{+}
\newcommand{\lrl}{\pm}
\newcommand{\uleftl}{^-}
\newcommand{\urightl}{^+}
\newcommand{\ulrl}{^\pm}
\newcommand{\sigmato}{\stackrel\sigma\to}
\newcommand{\Jmp}[1]{\mathrm{Jmp}_{#1}}
\newcommand{\Var}[1]{\mathrm{Var}_{#1}}
\newcommand{\mVar}[1]{\text{\sl{Var}}_{#1}}
\newcommand{\VE}{{\rm VE}}
\newcommand{\BV}[1]{\mathrm{BV}\kern-2pt_{#1}}
\DeclareMathOperator{\var}{Var_\mathsf{d}}
\DeclareMathOperator{\graph}{graph}
\newcommand{\varP}{\mVar{\sfd,\sfe}}
\newcommand{\varC}{\mVar{\sfd,\sfc}}
\DeclareMathOperator{\Ju}{J_u}
\DeclareMathOperator{\Gs}{\cR}
\DeclareMathOperator{\Cf}{Trc} 
\DeclareMathOperator{\Fd}{\mathsf c}
\DeclareMathOperator{\Cd}{\mathrm{GapVar}_\delta}
\DeclareMathOperator{\Li}{Li}
\DeclareMathOperator{\Ls}{Ls}
\DeclareMathOperator{\dom}{D}
\newcommand{\Jump}[2]{{\mathrm J}^{#1}_{#2}}
\newcommand{\topol}{\sigma}
\newcommand{\rmJ}{\mathrm J}
\newcommand{\sfdp}{\sfd_\R}
\newcommand{\sigmap}{\sigma_\R}
\newcommand{\SSd}{\SS\kern-3pt_\sfd}
\newcommand{\SSD}{\SS_\sfD}
\newcommand{\mytag}[2]{\hyperref[#1#2]{$\la$#1\ifthenelse{\equal{#2}{}}{}{.#2}$\ra$}}
\title{Viscous corrections of the\\
  Time Incremental Minimization Scheme and\\
  Visco-Energetic Solutions to \\
  Rate-Independent Evolution Problems}
\begin{document}

\author{Luca Minotti, Giuseppe Savar\'e
\thanks{Universit\`a di Pavia. 
email:
\textsf{luca.minotti01@universitadipavia.it, giuseppe.savare@unipv.it}. 
G.S.~has been partially supported by
PRIN10/11, PRIN15 grants from MIUR for the project \emph{Caculus of Variations}
and by IMATI-CNR}
}

\maketitle

\begin{abstract} 
  We propose the new notion of Visco-Energetic solutions to 
  rate-independent systems $(X,\cE,\sfd)$
  driven by a time dependent energy $\cE$ and a dissipation
  quasi-distance $\sfd$ 
  in a general metric-topological space $X$.

  As for the classic Energetic approach, 
  solutions can be obtained by solving a modified
  time Incremental Minimization Scheme,
  where at each step the dissipation quasi-distance $\sfd$ 
  is incremented by a viscous correction $\delta$ (e.g.~proportional
  to the square of the  distance $\sfd$), 
  which penalizes far distance jumps by
  inducing a localized version of the stability condition. 
  
  We prove a general convergence result and a typical characterization
  by Stability and Energy Balance 
  in a setting comparable to the standard energetic one, thus
  capable to cover a wide range of applications. 
  The new refined Energy Balance condition compensates the localized
  stability and provides a careful description of the jump behavior:
  at every jump the solution follows an optimal transition,
  which resembles in a suitable variational sense the discrete scheme
  that has been implemented for the whole construction.
\end{abstract}


{\small\tableofcontents}
\section{Introduction}
\addtocontents{toc}{\vspace{\normalbaselineskip}}
Since the pioneering papers 
\cite{Mielke-Theil-Levitas02,Mielke-Theil04},
energetic solutions 
(also called irreversible quasi-static evolutions in the fracture
models studied in \cite{Francfort-Marigo98,DalMaso-Francfort-Toader05,DalMaso-Toader02})
to rate-independent evolutionary systems driven
by time-dependent functionals have played a crucial role 
and provided a unifying framework for many different applied models,
such as 
shape memory alloys~\cite{Mielke-Theil-Levitas02,AuMiSt08RIMI}, 
crack propagation~\cite{DalMaso-Toader02,DalMaso-Francfort-Toader05} 
elastoplasticity~\cite{Miel04EMIE,Francfort-Mielke06,DaDeMo06QEPL,DaDeMoMo06,MaiMie08?GERI}, 
damage in brittle materials \cite{MieRou06RIDP,
  BoMiRo07?CDPS,Mielke11} 
delamination~\cite{KoMiRo06RIAD},
ferroelectricity~\cite{MieTim06EMMT}, and
superconductivity~\cite{SchMie05VPSC}. 
We refer to the recent monograph \cite{Mielke-Roubicek15} for a
complete discussion and overview of the theory and its applications.

In its simplest \emph{metric} formulation, 
a Rate-Independent System (R.I.S.) $(X,\cE,\sfd)$ can be described by
a metric space $(X,\sfd)$ and a time-dependent energy functional
$\cE:[0,T]\times X\to \R$. Energetic solutions can be obtained as 
a limit of piecewise constant interpolant of 
discrete solutions $U^n_\tau$ obtained by recursively solving the time
Incremental Minimization scheme
\begin{equation}
  \label{eq:167}
  \min_{U\in X} \cE(t^n_\tau,U)+\sfd(U^{n-1}_\tau,U).
  \tag{IM$_\sfd$}
\end{equation}
The main aim of the present paper is to study general \emph{viscous
corrections} of \eqref{eq:167}
\begin{equation}
  \label{eq:167delta}
  \min_{U\in X} \cE(t^n_\tau,U)+\sfd(U^{n-1}_\tau,U)+\delta(U^{n-1}_\tau,U),
  \tag{IM$_{\sfd,\delta}$}
\end{equation}
obtained by perturbing the distance $\sfd$ by a ``viscous''
penalization term $\delta:X\times X\to[0,\infty)$,
which should induce a better localization 
of the minimizers.  A typical choice is
the quadratic correction $\delta(u,v):=\frac{\mu}2\sfd^2(u,v)$,
for some $\mu>0$.

We will show that solutions generated by the scheme \eqref{eq:167delta} 
exhibit a sort of intermediate behaviour between Energetic and 
Balanced Viscosity solutions \cite{Mielke-Rossi-Savare12},
since they retain the great structural robustness of the former and
allow for a more localized response typical of the latter.
Before explaining these novel features,
let us briefly recall a few basic facts concerning Energetic and 
Balanced Viscosity solutions.
\paragraph{\em Energetic solutions.}
Energetic solutions to the R.I.S.~$(X,\cE,\sfd)$ 
are curves $u:[0,T]\to X$ 
with bounded variation that are characterized by
two variational conditions, called \emph{stability} (S$_\sfd$) and 
\emph{Energy Balance} (E$_\sfd$):
\begin{equation}
  \label{eq:165}
  \cE(t,u(t))\le \cE(t,v)+\sfd(u(t),v)\quad\text{for every }v\in X,\
  t\in [0,T],
  \tag{S$_\sfd$}
\end{equation}
\begin{equation}
  \label{eq:166}
  \cE(t,u(t))+\var(u,[0,t])=\cE(0,u_0)+\int_0^t \cP(r,u(r))\,\d r
  \quad\text{for every }t\in [0,T].
  \tag{E$_\sfd$}
\end{equation}
In \eqref{eq:166} $\var(u,[0,t])$ denotes the usual pointwise total variation of
$u$ on the interval $[0,t]$ (see \eqref{eq:1}) and
$\cP(t,u)=\partial_t \cE(t,u)$ is the partial derivative of the energy
$\cE$ with respect to (w.r.t.) time, which we assume to be continuous
and satisfying the uniform bound
\begin{equation}
  \label{eq:168}
  |\cP(t,x)|\le C_0\big(\cE(t,x)+C_1\big)\quad\text{for every }x\in X
\end{equation}
for some constants $C_0,C_1\ge0$.

As we mentioned, one of the strongest features of the energetic approach is the
possibility to construct energetic solutions by solving the time
\emph{Incremental Minimization scheme} \eqref{eq:167}
(also called \emph{Minimizing Movement method} in the 
De Giorgi approach to metric gradient flows, see \cite{Ambrosio-Gigli-Savare08}).
If $\cE$ has compact sublevels then for every ordered partition 
$\tau=\{t^0_\tau=0,t^1_\tau,\cdots,t^{N-1}_\tau,t^N_\tau=T\}$ of the
interval $[0,T]$ with variable time step
$\tau^n:=t^n_\tau-t^{n-1}_\tau$ and for every
initial choice $U^0_\tau= u(0)$
we can construct by induction an approximate
sequence $(U^n_\tau)_{n=0}^N$ solving \eqref{eq:167}.

If $\overline U_\tau$ denotes the left-continuous piecewise constant
interpolant
of $(U^n_\tau)_n$ which takes the value $U^n_\tau$ on the interval
$(t^{n-1},t^n_\tau],$ then the family of discrete solutions $\overline
U_\tau$
has limit curves with respect to pointwise convergence as 
the maximum of the step sizes
$|\tau|=\max \tau^n$ vanishes, and every limit curve $u$ is an energetic solution.

A second important fact concerns the mutual interaction between
the Stability and the Energy Balance
conditions \eqref{eq:165}-\eqref{eq:166}: it is possible to prove that
for every curve
$u$ satisfying \eqref{eq:165}, relation \eqref{eq:166} is in fact equivalent to
the Energy-Dissipation inequality
\begin{equation}
  \label{eq:166bis}
  \cE(t,u(t))+\var(u,[0,t])\le \cE(0,u_0)+\int_0^t \cP(r,u(r))\,\d r
  \quad\text{for every }t\in [0,T].
\end{equation}
When 
\begin{equation}
X=\R^d,\quad \sfd(x,y):= \alpha|y-x|,\ \alpha>0,\quad 
\text{$\cE$ is sufficiently
smooth,}\label{eq:176}
\end{equation}
and $\rmD^2_x\cE(t,x)\ge\lambda I$, $\lambda>0$, so that
$\cE(t,\cdot)$ is uniformly convex, 
then it is possible to prove 
that energetic solutions are continuous and can be equivalently
characterized by the doubly nonlinear evolution inclusion
\begin{equation}
  \label{eq:169}
  \alpha\,\partial\psi(\dot u(t))+\rmD\cE(t,u(t))\ni0,\quad
  \psi(v):=|v|.
\end{equation}
Even simple $1$-dimensional nonconvex examples,
e.g.~when the energy has the form
\begin{equation}
\text{$\cE(t,x):=W(x)-\ell(t)x$ for a double
well potential such as $W(x)=(x^2-1)^2$,  $x\in \R$,}\label{eq:171}
\end{equation}
show that energetic solutions have jumps, preventing the 
violation of the global stability condition
(different kind of jumps arise from time-discontinuities of the
energy, see e.g.~\cite{Krejci-Liero09}). 
In fact, combining stability and energy balance, it is possible to check that
at every jump point
$t\in \Jump{}u$, the left and right limits $u(t\leftl),\,u(t\rightl)$
of a solution $u$ satisfy the energetic jump conditions 
\begin{equation}
  \label{eq:170}
  \sfd(u(t\leftl),u(t))=\cE(t,u(t\leftl))-\cE(t,u(t)),\quad
  \sfd(u(t),u(t\rightl))=\cE(t,u(t))-\cE(t,u(t\rightl)),
\end{equation}
which are strongly influenced by the global energy landscape of $\cE$.
This reflects the global constraint imposed by the stability condition,
whose violation induces the jump (see
e.g.~\cite[Ex.\,6.3]{Knees-Mielke-Zanini08},
\cite[Ex.\,1]{Mielke-Rossi-Savare09}).

For instance, in the case of example \eqref{eq:176}-\eqref{eq:171}
with 
$\ell\in \rmC^1([0,T])$
strictly increasing with $u_0<-1$ and $\ell(0)=\alpha+W'(u_0)$, 
it is possible to prove \cite{Rossi-Savare13}
that an energetic solution $u$ is an increasing selection of the 
equation
\begin{equation}
  \label{eq:172}
  \alpha+\partial W^{**}(u(t))\ni \ell(t)
\end{equation}
where $W^{**}$ is the convex envelope $W^{**}(x)=\big((x^2-1)_+\big)^2$,
independently of the parameter $\alpha>0$. 
\begin{figure}[!h]
\label{fig:1}
\centering
\begin{tikzpicture}
\draw[->] (-3,0) -- (3,0) node[above] {$u(t)$};
\draw[->] (0,-2.8) -- (0,2.8) node[right] {$\ell(t)-\alpha$};

 \begin{scope}[scale=1.8]
    \draw[domain=-1.4:1.4,samples=100] plot ({\x}, {(\x)^3-\x}) node[above] {$W'(u)$};
     \draw[domain=-1.4:-1,very thick,samples=100] plot ({\x}, {(\x)^3-\x}) [arrow inside={}{0.33,0.66}];
     \draw[domain=1:1.4,very thick,samples=100] plot ({\x}, {(\x)^3-\x}) [arrow inside={}{0.50}]    ;
      \foreach \Point in {(-1,0) ,(1,0) }{
    \draw[fill=blue,blue] \Point circle(0.04);
    }
\draw[->, dashed,blue] (-1,0) to [bend left=80, looseness=1.3] (0.96,0.05);
\node[below] at (-1,0) {\qquad\,\,$u(t-)$};
\node[below] at (1,0) {\qquad$u(t+)$};
  \end{scope}
  
\end{tikzpicture} 
\caption{Energetic solution for a double-well energy $W$ with an
  increasing load $\ell$, see \eqref{eq:172}.}
\end{figure}
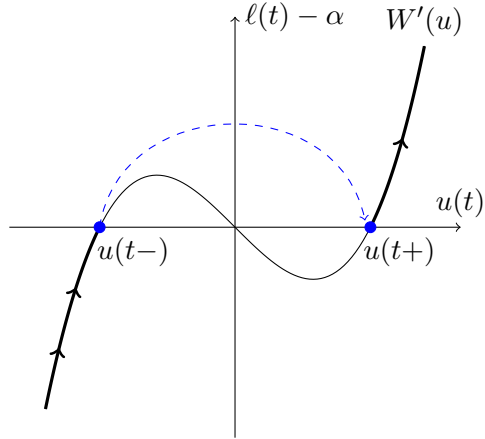

\paragraph{\emph{Balanced Viscosity solutions}.}
In order to obtain a formulation where local effects are more
relevant (see 
 \cite[Sec.\,6]{Miel03EFME}, 
 \cite{DalMaso-Toader02, NegOrt07?QSCP,Efendiev-Mielke06,Larsen10,Mielke-Zelik14})
 various kinds of corrections have been considered.
A natural one introduces 
a viscous correction to the incremental minimization scheme
\eqref{eq:167}, penalizing
 the square of the distance from the
previous step
\begin{equation}
  \label{eq:167e}
  \min_{U\in X} \cE(t^n_\tau,U)+\sfd(U^{n-1}_\tau,U)+\frac {\eps^n}{2\tau^n}\sfd^2(U^{n-1}_\tau,U),
  \tag{IM$_{\sfd,\eps}$}
\end{equation}
for a parameter $\eps^n=\eps^n(\tau)\down0$ with 
$\frac {\eps^n(\tau)}{|\tau|}\up+\infty$.
In the previous Euclidean framework \eqref{eq:176}, \eqref{eq:167} corresponds to 
the discretization of the generalized gradient flow
\begin{equation}
  \label{eq:173}
  \alpha\partial\psi(\dot u(t))+\eps\dot u(t)+\rmD\cE(t,u(t))\ni0.
\end{equation}
Such kinds of approximations have been
studied in a series of contributions 
\cite{Rossi-Mielke-Savare08,Mielke-Rossi-Savare09,Mielke-Rossi-Savare12,Mielke-Rossi-Savare13},
also dealing with more general corrections in metric and linear
settings
(ses also a comparison between other possible notions in
\cite[Sec.~5]{Mielke-Rossi-Savare09}, \cite{Mielke11-CIME} and a
similar approach for finite-strain elasto-plasticity in \cite{Rindler15}). 
Under suitable smoothness and lower semicontinuity assumptions
involving the metric slope of $\cE$ it is possible to prove that 
all the limit curves satisfy a local stability assumption and a
modified Energy Balance, involving 
an augmented total variation that 
encodes a more refined description of the jump behaviour of $u$:
roughly speaking, a jump between $u(t-)$ and $u(t+)$ occurs only when
these values can be connected by a rescaled solution $\vartheta$ of
\eqref{eq:173}, where the energy is frozen at the jump time $t$
(see the next section 
\ref{subsec:BVsolutions}):
\begin{equation}
  \label{eq:173bis}
  \alpha\partial\psi(\dot \vartheta(s))+\dot \vartheta(s)+\rmD\cE(t,\vartheta(s))\ni0.
\end{equation}
\begin{figure}[!h]
\centering
  \begin{tikzpicture}
\draw[->] (-3,0) -- (3,0) node[above] {$u(t)$};
\draw[->] (0,-2.8) -- (0,2.8) node[right] {$\ell(t)-\alpha$};

 \begin{scope}[scale=1.8]
    \draw[domain=-1.4:1.4,samples=100] plot ({\x}, {(\x)^3-\x}) node[above] {$W'(u)$};
    \draw[domain=-1.4:-0.57,very thick,samples=100] plot ({\x}, {(\x)^3-\x}) [arrow inside={}{0.33,0.66}];
     \draw[domain=1.15:1.4,very thick,samples=100] plot ({\x}, {(\x)^3-\x}) [arrow inside={}{0.50}]    ;
     \draw[domain=-0.57:1.15,thick,blue,samples=100] plot ({\x}, {0.385}) [arrow inside={}{0.33,0.66}];
     \foreach \Point in {(-0.57,0.385),(1.15,0.385)}{
    \draw[fill=blue,blue] \Point circle(0.04);
}
\draw[dotted] (-0.57,0.385) -- (-0.57,0) node[below] {$u(t-)$};
\draw[dotted] (1.15,0.385) -- (1.15,0) node[below] {\,\,\,$u(t+)$};

 \end{scope}
  
\end{tikzpicture}

  \caption{BV solution for a double-well energy $W$ with an
  increasing load $\ell$. The blue line denotes the 
  path described by the optimal transition $\vartheta$ solving 
\eqref{eq:173bis}.}
  \label{fig:2}
\end{figure}
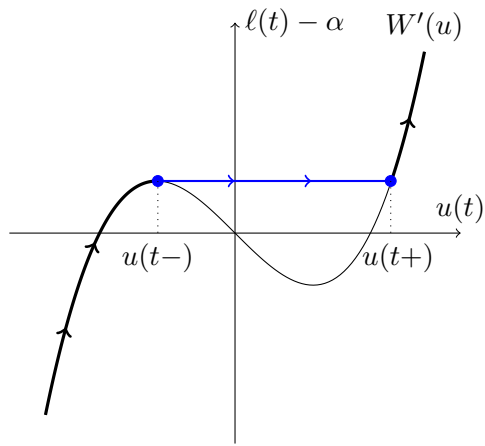

\noindent
One of the main technical difficulties of the theory of Balanced
Viscosity solutions is related to the properties of the slope of $\cE$,
which 
can be difficult to check when highly nonsmooth-nonconvex energies are
involved.

More degenerate situations when $\alpha=0$ can also be considered,
both from the continuous (see \cite{Agostiniani-Rossi16})
and the discrete point of view
(see \cite{Artina-Cagnetti-Fornasier-Solombrino15}, who considers 
a different dependence with respect to time, given by a time-dependent linear constraint):
the main difficulty here relies on the loss of time-compactness, since
simple estimates of the total variation of the approximating curves 
are missing.
\paragraph{\emph{Viscous corrections of the Incremental Minimization Scheme}}
The present paper introduces and studies an intermediate situation
between
Energetic and Balanced Viscosity solutions, when one keeps constant the ratio
$\mu:=\eps^n/\tau^n$ in \eqref{eq:167e}. In this way 
the metric dissipation $\sfd$ is corrected by an extra viscous
penalization term
$\delta(u,v):=\frac{\mu}2\sfd^2(u,v)$ which induces a localization 
of the minimizer, tuned by the parameter $\mu>0$.
At each step $n$ we thus
propose to solve a modified
Incremental Minimization scheme of the form
\begin{equation}
  \label{eq:167mu}
  \begin{aligned}
    &\text{select $U^n_\tau\in \rmM(t^n_\tau,U^{n-1}_\tau),\qquad$ where for every
      $t\in [0,T]$ and $x\in X$}\\
    &\rmM(t,x):=\argmin_{y\in X} \Big\{
    \cE(t,y)+\sfd(x,y)+\delta(x,y)\Big\},\qquad
    \delta(x,y):=\frac{\mu}2\sfd^2(x,y).
  \end{aligned}
  \tag{IM$_{\sfd,\delta}$}
\end{equation}
 In the particular setting of crack propagation, a similar kind of 
corrections have already been considered by
\cite{DalMaso-Toader02}: in that case $\delta$ arises from a different
(semi-)distance $\mathsf d_\ast$.
 Even if general viscous corrections $\delta$ could be considered
(but still satisfying suitable compatibility conditions, see Section \ref{subsec:viscous-correction}), in this Introduction
we will choose the simpler quadratic one for ease of exposition.

Notice that in the finite dimensional case \eqref{eq:176} when the Hessian
of the energy is bounded from below, 
i.e.~$\rmD_x^2\cE(t,x)\ge -\lambda I$ for every $t,x$ and some
$\lambda\ge0$, 
the choice $\mu\alpha^2\ge \lambda$ yields a \emph{convex} incremental
problem \eqref{eq:167mu}, which could
greatly help in the effective computation of the solution.
Differently from \cite{Artina-Cagnetti-Fornasier-Solombrino15},
we do not need to construct $U^n_\tau$ by freezing the time variable
at $t^n_\tau$ and iterating the minimization scheme 
to converge to a critical point: after each incremental minimization
step the energy is immediately updated to the new value
at the time $t^{n+1}_\tau$.

Since $\delta\ge0$, it is not difficult to check that 
the family of discrete solutions $\overline U_\tau$ has uniformly
bounded $\sfd$-total variation and takes value in a compact set of
$X$, so that it always admits
limit curves $u\in \BV{\sfd}([0,T];X)$. 
The difficult task here
concerns the characterization of such limit curves. 
One of the main problems underlying the simple scheme \eqref{eq:167mu} is 
the loss of the triangle inequality for the total dissipation 
\begin{equation}
  \label{eq:174}
  \sfD(u,v):=\sfd(u,v)+\delta(u,v)
  = \sfd(u,v)+\frac\mu2 \sfd^2(u,v).
\end{equation}
In the case of Energetic solutions, the triangle inequality of $\sfd$
lies at the core of two crucial properties:
\begin{enumerate}[a)]
\item every solution $U^n$ of the minimization step \eqref{eq:167}
  satisfies the stability condition \eqref{eq:165} at $t=t^n_\tau$;
\item the computation of the total variation of a piecewise constant map $\overline U_\tau$ 
  associated with some partition $\tau$ involves only consecutive
  points, i.e.
  \begin{displaymath}
    \var(\overline U_\tau,[0,T])=
    \sum_{n=1}^N\sfd(U^{n-1}_\tau,U^n_\tau)
  \end{displaymath}
  and the total variation functional $u\mapsto \var(u,[0,T])$ is lower
  semicontinuous w.r.t.~pointwise convergence, so that at least an
  Energy inequality corresponding to \eqref{eq:166} can be 
  easily deduced from the corresponding version at the discrete level.
\end{enumerate}
Such properties fail in the case of the augmented dissipation $\sfD$
of \eqref{eq:174}. In particular, even in the finite-dimensional
setting \eqref{eq:176} with $\alpha=1$, it is easy to check that e.g.~Lipschitz curves 
$u:[0,T]\to \R^d$ can be approximated by piecewise constant
interpolants $\overline U_\tau$ on uniform partitions 
$\tau=\{n T/N\}_{n=0}^N$ with $U^n_\tau=u(nT/N)$ and $|\tau|=T/N$, so that 
\begin{equation}
  \label{eq:175}
  \lim_{|\tau|\down0}\sum_{n=1}^N
  |U^{n-1}_\tau-U^n_\tau|+\frac\mu2|U^{n-1}_\tau-U^n_\tau|^2=
  \var(u,[0,T])=\int_0^T |\dot u(t)|\,\d t.
\end{equation}
\paragraph{\emph{Visco-Energetic solutions.}}
Nevertheless, by using more refined arguments and guided by the
results
obtained in the Balanced Viscosity approach, we are able to obtain 
a precise variational characterization of the limit curves (called
\emph{Visco-Energetic solutions}), still
stated in terms of suitably adapted stability and energy balance
conditions.

Concerning stability, we obtain a natural generalization of \eqref{eq:165}
\begin{equation}
  \label{eq:165D}
  \cE(t,u(t))\le \cE(t,v)+\sfD(u(t),v)\quad\text{for every }v\in X,\
  t\in [0,T]\setminus \Jump{}u,
  \tag{S$_\sfD$}
\end{equation}
which is naturally associated with the $\sfD$-\emph{stable set}
\begin{equation}
  \label{eq:182}
  \SSD:=\Big\{(t,x):\cE(t,x)\le \cE(t,y)+\sfD(x,y)\quad
  \text{for every }y\in X\Big\}.
\end{equation}
\eqref{eq:165D} is in good accordance with
\cite{DalMaso-Toader02}, where a similar condition has been found (see
Theorem 3.3(b)).
Notice that in the  finite dimensional case \eqref{eq:176} when
$\mu$ is sufficiently big so that 
$\rmD^2_x\cE(t,x)\ge -\mu \alpha^2 I$, \eqref{eq:165} is in fact a local
condition, which can be restated as
\begin{equation}
  \label{eq:177}
  |\rmD_x\cE(t,u(t))|\le \alpha\quad\text{for every }t\in [0,T]\setminus \Jump{}u.
\end{equation}
This shows that Visco-Energetic solutions also satisfy the basic
local stability condition, shared by all kind of solutions to
variationally driven
rate-independent problems (see \cite{Mielke11-CIME}, \cite[Sec.~1.8, 3.3]{Mielke-Roubicek15}). 

The right replacement of the Energy Balance condition is harder to
formulate and it is one of the main contribution of the present
paper. 
Since \eqref{eq:165D} is weaker than \eqref{eq:165}, it is
clear
that the Energy Dissipation inequality \eqref{eq:166bis} (which still trivially
holds for limits of \eqref{eq:167mu}) will not be enough
to recover the energy balance: in particular, important pieces of
information are lost along the jumps. 
This is a typical situation 
arising in many other approaches (see e.g.~the discussion in
\cite[Sec.~3.3.3]{Mielke-Roubicek15})
and leading to ad-hoc reinforcements of the jump conditions,
as for BV solutions (see also the notion of
maximally dissipative solutions \cite{Stefanelli09,Roubicek15}, whose
existence in general cases is
however not clear at the present stage of the theory).

In the present case of the Visco-Energetic approach, a heuristic idea, which one can figure out by the direct analysis of simple cases
such as \eqref{eq:176}-\eqref{eq:171}, is that jump transitions
between $u(t-)$ and $u(t+)$ should be described by discrete
trajectories
$\vartheta:Z\to X$ defined in a subset $Z\subset \Z$ 
such that each value $\vartheta(n)\in \rmM(t,\vartheta(n-1))$ is a minimizer of the ``frozen''
incremental problem at time $t$ with datum $\vartheta(n-1)$.
In the simplest cases $Z=\Z$, the left and right jump values
$u(t\pm)$ are the limit of $\vartheta(n)$ as $n\to\pm\infty$, but more
complicated situations can occur, when $Z$ is a proper subset of $\Z$
or one has to deal with concatenation of (even countable) discrete
transitions and sliding parts parametrized by a continuous variable, where the stability condition
\eqref{eq:165D} holds. 

In order to capture all of these possibilities, we will introduce a quite general
notion of transition parametrized by a continuous map $\vartheta:E\to X$ defined
in an arbitrary compact subset of $\R$ such that $\vartheta(\min
E)=u(t-)$ and $\vartheta(\max E)=u(t+)$.
The cost of such kind of transition results from the contribution of three
parts: the first one is the usual total variation $\var(\vartheta,E)$ (see the
next \eqref{eq:1}). The second contribution arises at each 
``gap'' in $E$, i.e.~a bounded connected component $I=(I^-,I^+)$ 
of $\R\setminus E$: denoting by $\frH(E)$ the collection of all these
intervals, we will set
\begin{equation}
  \label{eq:179}
  \Cd(\vartheta,E):=\sum_{I\in \frH(E)}\delta(\vartheta(I^-),\vartheta(I^+)).
\end{equation}
The last contribution detects if $\vartheta$ violates the stability
condition at $s\in E$: it
is defined as the sum
\begin{equation}
  \label{eq:180}
  \sum_{s\in E\atop s<\max E}\cR(t,\vartheta(s))
\end{equation}
where $\cR$ is the residual stability function
\begin{equation}
  \label{eq:181}
  \cR(t,x):=\max_{y\in X}\cE(t,x)-\cE(t,y)-\sfD(x,y)=
  \cE(t,x)-\min_{y\in X}\Big(\cE(t,y)+\sfD(x,y)\Big).
\end{equation}
Since it is easy to check that $\cR(t,\theta)=0$ if and only if
$(t,\theta)\in \SSD$, $\cR(t,\cdot)$ provides a measure of 
the violation of the stability constraint. 

The total cost of a transition $\vartheta:E\to X$ at a jump time $t$ is therefore
\begin{equation}
  \label{eq:183}
  \Cf(t,\vartheta,E):=\var(\vartheta,E)+\Cd(\vartheta,E)+\sum_{s\in E\atop s<\max E}\cR(t,\vartheta(s)),
\end{equation}
and the corresponding cost $\sfc$ for a jump from $u(t-)$ to $u(t+)$
passing through the value $u(t)$
is given by
\begin{equation}
  \label{eq:184}
  \begin{aligned}
    \sfc(t,u(t-),u(t),u(t+)):=\inf\Big\{&\Cf(t,\vartheta,E):\vartheta\in
    \rmC(E,X),\ \vartheta(E)\ni u(t),\\
    &\ \vartheta(\min E)=u(t-),\ \vartheta(\max E)=u(t+)\Big\},
  \end{aligned}
\end{equation}
where the infimum is attained whenever there is at least one
admissible transition with finite cost. Notice that the cost $\sfc$ 
is always bigger than the corresponding value computed by the
dissipation distance $\sfd$, i.e.~the quantity
\begin{equation}
  \label{eq:186}
  \Delta_\sfc(t,u(t-),u(t),u(t+)):= \sfc(t,u(t-),u(t),u(t+))-\sfd(u(t-),u(t))-\sfd(u(t),u(t+))
\end{equation}
is nonnegative. 
$\sfc$ always controls the energy dissipation along the
jump, i.e.~
\begin{equation}
  \label{eq:188}
  \Cf(t,\vartheta,E)\ge \sfc(t,u(t-)),u(t),u(t+))\ge 
  \cE(t,u(t-))-\cE(t,u(t+)).
\end{equation}
With these notions at our disposal, we can eventually write the 
Energy Balance condition for Visco-Energetic solutions
\begin{equation}
  \label{eq:166ve}
  \cE(t,u(t))+\mVar{\sfd,\sfc}(u,[0,t])=\cE(0,u_0)+\int_0^t \cP(r,u(r))\,\d r
  \quad\text{for every }t\in [0,T],
  \tag{E$_{\sfd,\sfc}$}
\end{equation}
where 
the augmented total variation $\mVar{\sfd,\sfc}(u,[a,b])$ 
differs from the usual one $\var(u,[a,b])$ by an extra 
contribution at the jump points $t\in \Jump{}u$:
\begin{equation}
  \label{eq:185}
  \begin{aligned} 
    &\mVar{\sfd,\sfc}(u,[a,b]):= \var(u,[a,b])
    +\sum_{t\in
      \Jump{}u\cap[a,b]}\Delta_\sfc(t,u(t-),u(t),u(t+)).
  \end{aligned}
\end{equation}
As in the case of energetic solutions, once 
the stability condition \eqref{eq:165D} is satisfied,
it is sufficient to check the Energy-Dissipation inequality associated
with \eqref{eq:166ve}, since the extra term appearing in the definition
of 
$\mVar{\sfd,\sfc}(u,[0,t])$ \eqref{eq:185} 
provides the right correction to compensate 
the weaker stability property. 
At each jump point we thus obtain the Visco-Energetic jump conditions
corresponding to \eqref{eq:170}
\begin{equation}
  \label{eq:170ve}
  \sfc(t,u(t\leftl),u(t))=\cE(t,u(t\leftl))-\cE(t,u(t)),\quad
  \sfc(t,u(t),u(t\rightl))=\cE(t,u(t))-\cE(t,u(t\rightl)).
\end{equation}
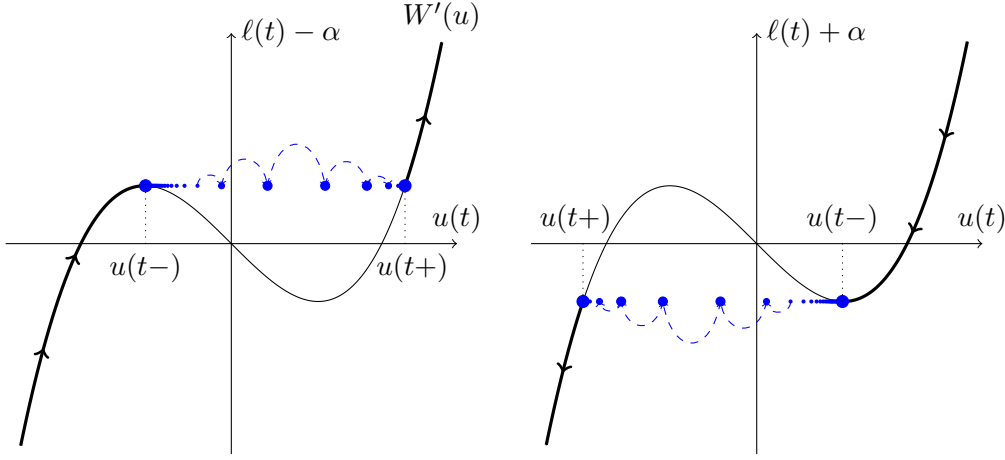
\begin{figure}[!h]
\centering
\begin{tikzpicture}

\draw[->] (-3,0) -- (3,0) node[above] {$u(t)$};
\draw[->] (0,-2.8) -- (0,2.8) node[right] {$\ell(t)-\alpha$};

 \begin{scope}[scale=2]
    \draw[domain=-1.4:1.4,samples=200] plot ({\x}, {(\x)^3-\x}) node[above] {$W'(u)$};
     \draw[domain=-1.4:-0.57,very thick,samples=200] plot ({\x}, {(\x)^3-\x}) [arrow inside={}{0.33,0.66}];
     \draw[domain=1.1547:1.4,very thick,samples=200] plot ({\x}, {(\x)^3-\x}) [arrow inside={}{0.50}]    ;
     \foreach \Point in {(-0.57,0.3849),(1.1547,0.3849)}{
       \draw[fill=blue,blue] \Point circle(0.04);
}
     \foreach \Point in {(-0.542,0.3849),(-0.536,0.3849), (-0.530,0.3849), (-0.524,0.3849), (-0.518,0.3849), (-0.512,0.3849), (-0.506,0.3849), (-0.500,0.3849), (-0.494,0.3849), (-0.487,0.3849), (-0.479,0.3849), (-0.469,0.3849),(-0.457,0.3849),(-0.442,0.3849),(-0.423,0.3849),(-0.398,0.3849),(-0.363,0.3849),(-0.311,0.3849),(-0.225,0.3849),(-0.065,0.3849),(0.241,0.3849),(0.624,0.3849),(0.901,0.3849),(1.045,0.3849),(1.109,0.3849),(1.136,0.3849),(1.147,0.3849),(1.151,0.3849),(1.153,0.3849)}{
    \draw[fill=blue,blue] \Point circle(0.01);
}
\draw[->, dashed,blue]  (-0.225,0.3849) to [bend left=90,looseness=1.5]
(-0.065,0.3849);
\draw[fill=blue,blue] (-0.065,0.3849) circle(0.02);
\draw[->, dashed,blue]  (-0.065,0.3849) to [bend left=80,looseness=2]
(0.241,0.3849);
\draw[fill=blue,blue] (0.241,0.3849) circle(0.03);
\draw[->, dashed,blue]  (0.241,0.3849) to [bend left=80,looseness=2.5]
(0.624,0.3849);
\draw[fill=blue,blue] (0.624,0.3849) circle(0.03);
\draw[->, dashed,blue]  (0.624,0.3849) to [bend left=80, looseness=2]
(0.901,0.3849);
\draw[fill=blue,blue] (0.901,0.3849) circle(0.03);
\draw[->, dashed,blue]  (0.901,0.3849) to [bend left=80, looseness=1.5]
(1.045,0.3849);
\draw[fill=blue,blue](1.045,0.3849) circle(0.02);

\draw[dotted] (-0.57,0.3849) -- (-0.57,0) node[below] {$u(t-)$};
\draw[dotted] (1.1547,0.3849) -- (1.1547,0) node[below] {\,\,\,$u(t+)$};
  \end{scope}
\end{tikzpicture}
\quad
\begin{tikzpicture}

\draw[->] (-3,0) -- (3,0) node[above] {$u(t)$};
\draw[->] (0,-2.8) -- (0,2.8) node[right] {$\ell(t)+\alpha$};

 \begin{scope}[scale=2]
    \draw[domain=-1.4:1.4,samples=200] plot ({-\x}, {(-\x)^3+\x});
     \draw[domain=-1.4:-0.57,very thick,samples=200] plot ({-\x}, {(-\x)^3+\x}) [arrow inside={}{0.33,0.66}];
     \draw[domain=1.1547:1.4,very thick,samples=200] plot ({-\x}, {(-\x)^3+\x}) [arrow inside={}{0.50}]    ;
     \foreach \Point in {(0.57,-0.3849),(-1.1547,-0.3849)}{
    \draw[fill=blue,blue] \Point circle(0.04);
}
     \foreach \Point in {(0.542,-0.3849),(0.536,-0.3849), (0.530,-0.3849), (0.524,-0.3849), (0.518,-0.3849), (0.512,-0.3849), (0.506,-0.3849), (0.500,-0.3849), (0.494,-0.3849), (0.487,-0.3849), (0.479,-0.3849), (0.469,-0.3849),(0.457,-0.3849),(0.442,-0.3849),(0.423,-0.3849),(0.398,-0.3849),(0.363,-0.3849),(0.311,-0.3849),(0.225,-0.3849),(0.065,-0.3849),(-0.241,-0.3849),(-0.624,-0.3849),(-0.901,-0.3849),(-1.045,-0.3849),(-1.109,-0.3849),(-1.136,-0.3849),(-1.147,-0.3849),(-1.151,-0.3849),(-1.153,-0.3849)}{
    \draw[fill=blue,blue] \Point circle(0.01);
}

\draw[->, dashed,blue]  (0.225,-0.3849) to [bend left=90,looseness=1.5]
(0.065,-0.3849);
\draw[fill=blue,blue] (0.065,-0.3849) circle(0.02);
\draw[->, dashed,blue]  (0.065,-0.3849) to [bend left=80,looseness=2]
(-0.241,-0.3849);
\draw[fill=blue,blue] (-0.241,-0.3849) circle(0.03);
\draw[->, dashed,blue]  (-0.241,-0.3849) to [bend left=80,looseness=2.5]
(-0.624,-0.3849);
\draw[fill=blue,blue] (-0.624,-0.3849) circle(0.03);
\draw[->, dashed,blue]  (-0.624,-0.3849) to [bend left=80,looseness=2]
(-0.901,-0.3849);
\draw[fill=blue,blue] (-0.901,-0.3849) circle(0.03);
\draw[->, dashed,blue]  (-0.901,-0.3849) to [bend left=80,looseness=1.5]
(-1.045,-0.3849);
\draw[fill=blue,blue](-1.045,-0.3849) circle(0.02);

\draw[dotted] (0.57,0.-0.3849) -- (0.57,0) node[above] {$u(t-)$};
\draw[dotted] (-1.1547,-0.3849) -- (-1.1547,0) node[above] {$u(t+)\,\,\,$};
  \end{scope}
\end{tikzpicture}

  \caption{Visco-Energetic solutions for a double-well energy $W$ with an
  increasing (on the left) and a decreasing (on the right) load $\ell$
  and the choices $\mu\alpha^2\ge -\min W''$. 
  Jumps occur when $u$ reaches a local maximum (or minimum) of $W$
  as in the Balanced Viscosity case.
  The optimal transitions $\vartheta$ are infinite sequences of jumps.}
  \label{fig:3}
\end{figure} 

\noindent
Differently from other situations where only partial asymptotic
information can be recovered in the limit (see e.g.~\cite[Def.~3.2,
Thm.~3.3(c,e)]{DalMaso-Toader02}),
one of the beautiful aspects of the Visco-Energetic setting
is that for each $t\in \Jump{}u$ there always exists an optimal
transition 
$\vartheta:E\to X$ connecting $u(t-)$ to $u(t+)$ and passing through
$u(t)$ such that 
\begin{equation}
  \label{eq:187}
  \cE(t,u(t\leftl))-\cE(t,u(t\rightl))=\Cf(t,\vartheta,E).
\end{equation}
In the case when $(t,\vartheta(s))\not\in \SSD$ for some $s\in E$ we
can
prove that $s$ is isolated and denoting by $s_-:=\max E\cap
(-\infty,s)$ we recover the property
\begin{equation}
  \label{eq:189}
  \vartheta(s)\in \rmM(t,\vartheta(s_-)),
\end{equation}
which provides an important description of optimal transitions
(see Figure \ref{fig:3} for a simple example).

\paragraph{\emph{Further generalizations and scope of the
    Visco-Energetic theory.}}
In the paper we try to develop the ideas above at the 
highest level of generality;
in particular
\begin{enumerate}[a)]
\item we separate the roles of the dissipation distance 
  and of the topology, by considering a general metric-topological setting, where 
  the compactness assumptions are stated in terms of a weaker topology
  $\sigma$
  (see Section \ref{subsec:metric-topological}).
\item we consider general lower semicontinuous 
  asymmetric quasi-distances $\sfd$,
  possibly taking the value $+\infty$ 
  \eqref{eq:16}.
  As in the energetic framework, in this case a further closedness
  condition   involving the stable set will play a crucial role: 
  in the visco-energetic setting, we will need the closure of the 
  $Q$-quasi stable sets, $Q\ge 0$, 
  of the points $x\in X$ satisfying
  \begin{equation}
    \label{eq:191}
    \cE(t,x)\le \cE(t,y)+\sfD(x,y)+Q\quad\text{for every }y\in X.
  \end{equation}
  Notice that \eqref{eq:191} reduces to the definition of
  the stable set when $Q=0$.
\item we try to relax the assumptions concerning
  the time-differentiability of $\cE$, thus allowing for super-differentiable
  energies: this is particularly useful to cover the important case of 
  product spaces $X:=Y\times Z$ 
  (see Section \ref{subsec:marginal}) 
  where 
  $\sfd$ controls only the $Z$-component and 
  one has to deal with
  reduced/marginal energies
  \begin{equation}
    \label{eq:190}
    \widetilde\cE(t,z):=\min_{y\in Y}\cE(t,y,z).
  \end{equation}
\item we consider quite general viscous corrections $\delta$, not
  necessarily obtained as a function of the quasi-distance $\sfd$
  (see Section \ref{subsec:viscous-correction}).  
\end{enumerate}

This generality aims to develop a broad-ranging theory, which can
hopefully reach the same 
power of the energetic one. In particular, d) ensures a lot of
flexibility on the choice of the localizing term, c) guarantees the
potential applicability
to the challenging cases of quasi-static evolutions where part of the
unknowns are not stabilized by
dissipation effects, a) and b) are intended for 
separating the ``technical'' choice of the topology (which should be
sufficiently weak to gain the compactness of the energy sublevels)
from the choice of the dissipation, which is dictated by the model.
As the examples of Section \ref{sec:examples} show, 
one can expect that the similarity between the closure condition of the $Q$-stable set
\eqref{eq:121}
and the closure condition of the standard stable set \eqref{eq:165} (which is one
of the
main requirements of the Energetic theory) will allow
to extend a large part of the available tools and techniques
originally developed
for the Energetic setting to the Visco-Energetic framework.

Even if the Visco-Energetic approach involves a more complicated
characterization of the jump transitions, it preserves two of the crucial aspects of
the Energetic theory: a quite robust metric/variational description of the evolution in
terms of stability and energy balance, combined with a simple approximation algorithm 
whose convergence relies on a few basic structural properties of
energy and dissipation.
It moreover keeps the same localization effect of the Balanced-Viscosity
approach under considerably weaker assumptions on the data. 
It is worth noticing that one can recover the Energetic solutions
(respectively, the Balanced Viscosity solutions) as a limit as the
viscous parameter $\mu$ of \eqref{eq:174} goes to $0$ (resp.~to
$+\infty$):
see \cite{Rossi-Savare17-preprint}.

Besides applications to various models where the Energetic approach
have been successfully employed (we plan to apply the present theory to elastoplasticty and crack propagation
models, following the approaches of
\cite{MaiMie08?GERI,Rindler15,MRS16,DalMaso-Francfort-Toader05}),
further important developments have to be better
understood: one of the most interesting one concerns the case of a
viscosity term $\delta$ which is not ``controlled'' by the distance
$\sfd$.
This situation may occur when $\sfd=0$, causing severe
compactness issues (this is the most difficult case, see
\cite{Agostiniani-Rossi16,Artina-Cagnetti-Fornasier-Solombrino15}),
or when the evolution involves a coupling between quasi-static and
viscous laws \cite{DalMaso-Toader02}, leading to a problematic
formulation of the energy balance.

\paragraph{\emph{Plan of the paper.}}
In the preliminary section \ref{sec:prelresults}
we briefly recall the canonical metric-topological setting, 
how to deal with BV and regulated functions, the properties
of the energy $\cE$ and its power $\cP$, 
the basic framework of Energetic and Balanced Viscosity solutions.

\emph{\bfseries We collect our main results in section \ref{sec:VE}:}
we start by discussing admissible viscous corrections $\delta$
and we introduce in full detail the associated viscous jump cost
$\sfc$,
relying on generalized transitions, and the residual
stability function $\cR$ (Section \ref{subsec:residual}).
\emph{\bfseries Section \ref{subsec:VE} contains the precise definition
of VE solutions, their basic characterizations, 
and the main existence theorem \ref{thm:existence}.}
In the case when the dissipation distance $\sfd$ 
is not continuous w.r.t.~$\sigma$, the properties
of $\cR$ will play a crucial role, so we 
investigate them in Section \ref{subsec:residual-stability}
and we will apply these results to 
elucidate the structure of optimal jump transitions
in Section \ref{subsec:optimal-transitions}.

Examples and applications are collected in Section \ref{sec:examples},
starting from the simplest convex or $1$-dimensional cases,
and moving towards more complicated situations,
where $\delta$ may depend on an accessory distance $\sfd_*$ 
(Section \ref{subsec:genconvexity}), 
$\sfd$ is degenerate but still separates the stable set
(Section \ref{subsec:degenerate}),
$X$ is a product space and we will have to
introduced a reduced marginal energy as in \eqref{eq:190} 
(Section \ref{subsec:marginal}).

The last sections contain all the proofs and the relevant 
properties of the transition cost and the Viscous Incremental 
Minimization scheme.
Section \ref{sec:dissipationcost} is devoted to 
the properties of the cost $\mathrm{Trc}$ 
of a transition $\vartheta$ 
and to the existence of optimal transitions.

Section \ref{sec:energy-inequalities} contains the crucial
lower energy estimates
along jumps and along arbitrary BV curves 
satisfying the stability condition \eqref{eq:165D}, thus proving that 
\begin{equation}
  \label{eq:166lower}
  \cE(t,u(t))+\mVar{\sfd,\sfc}(u,[s,t])\ge\cE(0,u(s))+\int_s^t \cP(r,u(r))\,\d r
  \quad\text{for every }s,t\in [0,T],\ s\le t,
\end{equation}
whenever \eqref{eq:165D} holds in $[0,T]$.

The last Section \ref{sec:convergenceproof} 
contains all the main steps of the proof, which
follows a canonical strategy: discrete estimates 
for the Viscous Incremental Minimization scheme
\eqref{eq:167mu},
compactness, energy-dissipation inequality 
\begin{equation}
  \label{eq:166upper}
  \cE(t,u(t))+\mVar{\sfd,\sfc}(u,[0,t])\le\cE(0,u_0)+\int_0^t \cP(r,u(r))\,\d r
  \quad\text{for every }t\in [0,T],
\end{equation}
obtained
by the lower semicontinuity results of Section
\ref{sec:dissipationcost},
and conclusion by reinforcing energy convergence at each time $t$ 
thanks to \eqref{eq:166lower}.
\subsection*{Acknowledgment}
We would like to thank Riccarda Rossi for insightful
discussions on the whole subject and for valuable comments on 
the manuscript.

Part of this paper has been written while the second author 
was visiting the Erwin Schr\"odinger Institute for Mathematics and Physics
(Vienna), whose support is gratefully acknowledged.

\newpage
\section*{List of notation}
\addcontentsline{toc}{subsection}{\it List of notation.}

\smallskip
\halign{$#$\hfil\ &#\hfil
\cr
(X,\sigma)&The reference topological Hausdorff space, Section \ref{subsec:metric-topological}
\cr
\sfd&the asymmetric (quasi-)distance on $X$, \eqref{eq:16}\cr
\sfd\text{ separates } U&\eqref{eq:25}\cr
\var(u,E)&pointwise total variation w.r.t.~$\sfd$ of $u:E\to X$,
\eqref{eq:1}\cr
(\sigma,\sfd)\text{-regulated functions}&Definition
\ref{def:regulated}\cr
\Jump{}u&Jump set of a $(\sigma,\sfd)$-regulated function $u$, \eqref{eq:3}\cr
\BV{\sigma,\sfd}([a,b];X)&Space of $(\sigma,\sfd)$-regulated function
with bounded variation, \ref{def:regulated}\cr
\Delta_\sfe(t,u^-,u^+)&incremental cost function associated with $\sfe$,
\eqref{eq:31}\cr
\Jmp\sfe(u,[a,b])&incremental jump variation induced by
$\Delta_\sfe$, Definition \ref{def:augmented}\cr
\mVar{\sfd,\sfe}(u,[a,b])&Augmented total variation, Definition
\ref{def:augmented}\cr
\cE(t,u)&the energy functional, Section
\ref{subsec:energy-functional}\cr
\cP(t,u)&the power functional, $\partial_t \cE(t,u)$, Section
\ref{subsec:energy-functional}\cr
\cF(t,u),\cF_0(u)&perturbed energy through the distance $\sfd$,
\eqref{eq:35}\cr
\sfD(u,v),\delta(u,v)&modified viscous dissipations, \eqref{eq:6}\cr
(X,\cE,\sfd,\delta)&the basic Visco-Energetic Rate-Independent System\cr
\SSD,\SSD(t)&the $\sfD$-stable set and its sections, Definition
\ref{def:stable-set}\cr
\cR(t,u)&the residual stability function, Definition
\ref{def:slope}\cr
\rmM(t,u)&the set of minimizers of the Incremental
Minimization scheme, \eqref{eq:107}\cr
\tau&a finite partition $\{t^0_\tau,t^1_\tau,\cdots,t^N_\tau\}$ 
of the time interval $[0,T]$, see page \pageref{page:incremental} \cr
U^n_\tau&discrete solutions to the time incremental minimization
scheme
\eqref{ims}
\cr
\overline U_\tau&left continuous piecewise constant interpolant of the values $U^n_\tau$\cr
E^-,E^+&infimum and supremum of a set $E\subset \R$, Section
\ref{subsec:residual}\cr
\Holes(E)&collection of the bounded connected components of $\R\setminus
E$, Section
\ref{subsec:residual}\cr 
\Pf(E)&collection of all the finite subset of $E$\cr
\rmC_{\sigma,\sfd}(E;X)&$\sigma$- and $\sfd$- continuous functions
$f:E\to X$, \eqref{eq:46}\cr
\Cf(t,\vartheta,E)&the transition cost, Definition \ref{def:transition-cost}\cr
\Cd(\vartheta,E)&one of the component of the transition cost, Definition \ref{def:transition-cost}\cr
\sfc(t,u^-,u^+)&the Visco-Energetic jump dissipation cost, Definition
\ref{dissipationcost}\cr
&\cr
\text{\textbf{Assumptions:}}&\cr
\text{\mytag A{}}&Energy and power, Page \pageref{A}\cr
\text{\mytag B{}}&Admissible viscous corrections, Page \pageref{B}\cr
\text{\mytag C{}}&Closure and separation of the (quasi)-stable set, Page \pageref{C}\cr
}

\newpage
\section{Notation, assumptions and preliminary results} \label{sec:prelresults}
In this section we recall some notation and properties related
to asymmetric (quasi-)distances in topological spaces, regulated $\rm{BV}$
functions and 
\textit{Energetic} and \textit{Balanced Viscosity
  $\mathrm{(BV)}$ solutions} of a general rate-independent system.

\subsection{The metric-topological setting.}
\label{subsec:metric-topological}
Let $(X,\topol)$ be a Hausdorff topological space satisfying the first
axiom of countability; we will fix a reference point $x_o\in X$ and 
a time interval
$[0,T]\subset \R$, $T>0$. 

\paragraph*{Asymmetric dissipation distances.}
\addcontentsline{toc}{subsubsection}{\it Asymmetric dissipation distances.}
The first basic object characterizing a Rate-Independent System (R.I.S.)
is
\begin{equation}
  \label{eq:16}
  \begin{gathered}
    \text{a l.s.c.~\emph{asymmetric (quasi-)distance}
      $\sfd:X\times X\to[0,\infty]$, satisfying}\\
    \sfd(x,x)=0,\quad \sfd(x_o,x)<\infty,\quad
    \sfd(x,z)\le \sfd(x,y)+\sfd(y,z)\quad\text{for
      every }x,y,z\in X.
  \end{gathered}
\end{equation}
%
We say that a subset $U\subset X$ is \emph{$\sfd$-bounded} if 
$\sup_{u\in U}\sfd(x_o,u)<\infty$.
We say that $\sfd$ separates the points of $U\subset X$ if 
\begin{equation}
  \label{eq:25}
  u,v\in U,\quad \sfd(u,v)=0\qquad\Rightarrow\qquad
  u=v.
\end{equation}
We will often deal with subsets of the product space $Y:=\R\times X$,
which will be endowed with the product topology $\sigmap$, the asymmetric distance
\begin{equation}
  \label{eq:72}
  \sfdp((s,x),(t,y)):=|t-s|+\sfd(x,y)\quad
  \text{and the distinguished point $y_o:=(0,x_o)$. }
\end{equation}
Notice that 
$\UU\subset \R\times X$ is separated by $\sfdp$ if
$\sfd$ separates the points of all its sections
$\UU(t):=\{u\in X:(t,u)\in \UU\}$, $t\in \R$. 

The relation between $\sigma$ and $\sfd$ will be
clarified by the following Lemma: we will typically choose $W$ as 
a sequentially compact subset of $X$ or
$\R\times X$.
\begin{lemma}
  \label{le:obvious}
  Let $(Z,\sigma_Z)$ and 
  $(W,\sigma_W)$ be Hausdorff topological spaces satisfying the
  first axiom of countability; we suppose that 
  $W$ is sequentially compact and it is endowed with a l.s.c.~asymmetric
  quasi-distance $\sfd_W$ as in \eqref{eq:16} and we fix 
  an accumulation point of $z_0\in Z$, with 
  neighborhood basis $\cN(z_0)$.
  \begin{enumerate}[i)]
  \item If $v:Z\to W$ satisfies 
    \begin{equation}
      \label{eq:73}
      \lim_{z\to z_0}\sfd_{W}(v(z),v(z_0))\land\sfd_{W}(v(z_0),v(z))=0,\quad
      \bigcap_{N\in \cN(z_0)}\overline{v(N)}\quad\text{is separated by }\sfd_W,
    \end{equation}
    then $\lim_{z\to z_0}v(z)=v(z_0).$
  \item If $v:Z\to W$ satisfies
    \begin{equation}
      \label{eq:74}
      \lim_{z,z'\to z_0}\sfd_{W}(v(z),v(z'))\land 
      \sfd_{W}(v(z'),v(z))=0,\quad
            \bigcap_{N\in \cN(z_0)}\overline{v(N)}\quad\text{is separated
              by }\sfd_W,
          \end{equation}
    then there exists the limit $\bar v:=\lim_{z\to z_0}v(z)$ and 
    $\lim_{z\to z_0}\sfd_{W}(v(z),\bar v)\land\sfd_{W}(\bar v,v(z))=0$.
  \end{enumerate}
\end{lemma}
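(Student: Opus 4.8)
The plan is to exploit the sequential compactness of $W$ and the first axiom of countability of $Z$ at $z_0$, so that every assertion can be tested along sequences $z_n\to z_0$ with $z_n\ne z_0$ (such sequences exist because $z_0$ is an accumulation point). The running observation is that if $v(z_{n_k})\to w$ in $(W,\sigma_W)$ along a subsequence, then $w$ lies in the set $S:=\bigcap_{N\in\cN(z_0)}\overline{v(N)}$ — indeed $z_{n_k}$ eventually enters every neighborhood $N$ of $z_0$, so $w\in\overline{v(N)}$ — and $S$ is separated by $\sfd_W$ by hypothesis; moreover $v(z_0)\in S$ as well. A second recurring device handles the asymmetry of $\sfd_W$: whenever we know that $\sfd_W(a_n,b_n)\land\sfd_W(b_n,a_n)\to0$, we pass to a subsequence on which a single one of the two directions realizes the minimum and tends to $0$, and only then invoke the lower semicontinuity of $\sfd_W$ in that fixed slot.

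For part i) we argue by contradiction. If $v(z)\not\to v(z_0)$, then using a countable neighborhood basis of $z_0$ we can pick $z_n\to z_0$, $z_n\ne z_0$, with $v(z_n)$ outside a fixed open neighborhood $U$ of $v(z_0)$; by sequential compactness a subsequence of $(v(z_n))$ converges to some $w\in W\setminus U$, so $w\ne v(z_0)$ while $w,v(z_0)\in S$. By \eqref{eq:73} we have $\sfd_W(v(z_n),v(z_0))\land\sfd_W(v(z_0),v(z_n))\to0$; extracting a further subsequence that fixes the minimizing direction and using lower semicontinuity of $\sfd_W$ yields $\sfd_W(w,v(z_0))=0$ or $\sfd_W(v(z_0),w)=0$, whence $w=v(z_0)$ by the separation property \eqref{eq:25} — a contradiction. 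Thus $\lim_{z\to z_0}v(z)=v(z_0)$.

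For part ii) we first show that for every sequence $z_n\to z_0$, $z_n\ne z_0$, the sequence $(v(z_n))$ converges in $W$: it has convergent subsequences by sequential compactness, and any two subsequential limits $w,w'$ lie in $S$ and, diagonalizing \eqref{eq:74} against shrinking neighborhoods $N_j$ so that both arguments stay inside $N_j$ while converging to $w$ and to $w'$ respectively, satisfy — after fixing the direction and using lower semicontinuity — $\sfd_W(w,w')=0$ or $\sfd_W(w',w)=0$, hence $w=w'$ by \eqref{eq:25}. Interleaving two such sequences shows that the limit does not depend on the sequence; call it $\bar v$. By first countability at $z_0$ this is exactly the statement $\lim_{z\to z_0}v(z)=\bar v$. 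Finally, for the quantitative claim, fix $\eps>0$, choose $j$ with $1/j<\eps$ and $N\in\cN(z_0)$ with $\sfd_W(v(z),v(z'))\land\sfd_W(v(z'),v(z))<1/j$ for all $z,z'\in N\setminus\{z_0\}$; fixing $z\in N$ and letting $z'\to z_0$ along a sequence that fixes the minimizing direction, lower semicontinuity of $\sfd_W$ gives $\sfd_W(v(z),\bar v)\land\sfd_W(\bar v,v(z))\le 1/j<\eps$, and since $z\in N\setminus\{z_0\}$ was arbitrary this yields $\lim_{z\to z_0}\sfd_W(v(z),\bar v)\land\sfd_W(\bar v,v(z))=0$.

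The only genuinely delicate bookkeeping is the interaction between the asymmetric minimum $\land$ and the mere lower semicontinuity of $\sfd_W$: at each step one must first extract a subsequence on which one of the two directions attains the minimum and vanishes, and only then pass to the limit in that fixed slot; likewise, in part ii) the diagonal extraction has to be organized so that both $v(z_{n_j})$ and $v(z'_{m_j})$ stay convergent while the neighborhoods $N_j$ shrink. Everything else is routine once one has sequential compactness of $W$ and the separation of $S$ at hand.
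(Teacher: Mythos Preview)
Your proof is correct and follows the same strategy as the paper: reduce to sequences via first countability, use sequential compactness of $W$ to produce subsequential limits in $S=\bigcap_N\overline{v(N)}$, and conclude equality of limits from the separation property of $\sfd_W$ on $S$. One small simplification worth noting: the paper observes that $\sfd_{W,\land}(w,w'):=\sfd_W(w,w')\land\sfd_W(w',w)$ is itself lower semicontinuous (as the minimum of two l.s.c.\ functions), so one can pass to the limit in both arguments directly without first extracting subsequences to fix the minimizing direction---this replaces all your ``fix the direction, then use l.s.c.\ in that slot'' steps by a single application of l.s.c.\ of $\sfd_{W,\land}$.
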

\begin{proof}
  We will prove the claim ii), since the proof of point i) is
  completely analogous. We set
  $\sfd_{W,\land}(w,w'):=\sfd_W(w,w')\land \sfd_W(w',w)$.
  Since 
  $W$ is sequentially compact and
  $\sigma_Z,\sigma_W$ satisfy the first
  countability axiom, in order to prove the existence of the limit it is sufficient to show that
  whenever sequences $z_n',z_n''\to z_0$ with
  $z_n',z_n''\in Z$ and $v(z_n')\to v',\ v(z_n'')\to v''$ then
  $v'=v''$. 

  By the first of \eqref{eq:74} for every $\eps>0$ we find $\bar n\in
  \N$ such that 
  $\sfd_{W,\land}(v(z_n'),v(z_m''))\le \eps$ for $n,m\ge \bar n$;
  since $\sfd_{W,\land}$ is $\sigma_W$ lower
  semicontinuous, we obtain
  $$\sfd_{W,\land}(v(z_n'),v)\le \liminf_{m\to\infty}
  \sfd_{W,\land}(v(z_n'),v(z_m''))\le \eps\quad\text{for every }n\ge
  \bar n.
  $$
  Passing to the limit as $n\to\infty$ we obtain
  $\sfd_{W,\land}(v',v'')\le \eps$; since $\eps>0$ is arbitrary 
  we get $\sfd_{W,\land}(v',v'')=0$. 
  Since $v',v''$ belong to $\bigcap_{N\in \cN(z_0)}\overline{v(N)} $ which
  is separated by $\sfd_W$ we conclude that
  $v'=v''$. 
\end{proof}
\begin{remark}[The metric setting]
  \label{rem:metric}
  \upshape
  A typical situation occurs when $\sfd$ is a distance on $X$,
  i.e.~it is symmetric, finite, and separates the points of $X$, so
  that 
  $(X,\sfd)$ is a standard metric space,
  and $\sigma$ is the
  induced topology. In this case, which
  we will simply call \emph{the metric setting}, 
  part of the previous discussion and of the
  next developments 
  can be stated in a much simpler form. 
  A less restrictive notion  in the case of an asymmetric distance 
  (thus not inducing a topology) 
  is the left-continuity property, that we will introduce in 
  formula \eqref{eq:150} below.
\end{remark}
\paragraph{Pointwise total variation and $(\sigma,\sfd)$-regulated
  functions.}
\addcontentsline{toc}{subsubsection}{\it Pointwise total variation and $(\sigma,\sfd)$-regulated
  functions.}
Let $E\subset \R$ be an arbitrary subset; we will denote 
by $\Pf(E)$ the collection
of all the finite subsets of $E$ and we will set $E^-:=\inf E$,
$E^+:=\sup E$.
The \textit{pointwise} total variation $\var(u,E)$
of a function $u:E\to X$ 
is defined in the usual way by
 \begin{equation}
   \var(u,E):=\sup \left\{\sum_{j=1}^M
     \sfd\left(u(t_{j-1}),u(t_j)\right):
     t_0<t_1<\dots{}<t_M,\ \{t_i\}_{i=0}^M\in \Pf(E) \right\};\label{eq:1}
\end{equation}
we set $\var(u,\emptyset):=0$. 

If $\var(u,E)<\infty$ then $u$ belongs to the space
$\BV\sfd(E,X)$ of function with \emph{bounded variation} and
we can define the function
\begin{equation}
  \label{eq:26}
  V_u(t):=\var(u,E\cap [E^-,t]),\quad t\in [E^-,E^+],
\end{equation}
which is monotone non decreasing and satisfies
\begin{equation}
  \label{eq:27}
  \sfd(u(t_0),u(t_1))\le \var(u,[t_0,t_1])=V_u(t_1)-V_u(t_0)\quad
  \text{for every }t_0,t_1\in E,\ t_0\le t_1.
\end{equation}
%
%
When $\sfd$ is a distance and $(X,\sfd)$ is a complete metric space,
it is well known that every function $u$ with bounded variation is
\emph{regulated}, i.e.~it admits
left and right $\sfd$-limits at every time $t$ and the jump set coincides
with the jump set of $V_u$.
In our weaker framework, regulated functions should also take into account
the $\sigma$ topology (which could also be non-metrizable).
We propose the following definition.
\begin{definition}[$(\sigma,\sfd)$-regulated functions]
  \label{def:regulated}
  We say that $u:[a,b]\to X$ is $(\sigma,\sfd)$-regulated if 
  for every $t\in [a,b]$ there exist the left and
  right limits $u(t\lrl)$ 
  (here we adopt the convention $u(a\leftl ):=u(a)$ and $u(b\rightl ):=u(b)$)
  w.r.t.~the $\sigma$ topology, satisfying 
\begin{gather}
  \label{eq:2}
  \begin{aligned}
    u(t\leftl )=\lim_{s\uparrow t}u(s),\quad & \lim_{s\uparrow
      t}\sfd(u(s),u(t\leftl ))=0,
\\
    u(t\rightl
    )=\lim_{s\downarrow t}u(s),\quad &\lim_{s\downarrow t}\sfd(u(t\rightl
    ),u(s))=0,
    \end{aligned}
    \intertext{and}
    \label{eq:193}
    \sfd(u(t\leftl),u(t))=0\ \Rightarrow\ u(t\leftl)=u(t);\qquad
    \sfd(u(t),u(t\rightl))=0\ \Rightarrow\ u(t)=u(t\rightl).
\end{gather}
The \textit{pointwise} jump set $\Jump\pm u$ of a
$(\sigma,\sfd)$-regulated function $u$ is defined by
\begin{equation}
\label{eq:3}
\Jump -u := \left\{t\in[a,b]:u(t\leftl )\neq u(t)\right\},\quad 
\Jump+u:=\left\{t\in[a,b]: u(t)\neq
  u(t\rightl )\right\} ,\quad
\Jump{}u:=\Jump-u\cup\Jump+u.
\end{equation}
We will denote by $\BV{\sigma,\sfd}([a,b];X)$ the 
space of $(\sigma,\sfd)$-regulated functions
with finite $\sfd$-total variation.
In this case $\Jump\pm u$ coincide 
with the corresponding jump sets $\Jump\pm{V_u} $ of the real
monotone function
$V_u$.
In particular, $\Jump{}u=\Jump-u\cup\Jump+u$ is at most countable. 
\end{definition}
Notice that for a monotone function $V:[a,b]\to \R$ 
the jump set $\Jump{}V$ coincides with
$\{t\in [a,b]:V(t\leftl)\neq V(t\rightl)\}$.

As we already mentioned, in the metric setting of Remark \ref{rem:metric} when $(X,\sfd)$
is also complete, it is immediate to check that any function $u\in \BV\sfd([a,b];X)$
is $(\sigma,\sfd)$-regulated
and the values $u(t\pm)$ coincide with the usual left and right limits
of $u$. 
In more general situations, the following simple lemma, that
lies behind \cite[Assumption (A4), Theorems 3.2, 3.3]{Mainik-Mielke05} and 
\cite[Section 2.2]{Mielke-Rossi-Savare13},
provides a sufficient condition for a function $u\in
\BV\sfd(D;X)$, $D$ being a dense subset of $[a,b]$,
to admit a unique $(\sigma,\sfd)$
regulated extension to $[a,b]$;
when $D=[a,b]$ it still provides interesting $\sigma$-continuity
properties of $u$. 
\begin{lemma}
  \label{le:nondeg-regulated}
  Let $D$ be a dense subset of $[a,b]$, let
  $u$ be a curve in $\BV\sfd(D;X)$ with
  $\Jump{}{V_u}\subset D$,
  and let
  $\UU\subset [a,b]\times X$ a sequentially compact 
  set separated by $\sfdp$ such that $u(t)\in \UU(t)$ 
  for every $t\in D\setminus \Jump{}{V_u}$.
  
  If $\UU$ is sequentially compact and $\sfdp$ separates its points
  then $u$ admits a unique extension $\tilde u$ to a 
  function in $\BV{\sigma,\sfd}([a,b];X)$,
  with $\var(u,D)=\var(\tilde u,[a,b])$.
  In particular, when $D=[a,b]$ we get $u=\tilde u\in
  \BV{\sigma,\sfd}([a,b];X)$.  
  Finally, if $\sfd$ is a distance on $X$ then the values
  $u(t\pm)$ coincide with the left and right limits of $u$ with
  respect to $\sfd$ and with respect to $\sigma$. 
\end{lemma}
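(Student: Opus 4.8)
The plan is to construct the extension $\tilde u$ pointwise, using the monotone real function $V_u$ of \eqref{eq:26} to control the oscillation of $u$ and invoking Lemma \ref{le:obvious} to upgrade this purely metric control into genuine $\sigma$-convergence. Since $u\in\BV\sfd(D;X)$ and $\inf D=a$, $\sup D=b$, the function $V_u$ is bounded and nondecreasing on $[a,b]$, hence admits left and right limits $V_u(t\leftl)\le V_u(t)\le V_u(t\rightl)$ at every $t$; moreover $\sfd(u(s),u(s'))\le V_u(s')-V_u(s)$ for all $s\le s'$ in $D$ by \eqref{eq:27}. Put $F:=D\setminus\Jump{}{V_u}$; as $\Jump{}{V_u}$ is at most countable, $F$ is dense in $[a,b]$ (this is automatic in the distinguished case $D=[a,b]$), and for $s\in F$ the pair $(s,u(s))$ lies in $\UU$. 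Note finally that $\UU$, being sequentially compact in the Hausdorff, first-countable space $\R\times X$, is closed, so the sets $\bigcap_{N}\overline{v(N)}$ occurring below are contained in $\UU$ and hence $\sfdp$-separated.

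Fix $t\in(a,b]$ and treat the left limit. Apply Lemma \ref{le:obvious}(ii) to $Z:=F\cap[a,t)$ with accumulation point $z_0:=t$, to $W:=\UU$ with $\sfd_W:=\sfdp$, and to $v(s):=(s,u(s))$. The Cauchy condition \eqref{eq:74} holds since for $s\le s'$ in $F$ near $t$ one has $\sfdp(v(s),v(s'))\le|s-s'|+\bigl(V_u(s')-V_u(s)\bigr)\to 0$, and the separation condition holds by the remark above. Hence $\lim_{s\uparrow t,\,s\in F}(s,u(s))$ exists in $\sigmap$; its first component being $t$, this produces a point $\tilde u(t\leftl)\in X$ with $u(s)\sigmato\tilde u(t\leftl)$ as $s\uparrow t$ in $F$. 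The sharper, directed convergence demanded by \eqref{eq:2} then follows from the $\sigma$-lower semicontinuity of $\sfd$: letting $s'\uparrow t$ in $\sfd(u(s),u(s'))\le V_u(s')-V_u(s)$ gives $\sfd(u(s),\tilde u(t\leftl))\le V_u(t\leftl)-V_u(s)\to 0$. The right limit $\tilde u(t\rightl)$, $t\in[a,b)$, is obtained symmetrically, with $\lim_{s\downarrow t}\sfd(\tilde u(t\rightl),u(s))=0$; at the endpoints we set $\tilde u(a\leftl):=\tilde u(a)$, $\tilde u(b\rightl):=\tilde u(b)$ as prescribed.

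Now define $\tilde u(t):=u(t)$ for $t\in D$ and $\tilde u(t):=\tilde u(t\leftl)$ otherwise, and verify the remaining requirements of Definition \ref{def:regulated}. At a $V_u$-continuity point $t\in F$, the bounds $\sfd(u(s),u(t))\le V_u(t)-V_u(s)$ ($s\uparrow t$) and $\sfd(u(t),u(s))\le V_u(s)-V_u(t)$ ($s\downarrow t$) allow us to apply Lemma \ref{le:obvious}(i) with $v(s)=(s,u(s))$ and $v(t)=(t,u(t))\in\UU$, giving $v(s)\sigmato(t,u(t))$ from both sides; by uniqueness of $\sigma$-limits, $\tilde u(t\leftl)=u(t)=\tilde u(t\rightl)$, so $\tilde u$ is $\sigma$-continuous wherever $V_u$ is, and the same argument run along the dense set $F$ shows that the one-sided limits of $\tilde u$ computed over all of $[a,b]$ agree with the $\tilde u(t\lrl)$ built above. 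The implications \eqref{eq:193} follow from the separation of $\UU$ together with the estimates $\sfd(\tilde u(t\leftl),u(t))\le V_u(t)-V_u(t\leftl)$ and $\sfd(u(t),\tilde u(t\rightl))\le V_u(t\rightl)-V_u(t)$, obtained again by lower semicontinuity of $\sfd$, which locate $\tilde u(t\leftl)$, $u(t)$, $\tilde u(t\rightl)$ inside $\UU(t)$; a vanishing $\sfd$ between two of these therefore forces their equality. Thus $\tilde u$ is $(\sigma,\sfd)$-regulated. Finally $\var(\tilde u,[a,b])=\var(u,D)$: ``$\ge$'' is immediate from $D\subset[a,b]$, while ``$\le$'' follows from a partition argument in which each node is shifted to a nearby point of $D$ lying on the correct side of the (countably many) jumps, using $\sigma$-lower semicontinuity of $\sfd$; applied on subintervals this gives $V_{\tilde u}=V_u$, whence $\Jump{}{\tilde u}=\Jump{}{V_{\tilde u}}=\Jump{}{V_u}$ by the general fact for regulated $\mathrm{BV}$ functions recorded in Definition \ref{def:regulated}, so $\tilde u\in\BV{\sigma,\sfd}([a,b];X)$ with $\var(u,D)=\var(\tilde u,[a,b])$; when $D=[a,b]$ all nodes already lie in $D$ and $\tilde u=u$. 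Uniqueness of the extension is clear, since for $t\notin D$ the value $\tilde u(t)=\tilde u(t\leftl)$ is forced by $\sigma$-continuity of $\tilde u$ along the dense set $D$. If moreover $\sfd$ is a distance, each $\sfd$-limit above is an ordinary metric limit and, the topology being metric, coincides with the corresponding $\sigma$-limit, which gives the last assertion.

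The main obstacle is exactly the passage from ``$\sfd$ small'' to ``$\sigma$-convergent'': the variation bound only tells us that $s\mapsto u(s)$ is $\sfd$-Cauchy along $D$, and since $\sfd$ is asymmetric, possibly $+\infty$, and does not generate a topology — while $\UU$ is merely sequentially compact, not compact — no limit can be extracted by soft arguments. This is precisely the role of Lemma \ref{le:obvious}, whose separation hypothesis on $\bigcap_N\overline{v(N)}$ guarantees that all subsequential limits coincide. The secondary delicate point is the behaviour at $\Jump{}{V_u}$: there $u$ carries only its prescribed value, so one must still verify the consistency relations \eqref{eq:193} and that every jump of $V_u$ is an actual jump of $\tilde u$; both rest on combining the $V_u$-jump estimates with the separation of $\UU$ and on keeping careful track of which of the two asymmetric quantities $\sfd(x,y)$, $\sfd(y,x)$ is being controlled at each step.
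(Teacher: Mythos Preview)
Your proof follows the same strategy as the paper's: invoke Lemma~\ref{le:obvious}(ii) with the $V_u$-Cauchy control to construct the one-sided $\sigma$-limits, extend $u$, and then verify the regulated properties and the equality of variations via a partition-shifting argument (which the paper carries out in full detail rather than sketching as you do).

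There is one gap worth naming. You apply Lemma~\ref{le:obvious}(ii) with $Z=F\cap[a,t)$, $F:=D\setminus\Jump{}{V_u}$, and claim that $F$ is dense in $[a,b]$ ``as $\Jump{}{V_u}$ is at most countable''. That implication is false: removing a countable set from a merely \emph{dense} set need not leave a dense set --- take $D=\Q\cap[a,b]$ and build a monotone $u$ with $\Jump{}{V_u}=D$, so that $F=\emptyset$. The paper avoids this by applying the lemma with $Z=D\cap[a,t)$ instead, at the cost of the parallel imprecision that $(s,u(s))$ need not lie in $\UU$ when $s\in\Jump{}{V_u}$; neither version is fully rigorous for arbitrary dense $D$, but in the principal case $D=[a,b]$ (and in every application within the paper) $F$ is genuinely dense and both arguments go through. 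A related slip appears in your check of \eqref{eq:193}: you assert that the estimates ``locate $\tilde u(t\leftl),u(t),\tilde u(t\rightl)$ inside $\UU(t)$'', but for $t\in\Jump{}{V_u}$ the hypothesis does not give $u(t)\in\UU(t)$, so separation of $\UU$ cannot be invoked directly; the paper's proof makes the same tacit assumption at the corresponding step.
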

\begin{proof}
  %
  We fix $t\in (a,b]$ and apply
  Lemma \ref{le:obvious} ii), 
  with $Z:= D\cap [a,t)$, $z_0:=t$, observing that for $r,s\in Z$ with $r\le s$
  \begin{equation}
    \label{eq:61}
    \sfd(u(r),u(s))\le V_u(s)-V_u(r),\quad
    \lim_{s,r\up t}V_u(s)-V_u(r)=0.
  \end{equation}
  Once the existence of the limit has been established,
  the previous estimate and the lower semicontinuity of $\sfd$ show that $\lim_{s\up t}\sfd(u(s),u)=0$.
  The argument for the existence of the right limit is completely
  analogous.
  If $t\in [a,b]\setminus D$, we can extend $u$ by setting $\tilde u(t):=\lim_{s\to t,\
    s\in D}u(s)$, since $t\not\in \Jump{}{V_u}$. It is easy to check
  that $\tilde u\in \BV\sfd([a,b];X)$, $V_{\tilde u}=V_u$, 
  $\tilde u(t\lrl)=u(t\lrl)$,  
  and $\Jump{}{\tilde u}\subset D$.

  If $t\not \in \Jump-{\tilde u}$ then the above argument shows $\lim_{s\up
    t}\sfd(\tilde u(s),\tilde u(t))=0$ so that $\lim_{s\up t}V_{\tilde
    u}(s)=V_{\tilde u}(t)$ and $t$ is
  a left continuity point for $V_u$.
  On the other hand, if $t\in \Jump -{\tilde u}$, $\tilde
  u(t\leftl ),\tilde u(t)=u(t)\in \UU(t)$, the
  separation property
  yields $\sfd(\tilde u(t\leftl ),\tilde u(t))>0$ and 
  \begin{displaymath}
    \sfd(\tilde u(t\leftl ),\tilde u(t))\le \liminf_{s\up t}\sfd(\tilde
    u(s),\tilde u(t))\le
  \liminf_{s\up t}V_{\tilde u}(t)-V_{\tilde u}(s)=V_{\tilde
    u}(t)-V_{\tilde u}(t\leftl )
\end{displaymath}
so that $t\in \Jump-{V_u}$.

In order to show that $\var(u,D)=\var(\tilde u,[a,b])$,
we consider a finite subset $P=\{t_j\}_{j=0}^N$ of $[a,b]$ with $t_j\le
t_{j+1}$ and we fix a small $\eps>0$.
 Since $\Jump{}{\tilde u}=\Jump{}u\subset D$,
every $t_j\in P$ can be approximated
by points $t_j^\pm\in D$ such that $t_{j-1}< t_j^- \le t_j\le t_j^+<t_{j+1}$ 
and
$\sfd(\tilde u(t_j^-),\tilde u(t_j))+\sfd(\tilde u(t_j),\tilde
u(t_j^+))\le \eps/({N+1})$ (we just set $t_j^\pm:=t_j$ whenever $t_j\in P\cap
D$).
Thus we have a new partition (with possible repetitions) 
$t_0^- \le t_0^+ <t_1^- \le t_1^+ ,\cdots,<t_N^- \le t_N^+ $ in $D$ and
\begin{align*}
  \sum_{j=1}^N\sfd(\tilde u(t_{j-1}),\tilde u(t_j))
  &\le
       \sum_{j=0}^{N}\sfd(\tilde u(t_{j}^- ),\tilde u(t_{j}))+
  \sfd(\tilde u(t_{j}),\tilde u(t_{j}^+ )) +
       \sum_{j=1}^N
  \sfd(\tilde u(t_{j-1}^+),\tilde u(t_j^- )) 
  \\&\le     \eps + \sum_{j=0}^N\sfd(\tilde u(t_{j}^- ),\tilde u(t_{j}^+ )) +
      \sum_{j=1}^N\sfd(\tilde u(t_{j-1}^+ ),\tilde u(t_j^- ))   
    \le \eps +\var (u,D).
\end{align*}
Since $\eps>0$ and $P$ are arbitrary we conclude.

The last statement of the Lemma follows easily from
\eqref{eq:2}. 
\end{proof}

\paragraph{Augmented total variation associated with a transition cost.}
\addcontentsline{toc}{subsubsection}{\it Augmented total variation associated with a transition cost.}

In some cases, such as for Balanced Viscosity or Visco-Energetic solutions, 
we will need a modified notion of total variation, increased by a
further contribution along the jumps of
the function.

Such a contribution can be described by a function
$\sfe:[0,T]\times X\times X\to [0,+\infty]$ satisfying
\begin{equation}
  \label{eq:31}
  \Delta_\sfe(t,u_-,u_+)=\sfe(t,u_-,u_+)-\sfd(u_-,u_+)\ge
  0\quad\text{for every }t\in [0,T],\ u_\pm\in X.
\end{equation}
We will also use the notation $\Delta_\sfe(t,u_-,u,u_+)$ 
\begin{equation}
  \label{eq:192}
  \Delta_\sfe(t,u_-,u,u_+):=\Delta_\sfe(t,u_-,u)+\Delta_\sfe(t,u,u_+).
\end{equation}
\begin{definition}[Augmented total variation]
  \label{def:augmented}
  Let $\sfe,\Delta_\sfe$ be as in \eqref{eq:31}.
  For every $(\sigma,\sfd)$-regulated curve
  $u\in \BV{\sigma,\sfd}([0,T];X)$ and every subinterval
  $[a,b]\subset [0,T]$, the \emph{incremental jump variation} of $u$
  on $[a,b]$ induced by $\Delta_\sfe$
  is
  \begin{multline}
    \Jmp{\Delta_\sfe}(u,[a,b]):=\Delta_\sfe
    (a,u(a),u(a\rightl ))+ \Delta_\sfe
    (b,u(b\leftl ),u(b)) 
    \\ 
    +\sum_{t \in \Ju\cap (a,b)}
    \Delta_\sfe
    (t,u(t\leftl ),u(t),u(t\rightl)),
  \end{multline}
  and the corresponding \emph{augmented total variation} is
  \begin{equation} \label{eq:varP}
    \mVar{\sfd,\sfe}(u,[a,b]):=\var(u,[a,b])+
    \Jmp{\Delta_\sfe}(u,[a,b]).
  \end{equation}
\end{definition}
Notice that $\mVar{\sfd,\sfe}(u,[a,b])\ge\var(u,[a,b])$ and they
coincide when $\Ju=\emptyset$ or when $\sfe=\sfd$. As for the
$\sfd$-total variation, $\mVar{\sfd,\sfe}$ satisfies the additive
property
\begin{equation}
  \label{eq:21}
  \mVar{\sfd,\sfe}(u,[a,b])+\mVar{\sfd,\sfe}(u,[b,c])=\mVar{\sfd,\sfe}(u,[a,c])\quad
  \text{whenever }a\le b\le c.
\end{equation}

\subsection{The Energy functional}
\label{subsec:energy-functional}
In this section we briefly recall one of the possible settings for 
energetic solutions to a rate-independent system (R.I.S.) $(X,\cE,\sfd)$, following the 
approach of \cite{Mainik-Mielke05}.
Besides the asymmetric dissipation distance $\sfd$ we have introduced
in the previous section, variationally driven rate-independent
evolutions are characterized by a time-dependent \emph{energy
  functional}
$\cE:[0,T]\times X\rightarrow
    \mathbb{R}$.
%
%
A few basic properties will also involve
the perturbed functionals
\begin{equation}
  \label{eq:35}
  \cF(t,x):=\cE(t,x)+\sfd(x_o,x)+F_o,\quad
  \cF_0(x):=\cF(0,x),\quad x\in X
\end{equation}
and the collection of their sublevels $\big\{(t,x)\in [0,T]\times
X:\cF(t,x)\le C\big\}$ in $[0,T]\times X$;
here $F_o\in [0,\infty)$ is a suitable constant which will ensure
$\cF\ge0$, see \eqref{A.21}. 
We will always make the following standard assumptions
\cite{Mainik-Mielke05,Rossi-Mielke-Savare08,Mielke-Roubicek15},
where we also allow some flexibility in the choice of
the power $\cP$ (see 
\cite{Knees-Mielke-Zanini08,Knees-Zanini-Mielke10,MRS12,MRS13}.
\begin{mainassumption}{$\la$A$\ra$}
  \label{A}
  The R.I.S.~$(X,\cE,\sfd)$ satisfy 
  \begin{enumerate}[\upshape\bfseries $\la${A}.1$\ra$]
  \item \emph{\bfseries Lower semicontinuity and compactness.}
    \label{A1}
    $\cE$ is $\sigma$-l.s.c.~on all the sublevels of $\cF$, which are 
    $\sigma_\R$-sequentially compact in $[0,T]\times X$.
    %
  \item 
    \emph{\bfseries Power-control.}
    \label{A2}
    There exists a map $\cP:[0,T]\times X\to \R$ (the 
    ``time
    superdifferential'' of the energy)
    upper semicontinuous on the sublevels of $\cF$ 
    satisfying
    \begin{gather}
       \label{A.22}
        \liminf_{s\up t}\frac{\cE(t,x)-\cE(s,x)}{t-s}\ge \cP(t,x)\ge
        \limsup_{s\down t}\frac{\cE(s,x)-\cE(t,x)}{s-t}\\
         \label{A.21}
        |\cP(t,x)|\leq C_P \cF(t,x),
    \end{gather}
    for a constant $C_P> 0$ and for every $(t,x)\in[0,T]\times X$.
  \end{enumerate}
\end{mainassumption}
Notice that whenever $t\mapsto \cE(t,x)$ is differentiable at some
$t_0\in [0,T]$
\eqref{A.22} yields $\cP(t_0,x)=\partial_t \cE(t_0,x)$.
On the other hand, \eqref{A.22} shows that 
$t\rightarrow\cE(t,x)$ is upper semicontinuous (and thus continuous by
\mytag A1
and bounded) on
$[0,T]$. It follows that $t\mapsto \cF(t,x)$ is bounded so that
there exists a suitable constant $C$ providing 
$|\cP(t,x)|\le C$ for every $t\in [0,T]$. 

\eqref{A.22} then shows
that $t\mapsto \cE(t,x)$ is Lipschitz continuous 
differentiable a.e.;
by estimating $\cE(t_1,x)=\cE(t_0,x)+\int_{t_0}^{t_1}\cP(s,x)\,\d s$ with
\eqref{A.21} 
and applying Gronwall's lemma, we obtain
\begin{equation} \label{eq:gronwallestimate}
  \cF(t_1,x)
  \leq \cF(t_0,x)
  \exp(C_P|t_1-t_0|)\quad\text{for every }t_0,t_1\in [0,T].
\end{equation}
This estimate will be the basis for the a priori estimate of the
stored and the dissipated energies. 
\relax
In particular it implies that 
for every $t\in [0,T]$ and for every $y\in X$ the map
\begin{equation}
  \label{eq:86}
  x\mapsto \cE(t,x)+ \sfd(y,x) \quad \text{has bounded
      sequentially compact sublevels in $(X,\sigma)$}. 
\end{equation}
Moreover, it would not be difficult to check that \mytag A2 yields 
\begin{align*}
  \cP(t,x)&=\frac{\partial}{\partial t}^{\kern-2pt-}\cE(t,x)=
  \lim_{s\up t}\frac{\cE(t,x)-\cE(s,x)}{t-s}&&
  \text{for every }t\in [0,T],\ x\in X,\ \\
  \cP(t,x)&=\lim_{s\to t}\frac{\cE(t,x)-\cE(s,x)}{t-s}&&
            \text{for a.e.~$t\in [0,T]$, for every }x\in X. 
\end{align*}
\begin{remark}[Left continuity of $\sfd$ and upper semicontinuity of $\cP$]
  \label{rem:Pcont}
  \upshape
  The upper semicontinuity condition of $\cP$
  stated in \mytag A2
  could be relaxed if we know more properties on $\sfd$
  (and on the viscous correction $\delta$, see \ref{B}).
  An example is provided by this condition, that 
  can occasionally replace \mytag A2:
  \begin{enumerate}[\upshape\bfseries $\la${A}.2'$\ra$]
    \label{A2'}
  \item \em $\sfd$ is left-continuous on the sublevels of $\cF_0$, i.e.
      \begin{equation}
        \label{eq:150}
        \cF_0(x_n)\le C,\quad
        x_n\sigmato x\quad\Rightarrow\quad
        \sfd(x_n,v)\to \sfd(x,v)
      \end{equation}
      and the map $\cP:[0,T]\times X\to \R$ satisfies 
      \eqref{A.22}, \eqref{A.21} and the conditional 
      upper-semicontinuity
  \begin{equation}
    \label{eq:135}
    (t_n,x_n)\sigmato (t,x),\quad
    \cE(t_n,x_n)\to \cE(t,x)\quad\Rightarrow\quad
    \limsup_{n\up\infty}\cP(t_n,x_n)\le \cP(t,x).
  \end{equation}  
  \end{enumerate}  
\end{remark}
\relax
\begin{remark}[Extended-valued energies and distances]
\upshape
Our
setting is equivalent to considering an energy
functional 
$
\widetilde \cE:[0,T]\times \widetilde{X} \rightarrow \R\cup \{+\infty\}
$ 
possibly assuming the value $+\infty$,
since any reasonable formulation of \mytag A2 yields that 
the proper domain 
$ \dom(\widetilde\cE(t,\cdot)):=
\{u\in \widetilde{X}:
\widetilde\cE(t,u)<+\infty \}$
should be independent of time, thanks to \eqref{eq:gronwallestimate}.
Also the assumption $\sfd(x_o,u)<\infty$ is not restrictive.
%
In fact, it is sufficient to choose $x_o$ as the initial datum $\bar u$ of
the evolution problem and consider
the restriction of $\tilde \cE$ and $\tilde\sfd$ to the set 
$X:=\{v\in \dom(\widetilde\cE(0,\cdot)):\widetilde \sfd(x_o,v)<\infty\}$.
\end{remark}

\subsection{Energetic solutions to rate-independent problems} 
\label{subsec:energetic}
Hereafter 
we recall the notion of \textit{energetic solution} (see
\cite{Mielke-Theil-Levitas02}, \cite{Mielke-Theil04})  
to the 
\relax Rate-Independent System (R.I.S.) 
$(X,\cE,\sfd)$.
%
Let us first introduce the notion of 
the $\sfd$-stable set $\SS\subset [0,T]\times X$
associated with
$\cE$
\begin{equation}
  \label{eq:17}
  \SSd:=\Big\{(t,u)\in [0,T]\times X:\cE(t,u)\le \cE(t,v)+\sfd(u,v)\quad\text{for
    every }v\in X\Big\}
\end{equation}
with its time-dependent sections $\SSd(t):=\{u\in X:(t,u)\in \SSd\}$.
\begin{definition}[Energetic solutions] A curve $u\in
  \BV\sfd([0,T];X)$ is an energetic solution of the 
R.I.S.~$(X,\mathcal{E}, \sfd)$ if for all $t\in [0,T]$ it satisfies the global stability condition
\begin{equation} \label{enstability}
  u(t)\in \SSd(t), \quad\text{i.e.}\quad
  \mathcal{E}(t,u(t))\leq \mathcal{E}(t,v)+\sfd(u(t),v) \qquad
  \text{for every }v\in X,
  \tag{S$_\sfd$}
\end{equation}
and the energetic balance
\begin{equation} \label{ensolbalance}
\mathcal{E}(t,u(t))+\var(u,[0,t])=\mathcal{E}(0,u(0))+\int_0^t\cP(s,u(s))\rmd s. \tag {E$_\sfd$}
\end{equation}
\end{definition}
\relax 
The Existence of
energetic solutions in such a general framework 
is one of the main results of \textsc{Mainik-Mielke}:
it requires the \emph{closedness} of the stable set $\SSd$ and
a non-degeneracy of $\sfd$ in each sections $\SSd(t)$
\cite[Thm.~4.5]{Mainik-Mielke05}, two conditions which are always
satisfied in the simpler metric setting of Remark \ref{rem:metric}.
\begin{theorem}[Existence of Energetic solutions]
  Let us assume that 
  \begin{equation}
    \label{eq:18}
    \SSd\text{ is $\sigma$-closed in $[0,T]\times X$
      and separated by~$\sfd$}.
  \end{equation}
  Then for every $\bar u\in \SSd(0)$ there exists at least one
  energetic solution to the R.I.S.~$(X,\cE,\sfd)$
  satisfying $u(0)=\bar u$.
\end{theorem}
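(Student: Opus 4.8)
The plan is to follow the classical time-discretization argument of \cite{Mainik-Mielke05}. For a partition $\tau=\{0=t^0_\tau<t^1_\tau<\dots<t^N_\tau=T\}$ I would set $U^0_\tau:=\bar u$ and select inductively $U^n_\tau$ as a minimizer of the Incremental Minimization scheme \eqref{eq:167} with datum $U^{n-1}_\tau$ at time $t^n_\tau$; the minimum is attained because the functional $y\mapsto\cE(t^n_\tau,y)+\sfd(U^{n-1}_\tau,y)$ is $\sigma$-lower semicontinuous (recall $\cE$ is $\sigma$-l.s.c.\ by \mytag A1 and $\sfd(U^{n-1}_\tau,\cdot)$ is $\sigma$-l.s.c.\ by \eqref{eq:16}) and has $\sigma$-sequentially compact sublevels by \eqref{eq:86}. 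Two discrete facts then follow at once. Testing minimality at $U^n_\tau$ against an arbitrary $v\in X$ and using the triangle inequality for $\sfd$ gives the discrete stability $(t^n_\tau,U^n_\tau)\in\SSd$. Testing instead against $v=U^{n-1}_\tau$, using $\cE(t^n_\tau,x)-\cE(t^{n-1}_\tau,x)=\int_{t^{n-1}_\tau}^{t^n_\tau}\cP(s,x)\,\d s$ (valid by \eqref{A.22}), and summing over $n\le k$ gives the discrete energy inequality $\cE(t^k_\tau,U^k_\tau)+\sum_{n\le k}\sfd(U^{n-1}_\tau,U^n_\tau)\le\cE(0,\bar u)+\sum_{n\le k}\int_{t^{n-1}_\tau}^{t^n_\tau}\cP(s,U^{n-1}_\tau)\,\d s$.

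The a priori bounds and the compactness are where the real work lies. Inserting \eqref{A.21} into the discrete energy inequality and using the Gronwall bound \eqref{eq:gronwallestimate} gives a bound on $\cF(t^n_\tau,U^n_\tau)$ uniform in $\tau$ and $n$; hence $\overline U_\tau$ takes values in a fixed $\sigma$-sequentially compact closed set $K\subset X$ (a sublevel of $\cF_0$, via \eqref{eq:86}) and the nondecreasing functions $V_\tau(t):=\var(\overline U_\tau,[0,t])=\sum_{t^n_\tau\le t}\sfd(U^{n-1}_\tau,U^n_\tau)$ are uniformly bounded. By Helly's selection theorem, along a subsequence $V_\tau\to V$ pointwise with $V$ nondecreasing; fixing a countable dense $D\subset[0,T]$ that contains $0$, $T$ and the discontinuity points of $V$, a diagonal argument based on the sequential compactness of $K$ and the first countability of $\sigma$ extracts a further subsequence with $\overline U_\tau(t)\to u(t)$ in $\sigma$ for every $t\in D$. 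From \eqref{eq:27} and the lower semicontinuity of $\sfd$ one gets $\sfd(u(s),u(t))\le V(t)-V(s)$ for $s\le t$ in $D$, so $u|_D\in\BV\sfd(D;X)$ with $\Jump{}{V_u}\subset D$; and since $\overline U_\tau(t)=U^n_\tau$ with $(t^n_\tau,U^n_\tau)\in\SSd$ and $t^n_\tau\to t$, the $\sigma$-closedness of $\SSd$ forces $u(t)\in\SSd(t)$ for $t\in D$. Thus $u|_D$ takes values, off $\Jump{}{V_u}$, in $\SSd\cap([0,T]\times K)$, which is $\sigmap$-sequentially compact (by \mytag A1 and the closedness of $\SSd$) and separated by $\sfdp$ (because $\SSd$ is separated by $\sfd$): Lemma \ref{le:nondeg-regulated} then provides a unique extension $u\in\BV{\sigma,\sfd}([0,T];X)$ with $\var(u,[0,T])<\infty$. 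Producing this genuine $\BV{\sigma,\sfd}$ limit curve is, I expect, the main obstacle, and it is exactly here that \emph{both} conditions in \eqref{eq:18} enter; in the metric setting of Remark \ref{rem:metric} this step is trivial.

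With the limit curve in hand, the rest is routine. Stability: at every $t\notin\Jump{}V$ the curve $u$ is $\sigma$-continuous, so $u(t)=\lim_{s\to t,\ s\in D}u(s)$ with $(s,u(s))\in\SSd$, whence $(t,u(t))\in\SSd$ by closedness; the countably many remaining $t$ are dealt with by one more extraction and the same closedness, so \eqref{enstability} holds on all of $[0,T]$. Energy--Dissipation inequality: I would pass to the limit in the discrete energy inequality using the lower semicontinuity of $v\mapsto\var(v,[0,t])$ for pointwise convergence (standard, from \eqref{eq:16}), the $\sigma$-lower semicontinuity of $\cE$, and the convergence of the power terms. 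For the latter one first checks, by the same separation/closedness argument, that $\overline U_\tau$ and its piecewise-constant interpolants converge to $u$ at every continuity point of $V$, hence $\d s$-a.e.; then the uniform bound $|\cP|\le C$ on the relevant sublevels together with the upper semicontinuity of $\cP$ there (\mytag A2) let the power terms pass to the limit via reverse Fatou. Altogether this yields $\cE(t,u(t))+\var(u,[0,t])\le\cE(0,\bar u)+\int_0^t\cP(s,u(s))\,\d s$.

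To upgrade this to the full energy balance \eqref{ensolbalance} I would invoke the equivalence recalled in the introduction (between \eqref{eq:166bis} and \eqref{ensolbalance} for stable curves); its nontrivial direction is the lower energy estimate, obtained as follows: given $0=r_0<\dots<r_m=t$, test \eqref{enstability} at $r_{j-1}$ against $u(r_j)$, rewrite $\cE(r_{j-1},u(r_j))=\cE(r_j,u(r_j))-\int_{r_{j-1}}^{r_j}\cP(s,u(r_j))\,\d s$, telescope to obtain $\cE(0,u(0))-\cE(t,u(t))\le\var(u,[0,t])-\int_0^t\cP(s,u(\pi(s)))\,\d s$, where $\pi(s)$ is the right endpoint of the subinterval containing $s$, and then refine the partition: $u(\pi(s))\to u(s)$ for every $s\notin\Jump{}u$, so reverse Fatou (again via $|\cP|\le C$ and the upper semicontinuity of $\cP$) gives $\cE(0,u(0))+\int_0^t\cP(s,u(s))\,\d s\le\cE(t,u(t))+\var(u,[0,t])$. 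Combined with the Energy--Dissipation inequality this produces \eqref{ensolbalance}, first at continuity points of $V$ and then at every $t\in[0,T]$ by monotonicity and semicontinuity of the terms involved; since $u(0)=\bar u$, the curve $u$ is the required energetic solution.
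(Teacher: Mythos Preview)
The paper does not prove this theorem: it is quoted as a result of \textsc{Mainik--Mielke} \cite[Thm.~4.5]{Mainik-Mielke05}, and no argument is supplied beyond the reference. Your sketch follows exactly the classical time-discretization route of that paper, and the compactness step---where you invoke Lemma~\ref{le:nondeg-regulated} together with both parts of \eqref{eq:18} to produce a genuine $\BV{\sigma,\sfd}$ limit curve---is handled correctly. The upper Energy--Dissipation inequality also goes through as you describe, since upper semicontinuity of $\cP$ and reverse Fatou give $\limsup_k\int\cP(s,\overline U_{\tau(k)}(s))\,\d s\le\int\cP(s,u(s))\,\d s$, which is the direction you need there.

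There is, however, a genuine gap in the final paragraph. From the telescoped stability inequality you correctly obtain
\[
\cE(0,u(0))+\int_0^t\cP(s,u(\pi(s)))\,\d s\le \cE(t,u(t))+\var(u,[0,t])
\]
for every partition, with $\pi(s)$ the right endpoint. You then claim that refining the partition and applying ``reverse Fatou via the upper semicontinuity of $\cP$'' replaces $u(\pi(s))$ by $u(s)$. The direction is wrong: upper semicontinuity only gives $\limsup\cP(s,u(\pi(s)))\le\cP(s,u(s))$, hence $\limsup\int\cP(s,u(\pi(s)))\,\d s\le\int\cP(s,u(s))\,\d s$; but to conclude you would need the \emph{reverse} inequality $\int\cP(s,u(s))\,\d s\le\limsup\int\cP(s,u(\pi(s)))\,\d s$, i.e.\ lower semicontinuity of $\cP$, which \mytag A2 does not provide. (In the original Mainik--Mielke setting $\cP=\partial_t\cE$ is typically assumed continuous, so this issue does not arise; under the paper's weaker hypotheses it does.)

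The fix is to replace the Riemann-sum refinement by a differential argument. The paper does precisely this in Theorem~\ref{thm:crinequality} via Lemma~\ref{le:dual}: one checks that $t\mapsto\int_0^t\cP(s,u(s))\,\d s-\cE(t,u(t\leftl))-\var(u,[0,t])-\eps t$ is nonincreasing by verifying a local differential inequality at each $t$. The crucial estimate there is
\[
\limsup_{s\uparrow t}\frac{1}{t-s}\Big(\int_s^t\cP(r,u(r))\,\d r-\cE(t,u(t\leftl))+\cE(s,u(t\leftl))\Big)\le 0,
\]
which uses upper semicontinuity of $\cP$ in the \emph{correct} direction (to bound the average of $\cP(r,u(r))$ from above by $\cP(t,u(t\leftl))$) together with \eqref{A.22} (to bound the difference quotient of $\cE$ from below by the same quantity). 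Specializing that proof to $\delta\equiv 0$, so that $\sfc=\sfd$ and $\varC=\var$, and using the $\sfd$-stability \eqref{enstability} in place of \mytag B3, gives exactly the lower energy inequality you need. With this correction your proof is complete.
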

One of the main features of 
the definition above concerns the jump behaviour of a solution:
assuming \eqref{eq:18}, every energetic solution $u$ is
$(\sigma,\sfd)$-regulated and 
satisfies 
the following \textit{jump conditions} at every jump point $t\in \Ju$:
\begin{equation} \label{eq:Jener}
  \begin{aligned}
\mathcal{E}(t,u(t\leftl ))-\mathcal{E}(t,u(t))&=\sfd(u(t\leftl ),u(t)), \\
\mathcal{E}(t,u(t))-\mathcal{E}(t,u(t\rightl ))&=\sfd(u(t),u(t\rightl )), \\
\mathcal{E}(t,u(t\leftl ))-\mathcal{E}(t,u(t\rightl ))&=\sfd(u(t\leftl ),u(t\rightl )).
\end{aligned} \end{equation}
\paragraph{The time incremental minimization scheme.} 
\addcontentsline{toc}{subsubsection}{\it The time incremental minimization scheme.}
\relax 
\label{page:incremental}
The most powerful method to construct energetic solutions to the
R.I.S.~$(X,\cE,\sfd)$ and to
prove their existence 
is provided by the 
time incremental minimization scheme.

We consider ordered
finite partitions $\tau\subset [0,T]$ 
whose points will be denoted by $t^n_\tau$ for integers $n$  between
$0$ and $N=N(\tau)$,
$0=t^0_\tau<t^1_\tau<\dots{}<t^{N-1}_\tau<t^N_\tau=T$, 
and we set $|\tau|:=\max_n
t^n_\tau-t^{n-1}_\tau$.
In
order to find good approximations $U^n_\tau$ of $u(t^n_\tau)$ we 
choose an initial value $U^0_\tau\approx u_0$ and solve the time incremental minimization scheme
\begin{equation}
\label{eq:IP0}
U^n_\tau\in \argmin_{U\in X} \Big\{\sfd(U^{n-1}_\tau,U)+\mathcal{E}(t^n_\tau,U)\Big\}. \tag{$\mathrm{IM_\sfd}$}
\end{equation}
Setting 
\begin{equation} \label{costinterpolant}
\overline{U}\kern-1pt_\tau(t):=U^n_\tau \quad \text{if } t\in(t^{n-1}_\tau,t^n_\tau],
\end{equation}
it is possible to find a sequence of partitions $\tau_k$ with $|\tau_k|\downarrow 0$ such that
\[
\exists \lim_{k\rightarrow + \infty} \overline{U}\kern-1pt_{\tau_k}(t):=u(t) \quad \text{for every } t\in[0,T]
\]
and $u$ is an energetic solution starting from $u_0$.

\newcommand{\taue}{\tau}

\subsection{Viscosity approximation and Balanced Viscosity (BV)
  solutions}
\label{subsec:BVsolutions}
A different approach to solve \textit{rate-independent} problems is to
use a viscous approximation of the dissipation distance. 
\relax 
For the sake of simplicity, we present this approach in the metric
setting of Remark \ref{rem:metric} with uniform partitions (i.e.~$t^n_\tau=n|\tau|$),
starting from the viscous regularization of the incremental
minimization scheme \eqref{eq:IP0} by a quadratic 
perturbation generated by the same distance $\sfd$, i.e.~a quadratic term
with coefficient $\mu:(0,\infty)\to (0,\infty) $ 
\begin{equation} \label{eq:delta}
\delta_\tau(u,v):=\frac 12 \mu(|\tau|) \sfd^2(u,v),
\quad u,v\in X;\quad
\lim_{r\down0}\mu(r)=+\infty,\quad
\lim_{r\down0} r\mu(r)=0;
\end{equation}
recall that \eqref{eq:delta} corresponds to \eqref{eq:167e} with 
$\mu(\tau)=\eps(\tau)/\tau$ independent of $n$. 

The viscous incremental problem is therefore to find $U^1_{\taue},\dots{},U^N_{\taue}$ such that
\begin{equation} \label{viscincproblem}
U^n_{\taue}\in\argmin_{U\in
  X}\left\{\vphantom{\Big|}\sfd(U^{n-1}_{\taue},U)+\delta_\tau(U^{n-1}_{\taue},U)+\mathcal{E}(t^n,U)\right\}. \tag{$\mathrm{IM_{\sfd,\delta_\tau}}$} 
\end{equation}
 Setting as in \eqref{costinterpolant}
$
\overline{U}\kern-1pt_{\taue}(t):=U^n_{\taue} \quad \text{if } t\in(t^{n-1}_\tau,t^n_\tau],
$
we can study the limit of the discrete solutions when $|\tau|\downarrow 0$.

\relax 
The scaling of the factor $\mu(\tau)$ in \eqref{eq:delta} can be justified by
observing that when $X=\R^d$, $\sfd(u,v):=|u-v|$ and $\cE$ is a
$\rmC^1$ function,
\eqref{viscincproblem} naturally arises as the implicit discretization of the
differential inclusion
\begin{equation}
  \label{eq:5}
  \mathop{\rm sign}(u'(t))+\eps u'(t)+\rmD_u \cE(t,u(t))\ni
  0,\quad
   \mathop{\rm sign}(v):=
   \begin{cases}
     v/|v|&\text{if }v\neq0,\\
     \{w\in \R^d:|w|\le 1\}&\text{if }v=0,
   \end{cases}
\end{equation}
with the choice $\mu(|\tau|):=\eps/|\tau|$. Therefore, 
if $\lim_{|\tau|\down0}\mu(|\tau|)=+\infty$ one can
heuristically expect 
that
the limits of discrete solutions $\overline U\kern-1pt_\tau$ to the 
incremental minimization problems \ref{viscincproblem} 
coincide 
with the limit trajectories of \eqref{eq:5} as $\eps\downarrow0$. 

Under quite general assumptions on $\cE$ it is possible to prove that
limit solutions $u$ of suitable subsequences of
$\overline{U}\kern-1pt_{\taue}$ satisfy 
a local stability condition, which replaces the global one
\eqref{enstability}, and a modified energy balance
defined in terms of an augmented total variation 
$\mVar{\sfd,\sfv}(u,[0,T])\ge 
\var(u,[0,T])$ as in Definition \ref{def:augmented}.
\relax
Both involve the \emph{metric slope} of $\cE$, defined as
\begin{equation}
  \label{eq:metric-slope}
  |\rmD\cE|(t,u):=\limsup_{v\to u}\frac{(\cE(t,u)-\cE(t,v))_+}{\sfd(u,v)}.
\end{equation}
%
$\mVar{\sfd,\sfv}$ is associated with the 
\textit{minimal
  transition cost} between $u_0$ and $u_1\in X$ at the time $t$:
\begin{align} \begin{split}
\sfv
(t,u_0,u_1):=\inf 
\bigg\{&\int_{r_0}^{r_1}|\dot{\theta}|(r)
\left(|\mathrm{D}\mathcal{E}|(t, \theta(r))\vee
  1\right)\,\mathrm{d}r: \\ & \theta\in
\mathrm{AC}([r_0,r_1];X,\sfd),\mbox{ }
\theta(r_0)=u_0,\mbox{ }\theta(r_1)=u_1 \bigg\},
\end{split} \end{align}
where $|\dot\theta|$ denotes the \emph{metric derivative} of $\theta$, 
see \cite[Sect.~1]{Ambrosio-Gigli-Savare08} and
\cite[Sect.~2.1]{Rossi-Mielke-Savare08} 
in the asymmetric case;
clearly 
$\sfv
(t,u_0,u_1)\ge\sfd(u_0,u_1)$.
Based on \eqref{eq:varP}, we can now specify the concept of \textit{Balanced Viscosity $\mathrm{(BV)}$ solution} to the rate-independent system $(X,\cE,\sfd)$.
\begin{definition}[Balanced Viscosity (BV) solutions] A curve $u\in \BV\sfd([0,T];X)$
  is a 
  {\upshape BV} solution of the rate-independent system
  $(X,\mathcal{E},\sfd)$
  with the viscous dissipation \eqref{eq:delta} if it satisfies the local stability 
\begin{equation} \label{sloc}
|\mathrm{D}\mathcal{E}|(t,u(t))\leq 1 \quad \text{for every}\quad
t\in[0,T]\setminus \Ju, \tag{$\mathrm{S_{\sfd,loc}}$}
\end{equation}
and the energy balance
\begin{equation}
\label{eq:EBc}
\mathcal{E}(t,u(t))+\mVar{\sfd,\sfv}(u,[0,t])=\mathcal{E}(0,u(0))+\int_0^t\cP(s,u(s))\rmd
s \quad \text{for all }t\in[0,T]. \tag{$\mathrm{E}_{\sfd,\sfv} 
$}
\end{equation}  
\end{definition}
The 
viscous total variation induced by $\sfv$ in 
the energy balance \eqref{eq:EBc}
(instead of the canonical one induced by the distance $\sfd$)
compensates for the lack of information in the local stability
condition \eqref{sloc}. In particular,
it is possible to prove that a curve $u\in \BV\sfd([0,T];X)$ is a
BV solution of the rate-independent system $(X,\cE,\sfd)$
if and only if it satisfies the local stability condition
$\eqref{sloc}$, the localized energy dissipation inequality
\[
\cE(t,u(t))+\var(u,[s,t])\leq \cE(s,u(s))+\int_s^t\cP(r,u(r))\,\d
r\quad
\text{for every $0\le s\le t\le T$}
\] 
and the following jump conditions at each point $t\in \Ju$ (see \cite[Theorem 3.13]{Mielke-Rossi-Savare13}):
\begin{equation} \label{eq:Jbv}
\begin{aligned}
\mathcal{E}(t,u(t\leftl ))-\mathcal{E}(t,u(t))&=
\sfv(t,u(t\leftl ),u(t)), \\
\mathcal{E}(t,u(t))-\mathcal{E}(t,u(t\rightl ))&=
\sfv(t,u(t),u(t\rightl )), \\
\mathcal{E}(t,u(t\leftl ))-\mathcal{E}(t,u(t\rightl ))&=
\sfv(t,u(t\leftl ),u(t\rightl )).
\end{aligned} 
\end{equation}
%
%

\section{Visco-Energetic (VE) solutions}
\label{sec:VE}
As we have seen in section \ref{sec:prelresults}, 
the choice $\mu(\tau)\equiv0$ in the incremental minimization problem
\eqref{viscincproblem}
\relax corresponds to \eqref{eq:IP0} and
leads
to the notion of \textit{Energetic solutions}, while the case
\relax when $\mu(\tau)\up+\infty$ as $\tau\down0$ 
corresponds to  \textit{Balanced Viscosity solutions}.
In the present paper, we want to study the asymptotic behaviour of the
incremental minimization scheme in the case 
when 
$\mu(\tau)\equiv \mu$ is a constant, 
and to find an appropriate variational characterization for the
corresponding limit trajectories. 
Arguing as in \cite{Rossi-Savare17-preprint} 
it would not be too difficult to consider also the case when
$\mu(\tau)\to\mu\in (0,\infty)$, obtaining the same class of limit solutions
as in the constant case $\mu(\tau)\equiv \mu$; however, instead of focusing on a
quadratic viscosity with a $\tau$-dependent coefficient, we 
prefer to cover a more general class of viscous corrections.

\subsection{Viscous correction of the incremental minimization scheme}
\label{subsec:viscous-correction}

In order to cover a wide spectrum of possible applications with
the greatest flexibility, we are considering here general viscous
corrections modeled by a lower semicontinuous map
\begin{equation}
\text{$\delta: X\times X\rightarrow [0,+\infty]$\quad
  with $\delta(x,x)=0$ for every $x\in X$},\label{eq:152}
\end{equation}
and the corresponding modified dissipation
\begin{equation}
\mathsf{D}(x,y):=\sfd(x,y)+\delta(x,y)\qquad \text{for all }x,y\in
X.
\label{eq:6}
\end{equation}
As in the previous section, $\sfd$ and $\cE$ will be 
a dissipation distance and a time-dependent energy functional
satisfying Assumptions \mytag A{} in the metric-topological setting
introduced in Section \ref{subsec:metric-topological}.
Our starting point 
is the following modified variational scheme.
\begin{definition}[The viscous incremental minimization scheme.]
\label{def:vims}
Starting from $U^0_{\tau}\in X$, find recursively $U^1_\tau,
  \dots{},U^N_\tau$ such that
$U^n_\tau$ minimizes
\begin{equation} \label{ims}
U\mapsto 
\sfD(U^{n-1}_\tau,U)+\mathcal{E}(t_n,U)
=\sfd(U^{n-1}_\tau,U)+\delta(U^{n-1}_\tau,U)+\mathcal{E}(t^n,U)
. \tag{$\mathrm{IM_{\sfd,\delta}}$}
\end{equation}
\end{definition}
Since $\sfd$ and $\delta$ are lower semicontinuous, 
the existence of a minimizer for the problem \eqref{ims} follows from
condition \mytag A1.
%
%
\relax
Of course, not every continuous function $\delta$ 
will provide an admissible viscous correction;
the trivial example $\delta=\sfd$ (which doubles the dissipation
distance)
shows that we should impose some 
sufficiently strong vanishing condition of $\delta(x,y)$ 
in the neighborhoods of points where $\sfd(x,y)=0$ in $X\times X$. 

A quite general admissibility criterion is related to the notion of
\emph{global $\sfD$-stability}, which can be easily imagined from the corresponding
property \eqref{enstability} of the energetic case.
We will also introduce the weaker notion of quasi-stability, which will
turn out to be very useful later on.
\begin{definition}[Quasi $\sfD$-stability and $\sfD$-stable set]
  \label{def:stable-set}
  Let $Q\ge0$; we say that $(t,x)\in [0,T]\times X$ is a $(\sfD,Q)$-quasi-stable point
  if it satisfies
  \begin{equation} \label{eq:quasistable}
    \mathcal{E}(t,x)\leq \mathcal{E}(t,y)+\sfD(x,y)+Q\qquad
    \text{for every }y\in X.
  \end{equation}
  In the case $Q=0$, i.e.~when
  \begin{equation} \label{stablepoints}
    \mathcal{E}(t,x)\leq \mathcal{E}(t,y)+\sfD(x,y)\qquad
    \text{for every }y\in X,
  \end{equation}
  we say that $(t,x)$ is $\sfD$-stable. We call $\SSD$ the stable set
  (i.e.~the collection of all the $\sfD$-stable points) and 
  $\SSD(t)$ its section at time $t$.
\end{definition}
As in the case of energetic solutions, we expect that the
$\sfD$-stability condition 
will play a
crucial role. 
\relax 
A first important point concerns the admissibility criterion for the
viscous correction $\delta$: in addition to 
basic compatibility properties between $\delta$ and $\sfd$,
we will essentially require that $\sfD$-stable points
satisfy a sort of local $\sfd$-stability. To better understand the next 
condition,
let us first notice that \eqref{stablepoints} can be equivalently
formulated as
\begin{equation}
  \label{eq:7}
  \sup_{y\neq x} \frac{\mathcal{E}(t,x)-
    \mathcal{E}(t,y)}{\sfD(x,y)}\le 1.
\end{equation}
\begin{mainassumption}{$\la$B$\ra$}[Admissible viscous corrections]
  \label{B}
  An admissible viscous correction $\delta:X\times X\to [0,+\infty]$ for the
  R.I.S.~$(X,\cE,\sfd)$ satisfies the following conditions:
  \begin{enumerate}[\upshape $\la${B}.1$\ra$] 
    \item 
      \label{B1}
      \emph{\bfseries $\sfd$-compatibility.}
      For every $x,y,z\in X$
      \begin{equation}
    \label{D.1}
    \sfd(x,y)=0\quad\Rightarrow\quad \delta(z,y)\le \delta(z,x)\text{ \
      and \ }
    \delta(x,z)\le \delta(y,z).
  \end{equation}
  \item 
    \label{B2}
    \emph{\bfseries Left $\sfd$-continuity.}
    For every sequence
    $x_n$ and every $x\in X$ 
  we have
  \begin{equation}
    \label{D.2}
    \sup_n\cF_0(x_n)<\infty,\quad
    x_n\sigmato x,\quad \sfd(x_n,x)\to 0\quad\Rightarrow\quad
    \lim_{n\to\infty}\delta(x_n,x)=0.
  \end{equation}
\item 
  \label{B3}
  \emph{\bfseries $\sfD$-stability yields local $\sfd$-stability.}
  For every $(t,x)\in \SSD$, $M>1$ there exists $\eta>0$ and a
  neighborhood $U$ of $x$ in $X$ such that 
\begin{equation} \label{D.3}
  \cE(s,y)\le \cE(s,x)+M\sfd(v,x)\quad
  \text{for every }(s,y)\in \SSD,\ 
  s\in (t-\eta,t],\ y\in U,\ \sfd(y,x)\le \eta.
\end{equation} 
Equivalently, 
\begin{equation}
  \label{D.3'}
  \limsup_{(s,y)\to(t,x),\ \sfd(y,x)\to0
    \atop(s,y)\in \SSD, \
    s\le t}
  \frac{\cE(s,y)-\cE(s,x)}{\sfd(y,x)}\le 1.
\end{equation}
\end{enumerate}
\end{mainassumption}
\mytag B1
 is a minimal compatibility condition between $\sfd$ and
$\delta$. Notice that \mytag B1 is trivially satisfied by any
monotonically increasing function of $\sfd$ or if $\sfd$
separates the points of $X$, e.g.~in the simpler metric
setting of Remark \ref{rem:metric}. 

Let us now see an important example of admissible viscous
corrections 
in which assumption \ref{B} is satisfied.
A further example will be discussed in Section \ref{subsec:genconvexity}.
\begin{example} 
\label{ex:1}
\upshape
If $\delta: X\times X\to [0,+\infty)$ satisfies 
\begin{equation}
  \relax \lim_{y\rightarrow x \atop
    \sfd(y,x)\to0 
  }\frac{\delta(y,x)}{\sfd(y,x)}=0 \quad \text{for every $x\in
    \SSD(t),\quad t\in [0,T]$},\label{eq:12}
\end{equation}
then it is immediate to check that $\sfD$ satisfies 
\mytag B3.
In particular, 
\relax any function of the form
\begin{equation}
  \delta(x,y)= h(\sfd(x,y))\quad\text{for a nondecreasing
    $h\in \rmC([0,\infty))$ with \quad
$\lim_{r\down0}h(r)/r=0$}
\label{eq:10}
\end{equation}
provides an admissible correction satisfying \mytag B{}.
A typical choice is the quadratic correction
\begin{equation}
  \label{eq:9}
  \delta(x,y):=\frac{\mu}{2}\sfd^2(x,y),\quad \mu>0.
\end{equation}
\end{example}

\subsection{Transition costs and augmented 
  total variation.}
\label{subsec:residual}
\relax 
Let us first notice that the quasi-stability condition
\eqref{eq:quasistable}
(and therefore the stability condition \eqref{stablepoints})
can be equivalently characterized through 
a sort of \emph{residual function} that we introduce in the definition below.
\begin{definition} \label{def:slope} For every $t\in[0,T]$ and $x\in
  X$ 
  the residual stability function is defined by
  \begin{align}
    \label{eq:108}
    \Gs(t,x):&=\sup_{y\in X}
               \{\mathcal{E}(t,x)-\mathcal{E}(t,y)-\sfD(x,y)\} \\
    \label{eq:109}
&=\mathcal{E}(t,x)-\inf_{y\in X}\{\mathcal{E}(t,y)+\sfD(x,y)\};
\end{align}
\end{definition}
$\Gs(t,x)$ provides the minimal constant $Q\ge0$ such that $(t,x)$
is $(\sfD,Q)$-quasi-stable:
\begin{equation}
  \label{eq:131}
  \Gs(t,x):=\min\Big\{Q\ge0: 
  \cE(t,x)\le \cE(t,y)+\sfD(x,y)+Q\quad\text{for every }y\in X\Big\}.
\end{equation}
By choosing $y:=x$ in \eqref{eq:108}
 we can immediately check that the residual function $\Gs$ is non-negative, i.e.
\[
\Gs(t,x)\geq 0 \quad \text{for all $t\in[0,T]$, $x\in X$}.
\]
\relax $\Gs$ provides a measure of the 
failure of the stability condition \eqref{stablepoints}, since 
for every $x\in X$, $t\in[0,T]$ we get 
\begin{equation}
  \label{eq:62}
  \cE(t,x)\le \cE(t,y)+\sfD(x,y)+\Gs(t,x)
\end{equation}
and
\begin{equation}
\Gs(t,x)=0\quad \Longleftrightarrow \quad \relax x\in
\SSD(t).\label{eq:63}
\end{equation}
Notice that when 
$\sfD$ is $\sigma$-continuous then
\begin{gather}
\text{$\Gs$ is $\sigma_\R$-lower semicontinuous}
\intertext{or, equivalently,}
\text{for every $Q\ge0$ the $(\sfD,Q)$-quasi-stable set is $\sigma$-closed.}\label{eq:133}
\end{gather}
We will see that this property will
play a crucial role in
our general setting and corresponds to the closedness property
of the
stable set \eqref{eq:18} in the energetic framework.

As in the case of Balanced Viscosity solutions, 
we expect that the jumps of a limit trajectory of the viscous
incremental minimization scheme \ref{def:vims} 
can be characterized by a class of curves minimizing a suitable
transition cost. 

The main novelty here is represented by the fact that such curves are
parametrized by continuous maps $\vartheta:E\to X$, defined on a
compact subset $E$ of $\R$, which in general may have a more
complicated structure than an interval.
We will also require that $\vartheta$ satisfies a natural continuity
condition with respect to $\sfd$
\begin{equation}
  \label{eq:46}
  \forall\,\eps>0\ \exists\,\eta>0:\quad
  \sfd(\vartheta(s_0),\vartheta(s_1))\le \eps\quad
  \text{for every }s_0,s_1\in E,\ s_0\le s_1\le s_0+\eta.
\end{equation}
The class of curves satisfying
\eqref{eq:46}
will be denoted by $\rmC_\sfd(E,X)$ and we will set
$\rmC_{\sigma,\sfd}(E,X):=\rmC(E,X)\cap \rmC_\sfd(E,X).$
In order to get a precise description of this (pseudo-) total
variation, we have to introduce a dissipation cost. 

Hereafter for every subset $E\subset \R$ we will call $E^- :=\inf E$,
$E^+ :=\sup E$; whenever $E$ is compact, we will denote by $\Holes(E)$ the
(at most) countable collection of the connected components of the open
set $[E^- ,E^+ ]\setminus E$: each element of $\Holes(E)$ (the ``holes''
of $E$) is therefore an
open interval of $\R$. We also denote by $\Pf(E)$ the collection
of all the finite subsets of $E$.
\begin{definition}[Transition cost]
  \label{def:transition-cost}
  Let $E\subset \R$ compact and $\vartheta\in \rmC_{\sigma,\sfd}(E;X)$. For every
  $t\in[0,T]$ we define the \emph{transition cost function} $\Cf(t,\vartheta,E)$ by
\begin{equation}
  \label{eq:33}
  \Cf(t,\vartheta,E):=\var(\vartheta,E)+\Cd(\vartheta,E)+\sum_{s\in E\setminus \{E^+ \}}\Gs(t,\vartheta(s))
\end{equation}
where the first term is defined as the usual total variation
\eqref{eq:1}, 
the second one 
is
\[
\Cd(\vartheta,E):=\sum_{I\in \Holes(E)}\delta(\vartheta(I^- ),\vartheta(I^+ )),
\]
and
the third term is
\[
\sum_{s\in
  E\setminus\{E^+ \}}\Gs(t,\vartheta(s)):=\sup\left\{\sum_{s\in
    P}\Gs(t,\vartheta(s)): P\in \Pf(E\setminus \{E^+ \})\right\},
\]
where the sum is defined as $0$ if $E\setminus \{E^+ \}=\emptyset$.
\end{definition}
We adopt the convention $\Cf(t,\vartheta,\emptyset):=0$. 
It is not difficult to check that the transition cost
$\Cf(t,\vartheta,E)$ is additive with respect to $E$:
\begin{equation}
  \label{eq:195}
  \Cf(t,\vartheta,E\cap[a,c])=
  \Cf(t,\vartheta,E\cap[a,b])+
  \Cf(t,\vartheta,E\cap[b,c])\quad \text{for every }a<b<c.
\end{equation}
It will be proved (see
Proposition \ref{prop:crineqonjumps})
that for every $t\in[0,T]$ and for every $\vartheta\in \rmC(E;X)$ 
\begin{equation}\label{eq:crinqualitytheta}
  \cE(t,\vartheta(E^+ ))+\Cf(t,\vartheta,E)\geq \cE(t,\vartheta(E^- )).
\end{equation}
The dissipation cost $\Fd(t,u_0,u_1)$ induced by the function $\Cf$ is
defined 
by minimizing $\Cf(t,\vartheta,E)$ among all the transitions $\vartheta$
connecting $u_0$ to $u_1$:
\begin{definition}[Jump dissipation cost and augmented total variation] \label{dissipationcost} 
Let $t\in[0,T]$ be fixed and let us consider $u_0, u_1\in X$. We set 
\begin{equation} \label{eq:dissipationcost}
\Fd(t, u_0,u_1):=\inf\left\{\Cf(t, \vartheta,E): E\Subset \R,\ 
\vartheta\in \rmC_{\sigma,\sfd}(E; X),\  \vartheta(E^- )=u_0,\ \vartheta(E^+ )=u_1\right\},
\end{equation}
with the incremental dissipation cost $\Delta_\sfc(t,
u_0,u_1):=\Fd(t,u_0,u_1)-\sfd(u_0,u_1)$.
The corresponding augmented total variation $\mVar{\sfd,\sfc}$ is then
defined according to Definition \ref{def:augmented}.
\end{definition}
Since $\Gs$ and $\Cd$ are positive, as in the case of BV solutions, it is immediate to check that 
\[
\Fd(t,u_0,u_1)\geq \sfd(u_0,u_1)\quad\text{for every $u_0, u_1\in X$}.
\]
Moreover, from \eqref{eq:crinqualitytheta}, it easily follows that
\begin{equation}
\Fd(t,u_0,u_1)\geq \cE(t,u_0)-\cE(t,u_1).
\end{equation}
As in the case of $\varP$ for Balanced Viscosity solutions, $\varC$ is
not a \textit{standard} total variation functional: for instance, it
is not induced by any distance on $X$. Nevertheless, $\varC$ enjoys
the nice additivity property \eqref{eq:21}.

\subsection{Visco-Energetic solutions}
\label{subsec:VE}
We can now give our precise definition of \textit{Visco-Energetic
  solution} of the rate-independent system
$(X,\mathcal{E},\sfd,\delta)$.
We will always assume that the energy functional satisfy the standard assumptions
\mytag A{}
and $\delta$ is an admissible viscous correction 
(i.e.~\mytag B{} hold).
\begin{definition}[Visco-Energetic (VE) solutions] We say that
  a $(\sigma,\sfd)$-regulated 
  curve
  $u:[0,T]\to X$ 
  is a 
\emph{Visco-Energetic (VE) solution} of the rate-independent system $(X,\mathcal{E},\sfd,\delta)$ if it satisfies the stability condition
\begin{equation}\label{stability}
  u(t)\in \SSD(t)\quad\text{for every }t\in [0,T]\setminus \Ju,
  \tag{S$_\sfD$}
\end{equation}
and the energetic balance
\begin{equation} \label{energybalance}
\mathcal{E}(t,u(t))+\varC(u,[0,t])=\mathcal{E}(0,u(0))+\int_0^t\cP(s,u(s))\rmd s \tag{$\mathrm{E_{\sfd,\sfc}}$}
\end{equation}
for every $t\in[0,T]$, where $\sfc$ is the jump dissipation cost \eqref{eq:dissipationcost}.
\end{definition}

As in the case of energetic and BV solutions, it is not difficult to see that the energy balance \eqref{energybalance} holds on any subinterval of $[t_0,t_1]$ of $[0,T]$:
\[
\mathcal{E}(t_1,u(t_1))+\varC(u,[t_0,t_1])=\mathcal{E}(t_0,u(t_0))+\int_{t_0}^{t_1}\cP(s,u(s))\rmd s.
\]
Indeed, this follows from the additivity property \eqref{eq:21} for
the augmented total variation $\mVar{\sfd,\sfc}$. Moreover, if a
curve  $u\in \BV{\sigma,\sfd}([0,T];X)$ satisfies the stability condition \eqref{stability}, then a \textit{chain-rule} inequality holds:
\begin{equation} \label{eq:crinequality}
\mathcal{E}(t_1,u(t_1))+
\varC(u,[t_0,t_1])\geq\mathcal{E}(t_0,u(t_0))+\int_{t_0}^{t_1}\cP(s,u(s))\sfd
s. 
\end{equation}

As a direct consequence, we have a characterization of VE solutions in
terms of a single, global in time, energy-dissipation inequality or
of a $\sfd$-energy-dissipation inequality combined with
a precise description of the jump behaviour.
The proof can be easily adapted from \cite[Prop.~4.2, Thm.~4.3]{Mielke-Rossi-Savare12}.
\begin{proposition}[Sufficient criteria for VE solutions]
  \label{prop:leqinequality} Let 
  $u\in \BV{\sigma,\sfd}([0,T];X)$ be a curve satisfying the
  stability condition \eqref{stability}.
  Then $u$ is a \VE\ solution of the
rate-independent system 
$(X,\mathcal{E},\sfd,\delta)$ if and only if it satisfies 
one of the following equivalent characterizations:
%
\begin{enumerate}[i)]
\item 
$u$ satisfies the $(\sfd,\sfc)$-energy-dissipation inequality
\begin{equation} \label{leqinequality}
    \mathcal{E}(T,u(T))+\varC(u,[0,T])\leq\mathcal{E}(0,u(0))+\int_0^T\cP(s,u(s))\rmd
    s. 
  \end{equation}
\item $u$ satisfies the $\sfd$-energy-dissipation inequality
  \begin{equation}
    \cE(t,u(t))+\var(u,[s,t])\leq \cE(s,u(s))+\int_s^t\cP(r,u(r))\rmd
    r\quad\text{for all $s\le t\in[0,T]$}\label{eq:110}
\end{equation}
and the following jump conditions at each point $t\in \Ju$
\begin{align}\label{Jve}
\begin{split}
\mathcal{E}(t,u(t\leftl ))-\mathcal{E}(t,u(t))=\Fd(t,u(t\leftl ),u(t)), \\
\mathcal{E}(t,u(t))-\mathcal{E}(t,u(t\rightl ))=\Fd(t,u(t),u(t\rightl )), \\
\mathcal{E}(t,u(t\leftl ))- \mathcal{E}(t,u(t\rightl ))=\Fd(t,u(t\leftl ),u(t\rightl )).
\end{split} \end{align}
\end{enumerate}
\end{proposition}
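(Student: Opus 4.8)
The plan is to follow the scheme of \cite[Prop.~4.2, Thm.~4.3]{Mielke-Rossi-Savare12}: the chain--rule inequality \eqref{eq:crinequality} drives the two ``if'' directions, while the ``only if'' part is obtained by localizing the energy balance \eqref{energybalance} around each jump point. First, a \VE\ solution trivially satisfies \eqref{leqinequality}, which is just \eqref{energybalance} at $t=T$. Conversely, assume $u$ satisfies \eqref{stability} and \eqref{leqinequality}: by \eqref{eq:crinequality} together with the additivity \eqref{eq:21} of $\varC$, the function $t\mapsto \mathcal J(t):=\cE(t,u(t))+\varC(u,[0,t])-\int_0^t\cP(r,u(r))\,\d r$ is non-decreasing on $[0,T]$, and since \eqref{leqinequality} gives $\mathcal J(T)\le\mathcal J(0)$ it must be constant, i.e.\ \eqref{energybalance} holds for every $t$; hence $u$ is a \VE\ solution. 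The same monotonicity argument will also close the implication $ii)\Rightarrow$ \VE, once \eqref{leqinequality} has been recovered from $ii)$.

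For the implication \VE\ $\Rightarrow ii)$, the inequality \eqref{eq:110} is immediate from \eqref{energybalance} and $\varC(u,\cdot)\ge\var(u,\cdot)$. To get the jump relations \eqref{Jve} I would fix $t\in\Ju$ and evaluate \eqref{energybalance} on $[a,t]$ and $[t,b]$ for $a\uparrow t$, $b\downarrow t$ running through continuity points of $u$. Using that $V_u$ has a jump $\sfd(u(t\leftl),u(t))+\sfd(u(t),u(t\rightl))$ at $t$ (by $(\sigma,\sfd)$-regularity), that the endpoint and tail contributions to $\mVar{\sfd,\sfc}$ vanish, and that $\int_a^b\cP\to0$ by \eqref{A.21}, one gets $\varC(u,[a,t])\to\Fd(t,u(t\leftl),u(t))$ and $\varC(u,[t,b])\to\Fd(t,u(t),u(t\rightl))$. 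It then remains to identify the one--sided limits of $\cE$ along $u$: the $\sigma$-lower semicontinuity of $\cE$ (\mytag A1) gives $\liminf_{b\downarrow t}\cE(b,u(b))\ge\cE(t,u(t\rightl))$, and combined with \eqref{energybalance} on $[t,b]$ this forces $\cE(b,u(b))\to\cE(t,u(t\rightl))$ and the second line of \eqref{Jve}; for the first line one also needs $\limsup_{a\uparrow t}\cE(a,u(a))\le\cE(t,u(t\leftl))$, which comes from testing the stability of a continuity point $a$ against $y=u(t\leftl)$ and letting $a\uparrow t$, using the time--continuity of $\cE$, the $\sfd$-regularity of $u$, and the left $\sfd$-continuity \mytag B2 of $\delta$. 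The third line of \eqref{Jve} then follows from the first two and the sub--additivity $\Fd(t,u(t\leftl),u(t\rightl))\le\Fd(t,u(t\leftl),u(t))+\Fd(t,u(t),u(t\rightl))$, obtained by concatenating transitions (the transition cost \eqref{eq:33} being additive under concatenation, the residual term $\cR(t,u(t))$ at the junction being counted exactly once), together with the bound $\Fd(t,u_0,u_1)\ge\cE(t,u_0)-\cE(t,u_1)$.

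For the implication $ii)\Rightarrow$ \VE, assume \eqref{stability}, \eqref{eq:110} and \eqref{Jve}. Then $\mathcal J$ is non-decreasing by \eqref{eq:crinequality}, while localizing \eqref{eq:110} on intervals bounded by continuity points of $u$ gives $\mathcal J(t)-\mathcal J(s)\le\Jmp{\Delta_\sfc}(u,[s,t])$; hence the continuous part of $\mathcal J$ is non-increasing, so constant, and $\mathcal J(T)-\mathcal J(0)$ reduces to the sum over $t\in\Ju$ of the jumps of $\mathcal J$. Each such jump equals $\lim_{b\downarrow t}\cE(b,u(b))-\lim_{a\uparrow t}\cE(a,u(a))+\Fd(t,u(t\leftl),u(t))+\Fd(t,u(t),u(t\rightl))$, which by \eqref{Jve} and the one--sided limit identities established above collapses to $0$; thus $\mathcal J(T)=\mathcal J(0)$, i.e.\ \eqref{leqinequality}, and the first paragraph yields \eqref{energybalance}.

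The delicate step, and the one I expect to absorb most of the work, is the localization at jumps and specifically the identification $\lim_{a\uparrow t}\cE(a,u(a))=\cE(t,u(t\leftl))$, $\lim_{b\downarrow t}\cE(b,u(b))=\cE(t,u(t\rightl))$. Because $\sfd$ is asymmetric, the bare lower--semicontinuity bound pins down only one side of each limit, and the other must be squeezed by a careful combination of the stability condition \eqref{stability}, the closedness of the $(\sfD,Q)$-quasi-stable sets (\mytag C{}), the time--continuity of $\cE$, and the compatibility and left $\sfd$-continuity properties \mytag B{} of $\delta$. In the metric setting of Remark~\ref{rem:metric} all of this is automatic; in the general asymmetric framework it is where the argument genuinely differs from \cite{Mielke-Rossi-Savare12}, everything else being a routine transcription of the Balanced Viscosity proof.
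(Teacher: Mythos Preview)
Your proposal is correct and follows exactly the route the paper takes, which is simply to adapt \cite[Prop.~4.2, Thm.~4.3]{Mielke-Rossi-Savare12} without giving further details. One caution on the implication $ii)\Rightarrow$ VE: you invoke $\lim_{b\downarrow t}\cE(b,u(b))=\cE(t,u(t\rightl))$ as ``established above'', but the argument you gave for that identity (in the direction VE $\Rightarrow ii)$) used the full balance \eqref{energybalance}, which is precisely what you are now trying to prove; and because $\sfd$ is asymmetric, testing the stability of $u(b)$ against $u(t\rightl)$ does not recover it either, since \eqref{eq:2} and \eqref{D.2} only control $\sfd(u(t\rightl),u(b))$ and $\delta(u(b),u(t\rightl))$, not $\sfD(u(b),u(t\rightl))$. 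This right-limit identification is exactly the place where the adaptation from the symmetric setting of \cite{Mielke-Rossi-Savare12} needs genuine extra work (as you yourself anticipate in the last paragraph), so it should be presented as a separate step---for instance via the monotonicity of $s\mapsto\cE(s,u(s))+V_u(s)-\int_0^s\cP$ forced by \eqref{eq:110}, combined with \eqref{Jve} and Theorem~\ref{prop:crineqonjumps}---rather than as a back-reference to an argument that is circular here.
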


\relax
\paragraph{Existence of VE solutions.}
\addcontentsline{toc}{subsubsection}{\it Existence of {\rm VE} solutions}
As in the energetic  and BV
cases, existence of Visco-Energetic solutions can be obtained by
proving the convergence of discrete solutions to the incremental
minimization scheme \ref{def:vims}. 
Besides the canonical assumptions \mytag A{} and \mytag B{}
we will further suppose that the following properties hold.
\begin{mainassumption}{$\la$C$\ra$}[Closure and separation properties
  for the stable set]\
  \label{C}
  \begin{enumerate}[\em\bfseries $\la${C}.1$\ra$]
  \item 
    \label{C1}
    For every $Q\ge0$
    {the $(\sfD,Q)$-quasistable sets
        \eqref{eq:quasistable} have $\sigma$-closed intersections with the
        sublevels of $\cF$.}
   \item \label{C2}
     The $\sfD$-stable set {is separated by~$\sfd$.}
  \end{enumerate}
\end{mainassumption}
In the viscous setting these assumptions correspond to \eqref{eq:18}
in the energetic one.
\mytag C1
is always satisfied in the case 
when \mytag A{2'} holds, in particular in the simpler metric case considered in Remark
\ref{rem:metric}.
Notice that \mytag C1 is equivalent to
assume that the residual stability
function
of Definition \ref{def:slope}
\begin{equation}
  \label{eq:134}
  \Gs\text{ is $\sigma$-l.s.c.~on the sublevels of }\cF.
  \tag{C.1'}
\end{equation}
Our main result is stated in the following theorem.
\begin{theorem}[Convergence of the \eqref{ims} scheme and
  existence of VE solutions] \ \label{thm:existence} 
  \\Let us suppose that \emph{Assumptions \mytag A{}, \mytag B{}, \mytag C{}}
  hold. Let $u_0\in X$ be fixed and let $\overline{U}\kern-1pt_\tau$ be the family of piecewise left-continuous constant interpolants of discrete solutions $U^n_\tau$ of  \eqref{ims}, with 
  \begin{equation}
\sfd(x_o,U^0_\tau)\le C,\quad
U^0_\tau\sigmato u_0, \qquad \cE(0,U^0_\tau)\rightarrow
\cE(0,u_0)\quad\text{as }|\tau|\down 0.\label{eq:141}
\end{equation}
Then for all sequence of partitions $k\mapsto \tau(k)$ with
$|\tau(k)|\down0$ there exist a (not relabeled)
subsequence and a 
limit curve $u\in \BV{\sigma,\sfd}([0,T];X)$ such that
\[
\overline{U}\kern-1pt_{\tau(k)}(t)\sigmato u(t),\quad
\cE(t,\overline U_{\tau(k)}(t))\to \cE(t,u(t))\quad \text{as $k\rightarrow \infty$}\quad \text{for every $t\in[0,T],$}
\]
and $u$ is a Visco-Energetic solution of the rate-independent system $(X,\mathcal{E},\sfd,\delta)$ starting from $u_0$.
\end{theorem}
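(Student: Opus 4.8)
The plan is to follow the canonical four-step argument (discrete a~priori estimates, compactness, upper energy-dissipation inequality, and reinforcement of energy convergence), exactly as outlined at the end of the Introduction, adapting the classical energetic/BV machinery to the augmented total variation $\varC$.

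\textbf{Step 1: Discrete estimates.}\ First I would derive the discrete energy inequality for the scheme \eqref{ims}. Testing the minimality of $U^n_\tau$ against the competitor $U^{n-1}_\tau$ and using $\sfD(x,x)=0$ gives $\cE(t^n_\tau,U^n_\tau)+\sfD(U^{n-1}_\tau,U^n_\tau)\le \cE(t^n_\tau,U^{n-1}_\tau)$; combining this with the fundamental theorem of calculus $\cE(t^n_\tau,U^{n-1}_\tau)=\cE(t^{n-1}_\tau,U^{n-1}_\tau)+\int_{t^{n-1}_\tau}^{t^n_\tau}\cP(r,U^{n-1}_\tau)\,\d r$ (valid by \eqref{A.22}) and summing over $n$, one obtains, after invoking the power bound \eqref{A.21} and the Gronwall estimate \eqref{eq:gronwallestimate}, a uniform bound on $\cF(t,\overline U_\tau(t))$ and on the discrete dissipation $\sum_n \sfD(U^{n-1}_\tau,U^n_\tau)$. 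Since $\delta\ge0$, this in particular controls $\var(\overline U_\tau,[0,T])=\sum_n\sfd(U^{n-1}_\tau,U^n_\tau)$, so the discrete solutions have uniformly bounded $\sfd$-variation and take values in a fixed sublevel of $\cF$, which is $\sigma$-sequentially compact by \mytag A1. Here the initial-data assumption \eqref{eq:141} enters to bound the first term.

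\textbf{Step 2: Compactness.}\ Using a Helly-type selection theorem for the real monotone functions $t\mapsto V_{\overline U_\tau}(t)$ together with the $\sigma$-sequential compactness of the sublevels, I would extract a subsequence and a limit monotone function $V$, a pointwise $\sigma$-limit $u(t)$ for every $t$, and arrange that $V$ dominates the variation of the limit. Applying Lemma~\ref{le:nondeg-regulated} with the separation hypothesis \mytag C2 (the $\sfD$-stable set is separated by $\sfd$, and one checks limit points lie there up to the countable jump set) shows $u\in \BV{\sigma,\sfd}([0,T];X)$ and identifies the left/right limits. By lower semicontinuity of $\cE$ one also gets $\limsup_\tau \cE(t,\overline U_\tau(t))\ge \cE(t,u(t))$ modulo the usual care at jump times.

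\textbf{Step 3: Upper energy-dissipation inequality \eqref{eq:166upper}.}\ This is the heart of the matter and the main obstacle. The discrete energy inequality of Step~1 only controls $\sum_n\sfD(U^{n-1}_\tau,U^n_\tau)$, which — as emphasized around \eqref{eq:175} — does \emph{not} lower-semicontinuously converge to $\varC(u,[0,T])$ because $\sfD$ fails the triangle inequality. The plan is to invoke the lower-semicontinuity results for the transition cost $\Cf$ proved in Section~\ref{sec:dissipationcost}: one groups the discrete steps falling in a small neighborhood of each jump time $t\in\Ju$ into a discrete transition $\vartheta_\tau\colon E_\tau\to X$ with $E_\tau$ a finite set, shows (using that each $U^n_\tau$ is a minimizer, hence quasi-stable with residual controlled by the scheme, so $\Gs(t^n_\tau,U^n_\tau)$ contributes appropriately) that $\sum_n\sfD(\cdot,\cdot)$ over that block is bounded below by $\Cf(t^n_\tau,\vartheta_\tau,E_\tau)$ up to errors vanishing with $|\tau|$, and then passes to the liminf using the lower-semicontinuity of $\Gs$ (guaranteed by \mytag C1, equivalently \eqref{eq:134}), of $\delta$, and of the ordinary variation. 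The residual terms away from jumps must be shown to vanish in the limit (they quantify the asymptotic stability of the interpolants), which is where assumption \mytag B3 — that $\sfD$-stability yields local $\sfd$-stability — is used to close the estimate. Combining the block estimates over all of $[0,T]$ yields $\varC(u,[0,T])\le\liminf_\tau\sum_n\sfD(U^{n-1}_\tau,U^n_\tau)$, and together with Step~1 and the $\sigma$-continuity/upper-semicontinuity of $\cP$ this gives \eqref{eq:166upper}.

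\textbf{Step 4: Conclusion.}\ Once \eqref{eq:166upper} is established, I would first show that $u$ satisfies the stability condition \eqref{stability}: the limit of quasi-stable interpolants is $\sfD$-stable off the jump set by \mytag C1 and \mytag B{}. Then \eqref{eq:166lower} from Section~\ref{sec:energy-inequalities} — the chain-rule/lower inequality valid for any stable BV curve — applies, and combined with \eqref{eq:166upper} it forces equality in \eqref{eq:166ve} for $t=T$, hence, by the additivity \eqref{eq:21} and running the same argument on $[0,t]$, for every $t$; this is Proposition~\ref{prop:leqinequality}(i). Finally, reinforcing the inequality $\limsup_\tau\cE(t,\overline U_\tau(t))\le\cE(t,u(t))$ (which follows because a strict loss of energy at some time would, via \eqref{eq:166lower} applied on $[0,t]$ and $[t,T]$ separately, contradict the just-proved global balance) upgrades the pointwise $\sigma$-convergence to convergence of the energies $\cE(t,\overline U_{\tau(k)}(t))\to\cE(t,u(t))$ for every $t$, completing the proof.
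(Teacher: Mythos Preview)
Your overall four-step architecture matches the paper, but Step~3 contains a genuine gap that would prevent the argument from closing. In Step~1 you only record the \emph{inequality} $\cE(t^n_\tau,U^n_\tau)+\sfD(U^{n-1}_\tau,U^n_\tau)\le \cE(t^n_\tau,U^{n-1}_\tau)$, whereas the paper uses the corresponding \emph{identity} with the residual term
\[
\cE(t^n_\tau,U^n_\tau)+\sfD(U^{n-1}_\tau,U^n_\tau)+\cR(t^n_\tau,U^{n-1}_\tau)= \cE(t^n_\tau,U^{n-1}_\tau),
\]
which holds precisely because $U^n_\tau\in\rmM(t^n_\tau,U^{n-1}_\tau)$. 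The summed quantity one must track is therefore $W_\tau(t)=\sum\big(\sfD(U^{j}_\tau,U^{j+1}_\tau)+\cR(t^{j+1}_\tau,U^{j}_\tau)\big)$, not just $\sum\sfD$. This matters because for a discrete transition $\vartheta_\tau(n)=U^n_\tau$ the cost $\Cf$ is $\var+\Cd+\sum\cR=\sum\sfD+\sum\cR$; the $\cR$-contributions are \emph{not} error terms to be shown small but are an essential part of the cost that is matched exactly by the discrete identity. Your claim that ``$\sum_n\sfD$ over that block is bounded below by $\Cf$'' is therefore false without the $\cR$-terms, and your heuristic that $\cR(t^n_\tau,U^n_\tau)$ is small because $U^n_\tau$ is ``quasi-stable'' conflates two different residuals (it is $\cR(t^{n+1}_\tau,U^n_\tau)$ that is controlled). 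Once one works with $W_\tau$, Helly gives a limit $W$, and the lower-semicontinuity Corollary~\ref{cor:asymptoticcost} applied to the discrete transitions near each jump yields $W(t\rightl)-W(t\leftl)\ge\sfc(t,u(t\leftl),u(t\rightl))$, hence $W(T)-W(0)\ge\varC(u,[0,T])$.

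A second, smaller misattribution: assumption \mytag B3 plays no role in Step~3. It is used only in Section~\ref{sec:energy-inequalities} to prove the chain-rule lower bound \eqref{eq:crinequality} (specifically, to verify the slope condition \eqref{eq:22}/\eqref{eq:69} at accumulation points where the curve is $\sfD$-stable). In Step~3 the only structural assumptions invoked are \mytag B1 (for lower semicontinuity of $\Cd$) and \mytag C1 (for lower semicontinuity of $\cR$, hence of the residual sum). The a.e.\ vanishing of $\cR(\tilde\sft_\tau(t),\overline U_\tau(t))$ that you allude to is obtained from the bound $\int_0^{T}\cR(\tilde\sft_\tau,\overline U_\tau)\,\d r\le C|\tau|$ and is used to establish \emph{stability} of the limit (via \mytag C1), not to close the upper energy inequality. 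Finally, note that pointwise $\sigma$-convergence is initially obtained only on a full-measure set $\CC$ together with $\Jump{}W$; the upgrade to every $t\in[0,T]$ comes at the very end, after energy convergence, using \mytag B1 and the separation property \mytag C2.
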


\emph{The proof} of Theorem \ref{thm:existence} will follow
a standard structure, that will be 
exploited in Section \ref{sec:convergenceproof},
strongly relying on the basic and preliminary 
results of Sections \ref{sec:dissipationcost},
\ref{sec:energy-inequalities}.
\begin{itemize}
\item 
  We will first derive discrete stability estimates in Section
  \ref{subsec:discrete-estimates} for the solution of the incremental
  minimization problem \eqref{ims}: here only Assumption \mytag A{}
  will play an important role.
\item We will prove a preliminary
  convergence result by refined compactness arguments (Section
  \ref{subsec:compactness-proof}), 
  where we combine the lower semicontinuity of the residual stability
  function $\cR$
  \mytag C1 with the 
  $\sfd$-separation of the $\sfD$-stable set $\SSD$
  \mytag C2; in this way, we 
  will prove that every limit curve satisfies the 
  stability condition \eqref{stability}.
\item We will then obtain the energy-dissipation inequality 
  \eqref{leqinequality} by
  proving the lower semicontinuity
  of the augmented total variation $\mVar{\sfd,\sfc}$ 
  (section \ref{subsec:limit-dissipation-inequality}):
  this property strongly relies on
  the lower semicontinuity of the jump dissipation cost
  $\sfc$, which will be thoroughly studied in Section 
  \ref{sec:dissipationcost}: here 
  the minimal compatibility properties \mytag B1-\mytag B2 
  of the viscous correction $\delta$ will enter in the game.
\item The whole argument will be concluded
  by showing that along arbitrary stable curves 
  the decay rate of the energy can always be controlled
  by the power integral and the augmented variation $\mVar{\sfd,\sfc}$ 
  \eqref{eq:crinequality}: this topic 
  will be discussed in Section \ref{sec:energy-inequalities}
  and strongly depends on \mytag B3.
\end{itemize}



\subsection{The residual stability function $\cR$}
\label{subsec:residual-stability}
Let us briefly discuss a few properties of the 
residual stability function $\cR$. We first introduce 
the Moreau-Yosida regularization of $\cE$ and its associated minimal set.
\begin{definition}[Moreau-Yosida regularization and minimal set]
  Let us suppose that $\cE$ satisfies \emph{\mytag A1}.
  The $\sfD$-Moreau-Yosida regularization $\cY:[0,T]\times X\to \R$ of $\cE$
  is defined by
  \begin{equation}
    \label{eq:111}
    \cY(t,x):=\min_{y\in X}\cE(t,y)+\sfD(x,y).
  \end{equation}
  For every $t\in [0,T]$ and $x\in X$ the minimal set is 
  \begin{equation}
    \label{eq:107}
    \rmM(t,x):=\argmin_X \cE(t,\cdot)+\sfD(x,\cdot)=
    \Big\{y\in X:\cE(t,y)+\sfD(x,y)=\cY(t,x)\Big\}.
  \end{equation}
\end{definition}
Notice that by \mytag A1 
$\rmM(t,x)\neq\emptyset$ for every $t,x$.
It is clear that 
\begin{equation}
  \label{eq:112}
  \cR(t,x)=\cE(t,x)-\cY(t,x).
\end{equation}
In the next Lemma we collect a list of useful properties, connecting
$\cR$, $\cY$ and $\rmM$.
\begin{lemma}
  \label{le:usefulR}
  Let us suppose that \emph{Assumption \mytag A{}} holds. Then
  \begin{enumerate}[i)]
  \item 
      \begin{equation}
        \label{eq:115}
        \cE(t,y)+\sfD(x,y)+\cR(t,x)\ge \cE(t,x)\quad
        \text{for every $t\in [0,T],\quad x,y\in X$}
      \end{equation}
      and equality holds in \eqref{eq:115} if and only if $y\in \rmM(t,x)$.
    \item The map $(t,x)\mapsto \cY(t,x)$ is $\sigma_\R$-lower
      semicontinuous on every sublevel of $\cF_0$.
    \item If $(t_n,x_n)\xrightarrow {\sigma_\R} (t,x)$ and $\sfD(x_n,x)\to0$ as
      $n\to\infty$,
      we
      have
      \begin{equation}
        \label{eq:113}
        \limsup_{n\to\infty}\cY(t_n,x_n)\le \cY(t,x),\quad
        \liminf_{n\to\infty}\cR(t_n,x_n)\ge \cR(t,x).
      \end{equation}
      In particular, if $y\mapsto \sfD(y,x)$ is $\sigma$-continuous
      (on the sublevels of $\cF_0$)
      then
      $\cR$ is lower semicontinuous (on the sublevels of $\cF_0$).
    \item If $(t_n,x_n)\xrightarrow{\sigma_\R} (t,x)$ and $\cE(t_n,x_n)\to \cE(t,x)$
      then
      \begin{equation}
        \label{eq:114}
        \limsup_{n\to\infty}\cR(t_n,x_n)\le \cR(t,x).
      \end{equation}
      If moreover $\liminf_{n\to\infty}\cR(t_n,x_n)\ge \cR(t,x)$ then 
      any limit point $y$ of a sequence $y_n\in \rmM(t_n,x_n)$ belongs
      to $\rmM(t,x)$.
    \item If $(t_n,x_n)\xrightarrow{\sigma_\R} (t,x)$ and $\sfD(x_n,x)\to0$ as
      $n\to\infty$
      with $\sfd(x_o,x_n)\le C$, we
      have
      \begin{equation}
        \label{eq:113bis}
        \lim_{n\to\infty}\cR(t_n,x_n)=0 \quad\Rightarrow\quad
        \lim_{n\to\infty}\cE(t_n,x_n)=\cE(t,x).
      \end{equation}
  \item $\cR$ is lower semicontinuous on the sublevels of $\cF_0$ if
    and only if for every $\sfd$-bounded sequence $(t_n,x_n)$ converging to
    $(t,x)$ in $[0,T]\times X$ with 
    $\lim_{n\to\infty}\cE(t_n,x_n)=\bar \cE=\cE(t,x)+\eta$,
    $\eta\ge0$ there exists $y\in \rmM(t,x)$ 
    and a sequence $y_n$ 
    such that
    \begin{equation}
      \label{eq:116}
      \liminf_{n\to\infty}\Big(\cE(t,y_n)+\sfD(x_n,y_n)\Big)\le \cE(t,y)+\sfD(x,y)+\eta.
    \end{equation}
  \end{enumerate}
\end{lemma}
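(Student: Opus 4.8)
The plan is to prove i)--vi) in the stated order, since each rests on the previous one and on the compactness/lower semicontinuity built into \mytag A{}. Item i) is a mere rewriting of definitions: $\cE(t,y)+\sfD(x,y)\ge\inf_z\big(\cE(t,z)+\sfD(x,z)\big)=\cY(t,x)=\cE(t,x)-\cR(t,x)$, and equality holds exactly when $y$ attains the infimum, i.e. $y\in\rmM(t,x)$. For ii) I would take $(t_n,x_n)\xrightarrow{\sigma_\R}(t,x)$ inside a sublevel $\{\cF_0\le C\}$; by \eqref{eq:gronwallestimate} the $(t_n,x_n)$ stay in a fixed sublevel of $\cF$, so $\cE(t_n,x_n)$ is bounded. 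Choosing $y_n\in\rmM(t_n,x_n)$ (nonempty by \mytag A1) and testing the minimum with $y=x_n$ gives $\cE(t_n,y_n)+\sfD(x_n,y_n)\le\cE(t_n,x_n)$, hence $\cF(t_n,y_n)\le\cF(t_n,x_n)$ by the triangle inequality for $\sfd$; so $(t_n,y_n)$ also lies in a sublevel of $\cF$ and, up to a subsequence, converges $\sigma_\R$ to some $(t,y)$. Lower semicontinuity of $\cE$ on sublevels of $\cF$ (\mytag A1) and of $\sfD=\sfd+\delta$ then give $\liminf_n\cY(t_n,x_n)=\liminf_n\big(\cE(t_n,y_n)+\sfD(x_n,y_n)\big)\ge\cE(t,y)+\sfD(x,y)\ge\cY(t,x)$; passing first to a subsequence realizing the $\liminf$ upgrades this to the full inequality.

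For iii) I would bound $\cY$ from above using a single fixed competitor $\bar y\in\rmM(t,x)$: $\cY(t_n,x_n)\le\cE(t_n,\bar y)+\sfD(x_n,\bar y)$. Here $\cE(t_n,\bar y)\to\cE(t,\bar y)$ because $t\mapsto\cE(t,\cdot)$ is Lipschitz on the sublevel of $\cF_0$ containing the fixed point $\bar y$ (from \mytag A2 and \eqref{eq:gronwallestimate}); $\limsup_n\sfd(x_n,\bar y)\le\sfd(x,\bar y)$ from $\sfd(x_n,\bar y)\le\sfd(x_n,x)+\sfd(x,\bar y)$ and $\sfd(x_n,x)\le\sfD(x_n,x)\to0$; and $\limsup_n\delta(x_n,\bar y)\le\delta(x,\bar y)$ --- the one genuinely delicate step --- from the compatibility and left-continuity conditions \mytag B1--\mytag B2 on $\delta$ (for the model corrections $\delta=h(\sfd)$ with $h$ nondecreasing, continuous and $h(r)/r\to0$ as $r\down0$ it is simply the $\sfd$-triangle inequality plus continuity of $h$). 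This yields $\limsup_n\cY(t_n,x_n)\le\cY(t,x)$; together with lower semicontinuity of $\cE$ on a sublevel of $\cF$ and $\cR(t_n,x_n)=\cE(t_n,x_n)-\cY(t_n,x_n)$ it gives $\liminf_n\cR(t_n,x_n)\ge\cR(t,x)$. The ``in particular'' clause follows since $\sigma$-continuity of $y\mapsto\sfD(y,x)$ forces $\sfD(x_n,x)\to\sfD(x,x)=0$ for every $\sigma$-convergent $x_n$ on a sublevel, making the hypothesis automatic.

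Items iv)--vi) follow as corollaries. For iv): $\cE(t_n,x_n)\to\cE(t,x)$ gives $\limsup_n\cR(t_n,x_n)=\cE(t,x)-\liminf_n\cY(t_n,x_n)\le\cR(t,x)$ by ii), and if in addition $\liminf_n\cR(t_n,x_n)\ge\cR(t,x)$ then $\cY(t_n,x_n)\to\cY(t,x)$, so for $y_n\in\rmM(t_n,x_n)$ with $y_n\to y$ lower semicontinuity yields $\cE(t,y)+\sfD(x,y)\le\lim_n\cY(t_n,x_n)=\cY(t,x)$, i.e. $y\in\rmM(t,x)$. For v): $\cR(t_n,x_n)\to0$ forces $\cR(t,x)=0$ by iii), hence $\cY(t,x)=\cE(t,x)$, and then $\limsup_n\cE(t_n,x_n)\le\limsup_n\cY(t_n,x_n)\le\cE(t,x)$ by iii), while $\liminf_n\cE(t_n,x_n)\ge\cE(t,x)$ because $\sfd(x_o,x_n)\le C$ and the boundedness of $\cY(t_n,x_n)$ place $x_n$ in a sublevel of $\cF_0$ where $\cE$ is lower semicontinuous. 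For vi), both implications reduce --- after extracting a subsequence along which $\cR(t_n,x_n)\to\ell$ and hence $\cY(t_n,x_n)\to\bar\cE-\ell$ --- to the inequality $\lim_n\cY(t_n,x_n)\le\cY(t,x)+\eta$: in ($\Leftarrow$) the postulated competitors $y_n$ give $\cY(t_n,x_n)\le\cE(t_n,y_n)+\sfD(x_n,y_n)=\cE(t,y_n)+\sfD(x_n,y_n)+o(1)$ --- the $o(1)$ from the uniform time-Lipschitz bound on the $\cF$-sublevel in which the $y_n$ lie (justified by $\sfd(x_o,x_n)\le C$ and finiteness of the $\liminf$) --- so $\bar\cE-\ell\le\cY(t,x)+\eta$, i.e. $\ell=\liminf_n\cR(t_n,x_n)\ge\cR(t,x)$; in ($\Rightarrow$), lower semicontinuity of $\cR$ gives $\limsup_n\cY(t_n,x_n)\le\cY(t,x)+\eta$, and taking $y_n:=z_n\in\rmM(t_n,x_n)$ and again replacing $\cE(t_n,z_n)$ by $\cE(t,z_n)$ up to $o(1)$ produces the required $y_n$.

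The main obstacle is isolated in one spot: the estimate $\limsup_n\delta(x_n,\bar y)\le\delta(x,\bar y)$ along sequences with $\sfD(x_n,x)\to0$ in part iii). For $\delta$ a function of $\sfd$ this is elementary, but for a general admissible viscous correction it must be extracted from \mytag B1--\mytag B2 (and, where the stable set intervenes, \mytag B3); all the remaining steps are routine bookkeeping with sublevels, the Gronwall estimate \eqref{eq:gronwallestimate}, lower semicontinuity from \mytag A1, and the time-Lipschitz bound coming from \mytag A2.
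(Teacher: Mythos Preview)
Your outline is correct and follows essentially the same route as the paper's proof: i) is definitional, ii) picks $y_n\in\rmM(t_n,x_n)$, extracts a convergent subsequence via compactness of sublevels of $\cF$, and uses lower semicontinuity of $\cE$ and $\sfD$; iii) tests with a fixed $\bar y\in\rmM(t,x)$; iv) combines ii) with $\cR=\cE-\cY$; vi) in both directions takes $y_n\in\rmM(t_n,x_n)$ and converts $\cE(t_n,y_n)\leftrightarrow\cE(t,y_n)$ via the uniform time-Lipschitz bound on the relevant $\cF$-sublevel, exactly as the paper does.

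Two differences worth noting. First, for v) the paper is more direct than your argument: instead of routing through iii) (which inherits the delicate $\delta$-step), it applies i) with $y=x$ to get $\cE(t_n,x_n)\le\cE(t_n,x)+\sfD(x_n,x)+\cR(t_n,x_n)$; every term on the right tends to $\cE(t,x)$, $0$, $0$ respectively, yielding $\limsup_n\cE(t_n,x_n)\le\cE(t,x)$ with no appeal to properties of $\delta$ beyond $\sfD(x_n,x)\to0$. Second, the step you flag as ``the main obstacle'' in iii)---namely $\limsup_n\delta(x_n,\bar y)\le\delta(x,\bar y)$ when $\sfD(x_n,x)\to0$---is handled in the paper by the single phrase ``recalling \mytag B2'', so your diagnosis that this is the genuinely nontrivial point (automatic for $\delta=h(\sfd)$, requiring the left-continuity/compatibility structure in general) is accurate; the paper's proof does not expand on it either.
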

\begin{remark}[Mutual recovery sequences]
  \upshape
  The characterization vi) 
  of the lower semicontinuity of $\cR$ (a crucial property in view of
  \eqref{eq:134}) 
  is strongly related to the \emph{mutual recovery sequence}
  condition which typically characterizes the closure of 
  the $\sfd$-stable set in the energetic case 
  (see e.g.~\cite[Lemma 2.1.14]{Mielke-Roubicek15}).
  Notice however that the sequence $(t_n,x_n)$ in vi) is not assumed to
  be stable.
\end{remark}
\begin{proof}[Proof of Lemma \ref{le:usefulR}]
  i) is an immediate consequence of the definition.\\[4pt]
  ii) Let us consider a sequence $(t_n,x_n)_n$ 
  with $\cF_0(x_n)\le C,\ \cY(t,x_n)\le Y$ for every $n\in \N$ and $(t_n,x_n)\xrightarrow{\sigma_\R}
  (t,x)$ as $n\to\infty$. 
  Let $y_n\in
  \rmM(t_n,x_n)$ so that  $\cY(t_n,x_n)=
  \cE(t_n,y_n)+\sfD(x_n,y_n)\le Y$; it is easy to check (see also Theorem
  \ref{thm:discretestimates})
  that $\cF_0(y_n)\le C'$ so that 
  it is not restrictive to assume by \mytag A1 (up to extracting a not relabeled
  subsequence)
  that $y_n\sigmato y$. The lower semicontinuity of $\cE$, $\sfd$ and
  $\delta$ yield 
  $$\cY(t,x)\le \cE(t,y)+\sfD(x,y)\le
  \liminf_{n\up\infty}\cE(t_n,y_n)+\sfD(x_n,y_n)\le Y.$$ 
  iii) Let $y\in \rmM(t,x)$ so that $\cY(t,x)=\cE(t,y)+\sfD(x,y)$; by definition
  \begin{displaymath}
    \cY(t_n,x_n)\le \cE(t_n,y)+\sfD(x_n,y)
    \le \left|\int_{t_n}^t\cP(r,y)\,\d r\right|+\cE(t,y)+\sfD(x_n,y);
  \end{displaymath}
  passing to the limit as $n\up\infty$ and recalling \mytag B2 we
  get the first property of \eqref{eq:113}. The second one follows
  by \eqref{eq:112} and the lower semicontinuity of $\cE$.\\[4pt]
  iv) \eqref{eq:114} is a consequence of i), \eqref{eq:112}, and the
  convergence 
  of the energy. The last statement follows immediately by i).\\[4pt]
  v)   \eqref{eq:115} yields
  \begin{displaymath}
    \cE(t_n,x_n)\le \cE(t_n,x)+\sfD(x_n,x)+\cR(t_n,x_n)
  \end{displaymath}
  so that $\limsup_{n\to\infty}\cE(t_n,x_n)\le \cE(t,x)$. Since $x_n$
  belongs to a sublevel of $\cF_0$
  the $\sigma$-lower semicontinuity of $\cE$ yields \eqref{eq:113bis}.
  \\[4pt]
  vi) If property \eqref{eq:116} holds for every sequence
  $(x_n)_n$, 
  up to extracting a further subsequence
  it is not restrictive to suppose that $\cR(t_n,x_n)$ is converging,
  so that 
  \begin{align*}
    \lim_{n\to\infty}\cR(t_n,x_n)
    &\ge
      \limsup_{n\to\infty}\cE(t_n,x_n)-\cE(t_n,y_n)-\sfD(x_n,y_n)
    \\&\ge
          \cE(t,x)+\eta-\liminf_{n\to\infty}\Big(\cE(t_n,y_n)+\sfD(x_n,y_n)\Big) 
     \\&\kern-6pt     \stackrel{\eqref{eq:116}}\ge \cE(t,x)-\cE(t,y)-\sfD(x,y)=\cR(t,x).
  \end{align*}
  Conversely, let us suppose that $\cR$ is lower semicontinuous and
  let $(t_n,x_n)$ be a sequence satisfying
  $\lim_{n\to\infty}\cE(t_n,x_n)=\bar \cE=\cE(t,x)+\eta$, $\eta\ge0$. 
  We pick up any $y_n\in \rmM(t_n,x_n)$ obtaining
  \begin{align*}
    \limsup_{n\up\infty}\cE(t,y_n)
    &+\sfD(x_n,y_n)
    =\limsup_{n\up\infty}\cE(t_n,y_n)
    +\sfD(x_n,y_n)
      \\&=
      \limsup_{n\up\infty}\cE(t_n,x_n)-\Big(\cE(t_n,x_n)-\cE(t_n,y_n)-\sfD(x_n,y_n)\Big)
    \\&=\cE(t,x)+\eta-\liminf_{n\to\infty}\Big(\cE(t_n,x_n)-\cE(t_n,y_n)-\sfD(x_n,y_n)\Big)
    \\&= \cE(t,x)+\eta-\liminf_{n\to\infty}\cR(t_n,x_n)
        = \cE(t,x)+\eta-\cR(t,x)
        \\&\le \cE(t,x)+\eta-\Big(\cE(t,x)-\cE(t,y)-\sfD(x,y)\Big)=
        \cE(t,y)+\sfD(x,y)+\eta.
  \end{align*}
\end{proof}

\subsection{Optimal jump transitions}
\label{subsec:optimal-transitions}
Thanks to the jump conditions given by \eqref{Jve}, we can give a finer description of the behaviour of Visco-Energetic solutions along jumps. The crucial notion is provided by the following definition.
\begin{definition}[Optimal transitions] Let $t\in[0,T]$ and $u_-$,
  $u_+\in X$. 
We say that a curve $\vartheta\in \rmC_{\sigma,\sfd}(E;X)$, $E$ being a
compact subset of $\R$, is an optimal transition between $u_-$ and $u_+$ if
\begin{equation}
  u_-=\vartheta(E^- ),\quad
  u_+=\vartheta(E^+ ),\quad
  \Fd(t,u_-,u_+)=\Cf(t,\vartheta,E).
\end{equation}
$\vartheta$ is \emph{tight} if for every $I\in \Holes(E)$ 
$\vartheta(I^-)\neq \vartheta(I^+)$.
$\vartheta$ 
is a
\begin{align}
  \text{pure jump transition, if }&E\setminus \{E^-,E^+\}\text{ is discrete,}\\
  \text{sliding transition, if } &\Gs(t,\theta(r))=0\quad \text{for every $r\in E$},  \\
  \text{viscous transition, if } &\Gs(t,\theta(r))>0\quad \text{for every $r\in E\setminus \{E^{\pm}\}$} \label{eq:viscoustransition}.
\end{align}
We will say that a compact set $E$ is almost discrete if 
$E\setminus \{E^-,E^+\}$ is discrete. 
\end{definition}
It is easy to check that 
almost discrete compact sets $E$
can be parametrized by sequences $n\mapsto e_n$
defined in a compact interval
$Z$ of $\Z\cup\{\pm\infty\}$ and continuous
at $n=\pm\infty$ whenever those points belong to $Z$.  

The main interest of optimal transitions derives from the next result,
whose proof follows immediately from Corollary \ref{cor:existenceopt} later on.
\begin{theorem} Under the same assumptions {\em \mytag A{}, \mytag
    B{}, \mytag C{} } 
  of Theorem 
  \ref{thm:existence}, if $u\in \BV{\sigma,\sfd}([0,T];X)$ is a
  Visco-Energetic solution to the rate-independent system $(X,\cE,\sfd,\delta)$,
  then for every $t\in \Ju$ there exists a tight optimal transition
  $\vartheta\in \rmC_{\sigma,\sfd}(E,X)$ 
  between $u(t\leftl )$ and $u(t\rightl )$ such 
  that 
  \begin{equation}
    u(t)\in \vartheta(E)\quad\text{and}\quad \cE(t,u(t\leftl ))-\cE(t,u(t\rightl ))=\Cf(t,\vartheta,E).\label{eq:117}
\end{equation}
\end{theorem}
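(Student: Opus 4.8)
The plan is to read off the jump conditions that every Visco-Energetic solution satisfies and then to splice together optimal transitions for the two ``halves'' of the jump. Fix $t\in\Ju$. Being a VE solution, $u$ lies in $\BV{\sigma,\sfd}([0,T];X)$ and satisfies the stability condition \eqref{stability} and the energy balance \eqref{energybalance}; hence Proposition~\ref{prop:leqinequality} applies and, in particular, characterization (ii) gives the jump conditions \eqref{Jve} at $t$, so that $\Fd(t,u(t\leftl),u(t))=\cE(t,u(t\leftl))-\cE(t,u(t))$ and $\Fd(t,u(t),u(t\rightl))=\cE(t,u(t))-\cE(t,u(t\rightl))$, both finite since $\cE$ is real valued. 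Thus, for each of these two pairs, the infimum in Definition~\ref{dissipationcost} is taken over a nonempty family of transitions of finite cost, and Corollary~\ref{cor:existenceopt} — proved in Section~\ref{sec:dissipationcost}, where the real work lies — yields \emph{tight} optimal transitions $\vartheta_1\in\rmC_{\sigma,\sfd}(E_1;X)$ from $u(t\leftl)$ to $u(t)$ and $\vartheta_2\in\rmC_{\sigma,\sfd}(E_2;X)$ from $u(t)$ to $u(t\rightl)$, with $\Cf(t,\vartheta_1,E_1)=\Fd(t,u(t\leftl),u(t))$ and $\Cf(t,\vartheta_2,E_2)=\Fd(t,u(t),u(t\rightl))$.

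The second step is the concatenation. After translating $E_2$ we may assume $\max E_1=\min E_2=:b$; since $\vartheta_1(b)=u(t)=\vartheta_2(b)$, the two maps glue to a well-defined $\vartheta\colon E:=E_1\cup E_2\to X$. Its $\sigma$-continuity at $b$ is immediate (both one-sided limits equal $u(t)$), the continuity moduli \eqref{eq:46} of $\vartheta_1$ and $\vartheta_2$ combine to give $\vartheta\in\rmC_{\sigma,\sfd}(E;X)$, and by construction $\vartheta(E^-)=u(t\leftl)$, $\vartheta(E^+)=u(t\rightl)$ and $u(t)=\vartheta(b)\in\vartheta(E)$. No connected component of the complement of $E$ straddles the point $b\in E$, so $\Holes(E)=\Holes(E_1)\cup\Holes(E_2)$ and on every hole $\vartheta$ agrees with $\vartheta_1$ or $\vartheta_2$, both tight; hence $\vartheta$ is tight. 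Splitting at $b$ in the additivity formula \eqref{eq:195} (the point $b$ is counted exactly once, in the residual sum of the $\vartheta_2$-piece) and using $\vartheta|_{E_i}=\vartheta_i$ gives
\[
  \Cf(t,\vartheta,E)=\Cf(t,\vartheta_1,E_1)+\Cf(t,\vartheta_2,E_2)=\cE(t,u(t\leftl))-\cE(t,u(t\rightl)).
\]
Comparing with the third identity of \eqref{Jve}, $\Cf(t,\vartheta,E)=\Fd(t,u(t\leftl),u(t\rightl))$, so $\vartheta$ is a tight optimal transition between $u(t\leftl)$ and $u(t\rightl)$ passing through $u(t)$ — which is precisely \eqref{eq:117}.

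The only genuine difficulty is external to this argument: it is the attainment of the infima by tight transitions, i.e.\ Corollary~\ref{cor:existenceopt} itself, which rests on the lower semicontinuity of the transition cost $\Cf$ and on a compactness property for the class of admissible transitions, both established in Section~\ref{sec:dissipationcost}. Granting those, the statement above reduces to the gluing bookkeeping just described, together with the jump conditions already available from Proposition~\ref{prop:leqinequality}.
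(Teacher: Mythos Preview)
Your argument is essentially the paper's own: the paper's proof is the one-line remark that the theorem ``follows immediately from Corollary~\ref{cor:existenceopt}'', and you have correctly unpacked this into (i) the jump conditions \eqref{Jve} from Proposition~\ref{prop:leqinequality}, (ii) existence of optimal transitions on each half-jump via Corollary~\ref{cor:existenceopt}, and (iii) concatenation at the common endpoint $u(t)$ using the additivity \eqref{eq:195}. The bookkeeping for $\rmC_{\sigma,\sfd}$-continuity, for $\Holes(E)=\Holes(E_1)\cup\Holes(E_2)$, and for the residual sum at $b$ is handled correctly.

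One small overstatement: Corollary~\ref{cor:existenceopt} as stated produces an optimal transition, not a \emph{tight} one. You should add a line explaining why tightness comes for free: if $I\in\Holes(E)$ has $\vartheta(I^-)=\vartheta(I^+)$, then $\delta(\vartheta(I^-),\vartheta(I^+))=0$ by \eqref{eq:152}, and collapsing the interval $[I^-,I^+]$ to a point (via a monotone reparametrization as in Lemma~\ref{le:reparametrization}) removes this hole without increasing any of the three summands in $\Cf$; iterating over all such holes (there are at most countably many) yields a tight optimal transition with the same cost. This is routine, but since the theorem explicitly demands tightness and the corollary does not supply it, the step should be made visible.
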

\begin{remark} \label{rem:viscouspoint}
  \upshape
  If \mytag C1 holds and 
  $\vartheta\in\rmC_{\sigma,\sfd}(E,X)$ is a transition with finite
  cost 
  $\Cf(t,\vartheta,E)<\infty$, 
  then the set
  \begin{equation}
    \label{eq:118}
    E_\cR:=\Big\{r\in E\setminus \{E^+\}:\Gs(t,\theta(r))>0\Big\}\text{ is discrete,
      i.e.~all its points are isolated.}
  \end{equation}
  %
  Indeed, since $\cR$ is lower semicontinuous by \mytag C1,
  there exists $\eta>0$ such that 
  $\cR(t,\vartheta(r))\ge \frac 12 \cR(t,\vartheta(r_0))>0$ 
  for every $r_0\in E_\cR$ and $|r-r_0|<\eta$. 
  On the other hand, the finiteness of the transition cost yields
  $\sum_{s\in E\setminus \{E^+ \}} \Gs(t,\vartheta(s))<\infty$ 
  so that $E_\cR\cap\{r\in \R:|r-r_0|\le \eta\}$ is finite.
\end{remark}
We have another interesting characterization of optimal
Visco-Energetic transitions.
Whenever a set $E\subset \R$ is given, we will use the notation 
\begin{equation}
  \label{eq:119}
  r^-_E:=\sup \big(E\cap (-\infty,r)\big)\cup\{E^-\}
  ,\quad
  r^+_E:=\inf \big(E\cap (r,+\infty)\big)\cup\{E^+\},\quad
  r\in \R.
\end{equation}
\begin{theorem}
  A curve $\vartheta\in \rmC_{\sigma,\sfd}(E,X)$ with $\vartheta(E)\ni
  u(t)$ is
  an optimal transition between $u(t-)$ and $u(t+)$ 
  satisfying \eqref{eq:117} if and only if
  it satisfies 
  \begin{equation}
    \label{eq:120}
    \var(\vartheta,E\cap[r_0,r_1])\le
    \cE(t,\vartheta(r_0))-\cE(t,\vartheta(r_1))\quad
    \text{for every }r_0,r_1\in E,\ r_0\le r_1,
  \end{equation}
  and
  \begin{equation}
    \label{eq:121}
    \vartheta(r)\in \rmM(t,\vartheta(r^-_E))\quad \text{for every
    }r\in E\setminus \{E^-\}.
  \end{equation}
\end{theorem}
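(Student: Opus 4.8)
\emph{Proof plan.}
Write $g(s):=\cE(t,\vartheta(s))$ for $s\in E$. The plan is to route the equivalence through the \emph{local} identity
\[
  \Cf(t,\vartheta,E\cap[r_0,r_1])=g(r_0)-g(r_1)\qquad\text{for all }r_0\le r_1\text{ in }E,
\]
call it $(\star)$. By the additivity \eqref{eq:195} of $\Cf(t,\vartheta,\cdot)$ together with the a priori lower bound \eqref{eq:crinqualitytheta} applied to the three compact pieces $E\cap[E^-,r_0]$, $E\cap[r_0,r_1]$, $E\cap[r_1,E^+]$ (whose energy drops telescope to $g(E^-)-g(E^+)$), $(\star)$ is equivalent to its single instance $r_0=E^-$, $r_1=E^+$, i.e.\ to $\Cf(t,\vartheta,E)=g(E^-)-g(E^+)$; and since $\Fd(t,\vartheta(E^-),\vartheta(E^+))\ge g(E^-)-g(E^+)$ always holds by \eqref{eq:crinqualitytheta} while $\Fd(t,\vartheta(E^-),\vartheta(E^+))\le \Cf(t,\vartheta,E)$ as $\vartheta$ is an admissible transition, this last identity is exactly the requirement that $\vartheta$ be optimal and satisfy \eqref{eq:117}. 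So I would reduce everything to proving that $(\star)$ is equivalent to \eqref{eq:120}+\eqref{eq:121}. I first record two facts used throughout: (a) $\var(\vartheta,E)\le\Cf(t,\vartheta,E)$ and $g$ are finite — this is part of the hypothesis on one side, and follows from \eqref{eq:120} with $r_0=E^-$, $r_1=E^+$ on the other — so $\sfd(x_o,\vartheta(s))\le\sfd(x_o,\vartheta(E^-))+\var(\vartheta,E)$ is bounded uniformly in $s$, hence $\vartheta$ ranges in a sublevel of $\cF$, on which $\cR$ is $\sigma$-lower semicontinuous by \mytag C1 (equivalently \eqref{eq:134}); (b) \eqref{eq:120} makes $g$ non-increasing on $E$, hence, since the $\sigma$-lower semicontinuity of $\cE$ and $\sigma$-continuity of $\vartheta$ give $g(r)\le\liminf_{s\to r}g(s)\le\lim_{s\downarrow r}g(s)$, $g$ is right-continuous on $E$.

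\emph{From $(\star)$ to \eqref{eq:120}+\eqref{eq:121}.} Assuming $(\star)$, dropping the nonnegative terms $\Cd(\vartheta,\cdot)$ and $\sum_s\cR(t,\vartheta(s))$ from $\Cf(t,\vartheta,E\cap[r_0,r_1])=\var(\vartheta,E\cap[r_0,r_1])+\Cd(\vartheta,E\cap[r_0,r_1])+\sum_{s}\cR(t,\vartheta(s))$ yields \eqref{eq:120} at once. For \eqref{eq:121}, fix $r\in E\setminus\{E^-\}$ and set $q:=r^-_E\in E$. If $q<r$ then $E\cap[q,r]=\{q,r\}$ and $(q,r)$ is the only hole of $E\cap[q,r]$, so $(\star)$ reads $\sfD(\vartheta(q),\vartheta(r))+\cR(t,\vartheta(q))=g(q)-g(r)$, i.e.\ $\cE(t,\vartheta(r))+\sfD(\vartheta(q),\vartheta(r))+\cR(t,\vartheta(q))=\cE(t,\vartheta(q))$, which by the equality case in Lemma~\ref{le:usefulR}\,i) is precisely $\vartheta(r)\in\rmM(t,\vartheta(q))$. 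If $q=r$, i.e.\ $r$ is a left accumulation point of $E$, then $\vartheta(r)\in\rmM(t,\vartheta(r))$ reduces to $\cR(t,\vartheta(r))=0$, which follows because $\sum_{s\in E\setminus\{E^+\}}\cR(t,\vartheta(s))<\infty$ forces $\liminf_{a\uparrow r}\cR(t,\vartheta(a))=0$ along $E$ (only finitely many $s$ carry $\cR(t,\vartheta(s))>\eps$), so the lower semicontinuity of $\cR$ gives $\cR(t,\vartheta(r))\le0$; the case $r=E^+$ is covered by one of these two.

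\emph{From \eqref{eq:120}+\eqref{eq:121} to $(\star)$.} It suffices to prove $\Cf(t,\vartheta,E)\le g(E^-)-g(E^+)$. By Lemma~\ref{le:usefulR}\,i), \eqref{eq:121} is equivalent to
\[
  \sfD(\vartheta(r^-_E),\vartheta(r))+\cR(t,\vartheta(r^-_E))=g(r^-_E)-g(r)\qquad\text{for every }r\in E\setminus\{E^-\},
\]
call it $(\dagger)$. The key preliminary step is to show that $\cR(t,\vartheta(s))=0$ for every $s\in E\setminus\{E^+\}$ which is not the left endpoint of a hole of $E$: such an $s$ is then a right accumulation point of $E$, so taking $s_n\downarrow s$ in $E$ one has $s<(s_n)^-_E\le s_n$, hence $(s_n)^-_E\to s$, $\vartheta((s_n)^-_E)\to\vartheta(s)$, $g((s_n)^-_E)\to g(s)$ by right-continuity of $g$, and $\liminf_n\cR(t,\vartheta((s_n)^-_E))\ge\cR(t,\vartheta(s))$ by lower semicontinuity of $\cR$; Lemma~\ref{le:usefulR}\,iv) then forces the limit $\vartheta(s)$ of $\vartheta(s_n)\in\rmM(t,\vartheta((s_n)^-_E))$ (applying \eqref{eq:121} at $r=s_n$) to lie in $\rmM(t,\vartheta(s))$, i.e.\ $\cR(t,\vartheta(s))=0$. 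Consequently the residual part of $\Cf(t,\vartheta,E)$ reduces to $\sum_{I\in\Holes(E)}\cR(t,\vartheta(I^-))$. The remaining step is a telescoping over finite data: given a finite $P=\{E^-=s_0<\dots<s_M=E^+\}\subseteq E$ and a finite subfamily $\{I_1,\dots,I_k\}\subseteq\Holes(E)$, enlarge $P$ to contain all endpoints $I_l^\pm$ (so each $I_l$ is a gap between consecutive points of $P$, since $(I_l^-,I_l^+)\cap E=\emptyset$); on the remaining consecutive pairs bound $\sfd(\vartheta(s_{j-1}),\vartheta(s_j))\le\var(\vartheta,E\cap[s_{j-1},s_j])\le g(s_{j-1})-g(s_j)$ by \eqref{eq:120}, sum, use $\sum_j(g(s_{j-1})-g(s_j))=g(E^-)-g(E^+)$, and substitute $(\dagger)$ at each $r=I_l^+$ (where $(I_l^+)^-_E=I_l^-$) to obtain
\[
  \sum_{i=1}^{M}\sfd(\vartheta(s_{i-1}),\vartheta(s_i))+\sum_{l=1}^{k}\Big(\delta(\vartheta(I_l^-),\vartheta(I_l^+))+\cR(t,\vartheta(I_l^-))\Big)\le g(E^-)-g(E^+).
\]
An $\eps$-exhaustion (approximating $\var(\vartheta,E)$ and the hole-sum, then refining $P$) upgrades the left-hand side to $\var(\vartheta,E)+\Cd(\vartheta,E)+\sum_{I\in\Holes(E)}\cR(t,\vartheta(I^-))=\Cf(t,\vartheta,E)$, completing the argument.

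\emph{The hard part.} The genuinely delicate point is the residual bookkeeping in the ``if'' direction: proving that $\cR$ vanishes along $\vartheta$ at every accumulation point of $E$ — so that the residual contribution to the cost collapses onto the holes — is exactly where the finiteness of $\sum_s\cR(t,\vartheta(s))$, the closure assumption \mytag C1, and the stability of the minimal set $\rmM$ under energy convergence (Lemma~\ref{le:usefulR}\,iv)) must be used together; everything else is the additive telescoping of the transition cost through $(\star)$ and $(\dagger)$ and the elementary combinatorics of the holes of a compact subset of $\R$.
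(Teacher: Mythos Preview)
Your proof is correct and follows the same overall architecture as the paper's: both directions pass through the local energy identity $(\star)$ (the paper's \eqref{eq:196}) and finish the converse via a finite telescoping over holes using the relation $(\dagger)$ at each gap. The one place where your argument differs is precisely the step you flag as ``the hard part'': to show that $\cR(t,\vartheta(s))$ vanishes at every right accumulation point of $E$, the paper simply invokes Remark~\ref{rem:viscouspoint} (writing ``$E_\cR\subset H_-$''), whose hypothesis $\Cf(t,\vartheta,E)<\infty$ is however not yet available in the converse direction. You instead derive this directly from \eqref{eq:121} via Lemma~\ref{le:usefulR}\,iv), using the right-continuity of $g$ that you extract from \eqref{eq:120}; this makes the converse fully self-contained and is a genuine improvement over the paper's presentation.
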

\begin{proof}
  By the additivity property of $\Cf$ \eqref{eq:195} and 
  the energy inequality \eqref{eq:crinqualitytheta}
  it is easy to check that \eqref{eq:117} yields
  \begin{equation}
    \label{eq:196}
    \Cf(t,\vartheta,E\cap[a,b])=\cE(t,\vartheta(a))-\cE(t,\vartheta(b))\quad
    \text{for every }a,b\in E,\ a<b.
  \end{equation}
  Since $\var(\vartheta,E\cap[r_0,r_1])\le
  \Cf(t,\vartheta,E\cap[r_0,r_1])$ we get \eqref{eq:120}; 
  particularizing \eqref{eq:196} to the case of $a=r_E^-$, $b=r$, 
  we also get 
  \begin{equation}
    \label{eq:197}
    \sfD(\vartheta(r_E^-),\vartheta(r))+\Gs(t,\vartheta(r_E^-))=
    \cE(t,\vartheta(r_E^-))-\cE(t,\vartheta(r))
  \end{equation}
  showing that $\vartheta(r)\in \rmM(t,\vartheta(r_E^-))$ by 
  Lemma \ref{le:usefulR} i).
  Notice that when $r=r_E^-$ 
  \eqref{eq:196} simply yields $\cR(t,\vartheta(r))=0$,
  i.e.~$\vartheta(r)
  \in \SSD(t)$.

  In order to prove the converse implication, we fix $\eps>0$ and we
  consider 
  a finite subset $H\subset \Holes(E)$ such that 
  \begin{equation}
    \label{eq:198}
    \sum_{I\in H}\delta(\vartheta(I^-),\vartheta(I^+))\ge \Cd(t,\vartheta,E)-\eps,
  \end{equation}
  and let $H_\pm:=\{I^\pm:I\in H\}$. 
  Since $E_\cR
  \subset H_-
  $ by Remark \ref{rem:viscouspoint}, we can
  choose $H$ sufficiently big so that 
  \begin{equation}
    \label{eq:199}
    \sum_{I\in H}\cR(t,\vartheta(I^-))\ge \sum_{s\in E\setminus\{E^+\}}\cR(t,\vartheta(s))-\eps.
  \end{equation}
  Let us consider now an arbitrary finite part
  $F=\{E^-=s_0<s_1<\cdots<s_N=E^+\}\subset E$ 
  containing $H_-\cup H_+$ such that 
  \begin{equation}
  \sum_{n=1}^N \sfd(\vartheta(s_{n-1}),\vartheta(s_n))\ge
  \var(\vartheta,E)-\eps\label{eq:200}
\end{equation}
and let $k\mapsto n(k)$
  be an increasing sequence such that $H_-=\{s_{n(k)}:1\le k\le
  K\}$. Notice that for every $I\in H$ if $I^-=s_{n(k)}$ then
  $I^+=s_{n(k)+1}$, since $I^+\subset F$.
  Setting $n(0)=0$, $n(K+1)=N$,
  we have
  \begin{align*}
    &\cE(t,u(t-))-\cE(t,u(t+))
    =\sum_{k=0}^K \cE(t,\vartheta(s_{n(k)}))
      -\cE(t,\vartheta(s_{n(k+1)}))
      =
      \cE(t,\vartheta(s_0))-\cE(t,\vartheta(s_{n(1)}))\\&\qquad+
     \sum_{k=1}^{N}\Big(\cE(t,\vartheta(s_{n(k)}))
      -\cE(t,\vartheta(s_{n(k)+1}))\Big) 
      +\sum_{k=1}^{N}\Big(\cE(t,\vartheta(s_{n(k)+1}))-\cE(t,\vartheta(s_{n(k+1)}))\Big) 
    \\&\topref{eq:121}\ge 
        \var(\vartheta,E\cap[s_0,s_{n(1)}])+
        \sum_{k=1}^{N}\Gs(t,\vartheta(s_{n(k)}))+\sum_{k=1}^{N}\delta(\vartheta(s_{n(k)}),\vartheta(s_{n(k)+1}))
        \\&\qquad+\sum_{k=1}^{N}
        \sfd(\vartheta(s_{n(k)}),\vartheta(s_{n(k)+1}))
            +
            \sum_{k=1}^{N}\var(\vartheta,E\cap[s_{n(k)+1},s_{n(k+1)}])
        \\&   \ge
            \sum_{n=1}^N\sfd(\vartheta(s_{n-1}),\vartheta(s_n))
            +\sum_{k=1}^{N}\Gs(t,\vartheta(s_{n(k)}))
            +\sum_{k=1}^{N}\delta(\vartheta(s_{n(k)}),\vartheta(s_{n(k)+1}))
    \ge \Cf(t,\vartheta,E)-3\eps,
  \end{align*}
  where the last inequality results from \eqref{eq:198},
  \eqref{eq:199}, and \eqref{eq:200}. Since $\eps>0$ is arbitrary,
  by recalling \eqref{eq:crinqualitytheta} we
  get \eqref{eq:117}.
\end{proof} 
\begin{corollary}[Representation of optimal transitions of viscous type]
  We can always
  represent an optimal viscous transition between $u(t\leftl)$ and $u(t\rightl)$ as a finite or countable
  sequence $n\mapsto
  \vartheta(n)$ defined in a compact interval
  $Z$ of $\Z\cup\{\pm\infty\}$ satisfying 
  \begin{equation}
    \label{eq:122}
    \vartheta({n})\in \rmM(t,\vartheta(n-1))\quad
    \text{for every }n\in Z\setminus\{Z^-\},\quad
    \vartheta(Z^\pm)=u(t\pm),
  \end{equation}
  and the continuity conditions (whenever $\pm\infty\in Z$)
  \begin{equation}
    \label{eq:124}
    \begin{gathered}
      \lim_{n\to\pm\infty}\vartheta(n)=u(t\pm),\quad
      \lim_{n\to-\infty}\sfd(u(t-),\vartheta(n))=0,\quad
      \lim_{n\to+\infty}\sfd(\vartheta(n),u(t+))=0,\\
      \lim_{n\down-\infty}\cE(t,\vartheta(n))=\cE(t,u(t-)),
      \quad
      \lim_{n\up+\infty}\cE(t,\vartheta(n))=\cE(t,u(t+)).
    \end{gathered}
  \end{equation}
\end{corollary}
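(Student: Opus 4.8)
The plan is to show that the hypothesis ``viscous'' forces the domain $E$ to be almost discrete, so that the desired sequential representation is nothing but the canonical parametrization of an almost discrete compact set, and then to read off \eqref{eq:122} and \eqref{eq:124} from the characterization \eqref{eq:120}--\eqref{eq:121} of optimal transitions proved above, together with the regularity built into $\vartheta\in\rmC_{\sigma,\sfd}(E;X)$ and the exact energy balance \eqref{eq:196}.

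First I would record that the transition has finite cost. Since $t\in\Ju$ and $u$ is a VE solution, the jump conditions \eqref{Jve} give $\Fd(t,u(t\leftl),u(t\rightl))=\cE(t,u(t\leftl))-\cE(t,u(t\rightl))<\infty$, hence $\Cf(t,\vartheta,E)=\Fd(t,u(t\leftl),u(t\rightl))<\infty$ and in particular $\var(\vartheta,E)<\infty$, $\Cd(\vartheta,E)<\infty$ and $\sum_{s\in E\setminus\{E^+\}}\cR(t,\vartheta(s))<\infty$. Because $\vartheta$ is viscous, $\cR(t,\vartheta(r))>0$ for every $r\in E\setminus\{E^-,E^+\}$, so $E\setminus\{E^-,E^+\}\subseteq E_\cR$ with $E_\cR$ as in \eqref{eq:118}; since \mytag C1 holds, Remark \ref{rem:viscouspoint} applies and $E_\cR$ is discrete, whence $E\setminus\{E^-,E^+\}$ is discrete and $E$ is almost discrete. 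Using the parametrization of almost discrete compact sets recalled above, I fix an order isomorphism $n\mapsto e_n$ of a compact interval $Z\subseteq\Z\cup\{\pm\infty\}$ onto $E$, continuous at $\pm\infty$ when they belong to $Z$, with $e_{Z^-}=E^-$ and $e_{Z^+}=E^+$, and relabel (abusing notation) $\vartheta(n):=\vartheta(e_n)$. For a finite index $n\in Z\setminus\{Z^-\}$ the point $e_{n-1}$ immediately precedes $e_n$ in $E$, so $(e_n)^-_E=e_{n-1}$ in the notation \eqref{eq:119}, and \eqref{eq:121} (applicable since $e_n\neq E^-$) gives $\vartheta(n)=\vartheta(e_n)\in\rmM(t,\vartheta((e_n)^-_E))=\rmM(t,\vartheta(n-1))$; the boundary identities $\vartheta(Z^\pm)=\vartheta(E^\pm)=u(t\pm)$ are immediate. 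This establishes \eqref{eq:122}.

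It then remains to prove the limit relations \eqref{eq:124} at the points $\pm\infty\in Z$. Assume $+\infty\in Z$, so that $e_n\uparrow E^+$. The $\sigma$-continuity of $\vartheta$ on $E$ gives $\vartheta(e_n)\to\vartheta(E^+)=u(t\rightl)$ in $\sigma$, and \eqref{eq:46} (with $s_0=e_n$, $s_1=E^+$) gives $\sfd(\vartheta(e_n),u(t\rightl))\to0$; the $-\infty$ case is symmetric. For the energies I would invoke the exact balance \eqref{eq:196} — valid here because $\vartheta$ is optimal and \eqref{Jve} holds — which with $a=e_n$, $b=E^+$ reads $\cE(t,\vartheta(e_n))=\cE(t,u(t\rightl))+\Cf(t,\vartheta,E\cap[e_n,E^+])$, so it suffices to check $\Cf(t,\vartheta,E\cap[e_n,E^+])\to0$. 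Its three contributions are all ``tails'': $\var(\vartheta,E\cap[e_n,E^+])=V_\vartheta(E^+)-V_\vartheta(e_n)\to0$ because $V_\vartheta$ is continuous (the uniform continuity \eqref{eq:46} rules out $\sfd$-jumps of $\vartheta$, hence jumps of $V_\vartheta$); while $\Cd(\vartheta,E\cap[e_n,E^+])$ and $\sum_{s\in E\cap[e_n,E^+)}\cR(t,\vartheta(s))$ are tails of the absolutely convergent series $\Cd(\vartheta,E)$ and $\sum_{s\in E\setminus\{E^+\}}\cR(t,\vartheta(s))$ over index sets shrinking to $\emptyset$. Hence $\cE(t,\vartheta(n))\to\cE(t,u(t\rightl))$; the symmetric argument with $a=E^-$, $b=e_n$ in \eqref{eq:196} gives $\cE(t,\vartheta(n))\to\cE(t,u(t\leftl))$ as $n\to-\infty$. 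Together with the convergences already established this is exactly \eqref{eq:124}.

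The only genuinely delicate point is the energy convergence in \eqref{eq:124}: since $\cE(t,\cdot)$ is merely $\sigma$-lower semicontinuous, the $\sigma$-convergence $\vartheta(e_n)\to u(t\pm)$ does not by itself yield convergence of the energies. The remedy is to use that $\vartheta$ is \emph{optimal}, so that the energy decrement along $\vartheta$ coincides exactly with the accumulated transition cost \eqref{eq:196}, and to combine this with the finiteness $\Cf(t,\vartheta,E)<\infty$ and the continuity of $V_\vartheta$: these force the cost — and hence the energy decrement — over an arbitrarily short initial or terminal portion of $E$ to be negligible. Everything else is bookkeeping about the order isomorphism $n\mapsto e_n$ and direct transcription of \eqref{eq:121}.
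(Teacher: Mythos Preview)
Your proof is correct and follows the route the paper implicitly intends: the corollary is stated without proof, and your argument supplies exactly the details the authors leave to the reader, namely that viscosity forces $E\setminus\{E^-,E^+\}\subset E_\cR$, so Remark~\ref{rem:viscouspoint} makes $E$ almost discrete, after which \eqref{eq:122} is a direct transcription of \eqref{eq:121} through the order isomorphism, and \eqref{eq:124} is read off from $\vartheta\in\rmC_{\sigma,\sfd}(E;X)$ together with the exact balance \eqref{eq:196}. One small wording issue: you write that ``$V_\vartheta$ is continuous'' to justify $\var(\vartheta,E\cap[e_n,E^+])\to0$, but $V_\vartheta$ is not continuous at the interior (isolated) points of $E$; what you need, and what your argument actually delivers, is left-continuity of $V_\vartheta$ at $E^+$ (and right-continuity at $E^-$), which follows because $\var(\vartheta,E\cap[e_n,E^+])=\sum_{k\ge n}\sfd(\vartheta(e_k),\vartheta(e_{k+1}))$ is the tail of a convergent series (here \eqref{eq:46} is indeed essential to rule out a hidden jump at $E^+$).
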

An optimal transition $\vartheta$ 
can be decomposed in a canonical way into (at most countable)
collections of sliding and pure jump transitions.
\begin{proposition} Let 
$\vartheta\in \rmC_{\sigma,\sfd}(E,X)$ be an
 optimal transition
between $u_-$ and $u_+$. 
Then there exist disjoint closed intervals $(S_j)_{j\in \sigma}$ 
and almost discrete compact sets $\{V_k\}_{k\in \nu}$, with $\sigma,\nu\subset \N$, such that
\begin{equation}
E=(\cup_{j\in\sigma} S_j)\cup
\overline{(\cup_{k\in\nu}V_k)}\label{eq:125}
\end{equation}
and 
\begin{equation}
\vartheta_{|S_j}\text{ is of sliding type,}\quad\vartheta_{|V_k}\text{
  is of pure jump type}.\label{eq:126}
\end{equation}
\end{proposition}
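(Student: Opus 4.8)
The plan is to read the decomposition off the scalar function $g:E\to[0,\infty]$, $g(r):=\Gs(t,\vartheta(r))$. Since $\vartheta\in\rmC_{\sigma,\sfd}(E;X)$ is an optimal transition between $u_-$ and $u_+$, it has finite cost $\Cf(t,\vartheta,E)=\Fd(t,u_-,u_+)<\infty$; by the elementary a priori bounds for finite-cost transitions (the energy inequality \eqref{eq:crinqualitytheta} applied on subintervals, together with the definition of $\cF$) the curve $r\mapsto(t,\vartheta(r))$ is $\sigma_\R$-continuous with values in a sublevel of $\cF$, so \mytag C1, in the equivalent form \eqref{eq:134}, makes $g$ lower semicontinuous on $E$. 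Moreover $g\ge0$ and, by Remark~\ref{rem:viscouspoint}, the set $E_\cR=\{r\in E\setminus\{E^+\}:g(r)>0\}$ is discrete (each of its points isolated); since $\sum_{s\in E\setminus\{E^+\}}g(s)<\infty$, the same lower semicontinuity argument shows that the full set of unstable parameters $\{r\in E:g(r)>0\}$ is discrete as well.

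First I would isolate the sliding part. For each $r\in E$ with $g(r)=0$ let $[a(r),b(r)]$ be the largest interval with $r\in[a(r),b(r)]\subseteq E$ and $g\equiv0$ on it; lower semicontinuity of $g$ and closedness of $E$ ensure this largest interval exists and is closed. Let $\{S_j\}_{j\in\sigma}$, $\sigma\subseteq\N$, be those among the intervals $[a(r),b(r)]$ that are nondegenerate. Any two such maximal sliding intervals are equal or disjoint (a common endpoint would merge them into a strictly larger one), so the $S_j$ form an at most countable family of pairwise disjoint closed intervals with $\Gs(t,\vartheta(\cdot))\equiv0$ on each of them; thus every $\vartheta_{|S_j}$ is a sliding transition. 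Set
\[
C:=E\setminus\bigcup_{j\in\sigma}\interior(S_j),
\]
which is relatively closed in $E$, hence compact, and satisfies $E=\bigcup_jS_j\cup C$. The step I expect to be the main obstacle is the claim that $C$ is totally disconnected: if some connected component of $C$ were a nondegenerate interval $[c,c']$, it would be a genuine subinterval of $E$; any $d\in[c,c']$ with $g(d)>0$ is isolated among the unstable parameters, so $g$ vanishes on a punctured (one- or two-sided) neighbourhood of $d$ inside $[c,c']$, contradicting lower semicontinuity of $g$ at $d$. Hence $g\equiv0$ on $[c,c']$, so $[c,c']$ is a sliding interval and its interior was removed in forming $C$, a contradiction. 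Therefore every connected component of $C$ is a single point.

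Finally I would cover the totally disconnected compact set $C$ by countably many almost discrete compacta: for each $k\in\N$, using compactness, choose a finite $1/k$-net $V_k\subseteq C$ of $C$. Then $\bigcup_{k\in\nu}V_k$, with $\nu\subseteq\N$, is dense in $C$, so $\overline{\bigcup_kV_k}=C$; each $V_k$ is finite, hence compact and almost discrete ($V_k\setminus\{V_k^-,V_k^+\}$ is finite), and the restriction $\vartheta_{|V_k}\in\rmC_{\sigma,\sfd}(V_k;X)$ is trivially a pure jump transition. Assembling,
\[
E=\Big(\bigcup_{j\in\sigma}S_j\Big)\cup C=\Big(\bigcup_{j\in\sigma}S_j\Big)\cup\overline{\Big(\bigcup_{k\in\nu}V_k\Big)},
\]
which is the asserted decomposition. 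Only the sliding intervals $S_j$ are canonically determined by $\vartheta$; the family $\{V_k\}$ is not unique, and any countable dense collection of almost discrete subsets of $C$ — for instance, at the $k$-th stage finitely many endpoints of holes of $E$ together with the unstable parameters — serves equally well.
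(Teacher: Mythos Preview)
Your proof is correct and gives a valid decomposition satisfying the proposition, but the construction of the jump pieces $V_k$ differs genuinely from the paper's.

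The sliding intervals $S_j$ in both constructions coincide: since every $r\in E$ with $\Gs(t,\vartheta(r))>0$ is in fact isolated \emph{in $E$} (this is the full content of Remark~\ref{rem:viscouspoint}, not merely isolation in $E_\cR$), any nondegenerate interval contained in $E$ automatically has $g\equiv 0$. Hence your ``maximal sliding intervals'' are simply the maximal nondegenerate subintervals of $E$, which are exactly the paper's $S_j$ (the closures of the connected components of $(E^-,E^+)\setminus\overline{\bigcup_{I\in\Holes(E)}I}$). Incidentally, this observation lets you shortcut your total-disconnectedness argument: a point with $g>0$ being isolated in $E$ already contradicts lying in a nondegenerate subinterval of $E$, without the detour through the punctured neighbourhood and lsc.

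The real difference is in handling the complement $C=E\setminus\bigcup_j\interior(S_j)$. You prove $C$ is compact and totally disconnected and then cover it by arbitrary finite $1/k$-nets $V_k$; these are finite, hence trivially almost discrete, and the pure-jump property is automatic. The paper instead takes the set $E_0=\bigcup_{I\in\Holes(E)}\{I^-,I^+\}$ of hole endpoints and partitions it into maximal almost-discrete blocks $V_k=E\cap[r^-_{E_1},r^+_{E_1}]$ (with $E_1=E\setminus E_0$); then $\overline{\bigcup_k V_k}=\overline{E_0}$ plays the role of your $C$. So the paper's $V_k$ are canonical, determined purely by the topology of $E$, and may be infinite; yours are finite but non-canonical. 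Your route is more elementary and makes the total disconnectedness of $C$ explicit, while the paper's buys a structurally cleaner decomposition in which each $V_k$ is a maximal pure-jump block rather than an ad hoc finite sample.
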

\begin{proof}
We set for every $r\in [E^-,E^+]$
$$E_0:=\bigcup_{I\in 
\Holes(E)}
\{I^-,I^+\},\quad
E_1:=E\setminus {E_0},\quad 
  E_{0}
  (r):=E\cap [r^-_{E_1
  },r^+_{E_1
  }].
$$
Notice that $E_0$ contains all the isolated points of $E$
(in particular it contains $E_\cR$). 
If $r\in E_0$, $E_0
(r)$ is the closure of the ``maximal component'' of
$E_0
$ containing $r$,
in the sense that all the other points of $E_0
$ are separated
from $r$ by some accumulation point in $E_1
$. 
The restriction of $\vartheta$ to $E_0
(r)$ is of pure jump type 
and $E_0(r)$ is almost discrete.
 
We first decompose $E_0$ in the disjoint countable union of $\cup_{k\in
  \nu}V_k\cap E_0$
where $V_k$ is of the form $E_0(r)$ for some $r\in E_0$.
We then set 
\[
V=\overline {E_0}
,\quad S:=E\setminus (V\cup\{E^\pm\}),
\]
observing that $$ S=(E^-,E^+)\setminus 
\overline{\bigcup_{I\in \Holes(E)} I}.$$
We can now decompose the set $S$, open in $\R$, as the 
disjoint union of its connected components $(a_j,b_j)$, $j\in \sigma$, 
and we set $S_j:=[a_j,b_j]$ obtaining
\eqref{eq:125}. Since $E_\cR\subset E_0$ 
we also get \eqref{eq:126}.
\end{proof}
As we have seen in Remark \ref{rem:viscouspoint}, if an optimal
transition $\vartheta:E\rightarrow X$ 
is of viscous type, then the set $E\setminus \{E^- ,E^+ \}$ is
discrete. 
In general it may happen that $E$ is homeomorphic to 
a finite set of $\Z$ or to infinite intervals of the form
$\{-\infty\}\cup-\N$, $\N\cup\{+\infty\}$ or even to $\Z\cup\{\pm\infty\}$.
We can be more precise in the case when 
the functional
\begin{equation}
u\mapsto \cE(t,u)+\sfD(u_0,u)\quad 
\text{admits a unique minimizer in $X$ for every $u_0\in X$.}\label{eq:127}
\end{equation}
%
%
This happens, e.g.~, if $X$ is a linear space and we choose
a sufficiently strong viscous correction $\delta$ so that the map
$u\mapsto\cE(t,u)+\sfD(u_0,u)$ is strictly convex.
\begin{proposition} 
  Let 
$\vartheta:E\rightarrow X$ be a tight optimal transition between $u_-$ and
$u_+$. \\[4pt]
  i) If the energy and the dissipation satisfy 
  \eqref{eq:127} 
 then every $r\in E\setminus (E_\cR\cup\{E^+\})$ 
(in particular $r=E^-$ when
$u_-$ is stable) is a right accumulation point of $E$, i.e.~%
there exists a sequence $r_k\in E\cap(r,\infty)$ such that
$r_k\downarrow r$.\\[4pt]
ii) If $X$ is a vector space, $\sfd$ is the distance induced by
a norm on $X$, $\delta(u,v):=\frac \mu 2\sfd^2(u,v)$ as in
\eqref{eq:9}
and $\cE$ is Gateaux differentiable in $X$ then 
every $r\in E\setminus (E_\cR\cup\{E^-\})$ 
(in particular $r=E^+$ when
$u_+$ is stable) is a left accumulation point of $E$, i.e.~%
there exists a sequence $r_k\in E\cap(-\infty,r)$ such that
$r_k\uparrow r$.
\end{proposition}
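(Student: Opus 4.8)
The plan is to prove both statements by contradiction, using the characterization \eqref{eq:121} of a tight optimal transition $\vartheta\colon E\to X$ between $u_-$ and $u_+$ (at the time $t$ underlying the cost $\sfc$): $\vartheta(r)\in\rmM(t,\vartheta(r^-_E))$ for every $r\in E\setminus\{E^-\}$, together with tightness, i.e.~$\vartheta(I^-)\neq\vartheta(I^+)$ for every hole $I\in\Holes(E)$. The key elementary remark is: if $r\in E$, $r\neq E^+$, is \emph{not} a right accumulation point of $E$, then $(r,r^+_E)$ is a hole of $E$ whose left endpoint is $r$ and whose right endpoint $r^+_E\in E$ has $r$ as its $E$-predecessor, so \eqref{eq:121} gives $\vartheta(r^+_E)\in\rmM(t,\vartheta(r))$; symmetrically, if $r\in E$, $r\neq E^-$, is not a left accumulation point, then $(r^-_E,r)\in\Holes(E)$ and $\vartheta(r)\in\rmM(t,\vartheta(r^-_E))$ with $\vartheta(r^-_E)\neq\vartheta(r)$. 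In each case $\cR(t,\vartheta(r))=0$, equivalently $\vartheta(r)\in\SSD(t)$ (see \eqref{eq:63}): this holds by the very definition of $E_\cR$ as soon as $r\neq E^+$, and is precisely the extra hypothesis stated in the parenthetical when $r$ is an endpoint.

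\emph{Part i).} Suppose some $r\in E\setminus(E_\cR\cup\{E^+\})$ fails to be a right accumulation point. Since $\vartheta(r)\in\SSD(t)$, choosing $y=\vartheta(r)$ in the stability inequality shows that $\vartheta(r)$ itself attains $\min_{y}\big(\cE(t,y)+\sfD(\vartheta(r),y)\big)=\cE(t,\vartheta(r))$, hence $\vartheta(r)\in\rmM(t,\vartheta(r))$. By the uniqueness hypothesis \eqref{eq:127}, $\rmM(t,\vartheta(r))$ is a singleton; combined with $\vartheta(r^+_E)\in\rmM(t,\vartheta(r))$ from the remark above, this forces $\vartheta(r^+_E)=\vartheta(r)$, contradicting tightness of $\vartheta$ along the hole $(r,r^+_E)$. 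In particular this covers $r=E^-$ when $u_-$ is stable.

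\emph{Part ii).} Suppose some $r\in E\setminus(E_\cR\cup\{E^-\})$ fails to be a left accumulation point; set $w:=\vartheta(r^-_E)$ and $v:=\vartheta(r)$, so that $v\in\rmM(t,w)$, $v\neq w$ and $\cR(t,v)=0$. Write $\|\cdot\|$ for the norm inducing $\sfd$. On the one hand, testing the stability inequality $\cE(t,v)\le\cE(t,v+sh)+\sfd(v,v+sh)+\tfrac\mu2\sfd^2(v,v+sh)$ for $s\downarrow0$ and using the Gateaux differentiability of $\cE$ at $v$ yields the local stability bound $\la\rmD\cE(t,v),h\ra\ge-\|h\|$ for every $h\in X$. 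On the other hand $v$ minimises $y\mapsto\cE(t,y)+\sfd(w,y)+\tfrac\mu2\sfd^2(w,y)$; evaluating its (finite) one-sided directional derivative at $v$ in the direction $w-v$ — along which $s\mapsto\|w-v-s(w-v)\|=(1-s)\|w-v\|$ is affine for $s\in[0,1)$, so no smoothness of the norm is needed — gives $\la\rmD\cE(t,v),w-v\ra-\|w-v\|-\mu\|w-v\|^2\ge0$. Feeding $h=v-w$ into the first inequality and rearranging the second, we obtain $-\|v-w\|\le\la\rmD\cE(t,v),v-w\ra\le-\|v-w\|-\mu\|v-w\|^2$, hence $\mu\|v-w\|^2\le0$; since $\mu>0$ this gives $v=w$, a contradiction. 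The case $r=E^+$ with $u_+$ stable is included.

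\emph{Main obstacle.} Part i) is soft once the hole/predecessor bookkeeping is set up, the only care being the status of the endpoints $E^\pm$ relative to $E_\cR$. The genuine work is the first-order computation in part ii): one must extract, at the stable point $v$, that the energy can decrease along any ray at rate at most $1$ (the local stability bound), whereas $v$ being a full viscous incremental step from $w\neq v$ forces the energy to have decreased along the segment $[w,v]$ at a rate strictly larger than $1$. What makes this harmless even for a non-differentiable norm is to test exactly in the direction of that segment, where $\sfd(w,\cdot)$ and $\tfrac\mu2\sfd^2(w,\cdot)$ are, respectively, affine and quadratic, so that everything reduces to difference quotients of $\cE$, which converge by Gateaux differentiability.
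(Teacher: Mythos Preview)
Your argument for part i) is essentially identical to the paper's: if $r$ is not a right accumulation point then $(r,r^+_E)\in\Holes(E)$, stability of $\vartheta(r)$ gives $\vartheta(r)\in\rmM(t,\vartheta(r))$, \eqref{eq:121} gives $\vartheta(r^+_E)\in\rmM(t,\vartheta(r))$, and uniqueness \eqref{eq:127} contradicts tightness. Your handling of the endpoint cases (the $E^\pm$ issue with the definition of $E_\cR$) is also correct.

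For part ii) your proof is correct but genuinely different from the paper's. The paper works through convex subdifferential calculus: it introduces the duality map $N=\partial(\tfrac12\|\cdot\|^2)$ and the dual unit ball $K_*$, obtains from stability that the Gateaux derivative $\xi$ of $\cE$ at $\vartheta(r)$ lies in $K_*$, and from the minimality of $\vartheta(r)$ in $\rmM(t,\vartheta(s))$ the Euler inclusion $\xi\in N(\vartheta(r)-\vartheta(s))/\|\vartheta(r)-\vartheta(s)\|+\mu\,N(\vartheta(r)-\vartheta(s))$; the contradiction then comes from the norm identity $\|N(x)\|_*=\|x\|$, which is incompatible with $\|\xi\|_*\le 1$. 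Your route bypasses the duality apparatus entirely by testing both the stability and the minimality conditions in the single direction of the segment $[w,v]$; along that segment the distance terms $\sfd(w,\cdot)$ and $\tfrac\mu2\sfd^2(w,\cdot)$ are affine and quadratic, so only the Gateaux differentiability of $\cE$ is needed to pass to the limit, and you land on the scalar inequality $\|w-v\|+\mu\|w-v\|^2\le\|w-v\|$. This is more elementary and makes transparent that no smoothness of the norm is required. The paper's approach, by contrast, exhibits the full first-order optimality system, which is a bit more informative structurally but needs the subdifferential machinery.
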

\begin{proof}
Let us consider i) and let us suppose by contradiction 
that there exists $s\in E$ such that $(r,s)\in \Holes(E).$
Since $\vartheta(r)\in\SSD(t)$ 
we have $\vartheta(r)\in \rmM(t,\vartheta(r))$;
on the other hand, \eqref{eq:121} yields $\vartheta(s)\in
\rmM(t,\vartheta(r))$ so that \eqref{eq:127} yields
$\vartheta(r)=\vartheta(s)$
which contradicts the tightness of $\vartheta$.

Concerning ii) we still argue by contradiction
assuming that $(s,r)\in \Holes(E)$. 
We denote by $\xi\in X^*$ the unique element 
of the Gateaux subdifferential of $\cE(t,\vartheta(r))$,
by $N$ the subdifferential of $\frac 12\|\cdot\|_X^2$ and by 
$K_*$ the dual unitary ball of $X^*$.
It is not difficult to check that
\begin{equation}
  \label{eq:128}
  \xi\in K_*,\quad
  \frac{N(\vartheta(r)-\vartheta(s))}{\|\vartheta(r)-\vartheta(s)\|}+
  \mu\,N(\vartheta(r)-\vartheta(s))\ni \xi,
\end{equation}
so that we 
obtain
\begin{displaymath}
  (1+\mu\|\vartheta(r)-\vartheta(s)\|) \|N(\vartheta(r)-\vartheta(s))\|_*\le {\|\vartheta(r)-\vartheta(s)\|}
\end{displaymath}
which contradicts the fact that 
$\|N(x)\|=\|x\|$.
\end{proof}
\begin{remark} 
\upshape
When $\cE$ is nonsmooth 
a jump from {a non-stable point to a stable one }may happen 
even with the assumption of strict convexity of the functional
$\mathcal{E}+D$.
 For instance, we can consider the example
\[
X=\R,\quad \cE(t,u)=a|u|,\quad d(u,v)=|u-v|,\quad \delta(u,v)=\frac1{2}|u-v|^2.
\]
If $a>1$, it is immediate to check that $\SSD(t)=\{0\}$ for every $t$. 
If we start from a point $u_-\in (0,a-1)$ 
then $u_+=0$ belongs to $\rmM(t,u_-)$ and it is also a stable point.
\end{remark}

\section{Examples}
\label{sec:examples}
In this section we will discuss some applications of Theorem
\ref{thm:existence} about existence of Visco-Energetic solutions. 
Let us first recall that once Assumption \mytag A{} holds
and
\begin{equation}
  \label{eq:154}
  \sfD\text{ is left continuous on the sublevels of $\cF_0$ (see 
    \eqref{eq:150})
    and $\sfd$ separates $X$,}
\end{equation}
%
conditions \mytag B1, \mytag B2, \mytag C1, \mytag C2 are 
automatically satisfied, so that one can just focus on 
the verification of the canonical compactness-regularity conditions
\begin{equation}
\text{\mytag A{} and on the compatibility condition
\mytag B3.}
\label{eq:132}
\end{equation}
The latter is also satisfied if $\delta(u,v)$ is a
function of $\sfd$ as in \eqref{eq:10}.
\subsection{The convex case}
\label{subsec:convex}
Let us first consider the case when $X$ is a convex subset of a 
vector space 
$V$ and $\sfd$ is induced by a
convex, positively $1$-homogeneous
functional $\psi:V\to [0,+\infty)$. 
\begin{proposition}
  \label{prop:E=VE}
  If $\sfd(x,y):=\psi(y-x)$ for every $x,y\in X$
  and 
  the map
  $x\mapsto\cE(t,x)$ is convex in $X$ for every $t\in
  [0,T]$, 
  we have
  \begin{enumerate}[(i)]
  \item 
    If $u_-\in \SS_\sfd(t)$ and $u_+\in X$ satisfy the energetic
    jump condition $\cE(t,u_+)+\psi(u_+-u_-)=\cE(t,u_-)$
    then $\sfc(t,u_-,u_+)=\sfd(u_-,u_+)$.
    \item If the viscous correction $\delta$ satisfies
    \begin{equation}
      \label{eq:28}
      \lim_{\theta\down0}\frac{\delta(u,(1-\theta)u+\theta
        v)}{\theta}=0\quad
      \text{for every }u,v\in X
    \end{equation}
    then $\SSD=\SS_\sfd$.
  \end{enumerate}
  In particular any energetic solution $u\in \BV{\sfd}([0,T];X)$ 
  of
  $(X,\cE,\sfd)$ is a {\em VE} solution of $(X,\cE,\sfd,\delta)$
  and
  if \eqref{eq:28} holds any {\em VE} solution $u\in \BV{\sfd}([0,T];X)$ 
  of $(X,\cE,\sfd,\delta)$ is an energetic solution of $(X,\cE,\sfd)$.
\end{proposition}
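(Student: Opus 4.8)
The plan is to establish (i) and (ii) first and then read off the two equivalences using the jump conditions \eqref{eq:Jener}, \eqref{Jve} and the triangle inequality for $\sfd$. Three elementary facts will be used throughout: since $\sfD\ge\sfd$ one has $\SS_\sfd(t)\subseteq\SSD(t)$ and $\sfc(t,\cdot,\cdot)\ge\sfd(\cdot,\cdot)$; a convex, positively $1$-homogeneous $\psi$ is subadditive, so $\sfd(x,z)\le\sfd(x,y)+\sfd(y,z)$ and $\psi\big((y-v)+\lambda(w-v)\big)\le\psi(y-v)+\lambda\psi(w-v)$ for $\lambda\ge0$.

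For (i) I would use the affine transition $\vartheta\colon[0,1]\to X$, $\vartheta(r):=(1-r)u_-+r\,u_+$, which is well defined by convexity of $X$ and lies in $\rmC_{\sigma,\sfd}([0,1];X)$ (the $\sfd$-continuity \eqref{eq:46} is immediate from $1$-homogeneity of $\psi$, and $\sigma$-continuity of affine segments is automatic in the metric setting of Remark~\ref{rem:metric} and part of the convex structure otherwise). Since $[0,1]$ has no holes, $\Cd(\vartheta,[0,1])=0$, and the identity $\vartheta(t_j)-\vartheta(t_{j-1})=(t_j-t_{j-1})(u_+-u_-)$ together with $1$-homogeneity shows every partition sum equals $\psi(u_+-u_-)$, so $\var(\vartheta,[0,1])=\sfd(u_-,u_+)$. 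The crucial point is that the whole segment sits in the stable set: convexity of $\cE(t,\cdot)$ and the energetic jump identity $\cE(t,u_+)+\psi(u_+-u_-)=\cE(t,u_-)$ give $\cE(t,\vartheta(r))\le\cE(t,u_-)-r\,\psi(u_+-u_-)$, and combining this with $u_-\in\SS_\sfd(t)$ and the subadditivity bound $\psi(y-u_-)\le\psi(y-\vartheta(r))+r\,\psi(u_+-u_-)$ yields $\cE(t,\vartheta(r))\le\cE(t,y)+\sfd(\vartheta(r),y)$ for every $y\in X$, i.e.\ $\vartheta(r)\in\SS_\sfd(t)\subseteq\SSD(t)$ and hence $\cR(t,\vartheta(r))=0$. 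Therefore $\Cf(t,\vartheta,[0,1])=\sfd(u_-,u_+)$, and with $\sfc\ge\sfd$ this forces $\sfc(t,u_-,u_+)=\sfd(u_-,u_+)$.

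For (ii) only $\SSD\subseteq\SS_\sfd$ needs argument. Given $(t,x)\in\SSD$ and $y\in X$, I would test $\sfD$-stability of $x$ against $y_\theta:=(1-\theta)x+\theta y\in X$, $\theta\in(0,1]$, obtaining $\cE(t,x)\le\cE(t,y_\theta)+\theta\,\sfd(x,y)+\delta(x,y_\theta)$ (using $1$-homogeneity for $\sfd(x,y_\theta)=\theta\,\sfd(x,y)$). Bounding $\cE(t,y_\theta)\le(1-\theta)\cE(t,x)+\theta\cE(t,y)$ by convexity, subtracting $(1-\theta)\cE(t,x)$, dividing by $\theta$ and letting $\theta\downarrow0$ — at which point $\delta(x,y_\theta)/\theta\to0$ by \eqref{eq:28} — gives $\cE(t,x)\le\cE(t,y)+\sfd(x,y)$; since $y$ is arbitrary, $(t,x)\in\SS_\sfd$, so $\SSD=\SS_\sfd$.

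For the two implications, let $u\in\BV{\sfd}([0,T];X)$, which under the standing closure hypotheses is $(\sigma,\sfd)$-regulated. If $u$ is energetic, then $u(t)\in\SS_\sfd(t)\subseteq\SSD(t)$ for all $t$, giving \eqref{stability}; and at each $t\in\Ju$ the values $u(t\leftl)\in\SS_\sfd(t)$ (closedness of $\SS_\sfd$, \eqref{eq:18}) and $u(t)\in\SS_\sfd(t)$, together with \eqref{eq:Jener}, are exactly the hypotheses of (i) for the pairs $(u(t\leftl),u(t))$ and $(u(t),u(t\rightl))$, so $\sfc=\sfd$ there; hence every $\Delta_\sfc$-correction vanishes, $\varC(u,\cdot)=\var(u,\cdot)$, and \eqref{ensolbalance} becomes \eqref{energybalance}. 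Conversely, assume \eqref{eq:28} and let $u$ be a VE solution; by (ii) $u(t)\in\SS_\sfd(t)$ for $t\notin\Ju$ and, by closedness of $\SSD$ and (ii) again, $u(t\leftl)\in\SS_\sfd(t)$ for every $t$; feeding $u(t\leftl)\in\SS_\sfd(t)$ (the stable source) into \eqref{Jve} forces, via $\sfc\ge\sfd$ and the triangle inequality, the equalities $\sfc=\sfd$ at every jump, hence both the energetic jump conditions \eqref{eq:Jener} and $\varC=\var$, so \eqref{energybalance} becomes \eqref{ensolbalance}; finally the one-line estimate $\cE(t,u(t))=\cE(t,u(t\leftl))-\sfd(u(t\leftl),u(t))\le\cE(t,y)+\sfd(u(t\leftl),y)-\sfd(u(t\leftl),u(t))\le\cE(t,y)+\sfd(u(t),y)$ upgrades stability to the jump points, so $u$ is energetic (the endpoints $t=0,T$ are handled identically, with the proviso, needed for the converse, that $u(0)$ be $\sfd$-stable, i.e.\ $0\notin\Ju$). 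I expect the main obstacle to be the core of (i): producing the optimal transition and, above all, checking that the whole connecting segment stays stable, where convexity of $\cE$, the energetic jump identity and subadditivity of $\psi$ must be combined precisely; once (i) is in hand, (ii) and both implications are bookkeeping with the jump conditions and the triangle inequality.
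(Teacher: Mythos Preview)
Your argument is correct, and for part (i) it is genuinely simpler than the paper's. The paper establishes stability of the segment $\vartheta(r)=(1-r)u_-+r\,u_+$ by invoking the Mazur--Orlicz form of Hahn--Banach to produce a linear functional $\ell\le\psi$ with $\ell(w-u_-)\ge\cE(t,u_-)-\cE(t,w)$ for all $w$, and then uses $\ell$ to separate; this also yields the \emph{equality} $\cE(t,\vartheta(s))=\cE(t,u_-)-s\psi(u_+-u_-)$ along the segment. You bypass Hahn--Banach entirely: convexity of $\cE(t,\cdot)$ together with the jump identity already gives the inequality $\cE(t,\vartheta(r))\le\cE(t,u_-)-r\psi(u_+-u_-)$, and that inequality (not the equality) is all that is needed, since subadditivity of the $1$-homogeneous convex $\psi$ transfers $\sfd$-stability from $u_-$ to $\vartheta(r)$ directly. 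Your route is therefore more elementary; the paper's route gives the extra affine-along-the-segment information, which is not used here.

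For part (ii) the two arguments are identical. For the final implications you spell out more than the paper does (the paper only writes out the Energetic $\Rightarrow$ VE direction in detail and reduces the converse to $\SSD\subset\SS_\sfd$). Your triangle-inequality trick for the converse---combining $u(t\leftl)\in\SS_\sfd(t)$ with the three identities of \eqref{Jve} to force $\sfc=\sfd$ for all three jump pairs in one stroke---is clean and avoids the bootstrapping one might otherwise expect (first get $\sfc=\sfd$ on $[u(t\leftl),u(t)]$, then deduce $u(t)\in\SS_\sfd(t)$, then treat $[u(t),u(t\rightl)]$). One small ordering issue in your write-up: you announce ``$\sfc=\sfd$ at every jump'' before the stability upgrade, whereas logically the second jump equality and the stability upgrade are obtained simultaneously via the triangle inequality; just reorder those two sentences for clarity.
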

\begin{proof}
  Let us first suppose that $u$ is an energetic solution.
  Since the ``energetic'' stability condition \eqref{enstability} is stronger 
  than the corresponding ``Visco-Energetic'' one \eqref{stability},
  it is sufficient to check that 
  $u$ satisfies the Visco-Energetic balance condition
  \eqref{energybalance}; since $u$ satisfies \eqref{eq:110} it is
  sufficient to check that \eqref{Jve} holds. 
  
  Thus let $u_-,u_+\in \SS_\sfd(t)$ with
  $\cE(t,u_+)+\psi(u_+-u_-)=\cE(t,u_-)$. 
  We consider the convex subset of $V\times \R$
  $$K:=\big\{(v,z)\in V\times \R: u_-+v\in
  X,\ z\le
  \cE(t,u_-)-\cE(t,u_-+v)\big\}.$$
  By the Mazur-Orlicz version of Hahn-Banach Theorem 
  \cite[Theorem 1.1]{Simons98}
  there exists a linear functional $L:V\times \R\to \R$ such that 
  \begin{equation}
    \label{eq:77}
    L(v,z)\le \psi(v)-z\quad \text{for every }(v,z)\in 
    V\times\R,\qquad
    \inf_{(v,z)\in K}L=\inf_{(v,z)\in K}\psi(v)-z.
  \end{equation}
  Writing $L(v,z)=\ell(v)-\alpha z$ for some $\alpha\in \R$ and
  testing the first condition of \eqref{eq:77} with $v=0$ and
  arbitrary $z\in \R$ we get $L(v,z)=\ell(v)-z$, so that 
  \begin{equation}
    \label{eq:78}
    \ell(v)\le \psi(v)\quad\text{for every }v\in V.
  \end{equation}
  Since $u_-$ is $\sfd$-stable, for every $(v,z)\in K$ we have 
  \begin{displaymath}
    \psi(v)-z\ge \cE(t,u_-)-\cE(t,u_-+v)-z\ge0;
  \end{displaymath}
  since $(0,0)\in K$ we conclude that $\inf_K L=0$, which yields in particular
  \begin{equation}
    \label{eq:144}
    \ell(w-u_-)\ge \cE(t,u_-)-\cE(t,w)\quad \text{for every }w\in X.
  \end{equation}
  %
  Choosing $w=u_+$ in \eqref{eq:144} we deduce  $\ell(u_+-u_-)=\psi(u_+-u_-)$.
  Setting $\vartheta(s):=(1-s)u_-+su_+$, $s\in [0,1]$,
  we immediately get
  $\cE(t,\vartheta(s))=(1-s)\cE(t,u_-)+s\cE(t,u_+)=\cE(t,u_-)-
  \psi(\vartheta(s)-u_-)$ and
  \begin{align*}
    \cE(t,v)&\ge \cE(t,u_-)-\ell(v-u_-)
              \\&=\cE(t,\vartheta(s))-\ell(v-\vartheta(s))+
              \Big(\cE(t,u_-)-\cE(t,\vartheta(s))-\ell(\vartheta(s)-u_-) \Big)
    \\&\ge \cE(t,\vartheta(s))-\ell(v-\vartheta(s))+\Big(\psi(\vartheta(s)-u_-)-\ell(\vartheta(s)-u_-)\Big)
    \\&\ge \cE(t,\vartheta(s))-\psi(v-\vartheta(s)),
  \end{align*}
  so that $\vartheta(s)\in \SS_\sfd(t)\subset \SSD(t)$. It follows
  that $\sfc(t,u_-,u_+)\le \var(\vartheta,[0,1])=\sfd(u_-,u_+)$.

  In order to prove the converse implication, we simply have to check that
  $\SSD\subset \SS_\sfd.$ If $u\in \SSD(t)$, $v\in X$ and
  $v_\theta:=(1-\theta)u+\theta v$ with $\theta\in [0,1]$ we have
  \begin{displaymath}
    \cE(t,u)\le \cE(t,v_\theta)+\psi(v_\theta-u)+\delta(u,v_\theta)\le 
    (1-\theta)\cE(t,u)+\theta\Big(\cE(t,v)+\psi(v-u)+\frac{\delta(u,v_\theta)}{\theta}\Big),
  \end{displaymath}
  which yields
  \begin{displaymath}
    \cE(t,u)\le \cE(t,v)+\psi(v-u)+\frac{\delta(u,v_\theta)}{\theta}.
  \end{displaymath}
  Passing to the limit as $\theta\down0$ we conclude.
\end{proof}
\subsection{The $1$-dimensional case.}
\label{subsec:1D}
In the space $X:=\R$ consider a function $W\in \rmC^2(\R)$ bounded
from below with $-\lambda:=\inf_\R W''>-\infty$, a function $\ell\in
\rmC^1([0,T])$ and positive numbers $\alpha_\pm,\mu$;
the standard example for $W$ is the double-well potential $W(u)=\frac 14(1-u^2)^2$.  We set
\begin{equation}
  \label{eq:129}
  \cE(t,u):=W(u)-\ell(t)u,\quad
  \sfd(u,v):=\sum_\pm \alpha_\pm(v-u)_\pm,\quad
  \delta(u,v):=\frac\mu2 |u-v|^2.
\end{equation}
Since we are in the simplified setting recalled
at the beginning of Section \ref{sec:examples},
it is easy to check that all the assumptions \mytag A{}, \mytag B{},
\mytag C{} hold. 
A careful analysis (see \cite{Minotti16}) shows that when 
$\ell$ is strictly increasing, the initial datum $u_0$ 
satisfies a suitable stability condition and $\mu \alpha_+^2 > \lambda$
then $u\in \BV{}([0,T];\R)$ is a VE solution of $(X,\cE,\sfd,\delta)$ 
if and only if it is nondecreasing in $[0,T]$ and
\begin{equation}
  \label{eq:130}
  W'(u(t))=\ell(t)-\alpha_+,
\end{equation}
so that the evolution of $u$ can be described in terms of the upper
monotone envelope of $W'$ starting from $u_0$,
as in the case of Balanced Viscosity solutions, see 
\cite{Rossi-Savare13} and Figure \ref{fig:3} in the Introduction.
When $\ell$ is strictly decreasing then
$u$ should be non-increasing and \eqref{eq:130} should be replaced by
$W'(u(t))=\ell(t)+\alpha_-$.

In the case when $0<\mu\alpha_+^2<\lambda$ we have 
a sort of intermediate behaviour between the previous situation and 
the energetic case, corresponding to $\mu=0$ where
increasing jumps between $u(t-)<u(t+)$ obey the Maxwell rule 
\[ 
\int_{u(t-)}^{u(t+)}\Big(W'(r)-\ell(t)+\alpha_+\Big)\,\d r=0.
\] 
In particular, in the visco-energetic case, 
an increasing jump occurs at $t$ 
when we have the modified Maxwell rule
\begin{equation}
\int_{u(t-)}^{u_+}\Big(W'(r)-\ell(t)+\alpha_++\mu(r-u(t-))\Big)\,\d r=0
\quad\text{for some }u_+>u(t-).\label{eq:201}
\end{equation}
In this case, however, $u(t+)$ may differ from $u_+$, see
Figure \ref{fig:4}:
we refer to \cite{Minotti16} for a detailed analysis.

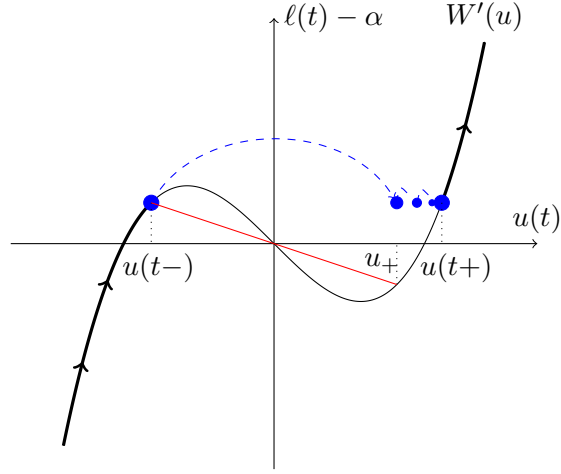
\begin{figure}[!h]
\label{fig:4}
\centering
\begin{tikzpicture}

\draw[->] (-3.5,0) -- (3.5,0) node[above] {$u(t)$};
\draw[->] (0,-3) -- (0,3) node[right] {$\ell(t)-\alpha$};

 \begin{scope}[scale=2]
    \draw[domain=-1.4:1.4,samples=200] plot ({\x}, {(\x)^3-\x}) node[above] {$W'(u)$};
     \draw[domain=-1.4:-0.816,very thick,samples=200] plot ({\x}, {(\x)^3-\x}) [arrow inside={}{0.33,0.66}];
     \draw[domain=1.1153:1.4,very thick,samples=200] plot ({\x}, {(\x)^3-\x}) [arrow inside={}{0.50}]    ;
     \foreach \Point in {(-0.816,0.272),(1.1153,0.272)}{
     \draw[fill=blue,blue] \Point circle(0.05);
     }
     \draw[fill=blue,blue] (0.816,0.272) circle(0.04);
     \foreach \Point in {(1.082,0.272), (1.112,0.272), (1.115,0.272)}{
    \draw[fill=blue,blue] \Point circle(0.01);
}
\draw[->, dashed,blue] (-0.816,0.272) to [bend left=60] (0.8,0.3);
\draw[dashed ,blue] (0.81,0.272) to [bend left=60,looseness=3] (0.92,0.3);
\draw[dashed,blue] (0.95,0.272) to [bend left=60,looseness=3] (1.05,0.3);

\draw[fill=blue,blue] (0.95,0.272) circle(0.03);
\draw[fill=blue,blue] (1.05,0.272) circle(0.02);

\draw[dotted] (-0.816,0.272) -- (-0.816,0) node[below] {\,\,\,$u(t-)$};
\draw[dotted] (0.816,-0.272) -- (0.816,0) node[below] {$u_+\quad$};
\draw[dotted] (1.1153,0.272) -- (1.1153,0) node[below] {\quad$u(t+)$};

\draw[red] (-0.816,0.272) -- (0.816,-0.272);
  \end{scope}

\end{tikzpicture}
\caption{Visco-Energetic solutions for a double-well energy $W$ with an
  increasing load $\ell$ and $0<\mu\alpha^2< -\min W''$. 
  In this case the solution $u$ jumps before reaching the local
  maximum of $W$ and the optimal transition $\vartheta$ 
  makes a first jump connecting $u(t-)$ with $u_+$ according to the
  modified
  Maxwell rule of \eqref{eq:201}: $u(t-)$ and $u_+$ corresponds to
  the intersection of the graph of $W'$ with the red line, whose slope is $-\mu$.
  After the first jump, $\vartheta$ makes an infinite sequence of
  jumps accumulating to
  $u(t+)$.}
\end{figure}

\subsection{The choice of $\delta$: $\alpha$-$\Lambda$ geodesic convexity.}
\label{subsec:genconvexity} 
In some situations it could be interesting to choose a viscous
correction $\delta$ associated with a metric different from $\sfd$: 
we want to show a typical example where \mytag B3 still holds
and a related application to the evolution of the Allen-Cahn energy.

Let us consider for simplicity 
\begin{equation}
  \begin{gathered}
    \text{the metric setting of Remark \ref{rem:metric} with
      $\delta(u,v):=\frac 12\sfd^2_*(u,v)$,}\\
    \text{where $\sfd_*$ is  
another distance on $X$,
continuous on each sublevel of $\cF_0$,}
  \end{gathered}
\label{eq:142}
\end{equation}
so that
\mytag B1-\mytag B2 
and \mytag C1-\mytag C2 hold.
\begin{definition}[$\alpha\text{-}\Lambda$
  convexity]
  \label{def:aLconvexity}
  Let $\alpha>0$, $\Lambda\ge0$.
  We say that $(\cE,\sfd,\sfd_*)$ satisfies 
  the \emph{weak} $\alpha\text{-}\Lambda$
  convexity property on a set $S\subset X$
  if for every $x,y\in S
  $ there
  exists a curve
  $\gamma:[0,1]\to X$ 
  such that
  \begin{gather}
    \label{eq:genconvexity} 
      \cE(t,\gamma(\theta))\leq (1-\theta)\cE(t,x)+\theta\cE(t,y)
      -\frac 12\theta(1-\theta)\Big[\alpha \sfd_*^2(x,y)-
      \Lambda\sfd (x,y)\sfd_*(x,y)\Big],\\
      \label{eq:genconvexity2}
      \liminf_{\theta\down0}\frac{\sfd(x,\gamma(\theta))}\theta\le \sfd(x,y),\quad
      \lim_{\theta\down0}\frac{\sfd_*(x,\gamma(\theta))}{\sqrt
        \theta}=0.
  \end{gather}
  We say that $(\cE,\sfd,\sfd_*)$ satisfies 
  the \emph{strong} $\alpha\text{-}\Lambda$
  convexity property if  for every $x,y\in X$ 
  there
  exists a curve
  $\gamma:[0,1]\to X$ connecting $x$ to $y$ 
  satisfying \eqref{eq:genconvexity} and
  \begin{equation}
    \label{eq:202}
    \sfd(\gamma(\theta),\gamma(\theta'))=|\theta-\theta'|\sfd(x,y),\quad
    \sfd_*(\gamma(\theta),\gamma(\theta'))=|\theta-\theta'|\sfd_*(x,y)
  \end{equation}
  for every $\theta,\theta'\in [0,1]$.
\end{definition}
Observe that \eqref{eq:genconvexity} is a generalization of the 
$\lambda$-convexity along geodesics, involving two distances: see \cite{Mielke-Rossi-Savare13}.

Let us show that if $(\cE,\sfd,\sfd_*)$ satisfies the 
weak $\alpha\text{-}\Lambda$ convexity on $S:=\cup_{t\in
  [0,T]}\SSD(t)$ then 
\mytag B3 holds.
In fact, if $x\in \SSD(t),y\in \SSD(s)$ and 
$\gamma$ satisfies \eqref{eq:genconvexity}-\eqref{eq:genconvexity2}, then
\begin{align*}
&\cE(t,x)\topref{stablepoints}\le
\cE(t,\gamma(\theta))+\sfd(x,\gamma(\theta))+\frac
          12\sfd_*^2(x,\gamma(\theta)) 
\\&\topref{eq:genconvexity}\le \!\!
    (1-\theta)\cE(t,x)+\theta\cE(t,y)-\frac{\theta(1-\theta)}2\sfd_*(x,y)\Big[\alpha
    \sfd_*(x,y) -\Lambda\sfd(x,y)\Big]+\sfd(x,\gamma(\theta))+\frac12\sfd_*^2(x,\gamma(\theta)).
\end{align*}
Subtracting $(1-\theta)\cE(t,x)$ and dividing by $\theta$ we obtain
\[
\cE(t,x)\leq \cE(t,y)+\frac{\sfd (x,\gamma(\theta))}\theta+\frac 1{2\theta}
\sfd_*^2(x,\gamma(\theta))-\frac 12(1-\theta)\Big[\alpha\sfd_*^2(x,y) -\Lambda\sfd(x,y)\sfd_*(x,y)\Big].
\]
Passing to the limit as $\theta\downarrow 0$ and using \eqref{eq:genconvexity2} we get
\begin{equation}
\label{eq:13}
\cE(t,x)-\cE(t,y)-\sfd(x,y) \le
-\frac{\alpha}{2}\sfd_*^2(x,y)+\frac{\Lambda}{2}\sfd(x,y)\sfd_*(x,y)  
\le \frac{\Lambda^2}{\relax 8\alpha}\sfd^2(x,y).
\end{equation}
To recover \mytag B3 is enough to divide by $\sfd(x,y)$ and to
pass to the limit as $x\to y$.

As a further consequence of the above conditions we can also prove an
enhanced
BV estimate, which is related to 
a coercivity property of $\cR$, see Lemma \ref{le:Rprop}. 
The proof will be collected in the
last section \ref{subsec:last}.
\begin{theorem}[BV estimates w.r.t.~$\sfd_*$]
  \label{thm:bvestimate} 
  Let us assume that {\em\mytag A{}} holds and 
  $(\cE,\sfd,\sfd_*)$ satisfies the \emph{strong}
  $\alpha\text{-}\Lambda$ convexity property. 
      If
    \begin{equation} \label{eq:powerlipschitz} |\cP(t,x)-\cP(t,y)|\le
      L\sfd_*(x,y)\quad \text{if $t\in[0,T]$ and $x,y\in X$},
    \end{equation}
    and \eqref{eq:141} holds, 
    then any $\mathrm{VE}$ solution $u$ obtained as a pointwise limit of
    the time incremental minimization scheme \eqref{ims} belongs to
    $\BV{\sfd_*}([0,T];X)$.
\end{theorem}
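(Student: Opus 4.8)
The plan is to derive a bound on $\sum_n\sfd_*(U^{n-1}_\tau,U^n_\tau)$ that is uniform in the partition $\tau$, and then pass to the limit by lower semicontinuity. I argue on the incremental scheme \eqref{ims}; to lighten the notation I set $t_n:=t^n_\tau$, $a_n:=\sfd(U^{n-1}_\tau,U^n_\tau)$, $b_n:=\sfd_*(U^{n-1}_\tau,U^n_\tau)$, and I use freely the discrete a priori estimates of Section~\ref{subsec:discrete-estimates} (Theorem~\ref{thm:discretestimates}): every $U^n_\tau$ stays in a fixed sublevel of $\cF_0$ (on which $\sfd_*$ is continuous by \eqref{eq:142}) and $\sum_n a_n\le V$ with $V$ independent of $\tau$; moreover $\cE(0,U^0_\tau)$ is bounded uniformly by \eqref{eq:141}.

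\emph{Two discrete inequalities.} For each $n$ I test the minimality of $U^n_\tau$ in \eqref{ims} against a point $\gamma(\theta)$, $\theta\in(0,1)$, where $\gamma\colon[0,1]\to X$ is an exactly constant-speed $\sfd,\sfd_*$-geodesic furnished by the \emph{strong} $\alpha$-$\Lambda$ convexity assumption (using \eqref{eq:genconvexity} and \eqref{eq:202}). With $\gamma(0)=U^n_\tau$, $\gamma(1)=U^{n-1}_\tau$, so that $\sfd(U^{n-1}_\tau,\gamma(\theta))=(1-\theta)a_n$ and $\sfd_*(U^{n-1}_\tau,\gamma(\theta))=(1-\theta)b_n$, substituting \eqref{eq:genconvexity} into the minimality inequality and letting $\theta\downarrow0$ yields
\begin{equation}
  \label{bvest:lower}
  \cE(t_n,U^{n-1}_\tau)-\cE(t_n,U^n_\tau)\ \ge\ a_n+\Bigl(1+\tfrac{\alpha}{2}\Bigr)b_n^2-\tfrac{\Lambda}{2}\,a_n b_n .
\end{equation}
With the choice $\gamma(0)=U^n_\tau$, $\gamma(1)=v$ for an arbitrary $v\in X$, using the triangle inequalities $\sfd(U^{n-1}_\tau,\gamma(\theta))\le a_n+\theta\,\sfd(U^n_\tau,v)$ and $\sfd_*(U^{n-1}_\tau,\gamma(\theta))\le b_n+\theta\,\sfd_*(U^n_\tau,v)$ together with \eqref{eq:202}, and again letting $\theta\downarrow0$, I obtain a one-step quasi-stability with a $b_n$-sized defect:
\begin{equation}
  \label{bvest:qstab}
  \cE(t_n,U^n_\tau)\ \le\ \cE(t_n,v)+\sfd(U^n_\tau,v)+\Bigl(b_n+\tfrac{\Lambda}{2}\,\sfd(U^n_\tau,v)\Bigr)\sfd_*(U^n_\tau,v)-\tfrac{\alpha}{2}\,\sfd_*^2(U^n_\tau,v).
\end{equation}
Because the geodesics have exact constant speed, no $\liminf$ subtlety appears: each $\theta$-dependent term is either proportional to $\theta$ or $O(\theta^2)$.

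\emph{The recursion and its summation.} I now compare consecutive steps. From Assumption~\mytag A{} one has $\cE(t_{n+1},x)=\cE(t_n,x)+\int_{t_n}^{t_{n+1}}\cP(s,x)\,\d s$, hence
\[
  \cE(t_{n+1},U^n_\tau)-\cE(t_{n+1},U^{n+1}_\tau)=\bigl[\cE(t_n,U^n_\tau)-\cE(t_n,U^{n+1}_\tau)\bigr]+\int_{t_n}^{t_{n+1}}\bigl(\cP(s,U^n_\tau)-\cP(s,U^{n+1}_\tau)\bigr)\,\d s .
\]
I bound the left-hand side from below by \eqref{bvest:lower} written at step $n+1$, the bracket from above by \eqref{bvest:qstab} with $v=U^{n+1}_\tau$, and the integral by $L\,(t_{n+1}-t_n)\,b_{n+1}$ via \eqref{eq:powerlipschitz}. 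The $a_{n+1}$ terms cancel; collecting the $b_{n+1}^2$ contributions — the ``viscous'' gain $b_{n+1}^2$ coming from the squared dissipation in \eqref{ims} adds to the convexity gain $\tfrac{\alpha}{2}b_{n+1}^2$ — and dividing by $b_{n+1}$ leaves the linear recursion
\begin{equation}
  \label{bvest:rec}
  (1+\alpha)\,b_{n+1}\ \le\ b_n+\Lambda\,a_{n+1}+L\,(t_{n+1}-t_n),\qquad n\ge1 .
\end{equation}
Summing \eqref{bvest:rec} over $n$ and using $\alpha>0$ gives $\alpha\sum_n b_n\le(1+\alpha)\,b_1+\Lambda V+LT$. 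The term $b_1=\sfd_*(U^0_\tau,U^1_\tau)$ is bounded uniformly: inserting $n=1$ into \eqref{bvest:lower}, whose left-hand side $\cE(t_1,U^0_\tau)-\cE(t_1,U^1_\tau)$ is bounded above (by \eqref{eq:gronwallestimate}, \eqref{eq:141} and the a priori lower bound on $\cE$ along the iterates), one solves the resulting quadratic inequality in $b_1$. Hence $\sum_{n=1}^{N}\sfd_*(U^{n-1}_\tau,U^n_\tau)\le C$ with $C$ independent of $\tau$. Since $\overline U_\tau$ is piecewise constant, this sum equals $\Var{\sfd_*}(\overline U_\tau,[0,T])$; and since $\sfd_*$ is continuous on the fixed sublevel of $\cF_0$ containing all the values, $\Var{\sfd_*}(\,\cdot\,,[0,T])$ is sequentially lower semicontinuous along the pointwise convergence $\overline U_{\tau(k)}(t)\sigmato u(t)$. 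Therefore $u\in\BV{\sfd_*}([0,T];X)$ with $\Var{\sfd_*}(u,[0,T])\le C$.

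\emph{Main obstacle.} The decisive step is the recursion \eqref{bvest:rec}, where the three hypotheses interlock. The \emph{strong} (two-metric, exactly constant-speed) $\alpha$-$\Lambda$ convexity is needed both for the convexity estimate on $\cE$ and to control $\sfd$ and $\sfd_*$ exactly along the comparison curves; the coefficient $1+\alpha>1$ on the left of \eqref{bvest:rec} — which alone makes $\sum_n b_n$ finite — arises from adding the two quadratic gains; and the Lipschitz bound \eqref{eq:powerlipschitz} on $\cP$ is precisely what turns the time mismatch between step $n$ and step $n+1$ into the summable error $L(t_{n+1}-t_n)$. One must also check that the $\tfrac{\Lambda}{2}a_nb_n$ cross terms cause no harm, which is automatic because $a_{n+1}$ survives only on the right-hand side of \eqref{bvest:rec}, to be summed against the already-controlled $\sfd$-variation $V$.
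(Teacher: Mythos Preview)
Your proof is correct and, at the level of the underlying computations, very close to the paper's: your inequalities \eqref{bvest:lower} and \eqref{bvest:qstab} (the latter combined with the time shift via \eqref{eq:powerlipschitz}) are exactly the content of Lemma~\ref{le:Rprop}, just written without the intermediary of the residual function $\cR$. The genuine difference lies in how the two inequalities are combined. The paper applies the elementary bound $2\sfd_*(v,x)\sfd_*(x,y)\le \sfd_*^2(v,x)+\sfd_*^2(x,y)$ to obtain the \emph{quadratic} recursion \eqref{eq:194}, and then invokes the discrete Gronwall lemma (Lemma~\ref{lem:gronwall}, quoted from \cite{Mielke-Rossi-Savare13}) to sum it. You instead divide through by $b_{n+1}$ before any such estimate, obtaining the \emph{linear} recursion \eqref{bvest:rec}, which telescopes directly. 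Your route is slightly sharper and more elementary, since it avoids both the AM--GM loss and the external Gronwall lemma; the paper's route, on the other hand, isolates the two-sided control of $\cR$ in Lemma~\ref{le:Rprop} as an independent statement, which has some conceptual value beyond this particular application.
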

\addcontentsline{toc}{subsubsection}{\it VE Evolution for the Allen-Cahn functional}
\begin{example}[VE evolution for the Allen-Cahn functional]
\label{subsec:AllenCahn}
\upshape
Let us consider  a bounded open
Lipschitz domain $\Omega\subset \R^d$,
a function $W\in \rmC^2(\R)$ as in the previous Example
\ref{subsec:1D} and let us set $X=\{u\in W^{1,2}_0(\Omega):W(u)\in
L^1(\Omega)\}$
endowed with the $L^1(\Omega)$-topology.

 The distance $\sfd$ is the usual one induced by the $L^1$ norm, while $\delta$ is the squared distance induced by the $L^2$ norm.
\[
\sfd(u,v):=\int_\Omega|u(x)-v(x)|\rmd x,\qquad \delta(u,v):=\frac{\mu}{2}\int_\Omega |u(x)-v(x)|^2\rmd x.
\]
We also consider the energy functional
\begin{equation} \label{eq:energyW}
\cE(t,u)=\begin{cases} 
\displaystyle
\int_\Omega\left(\frac{1}{2}|\nabla u|^2+W(u)-\ell(t)u\right)\rmd x \quad &\text{if $u\in \rmW^{1,2}_0(\Omega)$}; \\ +\infty \quad&\text{otherwise},
\end{cases}
\end{equation}
where 
$\ell\in C^1([0,T];L^2(\Omega)).$
It is immediate to check that for all $u\in X$ the function $t\mapsto \cE(t,u)$ is differentiable, with derivative 
\[
\cP(t,u)=-\int_\Omega\ell'(t)u\,\d x
\] 
so that Assumptions \mytag A{} are satisfied since the sublevels of
the energy are compact in $L^2(\Omega)$. 
Thus we are in the
canonical metric setting and the only nontrivial assumption is
\mytag B3 since $\delta$ is continuous on the sublevels of the energy. 
We will check that the $\alpha$-$\Lambda$ convexity discussed in
\eqref{subsec:genconvexity} is satisfied.
If $W$ is $\lambda$-convex with $\lambda>0$, we can use the estimate
\begin{equation} \label{eq:genconvexityexample}
\cE(t,(1-\theta)u+\theta v)\le (1-\theta)\cE(t,u)+\theta\cE(t,v)-\frac{\theta(1-\theta)}{2}\left(\|\nabla(u-v)\|^2_{L^2(\Omega)}+\lambda\|u-v\|^2_{L^2(\Omega)}\right),
\end{equation} 
hence we have \eqref{eq:genconvexity} with $\alpha=\lambda$ and $\Lambda=0$. If $\lambda<0$ we use the estimate (see \cite[Example 5.1]{Mielke-Rossi-Savare13})
\[
-\|\nabla(u-v)\|_{L^2(\Omega)}^2\le -(1+|\lambda|)\|u-v\|_{L^2(\Omega)}^2 +M_\lambda\|u-v\|_{L^1(\Omega)}^2
\]
for some $M_\lambda>0$. Inserting this into
\eqref{eq:genconvexityexample} we obtain the generalized convexity
\eqref{eq:genconvexity} with
$\frac{\alpha}{2}=(1+|\lambda|)+\lambda=1>0$ and
$\frac{\Lambda}{2}=(1+|\lambda|)M_\lambda$ and then also \mytag B1 is
satisfied. 
We can therefore apply Theorem \ref{thm:existence} and prove the existence of a
Visco-Energetic solution for the rate-independent system
$(X,\cE,\sfd,\delta)$. 
\end{example}
\subsection{Product spaces and degenerate-singular distances}
\label{subsec:degenerate}
\newcommand{\llbracket}{[\kern-1.5pt[}
\newcommand{\rrbracket}{]\kern-1.5pt]}
\newcommand{\PHI}{F}
\renewcommand{\sfY}{\Phi}
In many important examples the space $X$ is a cartesian product 
$X=\PHI\times Z$ (whose points can be written as $u=(\varphi,z)$, $\varphi\in \PHI$,
$z\in Z$) but $\sfd$ only depends on the $z$-component
\begin{equation}
  \label{eq:4}
  \sfd(u,u'):=\widetilde\sfd(z,z')\quad\text{if }u=(\varphi,z),\ u'=(\varphi',z'),
\end{equation}
for a quasi-distance $\widetilde \sfd$ separating $Z$.
In these cases it is natural
to consider a viscous correction $\delta(u,u')=\widetilde\delta(z,z')$
which still depends only on $z$ (but more
general interesting situations can occur, see
e.g.~\cite{DalMaso-Toader02} or
\cite{Knees-Negri15,Negri16} where an alternate minimization scheme
has been studied):
therefore, even if $\widetilde\sfd$ separates $Z$, the
distance $\sfd$ does not separate $X$.

It may happen that 
for every $z\in Z$ the set
\begin{equation}
  \label{eq:15}
  \sfY(t,z):=\argmin_\PHI\cE(t,\cdot,z)
\end{equation}
contains only one point. Since $\delta$ and $\sfD$ do not depend on
$\varphi$, one can easily check
that 
\begin{equation}
  \label{eq:147}
  (\varphi,z)\in \SSD(t)\quad \Rightarrow\quad \varphi\in \sfY(t,z),
\end{equation}
and $\sfd_\R$ separates 
$\SSD$. 
As an example, we consider the following model discussed in 
  \cite[Sect.~6.2]{Mainik-Mielke05} (we refer to
  \cite{Mainik-Mielke05} and \cite{Minotti16T} for the
  interpretation and more
  details).
\addcontentsline{toc}{subsubsection}{\it A delamination problem}
\begin{example}[A delamination problem]
  \upshape
  Let $O$ be a sufficiently regular open connected domain of $\R^d$, 
  $\Gamma_{\rm dir}\subset \partial O$ with positive surface measure and let
  $\Gamma\subset O$ be a piecewise smooth hypersurface, such that
  $\Omega:=O\setminus \Gamma$ is still connected.
  Let $\phi_{\rm dir}\in H^1(\Omega;\R^d)$ and $\PHI:=\big\{\varphi\in
  H^1(\Omega;\R^d): \varphi=\phi_{\rm dir}\ \text{on }\Gamma_{\rm
    dir}\big\}$ endowed with the weak topology $\sigma_F$ of $H^1$ and
  we set $Z:=L^\infty(\Gamma;[0,1])$ endowed with the weak$^*$
  topology $\sigma_Z$.
  
  We thus define
  \begin{equation}
    \label{eq:145}
    \cE(t,\varphi,z):=\int_\Omega \sfW(\rmD \varphi(x))\,\d x+
    \int_\Gamma z(x)\,\sfQ(\llbracket\varphi\rrbracket(x))\,\d \mathcal H^{d-1}(x)-
    \la \ell(t),\varphi\ra
  \end{equation}
  where $\sfW:\R^{d\times d}\to \R$ is the quadratic form of 
  linearized elasticity, $\sfQ:\R^d\to \R^d$ is a nonnegative quadratic form,
  $\llbracket \varphi\rrbracket\in H^{1/2}(\Gamma;\R^d)$ denotes the jump of the deformation of $\varphi$ across
  $\Gamma$, and $\ell\in \rmC^1([0,T];(H^1(\Omega))')$.
  We eventually introduce the dissipation
  $\sfd((\varphi,z),(\varphi',z')):=\widetilde\sfd(z,z')$ with
  \begin{equation}
    \label{eq:146}
    \widetilde\sfd(z,z'):=\int_\Gamma \psi(z'(x)-z(x))\,\d\mathcal
    H^{d-1}(x),\quad
    \text{where}\quad
    \psi(r):=
    \begin{cases}
      r&\text{if }r\ge0\\
      +\infty&\text{otherwise,}
    \end{cases}
  \end{equation}
  and the viscous correction $\delta((\varphi,z),(\varphi',z'))=h(\widetilde\sfd(z,z'))$ as in
  \eqref{eq:10}.
  
  Arguing as in \cite{Mainik-Mielke05} and taking into account 
  Example \ref{ex:1} it is easy to check that \mytag A{}
  and \mytag B{} are satisfied; also
  the separation property \mytag C2 follows by the above remarks
  since the set $\sfY(t,z)$ defined by \eqref{eq:15} contains only one
  element. 

  The only property that remains to be checked is 
  the closure of the $(\sfD,Q)$-quasi stable set \mytag C1.
  By \eqref{eq:134} we can apply Lemma \ref{le:usefulR} vi):
  if $(\varphi_n,z_n)\sigmato (\varphi,z)$ in $F\times Z$
  with $\cE(t,\varphi_n,z_n)\to \cE(t,\varphi,z)+\eta$
  and $(\varphi',z')$ is a minimizer of 
  $\cE(t,\cdot)+\sfD((\varphi,z),\cdot)$ in $\rmM(t,(\varphi,z))$,
  we have $\varphi'\in \sfY(t,z')$ and $z'\ge z$, so that we can apply
  \cite[Lemma 6.1]{Mainik-Mielke05} to find another sequence $z_n'\in
  Z$ satisfying $z_n'\le z_n$, $z_n'\stackrel{\sigma_Z}\to z'$ in $Z$ and
  $\widetilde\sfd(z_n,z_n')
  \to \widetilde\sfd(z,z')$; this also implies that 
  $\widetilde\delta(z_n,z_n')\to \widetilde\delta(z,z')$.
  Correspondingly, we set $\varphi_n':= \sfY(t,z_n')$ (with a slight
  abuse of notation, we still denote by $\sfY(t,z)$ the unique element
  of the set). Since the maps $z\mapsto \sfY(t,z)$ and
  $z\mapsto \cE(t,\sfY(t,z),z)$ are continuous (see \cite{Mainik-Mielke05})
  with respect to the topology of $Z$ 
  we deduce that 
  $\cE(t,\varphi_n',z_n')\to \cE(t,\varphi',z')$.
  We conclude that \eqref{eq:116} is satisfied since
  \begin{align*}
    \liminf_{n\to\infty}\Big(\cE(t,\varphi_n',z_n')+\sfD((\varphi_n,z_n),(\varphi_n',z_n'))\Big)
    &=\cE(t,\varphi',z')+\sfD((\varphi,z),(\varphi',z'))
      \\&\le \cE(t,\varphi',z')+\sfD((\varphi,z),(\varphi',z'))+\eta.
          \quad\Box
  \end{align*}
\end{example}
\subsection{Marginal energies}
\label{subsec:marginal}
In the same cartesian setting of the previous Section,
\ref{subsec:degenerate}
let us now consider the case when the set $\sfY(t,z)$ of \eqref{eq:15} contains more than one
element.
One can try to write a reduced model in the space $Z$ by
introducing the marginal energy functionals and its generalized power
\begin{equation}
  \label{eq:143}
  \widetilde\cE(t,z):=\min\Big\{\cE(t,\varphi,z):\varphi\in
  F\Big\},\quad
  \widetilde\cP(t,u):=\max\Big\{\cP(t,\varphi,z):\varphi\in \sfY(t,z)\Big\}.
\end{equation}
If $\cE$ satisfies \mytag A{}
one can easily prove that $\sfY(t,z)$ is compact in $F$ for every
$t,z$ and
\begin{equation}
  \label{eq:148}
  (t_n,z_n)\to(t,z),\quad
  \widetilde\cE(t,z_n)\le C\quad\Rightarrow\quad
  \Ls_{n\to\infty}\sfY(t,z_n)\subset \sfY(t,z),
\end{equation}
where $\Ls$ denotes the Kuratowski superior limit, see Definition
\ref{def:Kuratowski}.
The following Lemma allows to easily check conditions
\mytag A{}.
\begin{lemma}
  \label{le:cP}
  If the functionals $\cE,\cP$ satisfy \emph{Assumptions \mytag A{1},
    \mytag A2 (resp.~\mytag A{2'})}
  in $(X,\sigma,\sfd)$ then $(\widetilde \cE,\widetilde\cP)$ satisfy
  \emph{Assumptions \mytag A{1}, \mytag A2 (resp.~\mytag A{2'})}
  in $(Z,\sigma_Z,\widetilde\sfd)$.
\end{lemma}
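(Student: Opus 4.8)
The plan is to transfer each clause of Assumption \mytag A{} (resp.\ \mytag A{2'}) to $(Z,\sigma_Z,\widetilde\sfd)$ by systematically \emph{lifting} a point $z\in Z$ to a minimizer $\varphi\in\sfY(t,z)$, using that by \eqref{eq:4} the perturbation $\sfd(x_o,\cdot)$ ignores the $F$-component. First I would fix $x_o=(\varphi_o,z_o)$ and set $\widetilde\cF(t,z):=\widetilde\cE(t,z)+\widetilde\sfd(z_o,z)+F_o$, so that $\cF(t,\varphi,z)=\cE(t,\varphi,z)+\widetilde\sfd(z_o,z)+F_o$ and hence
\[
\widetilde\cF(t,z)=\min_{\varphi\in F}\cF(t,\varphi,z),\qquad \cF(t,\varphi,z)=\widetilde\cF(t,z),\ \ \cE(t,\varphi,z)=\widetilde\cE(t,z)\quad\text{for every }\varphi\in\sfY(t,z).
\]
By \eqref{eq:86} with $y=x_o$, for each $(t,z)$ the map $\varphi\mapsto\cE(t,\varphi,z)$ is $\sigma$-lower semicontinuous with $\sigma$-sequentially compact sublevels, so the minimum defining $\widetilde\cE(t,z)$ is attained and finite, $\sfY(t,z)$ is a nonempty $\sigma$-compact subset of $F$ contained in a sublevel of $\cF$, and (since $\cP$ is upper semicontinuous on the sublevels of $\cF$) the maximum defining $\widetilde\cP(t,z)$ in \eqref{eq:143} is attained as well; thus $\widetilde\cE,\widetilde\cP:[0,T]\times Z\to\R$ are well defined. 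Then \mytag A1 for $\widetilde\cE$ follows by lifting: if $\widetilde\cF(t_n,z_n)\le C$, choose $\varphi_n\in\sfY(t_n,z_n)$, so $\cF(t_n,\varphi_n,z_n)=\widetilde\cF(t_n,z_n)\le C$ and a subsequence has $(t_n,\varphi_n,z_n)\xrightarrow{\sigma_\R}(t,\varphi,z)$ by \mytag A1 for $\cE$; this gives the $\sigma_\R$-sequential compactness of the sublevels of $\widetilde\cF$ (from $\widetilde\cF(t,z)\le\cF(t,\varphi,z)\le\liminf_n\cF(t_n,\varphi_n,z_n)\le C$) and, running the extraction along a subsequence realizing $\liminf_n\widetilde\cE(t_n,z_n)$ with $(t_n,z_n)\to(t,z)$, the lower semicontinuity $\liminf_n\widetilde\cE(t_n,z_n)=\liminf_n\cE(t_n,\varphi_n,z_n)\ge\cE(t,\varphi,z)\ge\widetilde\cE(t,z)$.

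For \mytag A2, the bound \eqref{A.21} for $\widetilde\cP$ is immediate: picking $\varphi^*\in\sfY(t,z)$ that realizes $\widetilde\cP(t,z)$, $|\widetilde\cP(t,z)|\le|\cP(t,\varphi^*,z)|\le C_P\cF(t,\varphi^*,z)=C_P\widetilde\cF(t,z)$. For \eqref{A.22}, the key idea is to test \emph{both} one-sided difference quotients of $\widetilde\cE$ at $(t,z)$ against the \emph{same} $\varphi^*\in\sfY(t,z)$: since $\widetilde\cE(s,z)\le\cE(s,\varphi^*,z)$ for all $s$ with equality at $s=t$, one has $\widetilde\cE(t,z)-\widetilde\cE(s,z)\ge\cE(t,\varphi^*,z)-\cE(s,\varphi^*,z)$ for $s<t$ and $\widetilde\cE(s,z)-\widetilde\cE(t,z)\le\cE(s,\varphi^*,z)-\cE(t,\varphi^*,z)$ for $s>t$; dividing by $|t-s|$, letting $s\to t$, and invoking \eqref{A.22} for $(\cE,\cP)$ at $(t,(\varphi^*,z))$ gives
\[
\liminf_{s\up t}\frac{\widetilde\cE(t,z)-\widetilde\cE(s,z)}{t-s}\ge\cP(t,\varphi^*,z),\qquad \limsup_{s\down t}\frac{\widetilde\cE(s,z)-\widetilde\cE(t,z)}{s-t}\le\cP(t,\varphi^*,z).
\]
Since the left-hand sides are independent of $\varphi^*$, taking the supremum over $\varphi^*\in\sfY(t,z)$ in the first inequality turns the bound into $\widetilde\cP(t,z)$, and choosing $\varphi^*$ to realize $\widetilde\cP(t,z)$ in the second does the same there; this is exactly \eqref{A.22} for $(\widetilde\cE,\widetilde\cP)$.

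It then remains to establish the upper semicontinuity of $\widetilde\cP$ on the sublevels of $\widetilde\cF$: given $(t_n,z_n)\to(t,z)$ with $\widetilde\cF(t_n,z_n)\le C$, choose $\varphi_n\in\sfY(t_n,z_n)$ realizing $\widetilde\cP(t_n,z_n)$; along a subsequence realizing $\limsup_n\widetilde\cP(t_n,z_n)$, \mytag A1 for $\cE$ gives $(t_n,\varphi_n,z_n)\to(t,\varphi,z)$, and by \eqref{eq:148} we get $\varphi\in\Ls_n\sfY(t_n,z_n)\subset\sfY(t,z)$, so upper semicontinuity of $\cP$ on the sublevels of $\cF$ yields $\limsup_n\widetilde\cP(t_n,z_n)=\limsup_n\cP(t_n,\varphi_n,z_n)\le\cP(t,\varphi,z)\le\widetilde\cP(t,z)$. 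The variant \mytag A{2'} is handled the same way: the left $\widetilde\sfd$-continuity \eqref{eq:150} comes from lifting a $\widetilde\cF_0$-bounded, $\sigma_Z$-convergent sequence $z_n\to z$ to $\varphi_n\in\sfY(0,z_n)$, applying the left $\sfd$-continuity \eqref{eq:150} to $(\varphi_n,z_n)\to(\varphi,z)$, and observing that the resulting limit $\widetilde\sfd(z,w)$ does not depend on $\varphi$, so the whole sequence converges; and the conditional upper semicontinuity \eqref{eq:135} for $(\widetilde\cE,\widetilde\cP)$ is proved just as above, this time using the hypothesis $\widetilde\cE(t_n,z_n)\to\widetilde\cE(t,z)$ (together with the $\widetilde\sfd$-boundedness available in the applications and \eqref{eq:gronwallestimate}) to secure the compactness of the lifts $\varphi_n$, and then \eqref{eq:135} for $(\cE,\cP)$.

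The hard part will be the single recurring step on which all the semicontinuity statements hinge: extracting from the lifts $\varphi_n\in\sfY(t_n,z_n)$ a convergent subsequence whose limit \emph{still} lies in $\sfY(t,z)$. This is precisely the Kuratowski-type stability \eqref{eq:148}, which itself rests on the compactness and lower-semicontinuity content of \mytag A1; the upper semicontinuity of $\widetilde\cP$ for \mytag A2, its conditional version for \mytag A{2'}, and even the attainment of the maximum in \eqref{eq:143} all reduce to it. The only additional care is needed in the \mytag A{2'} case, where \eqref{eq:135} carries no a priori $\cF$-bound, so one must supply the missing $\widetilde\sfd$-boundedness — automatic for the $\sfd$-bounded sequences that occur in the applications — before invoking \mytag A1.
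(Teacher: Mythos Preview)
Your proposal is correct and follows essentially the same lifting strategy as the paper: choose $\varphi_n\in\sfY(t_n,z_n)$, use \mytag A1 and \eqref{eq:148} to extract a limit $\varphi\in\sfY(t,z)$, and transfer each clause of \mytag A{} back down to $Z$. One point you should make explicit in the \mytag A{2'} case: to invoke \eqref{eq:135} for $(\cE,\cP)$ at the lifted points you need $\cE(t_n,\varphi_n,z_n)\to\cE(t,\varphi,z)$, not just compactness; this holds because $\cE(t_n,\varphi_n,z_n)=\widetilde\cE(t_n,z_n)\to\widetilde\cE(t,z)=\cE(t,\varphi,z)$, the last equality coming from $\varphi\in\sfY(t,z)$ via \eqref{eq:148}---the paper spells out this sandwich argument, whereas you only cite the energy convergence for securing compactness.
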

\begin{proof}
  Notice that if $(t,z)$ belongs to the sublevel $\big\{\widetilde\cF\le C\big\}$ 
  then $(t,\varphi,z)$ belongs to $\big\{\cF\le C\big\}$ for every $\varphi\in
  \sfY(t,z)$. Property \mytag A1 is easy to verify, so we consider
  \mytag A2.
  
  The upper semicontinuity of $\widetilde\cP$ follows immediately by
  \eqref{eq:148}:
  selecting $\varphi_n\in \sfY(t_n,z_n)$ so that
  $\widetilde\cP(t_n,z_n)=
  \cP(t_n,\varphi_n,z_n)$ and observing that $(t_n,\varphi_n,z_n)$
  belong to a sublevel of $\cF$, we can suppose that 
  $\varphi_n$ converges to some $\varphi\in \sfY(t,z)$ so that
  the upper semicontinuity of $\cP$ yields
  \begin{displaymath}
    \limsup_{n\to\infty}\widetilde\cP(t_n,z_n)=
    \limsup_{n\to\infty}\cP(t_n,\varphi_n,z_n)\le 
    \cP(t,\varphi,z)\le \widetilde\cP(t,z).
  \end{displaymath}
  In the case of \mytag A{2'}, we observe that if $\widetilde
  \cE(t_n,z_n)\to
  \widetilde \cE(t,z)$ and $ \sfY(t_n,z_n)\ni\varphi_n\to \varphi$ as in the
  above argument, 
  we have 
  $$\widetilde\cE(t,z)\le \cE(t,\varphi,z)\le \liminf_{n\to\infty}
  \cE(t_n,\varphi_n,z_n)=\liminf_{n\to\infty}\widetilde\cE(t_n,z_n)=
  \widetilde\cE(t,z)$$
  so that $\cE(t_n,\varphi_n,u_n)\to \cE(t,\varphi,u)$ and we
  can apply the conditional upper semicontinuity of $\cP$.
  
  Concerning \eqref{A.21} we observe that for some $\varphi\in \sfY(t,z)$
  \begin{displaymath}
    |\widetilde\cP(t,z)|=
    |\cP(t,\varphi,z)|\le C_P\cF(t,\varphi,z)=
    C_P\widetilde\cF(t,z).
  \end{displaymath}
  As for \eqref{A.22}, since 
  \begin{align*}
    \liminf_{s\up t}\frac{\widetilde\cE(t,z)-\widetilde\cE(s,z)}{t-s}
    \ge \liminf_{s\up
    t}\frac{\cE(t,\varphi,z)-\cE(s,\varphi,z)}{t-s}
    \ge \cP(t,\varphi,z)\quad\text{for every }\varphi\in \sfY(t,z),
  \end{align*}
  so that
  \begin{displaymath}
    \liminf_{s\up
      t}\frac{\widetilde\cE(t,z)-\widetilde\cE(s,z)}{t-s}\ge \widetilde\cP(t,z);
  \end{displaymath}
  the corresponding right $\limsup$ inequality of \eqref{A.22} follows
  by the same argument.
\end{proof}
Let us consider for the sake of simplicity the case when
$\widetilde\sfd$ is left continuous and $\widetilde
\delta=h(\widetilde \sfd)$.
\begin{theorem}
  \label{thm:existence2}
  Let us suppose that the energy functionals $\cE,\cP$ satisfy
  \emph{Assumptions \mytag A1, \mytag A{2'}}, $\widetilde\sfd$
  separates $Z$ and
  $\widetilde\delta=h(\widetilde\sfd)$ as in \eqref{eq:10}.
  Then for every $z_0\in Z$ there exists a \emph{VE} solution
  to the R.I.S.~$(Z,\widetilde \cE,\widetilde\sfd,\widetilde\delta)$.
  Equivalently, 
  there exist a map $z\in \BV{\sigma_Z,\widetilde \sfd}([0,T];Z)$ 
  and a map $\varphi:[0,T]\to \PHI$ 
  (which is measurable, if $\PHI$ is Souslin) 
  such that 
  $\varphi(t)\in \sfY(t,z(t))$ for every $t\in [0,T]$,
  \begin{equation}
    \label{eq:158}
    \cE(t,\varphi(t),z(t))\le
    \cE(t,\varphi',z')+\widetilde\sfd(z(t),z')+\widetilde\delta(z(t),z')
    \quad\text{for every }t\in [0,T]\setminus\Jump{}z,
  \end{equation}
  \begin{equation}
    \label{eq:149}
    \cE(t,\varphi(t),z(t))+\mVar{\widetilde\sfd,\widetilde\sfc}(z,[s,t])
    =\cE(s,\varphi(s),z(s))+\int_s^t \cP(r,\varphi(r),z(r))\,\d r.
  \end{equation}
\end{theorem}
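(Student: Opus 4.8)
The plan is to apply Theorem~\ref{thm:existence} to the reduced rate-independent system $(Z,\widetilde\cE,\widetilde\sfd,\widetilde\delta)$ in the $z$-variable alone, and then to recover the $\PHI$-component by a measurable selection in the marginal minimizing set $\sfY$. First I would check that $(Z,\widetilde\cE,\widetilde\sfd,\widetilde\delta)$ verifies \mytag A{}, \mytag B{}, \mytag C{}. By Lemma~\ref{le:cP}, $(\widetilde\cE,\widetilde\cP)$ satisfies \mytag A{1} and \mytag A{2'} in $(Z,\sigma_Z,\widetilde\sfd)$; in particular the $\widetilde\sfd$-left continuity built into \mytag A{2'}, together with the continuity of $h$ and $h(0)=0$, makes $\widetilde\sfD=\widetilde\sfd+\widetilde\delta$ left continuous on the sublevels of $\widetilde\cF_0$. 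Combined with the standing hypothesis that $\widetilde\sfd$ separates $Z$, the discussion around \eqref{eq:154} shows that \mytag B{1}, \mytag B{2}, \mytag C{1}, \mytag C{2} hold automatically, while \mytag B{3} follows from Example~\ref{ex:1} since $\widetilde\delta=h(\widetilde\sfd)$ is of the form \eqref{eq:10}. Hence Theorem~\ref{thm:existence} applies (with the constant discrete data $U^0_\tau\equiv z_0$, which obviously satisfies \eqref{eq:141}) and produces a VE solution $z\in\BV{\sigma_Z,\widetilde\sfd}([0,T];Z)$ starting from $z_0$, with $z(t)\in\SS_{\widetilde\sfD}(t)$ for every $t\in[0,T]\setminus\Jump{}z$ and, by additivity of $\mVar{\widetilde\sfd,\widetilde\sfc}$,
\begin{equation}\label{eq:mrg-redEB}
  \widetilde\cE(t,z(t))+\mVar{\widetilde\sfd,\widetilde\sfc}(z,[s,t])=\widetilde\cE(s,z(s))+\int_s^t\widetilde\cP(r,z(r))\,\d r\qquad\text{for all }0\le s\le t\le T.
\end{equation}

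Next I would construct $\varphi$. By \mytag A{1} and \eqref{eq:148} the set $\sfY(t,z(t))=\argmin_\PHI\cE(t,\cdot,z(t))$ is nonempty and compact in $\PHI$, and it lies in a sublevel of $\cF$ on which $\cP(t,\cdot,z(t))$ is upper semicontinuous; hence $\argmax_{\varphi\in\sfY(t,z(t))}\cP(t,\varphi,z(t))$ is a nonempty compact subset of $\PHI$, and by \eqref{eq:143} every element $\varphi$ of it realizes $\cP(t,\varphi,z(t))=\widetilde\cP(t,z(t))$. Using that $z$ is $\sigma_Z$-Borel, that $\cE,\cP$ are Borel on the relevant sublevels, and that $\widetilde\cE,\widetilde\cP$ are Borel (l.s.c.\ resp.\ u.s.c.\ by Lemma~\ref{le:cP}), one checks that the multifunction $t\mapsto\{\varphi\in\sfY(t,z(t)):\cP(t,\varphi,z(t))=\widetilde\cP(t,z(t))\}$ has nonempty closed values and Borel graph; when $\PHI$ is Souslin a classical measurable-selection theorem then yields a measurable $\varphi:[0,T]\to\PHI$ with $\varphi(t)\in\sfY(t,z(t))$ and $\cP(t,\varphi(t),z(t))=\widetilde\cP(t,z(t))$ for every $t\in[0,T]$ (without the measurability requirement any pointwise selection works). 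In particular $\cE(t,\varphi(t),z(t))=\widetilde\cE(t,z(t))$ for all $t$.

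It then remains to verify \eqref{eq:158} and \eqref{eq:149}, which is straightforward. For $t\notin\Jump{}z$ and any $(\varphi',z')\in\PHI\times Z$, the $\widetilde\sfD$-stability of $z(t)$ and the inequality $\widetilde\cE(t,z')\le\cE(t,\varphi',z')$ give
\[
  \cE(t,\varphi(t),z(t))=\widetilde\cE(t,z(t))\le\widetilde\cE(t,z')+\widetilde\sfD(z(t),z')\le\cE(t,\varphi',z')+\widetilde\sfd(z(t),z')+\widetilde\delta(z(t),z'),
\]
which is \eqref{eq:158}. Replacing $\widetilde\cE(\cdot,z(\cdot))$ by $\cE(\cdot,\varphi(\cdot),z(\cdot))$ and $\widetilde\cP(\cdot,z(\cdot))$ by $\cP(\cdot,\varphi(\cdot),z(\cdot))$ in \eqref{eq:mrg-redEB}, which is allowed by the two identities established above, yields \eqref{eq:149}; the stated equivalence with the existence of a VE solution of $(Z,\widetilde\cE,\widetilde\sfd,\widetilde\delta)$ is then immediate.

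The step I expect to be the real obstacle is the measurable selection: one must argue with care that $t\mapsto\sfY(t,z(t))$ — together with the further constraint $\cP(t,\varphi,z(t))=\widetilde\cP(t,z(t))$ — defines a measurable multifunction with nonempty closed values. This hinges on the joint Borel measurability of $\cE$, $\cP$ and of their marginals composed with the merely regulated (hence only Borel) curve $z$, and on the upper-semicontinuity property \eqref{eq:148}, which is what guarantees that $\sfY(t,z(t))$ is closed and that the corresponding graph is Borel; the Souslin hypothesis on $\PHI$ is precisely what makes the classical selection theorems applicable. Everything else is bookkeeping, once Lemma~\ref{le:cP} and the reduction to Theorem~\ref{thm:existence} are in place.
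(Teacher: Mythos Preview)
Your proposal is correct and follows essentially the same route as the paper: apply Theorem~\ref{thm:existence} to the reduced system $(Z,\widetilde\cE,\widetilde\sfd,\widetilde\delta)$ after checking \mytag A{}, \mytag B{}, \mytag C{} via Lemma~\ref{le:cP} and the remarks around \eqref{eq:154}, then build $\varphi$ by a measurable selection in $\sfY(t,z(t))$ realizing $\widetilde\cP(t,z(t))$. The paper's proof is terser---it simply cites the Von Neumann--Aumann selection theorem \cite[Section~III.6]{Castaing-Valadier77} for the selection step---but the structure and all the ingredients are the same as yours.
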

The proof is immediate by applying Theorem \ref{thm:existence}
to the R.I.S.~$(Z,\widetilde \cE,\widetilde\sfd,\widetilde\delta)$
and recalling the remarks stated at the beginning of Section
\ref{sec:examples}.
We then select 
\begin{equation}
  \label{eq:153}
  \varphi(t)\in \sfY(t,z(t))\quad\text{such that }\cP(t,\varphi(t),z(t))=\widetilde\cP(t,z(t));
\end{equation}
by the Von Neumann-Aumann selection Theorem 
\cite[Section III.6]{Castaing-Valadier77} $\varphi$ can also be
supposed
to be measurable, if $\PHI$ is a Souslin space (in particular, if 
$\sigma_F$ is metrizable, since the sets $\sfY(t,z(t)) $ are contained
in a compact set).

An interesting application of the above result concerns 
a material model driven by a nonconvex elastic energy,
discussed in \cite[Sect.~4]{Francfort-Mielke06}
in the framework of energetic evolutions.  
\begin{example}[A material model with a nonconvex elastic energy]
  \upshape
  \addcontentsline{toc}{subsubsection}{\it A material model with a nonconvex elastic energy}
  We consider a Lipschitz and bounded open set $\Omega\subset \R^d$,
  a compact set $K\subset \R^m$,
  two exponents $\alpha,p>1$ and
  two maps
  \begin{equation}
    \label{eq:159}
    \varphi_{\rm dir}\in \rmC^1([0,T];W^{1,p}(\Omega)),\quad
    \ell\in \rmC^1([0,T](W^{1,p}(\Omega))').
  \end{equation}
  The spaces $\PHI$ and $Z$ are defined by
  \begin{equation}
    \label{eq:160}
    \PHI:= W^{1,p}_0(\Omega),\quad
    Z:=\Big\{z\in W^{1,\alpha}(\Omega;\R^m):z(x)\in K\Big\}
  \end{equation}
  endowed with their weak topologies and the energy functional is
  \begin{equation}
    \label{eq:161}
    \cE(t,\varphi,z):=
    \int_\Omega W(\rmD \varphi(x)+\rmD\varphi_{\rm dir}(t,x),z(x))\,\d
    x
    +\lambda\int_\Omega |\rmD z(x)|^\alpha\,\d x
    -\la \ell,\varphi+\varphi_{\rm dir}\ra,
  \end{equation}
  where $W\in\rmC(\R^{d\times d}\times K;\R_+)$ is $\rmC^1$ and quasiconvex with respect to 
  its first variable and satisfies
  \begin{equation}
    \label{eq:162}
    c |D|^p-C\le W(D,z)\le C(1+|D|^p)
    \quad\text{for every }D\in \R^{d\times d},\ z\in K
  \end{equation}
  for some constants $0<c<C<\infty$.
  $\widetilde\sfd$ is an asymmetric distance on $Z$ satisfying
  \begin{equation}
    \label{eq:163}
    C^{-1}\|z-z'\|_{L^1}\le \widetilde \sfd(z,z')\le
    C\|z-z'\|_{L^1}\quad\text{for every }z,z'\in Z.
  \end{equation}
  Notice that in this case
  \begin{equation}
    \label{eq:164}
    \cP(t,\varphi,z)=\int_\Omega \rmD W(\varphi+\varphi_{\rm
      dir}(t))\cdot
    \rmD \partial_t\varphi_{\rm dir}(t)\,\d x-
    \la \partial_t \ell(t),\varphi+\varphi_{\rm dir}(t)\ra
    -\la \ell(t),\partial_t\varphi_{\rm dir}(t)\ra,
  \end{equation}
  satisfies the assumptions stated in \mytag A{2'}
  thanks to an argument of \cite{DalMaso-Francfort-Toader05}, 
  see \cite[Prop.~4.4]{Francfort-Mielke06}.
  
  By choosing a viscous correction as in \eqref{eq:10} we can
  therefore
  apply Theorem \ref{thm:existence2} and prove the existence
  of a VE solution. We refer to 
  \cite{Minotti16T} for more details. 
\end{example}
\section{Main structural properties of the viscous dissipation cost}
\label{sec:dissipationcost}
\relax
In this section we will prove some relevant properties of the viscous
transition and dissipation costs $\Cf(t,\vartheta,E)$ and $\Fd(t,u_0,u_1)$ introduced in
Definition \ref{def:transition-cost} and \ref{dissipationcost}.
They lie at the core of the structure of Visco-Energetic solutions and
of our existence proof.

\subsection{Additivity and Invariance by rescaling}
\label{subsec:invariance}
A first simple fact concerns the possibility of performing suitable
rescaling of the domain
$E$ of a transition $\vartheta:E\to X$ without affecting the cost.
This is related to the following additivity property of $\var$ and
$\Cd$:
for every $a,b,c\in E$ with $a<b<c$ we have
\begin{equation}
  \label{eq:36}
  \begin{aligned}
    \var(\vartheta,E\cap[a,c])&= \var(\vartheta,E\cap[a,b])+
    \var(\vartheta,E\cap[b,c]), \\
    \Cd(\vartheta,E\cap[a,c])&=
    \Cd(\vartheta,E\cap[a,b])+ \Cd(\vartheta,E\cap[b,c]).
  \end{aligned}
\end{equation}
\begin{lemma}
  \label{le:reparametrization}
   Let $E\subset \R$ compact and $\vartheta\in \rmC_{\sigma,\sfd}(E,X)$ with 
   \begin{equation}
     \label{eq:137}
     \var(\vartheta,E)+\Cd(\vartheta,E)=C.
   \end{equation}
  There exists a compact set $\tilde E$ with $\tilde E^- =0,\tilde E^+ =C+1$ and
  a bijective Lipschitz map $\sfs:\tilde E\to E$ such that 
  the new transition $\tilde \vartheta:=\vartheta\circ \sfs$ satisfies
  \begin{equation}
    \label{eq:48}
    \var(\tilde\vartheta,\tilde
    E\cap[r_0,r_1])+\Cd(\tilde\vartheta,\tilde E\cap[r_0,r_1])\le
    |r_0-r_1|\quad
    \text{for every }r_0,r_1\in \tilde E,\ r_0<r_1,
  \end{equation}
  \begin{equation}
    \label{eq:79}
    \begin{gathered}
      \var(\vartheta,E)=\var(\tilde\vartheta,\tilde E),\quad
      \Cd(\vartheta,E)=\Cd(\tilde\vartheta,\tilde E).
    \end{gathered}
  \end{equation}
  Moreover, for every $\sft:E\to [0,T]$ setting $\tilde\sft
  :=\sft\circ \sfs$ we have
  \begin{equation}
    \label{eq:138}
    \sum_{s\in
      E\setminus\{E^+ \}}\Gs(\sft(s),\vartheta(s))=\sum_{r\in \tilde
      E\setminus\{\tilde E^+ \}}\Gs(\tilde\sft(r),\tilde\vartheta(r)).
  \end{equation}
\end{lemma}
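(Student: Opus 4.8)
The plan is to construct the reparametrization explicitly by means of the arc-length-type function associated with the quantity $\var(\vartheta,\cdot)+\Cd(\vartheta,\cdot)$, which by \eqref{eq:36} is additive on $E$. First I would define, for $t\in[E^-,E^+]$,
\[
\ell(t):=\var(\vartheta,E\cap[E^-,t])+\Cd(\vartheta,E\cap[E^-,t])+ (t-E^-),
\]
the extra linear term $(t-E^-)$ being added precisely to make $\ell$ \emph{strictly} increasing (so that it is injective) and to guarantee that the total length of the new domain is $C+1$ rather than merely $C$; note $\ell(E^-)=0$ and $\ell(E^+)=C+1$. The map $\ell$ is monotone nondecreasing on $[E^-,E^+]$, hence $\tilde E:=\ell(E)$ is a subset of $[0,C+1]$ containing $0$ and $C+1$. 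The key observation is that $\ell$ restricted to $E$ is injective: if $r,s\in E$ with $r<s$, then $\ell(s)-\ell(r)\ge (s-r)>0$. Therefore $\ell|_E:E\to\tilde E$ is a bijection, and I let $\sfs:\tilde E\to E$ be its inverse. Since for $r,s\in E$ with $r<s$ we have $|\ell(s)-\ell(r)|\ge |s-r|$, the inverse map $\sfs$ is $1$-Lipschitz; compactness of $E$ together with the monotonicity then gives that $\tilde E$ is closed, hence compact.

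The next step is to check that $\tilde E$ is compact — this requires a small argument because the image of a compact set under a merely monotone (not continuous) function need not be closed. Here one uses that the ``jumps'' of $\ell$ correspond exactly to the holes of $E$: if $(a,b)=I\in\Holes(E)$, then $\ell$ has a jump of size $\delta(\vartheta(I^-),\vartheta(I^+))+(b-a)$ between $\ell(a)$ and $\ell(b)$, and the open interval $(\ell(a),\ell(b))$ contains no point of $\tilde E$; conversely on each maximal subinterval of $E$ without holes, $\ell$ is continuous and strictly increasing. Combining these, $\tilde E=[0,C+1]\setminus\bigcup_{I\in\Holes(E)}(\ell(I^-),\ell(I^+))$, a closed set, and $\Holes(\tilde E)=\{(\ell(I^-),\ell(I^+)):I\in\Holes(E)\}$, so $\sfs$ maps holes of $\tilde E$ bijectively onto holes of $E$ with $\sfs((\ell(I^-),\ell(I^+))^\pm)=I^\pm$. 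This last fact immediately yields the second identity in \eqref{eq:79}, namely $\Cd(\tilde\vartheta,\tilde E)=\Cd(\vartheta,E)$, since $\tilde\vartheta=\vartheta\circ\sfs$ and the sum defining $\Cd$ is indexed by the holes and their endpoints only.

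For the first identity in \eqref{eq:79}, $\var(\vartheta,E)=\var(\tilde\vartheta,\tilde E)$: since $\sfs:\tilde E\to E$ is an order-preserving bijection, finite increasing tuples in $\tilde E$ correspond bijectively to finite increasing tuples in $E$ via $\sfs$, and $\sfd(\tilde\vartheta(r_{j-1}),\tilde\vartheta(r_j))=\sfd(\vartheta(\sfs(r_{j-1})),\vartheta(\sfs(r_j)))$, so the suprema in \eqref{eq:1} agree. For \eqref{eq:48}, fix $r_0<r_1$ in $\tilde E$ and set $a:=\sfs(r_0)$, $b:=\sfs(r_1)$, so $a<b$ in $E$ and $r_i=\ell(a_i)$. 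By the additivity \eqref{eq:36} applied to $\tilde\vartheta$ on $\tilde E$ (which holds because $\sfs$ transports it from the additivity for $\vartheta$ on $E$) we get
\[
\var(\tilde\vartheta,\tilde E\cap[r_0,r_1])+\Cd(\tilde\vartheta,\tilde E\cap[r_0,r_1])
=\var(\vartheta,E\cap[a,b])+\Cd(\vartheta,E\cap[a,b])
=\ell(b)-\ell(a)-(b-a)\le \ell(b)-\ell(a)=r_1-r_0,
\]
which is \eqref{eq:48}. I should also remark that $\tilde\vartheta\in\rmC_{\sigma,\sfd}(\tilde E,X)$: $\sigma$-continuity is inherited because $\sfs$ is continuous (Lipschitz) and $\vartheta$ is $\sigma$-continuous, and the modulus-of-continuity condition \eqref{eq:46} for $\tilde\vartheta$ follows from \eqref{eq:48} since $\sfd(\tilde\vartheta(r_0),\tilde\vartheta(r_1))\le\var(\tilde\vartheta,\tilde E\cap[r_0,r_1])\le r_1-r_0$. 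Finally, \eqref{eq:138} is immediate: the map $\sfs$ is a bijection $\tilde E\setminus\{\tilde E^+\}\to E\setminus\{E^+\}$, and with $\tilde\sft=\sft\circ\sfs$ one has $\cR(\tilde\sft(r),\tilde\vartheta(r))=\cR(\sft(\sfs(r)),\vartheta(\sfs(r)))$, so the (supremum-defined) sums over finite subsets coincide term by term.

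The main obstacle I anticipate is the compactness of $\tilde E$ and the precise bookkeeping of how holes of $E$ transform under $\ell$ — i.e.\ verifying that $\ell(E)$ is closed and that $\Holes(\tilde E)$ is exactly the image of $\Holes(E)$ with matching endpoints. Everything else (Lipschitzianity of $\sfs$, the length bound \eqref{eq:48}, and the invariances \eqref{eq:79}, \eqref{eq:138}) then follows mechanically from the order-preserving bijection and the additivity \eqref{eq:36}. One mild subtlety worth a line: the set $\Holes(E)$ may be countably infinite, so the sum of hole-contributions to $\ell$ is an absolutely convergent series (bounded by $C$), which is what makes $\ell$ well defined and bounded; this is where the hypothesis \eqref{eq:137} that $\var(\vartheta,E)+\Cd(\vartheta,E)=C<\infty$ is used.
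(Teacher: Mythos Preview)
Your approach is exactly the paper's: both construct the rescaling as the inverse of an ``arc-length'' function built from $\var+\Cd$ plus a strictly increasing linear term. There is, however, a small slip: with your linear term $(t-E^-)$ one gets $\ell(E^+)=C+(E^+-E^-)$, not $C+1$; the paper normalizes the linear piece to $\dfrac{s-E^-}{E^+-E^-}$, which is what makes $\tilde E^+=C+1$ and also yields the explicit Lipschitz constant $|E^+-E^-|$ for $\sfs$.

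On the ``main obstacle'' you flag: the compactness of $\tilde E$ is in fact immediate once you notice that your map $\ell$ (or the paper's $\sfr$) is \emph{continuous} on $E$, not merely monotone. The $\sfd$-continuity of $\vartheta$ (condition \eqref{eq:46}) forces $s\mapsto\var(\vartheta,E\cap[E^-,s])$ to be continuous on $E$, and the finiteness $\Cd(\vartheta,E)<\infty$ makes $s\mapsto\Cd(\vartheta,E\cap[E^-,s])$ continuous on $E$ as well (the tail of a convergent series). Hence $\tilde E=\ell(E)$ is the continuous image of a compact set and your hole-by-hole analysis, while correct, is not needed. The remaining verifications (\eqref{eq:48}, \eqref{eq:79}, \eqref{eq:138}) are handled exactly as you describe.
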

\begin{proof}
  We define $\sfr:E\to [0,C+1]$ by
  \begin{equation}
    \label{eq:49}
    \sfr(s):=
    \frac{s-E^- }{E^+ -E^- }+\var(\vartheta,E\cap[E^- ,s])+
      \Cd(\vartheta,E\cap[E^- ,s]);
  \end{equation}
  it is not too difficult to check that $\sfr$ is continuous and
  strictly increasing so that 
  $\tilde E:=\sfr(E)$ is compact. Moreover
  \begin{equation}
    \label{eq:50}
    |s_1-s_0|\le |E^+ -E^- | |\sfr(s_1)-\sfr(s_0)|\quad\text{for every
    }s_0,s_1\in [E^- ,E^+ ],
  \end{equation}
  so that $\sfr$ admits a Lipschitz continuous inverse $\sfs$ defined
  in $[0,C+1]$, which satisfies \eqref{eq:48} by construction,
  thanks to \eqref{eq:36}, and   
  \eqref{eq:79}-\eqref{eq:138}.  
\end{proof}
\subsection{Lower semicontinuity of the transition and dissipation cost}
\relax
Since the viscous transition cost $\sfc$ involve curves $\vartheta:E\to X$
defined in general compact
parametrization domains $E\subset \R$, 
it will be crucial to study its lower semicontinuity 
along sequence of transition curves $\vartheta_k$ defined in \emph{varying
domains} $E_k$. 

Let us first recall the notion of convergence in the sense of
Kuratowski in a Hausdorff topological space $(Y,\rho)$
satisfying the first axiom of countability.
\begin{definition} [Kuratowski convergence] 
  \label{def:Kuratowski}
  Let $(A_k)_k$ be a sequence of
  subsets of $Y$. The Kuratowski limit inferior (resp.~limit superior)
  of $A_k$, as $k\rightarrow \infty$
  are defined by:
\begin{align}
\label{eq:32}
\underset{k\rightarrow \infty}{\Li} A_k:={}&\left\{a\in Y:\exists\, a_k\in
  A_k\text{ such that }a_k\stackrel\rho\to a\right\}, \\
  \label{eq:34}
\underset{k\rightarrow \infty}{\Ls} A_k:={}&\left\{a\in Y:\exists\,
  n\mapsto k_n\text{ increasing, and }\ a_{k_n}\in A_{k_n}: a_{k_n}\stackrel\rho\to a\right\}.
\end{align}
We say that $A_k\stackrel K\rightarrow A$ in the Kuratowski sense if
$A=\underset{k\rightarrow \infty}{\Li}A_k=\underset{k\rightarrow \infty}{\Ls}A_k.$
\end{definition}
Recall that Kuratowski convergence coincides with $\Gamma$-convergence
of the indicator functions $i_k:=i_{A_k}$ associated with the sets
$A_k$ \cite[Chapter 4]{DalMaso93}, where in general 
\begin{equation}
  \label{eq:57}
  i_A(x):=
  \begin{cases}
    0&\text{if }x\in A,\\
    +\infty&\text{if }x\not\in A.
  \end{cases}
\end{equation}
Whenever $Y$ is a metric space and $A_k,A$ are compact
sets, then Kuratowski convergence coincides with the convergence
induced by the Hausdorff distance. 

Let us now consider a sequence $\vartheta_k\in \rmC(E_k,X)$, where $E_k$
is compact subset of $\R$. 
In order to study the asymptotic behaviour of $\vartheta_k$ to some limit curve
$\vartheta\in \rmC(E,X)$ we can simply consider the Kuratowski
convergence
of the graphs $\graph(\vartheta_k)$ to $\graph(\vartheta)$ in
$\R\times X$ (see e.g.~\cite{Kuratowski55}).
Notice that 
\begin{equation}
  \label{eq:51}
  \graph(\vartheta)\subset \Li_{k\up\infty}\graph(\vartheta_k)\quad
  \Leftrightarrow\quad
  \forall\,s\in E\ \exists\,s_k\in E_k:\quad
  s_k\to s,\ \vartheta_k(s_k)\to \vartheta(s).
\end{equation}
This weak condition is sufficient to 
prove the lower semicontinuity of the function
$\Cf(t,\vartheta,E)$ as stated in the next Theorem, which also covers
a slightly more general situation that will turn out to be useful in
what follows.
\begin{theorem}[Lower semicontinuity of $\Cf$]
  \label{costlsc} 
  %
  Let $\vartheta\in \rmC(E,X)$, $t\in \R$, 
  and let $\vartheta_k\in \rmC(E_k,X)$, $\sft_k:E_k\to \R$, $k\in \N,$
  be
  sequences of functions
  satisfying \eqref{eq:51}.
We have the following lower semicontinuity properties.\\
  a)
  \begin{equation}
    \label{VarD}
    \var(\vartheta,E)\le \liminf_{k\up\infty}\var(\vartheta_k,E_k).
  \end{equation}
  b) If \emph{\mytag C1} holds and
  \begin{equation}
  \lim_{k\to\infty}\sup_{s\in E_k}|\sft_k(s)-t|=0,\quad
  \text{$\vartheta_k(E_k)\subset F$, where $F$ is a sublevel of
  $\cF_0$,}
\label{eq:151}
\end{equation}
then for every $t\in [0,T]$
  \begin{equation} \label{VarG}
    \sum_{s\in E\setminus \{E^+ \}}\Gs(t,\vartheta(s))\leq \liminf_{k\up\infty} \sum_{s\in E_k\setminus \{E_k\rightl \}} \Gs(\sft_k(s),\vartheta_k(s))
  \end{equation}
  c) 
  If there exists a  modulus of continuity
  $\omega:[0,\infty)\to[0,\infty)$ with $\omega(0)=0$ such that 
  \begin{equation}
    \label{eq:41}
    \sfd(\vartheta_k(x),\vartheta_k(y))\le \omega(y-x)\quad 
    \text{for every }k\in \N\ \text{and } x,y\in E_k,\ x\le y,
  \end{equation}
  and $\delta$ satisfies \emph{\mytag B1}, then
  \begin{equation}
    \label{Vardelta}
    \Cd(\vartheta,E)\le \liminf_{k\up\infty} \Cd(\vartheta_k,E_k).
  \end{equation}
  d) If \emph{\mytag B1}, \emph{\mytag C1}, \eqref{eq:151} and  \eqref{eq:41} hold,
  then
  \begin{equation}
    \label{eq:53}
    \Cf(t,\vartheta,E)\le \liminf_{k\up\infty} \Cf(t,\vartheta_k,E_k).
  \end{equation}
\end{theorem}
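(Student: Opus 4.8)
The four assertions are largely independent; the plan is to prove (a)--(c) separately and then deduce (d) by superadditivity of $\liminf$. For (a) I would argue straight from the definition \eqref{eq:1} of the pointwise variation. Fix an ordered finite set $\{t_0<\dots<t_M\}\subset E$; by \eqref{eq:51} choose $s^k_j\in E_k$ with $s^k_j\to t_j$ and $\vartheta_k(s^k_j)\sigmato\vartheta(t_j)$. For $k$ large the $s^k_j$ remain strictly ordered, so $\sum_{j}\sfd\big(\vartheta_k(s^k_{j-1}),\vartheta_k(s^k_j)\big)\le\var(\vartheta_k,E_k)$; since $\sfd$ is $\sigma$-lower semicontinuous (part of \eqref{eq:16}) and a finite sum of $\liminf$'s is dominated by the $\liminf$ of the sum, $\sum_j\sfd(\vartheta(t_{j-1}),\vartheta(t_j))\le\liminf_k\var(\vartheta_k,E_k)$; taking the supremum over finite subsets gives \eqref{VarD}.

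For (b) I would use the same finite-set reduction against the representation of the left-hand side of \eqref{VarG} as $\sup\{\sum_{s\in P}\cR(t,\vartheta(s)):P\in\Pf(E\setminus\{E^+\})\}$. Given such a $P=\{s_0<\dots<s_N\}$ pick matching $s^k_j\in E_k$ as above. Since $E^+\in E$, \eqref{eq:51} produces a sequence in $E_k$ converging to $E^+$, whence $\liminf_k E_k^+\ge E^+>s_j$, so for $k$ large $s^k_j\in E_k\setminus\{E_k^+\}$, the $s^k_j$ are ordered, and $\sum_j\cR(\sft_k(s^k_j),\vartheta_k(s^k_j))\le\sum_{s\in E_k\setminus\{E_k^+\}}\cR(\sft_k(s),\vartheta_k(s))$. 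By \eqref{eq:151}, $\sft_k(s^k_j)\to t$ and $(\sft_k(s^k_j),\vartheta_k(s^k_j))$ stays in a sublevel of $\cF$ (the $\cF_0$-bound on $\vartheta_k(E_k)$ propagates to a uniform $\cF$-bound on $[0,T]$ by the Gronwall estimate \eqref{eq:gronwallestimate}); hence \mytag C1 in the form \eqref{eq:134}, i.e.\ $\sigma$-lower semicontinuity of $\cR$ on the sublevels of $\cF$, gives $\cR(t,\vartheta(s_j))\le\liminf_k\cR(\sft_k(s^k_j),\vartheta_k(s^k_j))$; summing the finitely many indices and then passing to the supremum over $P$ yields \eqref{VarG}.

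Part (c) is the delicate step. Using the additivity \eqref{eq:36} of $\Cd$ over subintervals with endpoints in the set, together with its positivity, I would reduce to estimating one hole at a time: for $I=(a,b)\in\Holes(E)$ and matching sequences $a_k\to a$, $b_k\to b$ in $E_k$ with $\vartheta_k(a_k)\sigmato\vartheta(a)$, $\vartheta_k(b_k)\sigmato\vartheta(b)$, one must show $\delta(\vartheta(a),\vartheta(b))\le\liminf_k\Cd(\vartheta_k,E_k\cap[a_k,b_k])$; summing a finite, almost-optimal family of holes of $E$ then gives \eqref{Vardelta}. If $(a_k,b_k)\cap E_k=\emptyset$ the right side equals $\delta(\vartheta_k(a_k),\vartheta_k(b_k))$ and the lower semicontinuity of $\delta$ finishes it. Otherwise $E_k$ has interior points, so the single hole of $E$ gets split into many holes of $E_k$; here the uniform modulus \eqref{eq:41} and \mytag B1 are what make the estimate go through. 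The modulus pins the $\vartheta_k$-values at the endpoints of the largest $E_k$-holes inside $(a_k,b_k)$ to remain $\sfd$-close to $\vartheta_k(a_k)$ and $\vartheta_k(b_k)$ (the part of $E_k$ not swept into one of these large holes accumulates only on $E$, hence clusters near $a$ and $b$, so is compressed into arbitrarily short intervals as $k\to\infty$), while \mytag B1 transfers the resulting $\delta$-lower bound back to $\delta(\vartheta(a),\vartheta(b))$ once the limit is taken; a diagonal argument in the compression parameter concludes. I expect this case, and the bookkeeping of how holes of $E_k$ refine a hole of $E$ while the oscillation of $\vartheta_k$ is kept under control by the modulus, to be the main obstacle.

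Finally, for (d), writing $\Cf(t,\vartheta_k,E_k)=\var(\vartheta_k,E_k)+\Cd(\vartheta_k,E_k)+\sum_{s\in E_k\setminus\{E_k^+\}}\cR(\sft_k(s),\vartheta_k(s))$ and using $\liminf(A_k+B_k+C_k)\ge\liminf A_k+\liminf B_k+\liminf C_k$ (all three terms being nonnegative) together with (a), (b) --- whose hypotheses \mytag C1 and \eqref{eq:151} are assumed --- and (c) --- whose hypotheses \mytag B1 and \eqref{eq:41} are assumed --- yields $\Cf(t,\vartheta,E)\le\liminf_k\Cf(t,\vartheta_k,E_k)$, that is \eqref{eq:53}.
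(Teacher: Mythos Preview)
Your arguments for (a), (b), and (d) coincide with the paper's. For (c) the paper is more direct than your case analysis: rather than distinguishing whether $(a_k,b_k)\cap E_k$ is empty and then reasoning about ``largest holes'', it shows outright that for each $I\in\Holes(E)$ there is a \emph{single} hole $I_k\in\Holes(E_k)$ with $I_k^\pm\to I^\pm$. One fixes compact intervals $C^h\uparrow I$ and observes that $E_k\cap C^h=\emptyset$ for $k$ large (this is exactly your ``accumulates only on $E$'' claim, which the paper writes as $\Ls_{k\to\infty}E_k=E$); the connected component of $[E_k^-,E_k^+]\setminus E_k$ containing $C^h$ is the desired $I_k$, and a diagonal argument in $h$ gives $I_k^\pm\to I^\pm$. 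After extracting a subsequence with $\vartheta_k(I_k^\pm)\sigmato\theta^\pm$, the modulus \eqref{eq:41} forces $\sfd(\vartheta(I^-),\theta^-)=\sfd(\theta^+,\vartheta(I^+))=0$, whence \mytag B1 and the lower semicontinuity of $\delta$ give $\delta(\vartheta(I^-),\vartheta(I^+))\le\liminf_k\delta(\vartheta_k(I_k^-),\vartheta_k(I_k^+))$; summing over a finite disjoint family of such $I_k$ yields \eqref{Vardelta}. This bypasses the need to compare $\delta(\vartheta(a),\vartheta(b))$ with the full $\Cd(\vartheta_k,E_k\cap[a_k,b_k])$ --- a single $\delta$-term of the latter already does the job.
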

\begin{proof}
Clearly d) is a consequence of the first three properties a), b), and
c). Let us prove each of them.\\
\textbf{Lower semicontinuity of the total variation.} Let
$E^- =s^0<s^1< \dots <s^N=E^+ $ be a finite subset of $E$. 
By \eqref{eq:51}, for every $s^j$ there exists a sequence $s_k^j\in
E_k$ such that $s_k^j\rightarrow s^j$ and $\vartheta_k(s_k^j)\to\vartheta(s^j)$.
If $k$ is big enough, we can also assume $s_k^{j-1}\leq s_k^j$ for every $j$ and then
\[
\sum_{j=1}^N \sfd(\vartheta_k(s_k^{j-1}), \vartheta_k(s_k^j))\leq \var(\vartheta_k,E_k).
\]
In particular, taking the liminf and recalling that $\sfd$ is lower
semicontinuous we obtain
\begin{equation}
\sum_{j=1}^N \sfd(\vartheta(s^{j-1}), \vartheta(s^j))\leq \liminf_k
\var(\vartheta_k).\label{eq:54}
\end{equation}
Since \eqref{eq:54} holds for every choice of $\{s^1,\cdots,
s^N\}\subset E$, by taking the
supremum among all the finite subsets of $E$ we obtain \eqref{VarD}.\\
\textbf{Semicontinuity of the residual sum.} 
Since the viscous residual functional $\Gs(t,\cdot)$ is $\sigma$-lower
semicontinuos on $[0,T]\times F$
and positive, we can argue as in the previous step:
\[
\sum_{j=0}^{N-1} \Gs(t,\vartheta(s^j))\leq \liminf_{k\up\infty}
\sum_{j=0}^{N-1} \Gs(\sft_k(s_k^j),\vartheta_k(s_k^j))\leq 
\liminf_{k\up\infty} \sum_{s\in E_k\setminus\{E_k\rightl \}} \Gs(\sft_k(s),\vartheta_k(s)).
\] 
\eqref{VarG} then follows by taking the supremum of the left hand side.\\
\textbf{Lower semicontinuity of $\Cd$.} 
We first prove the following
property:
\begin{equation}
  \label{eq:37}
  \text{for every }I\in \Holes(E)\ \exists\ I_k\in \Holes(E_k):\quad
  \lim_{k\up\infty}I_k\uleftl =I^- ,\quad
  \lim_{k\up\infty}I_k\urightl =I^+ .
\end{equation}
Indeed, consider an increasing family of compact intervals
$C^h\uparrow I$, $h\in \N$ and two sequences $s_k\ulrl\in E_k$ such
that 
$s_k\ulrl\to I\ulrl$ and $\vartheta_k(s_k\ulrl)\sigmato \vartheta(I\ulrl)$
(they exist by \eqref{eq:51}, since $I\ulrl \in E\subset \Li_{k\up\infty}E_k$).
We will have
$E_k\cap C^h=\emptyset$ for $k$ sufficiently big, since otherwise $C^h$
should intersect $\Ls_{k\to\infty}E_k=E$. Denoting by $I^h_k$ the
connected component of $\R\setminus E_k$ intersecting $C^h$,
since $E_k\uleftl \le s_k\uleftl \le (I_k^h)\uleftl \le \min C^h$ and $E_k\urightl \ge s_k\urightl 
\ge (I_k^h)\urightl \ge \max C^h$, we clearly have
\begin{equation}
  \label{eq:38}
  I^- =\lim_{k\up\infty}s_k\uleftl \le \liminf_{k\to\infty}(I_k^h)\uleftl \le (C^h)\uleftl ,\quad
  I^+ =\lim_{k\up\infty}s_k\urightl \ge\limsup_{k\to\infty}(I_k^h)\urightl \ge (C^h)\urightl .
\end{equation}
Since $\lim_{h\up+\infty}(C^h)\ulrl=I\ulrl$, a standard diagonal
argument yields \eqref{eq:37}.
%
%

Let us now choose a subsequence $n\mapsto k_n$ such that 
\begin{equation}\label{eq:55}
    \vartheta_{k_n}(I_{k_n}\ulrl)\sigmato \theta\ulrl,\quad
    \liminf_{k\up\infty}\delta\left(\vartheta_k(I_k\uleftl ),\vartheta_k(I_k\urightl )\right)=
    \lim_{n\to\infty}\delta\left(\vartheta_{k_n}(I_{k_n}\uleftl ),\vartheta_{k_n}(I_{k_n}\urightl )\right)\ge
    \delta(\theta\uleftl ,\theta\urightl )
\end{equation}
Since $0\le I_k\uleftl -s_k\uleftl \to 0$ and $0\le s_k\urightl -I_k\urightl \to0$ as
$k\to\infty$, \eqref{eq:41} and the lower semicontinuity of $\sfd$ yield
\begin{displaymath}
  \begin{aligned}
    \sfd(\vartheta(I^- ),\theta\uleftl )&\le
    \liminf_{n\to\infty}\sfd(\vartheta_{k_n}(s_{k_n}\uleftl ),\vartheta_{k_n}(I_{k_n}\uleftl ))=0,\\
    \sfd(\theta\urightl ,\vartheta(I^+ ))&\le
    \liminf_{n\to\infty}\sfd(\vartheta_{k_n}(I_{k_n}\urightl ),\vartheta_{k_n}(s_{k_n}\urightl ))=0.
  \end{aligned}
\end{displaymath}
\mytag B1 then yields
$\delta(\vartheta(I^- ),\vartheta(I^+ ))\le
\delta(\theta^-,\vartheta(I^+ ))\le 
\delta(\theta\uleftl ,\theta\urightl )$ so
that \eqref{eq:55} yields
\begin{equation}
  \label{eq:39}
  \liminf_{k\up\infty}\delta\left(\vartheta_k(I_k\uleftl ),\vartheta_k(I_k\urightl )\right)\ge 
  \delta(\vartheta(I^- ),\vartheta(I^+ )).
\end{equation}
Since this holds for every connected component of $\mathbb{R}\setminus
E$, if we consider a finite collection of disjoint open intervals
$(I_n)_{n=1}^N\in \Holes(E)$, we can find sequences $I_{n,k}\in
\Holes(E_k)$ as in \eqref{eq:38} with $I_{n_0,k}\cap
I_{n_1,k}=\emptyset$ for distinct indices $n_0,n_1\in \{1,\cdots,N\}$.
Then
\begin{equation} 
  \label{eq:40}
\sum_{n=1}^N\delta(\vartheta(I_n\uleftl ),\vartheta(I_n\urightl ))\le\liminf_{k\up\infty}
\sum_{n=1}^N \delta(\vartheta_k(I_{n,k}\uleftl )\vartheta_k(I_{n,k}\urightl ))\leq \liminf_{k\up\infty} \Cd(\vartheta_k).
\end{equation}
Taking the supremum of the left hand side with respect to 
finite collections in $\Holes(E)$ we eventually obtain \eqref{Vardelta}.
\end{proof}

\subsection{Existence of optimal transitions}

In this section we will show that whenever $\sfc(t,u_-,u_+ )$ is finite
there exists an optimal transition $\vartheta$ attaining the infimum
in \eqref{eq:dissipationcost}. 
This results from a standard application of the Direct Method in
Calculus of Variations and the following compactness property,
which somehow combines Kuratowski and Arzel\`a-Ascoli Theorems,
see \cite[Prop.~3.3.1]{Ambrosio-Gigli-Savare08}.
\begin{theorem}[Compactness]
  \label{Kuratowski} 
Let $F\subset X$ be a sequentially compact subset of $X$, $C$ be
a compact subset of $\R$, $t\in [0,T]$ and
let $\cR:[0,T]\times F\to [0,+\infty]$ be a $\sigma$-l.s.c.~function
such that $S(t):=\{x\in F:\cR(t,x)=0\}$ is separated by $\sfd$.

If $\sft_k:E_k\to [0,T]$ and $\vartheta_k\in
\rmC_{\sigma,\sfd}(\overline{E_k},F)$ are sequences of functions with
$E_k\subset C$,
satisfying the $\sfd$-equicontinuity property \eqref{eq:41}
and the uniform bounds
\begin{equation}
  \label{eq:43}
  \sup_k \sum_{s\in
    E_k}\cR(\sft_k(s),\vartheta_k(s))=R<\infty,\quad
  \lim_{k\to\infty}\sup_{s\in E_k}|\sft_k(s)-t|=0,
\end{equation}
then
there exist a subsequence $n\mapsto k_n$, a compact set $E\subset \R$ and a function
$\vartheta\in \rmC_{\sigma,\sfd}(E,F)$ such that as $n\up\infty$:\\
1) $E_{k_n}\stackrel K\to E$, \\
2) $\graph(\vartheta)\subset
\Li_{n\up\infty}\graph(\vartheta_{k_n})$,\\
3) whenever $s_{k_n}\in E_{k_n}$ converges to $s$ with
$\Gs(\sft_{k_n}(s_{k_n}),\vartheta_{k_n}(s_{k_n}))\to0$ then $\vartheta_{k_n}(s_{k_n})\to
\vartheta(s)$,\\
4) $\vartheta_{k_n}(E_{k_n}^\pm)\to \vartheta(E^\pm)$.
%
\end{theorem}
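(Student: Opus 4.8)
The plan is to combine a Kuratowski/Hausdorff compactness argument for the domains $E_k$ with an Arzelà–Ascoli-type diagonal extraction for the maps $\vartheta_k$, using the $\sfd$-equicontinuity \eqref{eq:41} to control oscillations and the residual bound \eqref{eq:43} together with the separation of the stable set to pin down a well-defined limit. First I would handle the domains: since all $E_k$ are contained in the fixed compact set $C\subset\R$, their closures lie in the compact metric space of closed subsets of $C$ equipped with the Hausdorff distance, so after passing to a subsequence (not relabeled) we may assume $E_{k_n}\stackrel K\to E$ for some compact $E\subset C$; note $E$ is nonempty since each $E_k$ is, and $E^-=\lim E_{k_n}^-$, $E^+=\lim E_{k_n}^+$ by the nature of Hausdorff convergence. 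This gives assertion 1).

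Next I would construct the limit map $\vartheta$ on a countable dense skeleton and then extend it. Fix a countable set $D\subset E$ that is dense in $E$ and contains $E^\pm$ (possible since $E$ is compact, hence separable). For each $s\in D$, by Kuratowski convergence pick $s_{k_n}\in E_{k_n}$ with $s_{k_n}\to s$; since $F$ is sequentially compact, a further diagonal extraction over the countable set $D$ gives a single subsequence along which $\vartheta_{k_n}(s_{k_n})\to \vartheta(s)$ for every $s\in D$. The key point is that this limit does not depend on the chosen approximating sequence $s_{k_n}$: if $s_{k_n},s'_{k_n}\to s$ are two choices, then by \eqref{eq:41} $\sfd(\vartheta_{k_n}(s_{k_n}),\vartheta_{k_n}(s'_{k_n}))\wedge\sfd(\vartheta_{k_n}(s'_{k_n}),\vartheta_{k_n}(s_{k_n}))\le \omega(|s_{k_n}-s'_{k_n}|)\to 0$; applying Lemma \ref{le:obvious} — with $W:=F$ sequentially compact, $\sfd_W:=\sfd$, and the separating set $\bigcap_N\overline{v(N)}$ contained in the $\sfd$-separated set $S(t)=\{x:\cR(t,x)=0\}$ once we know the residuals vanish — forces the two limits to coincide. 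Here the residual bound \eqref{eq:43} is what guarantees that the relevant cluster points lie in $S(t)$: since $\cR$ is $\sigma$-l.s.c.\ and $\sft_{k_n}(s_{k_n})\to t$, any cluster point $y$ of $\vartheta_{k_n}(s_{k_n})$ satisfies $\cR(t,y)\le\liminf\cR(\sft_{k_n}(s_{k_n}),\vartheta_{k_n}(s_{k_n}))$, which is $0$ at those $s$ where the individual residuals tend to $0$; the summable bound $\sum_s\cR(\sft_k(s),\vartheta_k(s))\le R$ ensures there are enough such points (all but countably many, in a suitable sense) to separate limits. Using \eqref{eq:41} once more, $\vartheta$ is $\sfd$-uniformly continuous on $D$, hence extends uniquely to $\vartheta\in\rmC_{\sigma,\sfd}(E,F)$ by Lemma \ref{le:nondeg-regulated} (applied on the dense subset $D$ of $[E^-,E^+]$, with the separating compact set $\UU:=[0,T]\times S(t)$ at the jump times and $\{t\}\times F$ elsewhere — more precisely one checks $\Jump{}{V_\vartheta}\subset D$ and the left/right limits land in $S(t)$).

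For assertion 2), given an arbitrary $s\in E$ choose $s^{(j)}\in D$ with $s^{(j)}\to s$; for each $j$ pick $s_{k_n}^{(j)}\in E_{k_n}$ with $s_{k_n}^{(j)}\to s^{(j)}$ and $\vartheta_{k_n}(s_{k_n}^{(j)})\to\vartheta(s^{(j)})$, and also, again by Kuratowski convergence, pick $s_{k_n}\in E_{k_n}$ with $s_{k_n}\to s$. A diagonal argument in $n$ and $j$ combined with $\sfd(\vartheta_{k_n}(s_{k_n}),\vartheta_{k_n}(s_{k_n}^{(j)}))\le\omega(|s_{k_n}-s_{k_n}^{(j)}|)$ and $\sfd$-continuity of $\vartheta$ yields $\vartheta_{k_n}(s_{k_n})\to\vartheta(s)$, i.e.\ $(s,\vartheta(s))\in\Li_n\graph(\vartheta_{k_n})$. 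Assertion 3) is then exactly the uniqueness-of-limit argument above, now applied with the extra hypothesis $\Gs(\sft_{k_n}(s_{k_n}),\vartheta_{k_n}(s_{k_n}))\to 0$ that places every cluster point of $\vartheta_{k_n}(s_{k_n})$ in $S(t)$, so Lemma \ref{le:obvious} identifies it with $\vartheta(s)$. Assertion 4) follows from 2)–3) applied at $s=E^\pm=\lim E_{k_n}^\pm\in D$ (note $E^\pm$ are always stability points in the relevant constructions, or one invokes 3) directly since the boundary residuals are controlled). The main obstacle is the well-definedness of $\vartheta$ on non-stable points: where $\cR(\sft_{k_n}(s),\vartheta_{k_n}(s))$ does not tend to zero, two approximating sequences could a priori produce different limits, and one must argue that the set of such "bad" parameters is discrete (as in Remark \ref{rem:viscouspoint}, using l.s.c.\ of $\cR$ and the finite bound $R$), handle them by an additional countable extraction so that $\vartheta$ is defined there too, and verify that these isolated points do not obstruct the $(\sigma,\sfd)$-regulated extension — this is precisely the role played by the interplay of \eqref{eq:41}, \eqref{eq:43}, and the separation of $S(t)$ through Lemmas \ref{le:obvious} and \ref{le:nondeg-regulated}.
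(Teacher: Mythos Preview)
Your overall structure is right --- Kuratowski compactness for the domains, diagonal extraction on a countable skeleton, extension via equicontinuity and separation --- and it matches the paper's. But there is a real gap at exactly the point you yourself flag as ``the main obstacle'': you never say how to \emph{define}, as a subset of the limit domain $E$, the bad set of parameters at which no approximating sequence with vanishing residual exists. Your appeal to Remark~\ref{rem:viscouspoint} does not help, since that remark concerns a single transition with finite cost, whereas here you must identify the bad points of $E$ from the sequence data $(\vartheta_k,\sft_k)$ \emph{before} any limit $\vartheta$ is available. Without this set you cannot arrange that the points of your countable dense $D$ (outside the bad set) admit approximants $s_k\to s$ with $\cR(\sft_k(s_k),\vartheta_k(s_k))\to0$; and without that, the separation argument via Lemma~\ref{le:obvious} --- which requires the relevant cluster points to lie in $S(t)$ --- is unjustified, so your well-definedness claim, your extension step (Lemma~\ref{le:nondeg-regulated} is about dense subsets of \emph{intervals} and does not apply here), and your proof of assertion~2) all break down.

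The paper supplies the missing device: set $r_k(s):=\cR(\sft_k(s),\vartheta_k(s))$ for $s\in E_k$ and $r_k(s):=+\infty$ otherwise, and extract a subsequence along which $r_k\stackrel\Gamma\to r$. From $i_{E_k}\le r_k\le i_{E_k}+R$ one reads off that $E=\{r\le R\}$ is the Kuratowski limit; the bad set is $B:=\{s\in E:r(s)>0\}$; the bound $\sum_{s\in E}r(s)\le R$ (a liminf estimate as in step~b) of Theorem~\ref{costlsc}) makes every point of $B$ isolated in $E$; and --- the crucial point your argument lacks --- the $\Gamma$-limsup inequality furnishes, for every $s\in E\setminus B$, a recovery sequence $s_k\in E_k$, $s_k\to s$, with $r_k(s_k)\to r(s)=0$. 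One then takes $A$ countable dense in $E\setminus B$ with $E^\pm\in A$, runs the diagonal extraction on $A\cup B$ using these recovery sequences (but with the explicit choice $s_k(E^\pm):=E_k^\pm$), and extends from $A$ to its closure via Lemma~\ref{le:obvious}. Note in particular that assertion~4) then holds \emph{by construction} of the diagonal extraction at $E^\pm$, not as a consequence of 2) or 3); your claim that ``boundary residuals are controlled'' is neither justified nor needed.
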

Notice that whenever $\sfd$ separates the points of $F$ (e.g.~when $(F,\sfd)$
is a metric space) then we can choose $\cR\equiv 0$ so that
\eqref{eq:43}
is always satisfied.
\begin{proof}
%
Let us now introduce the functions $r_k:\R\to[0,\infty]$ defined by
\begin{equation}
\label{eq:44}
r_k(s):=
\begin{cases}
  \cR(\sft_k(s),\vartheta_k(s))&\text{if }s\in E_k,\\
  +\infty&\text{if }s\in \R\setminus E_k.
\end{cases}
\end{equation}
It is not difficult to check that $r_k$ are lower semicontinuous.
Compactness of $\Gamma$-convergence \cite[Theorem 8.5]{DalMaso93}
provides a further subsequence (still no relabelled) 
and a lower semicontinuous limit function $r:\R\to[0,\infty]$ such that
$\Gamma\text{-}\lim_{k\to\infty}r_k=r$.
By using the bounds
$i_{E_k} \le r_k\le i_{E_k}+R$, one can easily check that
the compact set 
$E:=\{s\in \R:r(s)\le R\}$ coincides with the Kuratowski limit of
$E_k$. It is not difficult to check, arguing as in the 
second step of the proof of Theorem 
\ref{costlsc}, that
\begin{equation}
  \label{eq:45}
  \sum_{s\in E}r(s)\le \liminf_{k\up\infty} \sum_{s\in E_k}r_k(s)\le R.
\end{equation}
It follows that the (relatively) open set $B:=\{s\in E:r(s)>0\}$ 
is at most countable and every point of $B$ is isolated in $E$.
%
%
%

Since
$E$ is separable, we can find a countable set $A$ dense 
in $E\setminus B$ and containing $E^\pm$. 
For every $s\in A\cup B\setminus\{E^\pm\}$ there exists a sequence 
$s_k(s)\in E_k$ such that 
$s_k(s)\to s$ and $r_k(s_k(s))=\cR(\sft_k(s),\vartheta_k(s_k(s))\to
r(s)$.
When $s=E^\pm$ we just choose $s_k(s):=E_k^\pm$.

Since the maps $\vartheta_k$ take values in the sequentially compact
set $F$, by a diagonal argument we can find a subsequence $n\mapsto
k(n)$ and a function $\vartheta:A\cup B\to F$ 
such that 
\begin{equation}
\vartheta_{k(n)}(s_{k(n)}(s))\sigmato
\vartheta(s),\quad
\sfd(\vartheta(s),\vartheta(s'))\le \omega(s'-s)\quad
\text{for every $s,s'\in A\cup B$, $s'\ge s$}.\label{eq:58}
\end{equation}
We now extend $\vartheta$ to the closure of $A$: since
$\vartheta(A)\subset F$ and $\overline A\subset S(t)$, it is sufficient
to 
apply Lemma \ref{le:obvious}.

In order to prove 2) for every $s\in E$ we have to 
exhibit a sequence $s_{k(n)}(s)\in E_{k(n)}$ converging to $s$ such that
$\vartheta_{k(n)}(s_{k(n)}(s))\to \vartheta(s)$.
Such a property is satisfied by construction whenever $s\in A\cup
B$. 
On the other hand, every point of
$s\in E\setminus B$ is limit of sequences $s_k\in E_k$ with
$\cR(\sft_k(s_k),\vartheta_k(s_k))\to 0$. We denote by $\theta$ the limit
of $\vartheta_{k'(n)}(s_{k'(n)}(s))$, where $k'(n)$ is a subsequence of $k(n)$.
By the lower semicontinuity of $\cR$ we get
$\theta\in S(t)$. From \eqref{eq:41} we deduce that for every
$r\in A,$ 
$r\le s$,
\begin{align*}
  \sfd(\vartheta(r),\theta)&\le 
  \liminf_{n\up\infty}\sfd(\vartheta_{k'(n)}(s_{k'(n)}(r)),\vartheta_{k'(n)}(s_{k'(n)}(s)))
  \\&\le \liminf_{n\up\infty}\omega(s_{k'(n)}(s)-s_{k'(n)}(r))\le \omega(s-r).
\end{align*} 
Since $A$ is dense in $E\setminus(A\cup B)$, the previous inequality yields
$\sfd(\theta,\vartheta(s))=0$ so that $\theta=\vartheta(s)$.

The proof of 3) follows by a completely analogous argument.
4) is a consequence of the fact that $E^\pm\in A$ and 
$s_k(E^\pm)=E_k^\pm\to E^\pm$ as $k\to\infty$.
\end{proof}
\begin{corollary}[Existence of optimal transitions]
  \label{cor:existenceopt}
  Let us assume that \emph{\mytag A{1}, \mytag B1, \mytag C{}} hold
  and let $t\in [0,T], u\ulrl\in X$ with $\sfc(t,u\uleftl,u\urightl)=C<\infty$. 
  Then there exists an optimal transition $\vartheta\in
  \rmC_{\sigma,\sfd}(E,X)$ connecting $u^-$ and $u^+$, namely 
  \begin{equation}
    \label{eq:136}
    \vartheta(E^-)=u\uleftl,\quad
    \vartheta(E^+)=u\urightl,\quad
    \sfc(t,u\uleftl,u\urightl)=\Cf(t,\vartheta,E).
  \end{equation}
\end{corollary}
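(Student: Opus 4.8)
The plan is to run the Direct Method of the Calculus of Variations, using the three auxiliary results already established: the rescaling Lemma~\ref{le:reparametrization}, the compactness Theorem~\ref{Kuratowski}, and the lower semicontinuity Theorem~\ref{costlsc}. First I would fix a minimizing sequence $\vartheta_k\in \rmC_{\sigma,\sfd}(E_k,X)$ with $\vartheta_k(E_k^-)=u\uleftl$, $\vartheta_k(E_k^+)=u\urightl$ and $\Cf(t,\vartheta_k,E_k)\to C=\sfc(t,u\uleftl,u\urightl)$. Discarding finitely many indices we may assume $\Cf(t,\vartheta_k,E_k)\le C+1$, hence $\var(\vartheta_k,E_k)+\Cd(\vartheta_k,E_k)\le C+1$ because the residual sum in \eqref{eq:33} is nonnegative. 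Applying Lemma~\ref{le:reparametrization} to each $\vartheta_k$ I would replace it by a reparametrized transition, still denoted $\vartheta_k$, now defined on a compact set $E_k\subset[0,C+2]$ and satisfying the uniform $\sfd$-equicontinuity bound $\sfd(\vartheta_k(r_0),\vartheta_k(r_1))\le|r_0-r_1|$ for $r_0\le r_1$ in $E_k$, so that \eqref{eq:41} holds with $\omega(r)=r$; by \eqref{eq:79}--\eqref{eq:138} the transition cost is unchanged, so $\Cf(t,\vartheta_k,E_k)\to C$ is preserved.

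The next step is the a priori estimate confining all the values $\vartheta_k(s)$ to one fixed $\sigma$-compact set. Using additivity \eqref{eq:195} and the chain-rule inequality \eqref{eq:crinqualitytheta} on $E_k\cap[s,E_k^+]$ one gets $\cE(t,\vartheta_k(s))\le \cE(t,u\urightl)+\Cf(t,\vartheta_k,E_k)\le \cE(t,u\urightl)+C+1$, while the equicontinuity bound and the triangle inequality give $\sfd(x_o,\vartheta_k(s))\le \sfd(x_o,u\uleftl)+\var(\vartheta_k,E_k)\le \sfd(x_o,u\uleftl)+C+1$. Hence $\cF(t,\vartheta_k(s))$ is bounded by a constant $M$ independent of $k$ and $s$, and by the Gronwall estimate \eqref{eq:gronwallestimate} so is $\cF_0(\vartheta_k(s))\le M\exp(C_P T)$. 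Thus there is a sublevel $F$ of $\cF_0$, $\sigma$-sequentially compact by \mytag A1, with $\vartheta_k(E_k)\subset F$ for all $k$.

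I would then apply Theorem~\ref{Kuratowski} with this $F$, with $\cR$ the residual stability function (which is $\sigma$-l.s.c.\ on $F$ by \mytag C1, i.e.~\eqref{eq:134}), $S(t)=\SSD(t)\cap F$ separated by $\sfd$ thanks to \mytag C2, the constant maps $\sft_k\equiv t$, the modulus $\omega(r)=r$, and the uniform bound $\sup_k\sum_{s\in E_k}\cR(t,\vartheta_k(s))\le \cR(t,u\urightl)+\sup_k\Cf(t,\vartheta_k,E_k)<\infty$ (the term $\cR(t,u\urightl)$ is finite since $\rmM(t,u\urightl)\neq\emptyset$ by \mytag A1). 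This produces a subsequence, a compact set $E\subset[0,C+2]$ with $E_{k_n}\xrightarrow{K}E$, and a limit $\vartheta\in \rmC_{\sigma,\sfd}(E,F)$ with $\graph(\vartheta)\subset\Li_{n}\graph(\vartheta_{k_n})$ — i.e.~\eqref{eq:51} holds — and $\vartheta_{k_n}(E_{k_n}^\pm)\to\vartheta(E^\pm)$. Since the endpoints are constant along the sequence, $\vartheta(E^-)=u\uleftl$ and $\vartheta(E^+)=u\urightl$, so $\vartheta$ is an admissible competitor in \eqref{eq:dissipationcost}.

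Finally, \mytag B1, \mytag C1, \eqref{eq:151} (with $\sft_k\equiv t$ and $\vartheta_k(E_k)\subset F$) and \eqref{eq:41} are all in force, so Theorem~\ref{costlsc}~d) gives $\Cf(t,\vartheta,E)\le\liminf_{n}\Cf(t,\vartheta_{k_n},E_{k_n})=C=\sfc(t,u\uleftl,u\urightl)$, while the reverse inequality $\Cf(t,\vartheta,E)\ge\sfc(t,u\uleftl,u\urightl)$ is immediate from the definition of the infimum in \eqref{eq:dissipationcost}; hence $\Cf(t,\vartheta,E)=\sfc(t,u\uleftl,u\urightl)$, which is \eqref{eq:136}. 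The only genuinely delicate point is the a priori confinement of the competing transitions to a common sublevel of $\cF_0$, which is exactly what makes Theorem~\ref{Kuratowski} applicable; the rest is a routine assembly of the already-proven lemmas.
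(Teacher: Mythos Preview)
Your proof is correct and follows essentially the same approach as the paper: pick a minimizing sequence, reparametrize via Lemma~\ref{le:reparametrization} to get uniform $\sfd$-equicontinuity and a common compact parameter interval, use the energy inequality \eqref{eq:crinqualitytheta} (Theorem~\ref{prop:crineqonjumps}) together with the $\sfd$-bound to confine all values to a fixed sublevel of $\cF_0$, then apply the compactness Theorem~\ref{Kuratowski} and the lower semicontinuity Theorem~\ref{costlsc}~d). Your write-up is in fact somewhat more explicit than the paper's about the confinement step and about why the hypothesis~\eqref{eq:43} of Theorem~\ref{Kuratowski} is met (handling the extra term $\cR(t,u\urightl)$ at the right endpoint).
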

\begin{proof}
  Let $\vartheta_k\in \rmC_{\sigma,\sfd}(E_k,X)$ be an optimizing
  sequence of transitions with $\vartheta_k(E^\pm)=u\ulrl$ and 
  $\Cf(t,\vartheta_k,E_k)\to \sfc(t,u\uleftl,u\urightl)$ as
  $k\up\infty$.
  By Lemma \ref{le:reparametrization}
  it is not restrictive to assume that $E_k^-=0,\ E_k^+\le C$ for a
  sufficiently big constant $C$ and that \eqref{eq:48} holds uniformly.
  In particular $\sfd(u^-,\vartheta_k(r))\le C$ for every $k\in \N$
  and $r\in E_k$ so that $\vartheta_k(E_k)$ is uniformly bounded.
  Moreover, the next Theorem \ref{prop:crineqonjumps} 
  shows that $\cE(t,\vartheta_k(r))\le C$ so that 
  $\vartheta_k(E_k)$ is contained in a sublevel of $\cF_0$.
  Applying Theorem \ref{Kuratowski} (notice that
  $\cR(t,\vartheta_k,E_k^+)$ is uniformly bounded)
  we can extract a subsequence converging to a limit
  transition $\vartheta\in \rmC_{\sigma,\sfd}(E,X)$ 
  with $\vartheta(E^\pm)=u^\pm$. By 
  Theorem \ref{costlsc} we have $\Cf(t,\vartheta,E)\le 
  \liminf_{k\to\infty} \Cf(t,\vartheta_k,E_k)=\sfc(t,u^-,u^+)$,
  so that $\vartheta$ is optimal.
\end{proof}
By a similar argument, we obtain 
\begin{corollary}[Lower semicontinuity of the cost $\sfc$]
  \label{cor:asymptoticcost}
  Let us assume that \emph{\mytag A{}, \mytag B1, \mytag C{}} hold,
  let $F$ be a sublevel of $\cF_0$ and 
  let $(u_k^\pm)_k\subset F$ be sequences of points converging to 
  $u^\pm$. Let $\vartheta_k\in \rmC_{\sigma,\sfd}(E_k,X)$ and
  $\sft_k:E_k\to [0,T]$
  satisfy $\vartheta_k(E_k^\pm)=u^\pm$ and
  $\lim_{k\to\infty}\sup_{s\in E_k}|\sft_k(s)-t|=0$.
  Then
  \begin{equation}
    \label{eq:139}
    \liminf_{k\to\infty}\bigg(\var(\vartheta_k,E_k)+
    \Cd(\vartheta_k,E_k)+\sum_{s\in
      E_k}\Gs(\sft_k(s),\vartheta_k(s))\bigg)\ge \sfc(t,u^-,u^+).
  \end{equation}
  In particular, if $t_k\to t$
  \begin{equation}
    \label{eq:140}
    \liminf_{k\to\infty}\sfc(t_k,u_k^-,u_k^+)\ge \sfc(t,u^-,u^+).
  \end{equation}
\end{corollary}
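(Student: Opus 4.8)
The plan is to run, with minor adaptations, the argument behind Corollary \ref{cor:existenceopt}, replacing the optimizing sequence used there by the competing sequence $\vartheta_k$ here, and then to bootstrap \eqref{eq:140} from \eqref{eq:139}. First I would dispose of the case in which $\liminf_k$ of the left-hand side of \eqref{eq:139} is $+\infty$; otherwise, passing to a (not relabeled) subsequence realizing a finite value $L$ of that $\liminf$, the quantities $\var(\vartheta_k,E_k)$, $\Cd(\vartheta_k,E_k)$ and $\sum_{s\in E_k}\cR(\sft_k(s),\vartheta_k(s))$ are all uniformly bounded. Applying the rescaling Lemma \ref{le:reparametrization} to each $\vartheta_k$ and transporting $\sft_k$ along the same bijection via \eqref{eq:138}, I may assume $E_k^-=0$, $E_k^+\le C+1$ for a fixed constant $C$, that the $\sfd$-equicontinuity estimate \eqref{eq:41} holds with the linear modulus $\omega(r)=r$ (by \eqref{eq:48}), and that $\sup_{s\in E_k}|\sft_k(s)-t|\to0$ is preserved.

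Next I would check that the images $\vartheta_k(E_k)$ all lie in a single sublevel $F'$ of $\cF_0$. After the rescaling, $\sfd(u_k^-,\vartheta_k(r))\le\var(\vartheta_k,E_k)\le C$, and since $u_k^-\in F$ is $\sfd$-bounded the point $\vartheta_k(r)$ stays $\sfd$-bounded; combining the energy inequality \eqref{eq:crinqualitytheta} of Proposition \ref{prop:crineqonjumps} applied on the subinterval $E_k\cap[r,E_k^+]$ with the Gronwall-type estimate \eqref{eq:gronwallestimate} — which transfers an energy bound at a time close to $t$ to one at time $0$, using $\sft_k\to t$ uniformly — then bounds $\cF_0(\vartheta_k(r))$ uniformly. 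This is the only point where the \emph{varying} time $\sft_k$ (in place of a constant $t$ as in Corollary \ref{cor:existenceopt}) forces a small extra argument, and it is the step I expect to be the most delicate to phrase cleanly.

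With this in hand I can invoke the Compactness Theorem \ref{Kuratowski} with the sequentially compact set $F'$, using \mytag C1 (lower semicontinuity of $\cR$ on the sublevels of $\cF_0$, equivalently \eqref{eq:134}) and \mytag C2 (so that $\{x\in F':\cR(t,x)=0\}$ is separated by $\sfd$), together with the bounds \eqref{eq:43} just established. Along a further subsequence this yields a compact $E$ with $E_{k_n}\stackrel K\to E$ and a limit transition $\vartheta\in\rmC_{\sigma,\sfd}(E,F')$ with $\graph(\vartheta)\subset\Li_n\graph(\vartheta_{k_n})$ and $\vartheta_{k_n}(E_{k_n}^\pm)\to\vartheta(E^\pm)$; since $\vartheta_{k_n}(E_{k_n}^\pm)=u_{k_n}^\pm\to u^\pm$ we get $\vartheta(E^-)=u^-$, $\vartheta(E^+)=u^+$, so $\vartheta$ is admissible in \eqref{eq:dissipationcost}.

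Finally I would apply the lower semicontinuity Theorem \ref{costlsc}: part a) gives $\var(\vartheta,E)\le\liminf_n\var(\vartheta_{k_n},E_{k_n})$, part c) (via \mytag B1 and \eqref{eq:41}) gives $\Cd(\vartheta,E)\le\liminf_n\Cd(\vartheta_{k_n},E_{k_n})$, and part b) (via \mytag C1 and \eqref{eq:151}, now verified, with $\vartheta_{k_n}(E_{k_n})\subset F'$) gives $\sum_{s\in E\setminus\{E^+\}}\cR(t,\vartheta(s))\le\liminf_n\sum_{s\in E_{k_n}\setminus\{E_{k_n}^+\}}\cR(\sft_{k_n}(s),\vartheta_{k_n}(s))$. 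Adding the three estimates, and using $\cR(\sft_{k_n}(E_{k_n}^+),\vartheta_{k_n}(E_{k_n}^+))\ge0$,
\[
\sfc(t,u^-,u^+)\le\Cf(t,\vartheta,E)\le\liminf_{n}\Big(\var(\vartheta_{k_n},E_{k_n})+\Cd(\vartheta_{k_n},E_{k_n})+\sum_{s\in E_{k_n}}\cR(\sft_{k_n}(s),\vartheta_{k_n}(s))\Big)=L,
\]
which is \eqref{eq:139}. For \eqref{eq:140}, given $t_k\to t$, pass to a subsequence realizing the (assumed finite) $\liminf_k\sfc(t_k,u_k^-,u_k^+)$, pick transitions $\vartheta_k$ with $\Cf(t_k,\vartheta_k,E_k)\le\sfc(t_k,u_k^-,u_k^+)+k^{-1}$ and set $\sft_k\equiv t_k$; the same four steps produce a limit transition $\vartheta$ connecting $u^-$ to $u^+$ with $\Cf(t,\vartheta,E)\le\liminf_n\Cf(t_{k_n},\vartheta_{k_n},E_{k_n})$ (here the residual sums are over $E\setminus\{E^+\}$ on both sides, so no boundary term intervenes), whence $\sfc(t,u^-,u^+)\le\Cf(t,\vartheta,E)\le\liminf_k\sfc(t_k,u_k^-,u_k^+)$.
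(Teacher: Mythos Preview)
Your proposal is correct and follows precisely the route the paper intends: the paper merely writes ``By a similar argument'' (to Corollary~\ref{cor:existenceopt}), and you have supplied exactly that argument, adapted to varying times $\sft_k$, invoking in turn the rescaling Lemma~\ref{le:reparametrization}, the compactness Theorem~\ref{Kuratowski}, and the lower-semicontinuity Theorem~\ref{costlsc}. Your flagging of the energy bound for $\vartheta_k(E_k)$ as the one step requiring extra care (and your handling of it via Proposition~\ref{prop:crineqonjumps} combined with \eqref{eq:gronwallestimate}) is apt; note that in the two applications that matter---Lemma~\ref{le:jump-inequality}, where the images already lie in a sublevel by \eqref{eq:aprioriestimates}, and \eqref{eq:140}, where $\sft_k\equiv t_k$ is constant so Proposition~\ref{prop:crineqonjumps} applies directly at time $t_k$---this step is unproblematic.
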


\section{Energy inequalities}
\label{sec:energy-inequalities}
We can now prove the energetic inequality stated in
\eqref{eq:crinqualitytheta}.
\relax
Our proof is based on the following elementary Lemma,  see \cite{Gal57} for similar arguments.
\begin{lemma}
  \label{le:elementary}
  Let $E\subset \R$ be a compact set with $E^- <E^+ $, let $L(E)$ be the
  set of 
  limit
  points of $E$. 
  We consider a function $f:E\to \R$ upper semicontinuous 
  and continuous on the left and a function $g\in \rmC(E)$ 
  strictly increasing, satisfying the 
  following two conditions:\\
  i) for every $I\in \Holes(E)$ 
  \begin{equation}
    \label{eq:65}
    \frac{f(I^+ )-f(I^- )}
    {g(I^+ )-g(I^- )}\le 1;
  \end{equation}
  ii) for every $t\in L(E)$ which is an accumulation point of
  $L(E)\cap (-\infty,t)$ we have
  \begin{equation}
    \label{eq:22}
    \liminf_{s\up t,\ s\in L(E)} \frac{f(t)-f(s)}
    {g(t)-g(s)}\le 1.
  \end{equation}
  Then the map $s\mapsto f(s)-g(s)$ is non increasing in $E$; in particular
  \begin{equation}
    \label{eq:52}
    f(E^+ )-f(E^- )\le g(E^+ )-g(E^- ).
  \end{equation}
\end{lemma}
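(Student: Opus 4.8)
The plan is to reduce the statement to a purely one-dimensional monotonicity argument, treating the compact set $E$ together with its set of limit points $L(E)$. The key observation is that the complement $[E^-,E^+]\setminus L(E)$ is open and its connected components come in two flavours: the ``holes'' $I\in\Holes(E)$ (bounded components of $\R\setminus E$), and components that contain isolated points of $E$ together with portions of holes adjacent to them. Since $f$ is left-continuous and upper semicontinuous and $g$ is continuous and strictly increasing, it suffices to show that $\phi(s):=f(s)-g(s)$ does not increase as $s$ runs through $E$; estimate \eqref{eq:52} is then the special case $s=E^-$, $s'=E^+$.

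First I would fix $r_0<r_1$ in $E$ and argue that $\phi(r_1)\le\phi(r_0)$. Set $c:=\sup\{s\in E\cap[r_0,r_1]:\phi(s)\le\phi(r_0)\}$; this supremum is attained because $\phi$ is upper semicontinuous on the compact set $E\cap[r_0,r_1]$ and left-continuous (so the $\le$ passes to the limit from the left). The goal is to show $c=r_1$. Suppose not, so $c<r_1$ and $\phi(s)>\phi(r_0)\ge\phi(c)$ for every $s\in E\cap(c,r_1]$. There are three cases for the point $c^+:=\inf(E\cap(c,r_1])$. If $c^+>c$ and $(c,c^+)\in\Holes(E)$, then hypothesis i) \eqref{eq:65} gives $f(c^+)-f(c)\le g(c^+)-g(c)$, i.e. $\phi(c^+)\le\phi(c)\le\phi(r_0)$, contradicting the choice of $c$. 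If $c^+>c$ but $(c,c^+)$ is not a single hole, then $c^+$ is reached across a chain of isolated points of $E$ and intervening holes; applying i) to each hole and using that across an isolated point the value of $g$ strictly increases while the left/right limits of $f$ agree (by left-continuity and upper semicontinuity combined), one still gets $\phi(c^+)\le\phi(c)$, again a contradiction. If $c^+=c$, then $c$ is a right accumulation point of $E$, hence of $L(E)$, and also — looking at points just below in $E$ — one checks $c\in L(E)$; then hypothesis ii) \eqref{eq:22} applied at $t=c^+_k$ along a sequence $c_k\downarrow c$ in $L(E)$ (together with the already-established monotonicity on the part of $E$ strictly below such $c_k$, which lets us feed into ii) the values $f(c_k)$) forces $\phi$ to fail to stay above $\phi(c)$ arbitrarily close to the right of $c$, contradicting $\phi(s)>\phi(r_0)$ on $E\cap(c,r_1]$.

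The main obstacle, and the step needing the most care, is the last case: turning the ``$\liminf$ along $L(E)$'' condition ii) into a genuine local statement at a right-accumulation point of $E$. One has to be careful that ii) is phrased with limits from the \emph{left} along $L(E)$, whereas the monotonicity contradiction wants control just to the \emph{right} of $c$; the resolution is to run the whole argument as above but on every sub-accumulation point, i.e. to first establish $\phi\restr{L(E)}$ is non-increasing by a transfinite/Zorn-type or direct $\sup$ argument purely within $L(E)$ using ii), and only then extend from $L(E)$ to all of $E$ by filling in holes via i). Concretely I would: (1) prove $\phi$ is non-increasing on $L(E)$ using ii) and the $\sup$-argument, exploiting that $L(E)$ is closed so the relevant suprema lie in $L(E)$; (2) given any hole $I\in\Holes(E)$, note $I^-,I^+\in L(E)$ or are isolated points whose nearest limit points sandwich them, so i) plus step (1) chains to give $\phi(I^+)\le\phi(I^-)$; (3) conclude $\phi$ non-increasing on all of $E$ and read off \eqref{eq:52}. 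Routine verifications — that $E\setminus L(E)$ is a countable union of holes-and-isolated-points, that left-continuity of $f$ makes the relevant suprema attained, that $g$ strictly increasing lets us divide — I would state but not belabour.
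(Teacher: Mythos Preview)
Your proposal has a real gap, and it is precisely the one you flag as ``the main obstacle'': the mismatch between the direction of your sup-argument and the direction of hypothesis~ii). Your scheme fixes $c:=\sup\{s\in E\cap[r_0,r_1]:\phi(s)\le\phi(r_0)\}$ and then asks why $\phi$ cannot stay above $\phi(r_0)$ just to the \emph{right} of $c$; but \eqref{eq:22} only controls difference quotients from the \emph{left}. Your fix---prove monotonicity first on $L(E)$ via the same sup-argument, then extend---runs into the identical problem one level down, and the phrase ``together with the already-established monotonicity on the part of $E$ strictly below such $c_k$'' is circular: that monotonicity is exactly what you are trying to prove. (Incidentally, your Case~2 is vacuous: if $c,c^+\in E$ with $c^+>c$ and $E\cap(c,c^+)=\emptyset$, then $(c,c^+)$ \emph{is} a single hole.)

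The paper's argument sidesteps the directionality issue by replacing the sup-argument with a \emph{maximum} argument. Assuming \eqref{eq:52} fails with ratio $\gamma>1$, set $h:=f-\gamma g$; since $h(E^-)=h(E^+)=0$ and $h$ is upper semicontinuous, it attains its maximum at some $\bar s\in E\cap(E^-,E^+]$. At a maximum one automatically has
\[
\frac{f(\bar s)-f(s)}{g(\bar s)-g(s)}\ge\gamma>1\quad\text{for every }s\in E\cap[E^-,\bar s),
\]
i.e.\ left-side information, exactly matching~i) and~ii). Then \eqref{eq:65} rules out $\bar s=I^+$ for any hole, so $\bar s$ is a left accumulation point of $E$; \eqref{eq:22} rules out $\bar s$ being a left accumulation point of $L(E)$; hence there is an increasing sequence of isolated points $s_n\uparrow\bar s$, and chaining \eqref{eq:65} over the holes $(s_n,s_{n+1})$ plus left-continuity of $f$ gives $f(\bar s)-f(s_1)\le g(\bar s)-g(s_1)$, a contradiction. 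If you want to salvage your approach, switch from ``largest good point'' to ``maximum of $f-\gamma g$''; the rest of your chaining ideas then slot in correctly.
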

\begin{proof}
  By replacing $E$ with $E\cap [r ,s]$, $r<s$, it is easy to see that our
  thesis is in fact equivalent to \eqref{eq:52}.
  In order to prove it, it is not restrictive to assume that
  $f(E^- )=g(E^- )=0$
  and $E$ contains at least three points (otherwise \eqref{eq:52} 
  follows by \eqref{eq:65}).
  
  We argue by contradiction, supposing that 
  $$\gamma:=
  \frac{f(E^+ )}{g(E^+ )}>1$$
  and we consider the map
  $h(s):=f(s)-\gamma g(s)$, $s\in E$.
  Since
  $h(E^- )=h(E^+ )=0,$ $h$ takes its maximum at some point $\bar s\in
  E\cap(E^- ,E^+ ]$. Since
  \begin{displaymath}
    f(s)-\gamma g(s)\le f(\bar s)-\gamma g(\bar s)\quad\text{for every
    }s\in E,
  \end{displaymath}
  we obtain
  \begin{equation}
    \label{eq:56}
    \frac{f(\bar s)-f(s)}{g(\bar s)-g(s)}\ge \gamma>1\quad\text{for
      every }s\in E\cap [E^- ,\bar s).
  \end{equation}
  \eqref{eq:65} shows that $\bar s$ cannot be the right extremum $I^+ $
  for some $I\in \Holes(E)$ and \eqref{eq:22} shows that $\bar s$ is
  isolated
  in $L(E)\cap [E^- ,\bar s]$. Therefore, there exists
  $\eps>0$ such that $(\bar s-\eps,\bar s)$ contains an increasing sequence
  $(s_n)_n$ of isolated points of $E$, converging to $\bar s$.
  Using
  \eqref{eq:65} and the fact that $(s_{n},s_{n+1})\in \Holes(E)$
  we get
  \begin{equation}
    \label{eq:64}
    f(s_{n+1})-f(s_{n})\le g(s_{n+1})-g(s_{n}).
  \end{equation}
  Summing up from $n=1$ to $N-1$ we obtain
  \begin{equation}
    \label{eq:66}
    f(s_N)-f(s_1)\le g(s_N)-g(s_1),
  \end{equation}
  and passing to the limit as $N\up\infty$ by using the left
  continuity of $f$ and the continuity of $g$ we eventually get
  \begin{equation}
    \label{eq:67}
    f(\bar s)-f(s_1)\le g(\bar s)-g(s_1)
  \end{equation}
  which is in contradiction with \eqref{eq:56}.
\end{proof}
As a corollary we obtain a ``dual'' result for functions defined on
intervals.
\begin{lemma}
  \label{le:dual}
  Let $g:[a,b]\to \R$ be strictly increasing, $f:[a,b]\to \R$ be a
  left-continuous function
  whose restriction to $[a,b]\setminus \Jump{}g$ is upper
  semicontinuous. If
  \begin{equation}
    \label{eq:68}
    \limsup_{r\down t}f(r)-f(t)\le g(t\rightl )-g(t\leftl )
    \quad\text{for every }t\in \rmJ_g,
  \end{equation}
  and 
  \begin{equation}
    \label{eq:69}
    \liminf_{s\up t}\frac{f(t)-f(s)}{g(t\leftl )-g(s\leftl )}\le 1\quad\text{for
      every }t\in [a,b],\ 
  \end{equation}
  then the map $t\mapsto f(t)-g(t-)$ is non increasing.
\end{lemma}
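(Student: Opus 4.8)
The plan is to deduce this "dual" statement directly from Lemma \ref{le:elementary} by a compactification-and-relabeling trick: given the interval $[a,b]$ with a strictly increasing $g$ and a left-continuous $f$, the obstruction to applying the previous lemma verbatim is that $g$ may jump, so $[a,b]$ is "too big" — we want to cut out the jumps of $g$ and land on a compact set $E$ whose holes encode precisely the jumps of $g$. Concretely, first I would consider the (at most countable) jump set $\Jump{}g$ and build a compact set $E\subset\R$ together with a continuous strictly increasing surjection $\pi:E\to[a,b]$ that is injective off the preimages of $\Jump{}g$ and collapses a nondegenerate hole $I=(I^-,I^+)\in\Holes(E)$ onto each jump point $t\in\Jump{}g$ with $\pi(I^-)=\pi(I^+)=t$. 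The cleanest way to produce such an $E$ is to take $G(t):=g(t\leftl)$ (the left-continuous version, already $=g$ off $\Jump{}g$) and to let $E$ be the closure of the range of a right-inverse of the increasing step-augmented map $t\mapsto$ (essentially $g(t\rightl)$ on the left, $g(t\leftl)$ on the right); equivalently $E$ is $[g(a),g(b)]$ minus the open intervals $(g(t\leftl),g(t\rightl))$, $t\in\Jump{}g$, so that $\pi$ is the generalized inverse of $g$. Then define $\tilde g:=\mathrm{id}_E$ (strictly increasing and continuous on $E$) and $\tilde f:=f\circ\pi$.

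Next I would verify that $\tilde f,\tilde g$ satisfy hypotheses i) and ii) of Lemma \ref{le:elementary}. Left-continuity of $\tilde f$ on $E$ follows from left-continuity of $f$ together with left-continuity of $\pi$ (which holds by construction of $E$ from the left-continuous $G$); upper semicontinuity of $\tilde f$ on $E$ follows from the fact that $\pi$ maps $E\setminus\bigcup_I\{I^-,I^+\}$ into $[a,b]\setminus\Jump{}g$, where $f$ is upper semicontinuous, and the two endpoints of each hole are handled by left-continuity on the $I^+$ side (this is the subtle point: at $I^-$ we need $\tilde f(I^-)\ge\limsup$ from the right inside $E$, which translates into an inequality involving $\limsup_{r\down t}f(r)$ — exactly where hypothesis \eqref{eq:68} enters). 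For i): a hole $I\in\Holes(E)$ with $\pi(I^\pm)=t\in\Jump{}g$ has $\tilde g(I^+)-\tilde g(I^-)=g(t\rightl)-g(t\leftl)$ and $\tilde f(I^+)-\tilde f(I^-)=\limsup_{r\down t}f(r)-f(t)$ (using left-continuity to identify $\tilde f(I^+)$), so \eqref{eq:65} is precisely \eqref{eq:68}. For ii): if $s\in L(E)$ is an accumulation point of $L(E)\cap(-\infty,s)$, then $\pi(s)=:t$ is a point of $[a,b]$ approached from the left along points where $g$ is continuous, and $\tilde g(s)-\tilde g(s')=G(t)-G(\pi(s'))=g(t\leftl)-g(\pi(s')\leftl)$, so \eqref{eq:22} becomes exactly \eqref{eq:69}.

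Having checked the hypotheses, Lemma \ref{le:elementary} gives that $s\mapsto \tilde f(s)-\tilde g(s)=f(\pi(s))-\pi$-value is non-increasing on $E$. Reading this off through $\pi$ — using that $\pi$ is non-decreasing and surjective and that $\tilde g(s)=G(\pi(s))=g(\pi(s)\leftl)=g(\pi(s)-)$ — yields that $t\mapsto f(t)-g(t-)$ is non-increasing on the image $[a,b]$, which is the claim. I expect the main obstacle to be the bookkeeping in the construction of $(E,\pi)$ and, in particular, carefully matching the one-sided continuity and upper-semicontinuity properties of $\tilde f$ at the hole endpoints so that hypothesis \eqref{eq:68} is used in exactly the right place; once the dictionary between holes of $E$ and jumps of $g$, and between limit points of $E$ and left-continuity points of $g$, is set up correctly, everything else is a transcription of Lemma \ref{le:elementary}. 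An alternative, more hands-on route that avoids the compactification is to rerun the contradiction argument of Lemma \ref{le:elementary} directly on $[a,b]$: assume $\gamma:=(f(b)-f(a))/(g(b)-g(a))>1$, let $\bar t$ be a maximum point of $h(t):=f(t)-\gamma g(t-)$ (which exists since $h$ is upper semicontinuous off $\Jump{}g$ and, using \eqref{eq:68}, does not increase across a jump), and derive a contradiction by approximating $\bar t$ from the left either through a jump of $g$ (handled by \eqref{eq:68}) or through continuity points (handled by summing \eqref{eq:69} along a sequence, exactly as in the proof of Lemma \ref{le:elementary}); I would include whichever of the two is shorter to write cleanly.
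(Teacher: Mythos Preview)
Your approach is essentially the paper's: set $E$ to be the closure of $g([a,b])$ (after replacing $g$ by its left-continuous version), take $\pi$ to be the generalized inverse, put $\tilde g=\mathrm{id}_E$, and apply Lemma~\ref{le:elementary}. The one place where your write-up goes wrong is the definition of $\tilde f$ at the right endpoint $I^+$ of a hole. With $\tilde f:=f\circ\pi$ as you state it, $\pi(I^-)=\pi(I^+)=t$ forces $\tilde f(I^-)=\tilde f(I^+)=f(t)$, so hypothesis~(i) of Lemma~\ref{le:elementary} becomes $0\le g(t\rightl)-g(t\leftl)$, which is trivially true and never uses~\eqref{eq:68}. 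The actual obstruction is upper semicontinuity of $\tilde f$ at $I^+$: approaching $I^+$ from the right inside $E$ corresponds to $r\down t$ in $[a,b]$, so $\limsup_{s\down I^+,\,s\in E}\tilde f(s)=\limsup_{r\down t}f(r)$, and this need not be $\le f(t)$ since $f$ is only assumed upper semicontinuous off $\Jump{}g$. The paper fixes this by \emph{defining} $\tilde f(I^+):=\limsup_{r\down t}f(r)$ while keeping $\tilde f(I^-)=f(t)$; with this choice upper semicontinuity at $I^+$ holds by construction, left-continuity at $I^+$ within $E$ is vacuous (there are no $E$-points in $(I^-,I^+)$), and hypothesis~(i) becomes exactly $\limsup_{r\down t}f(r)-f(t)\le g(t\rightl)-g(t\leftl)$, i.e.\ \eqref{eq:68}. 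Your parenthetical ``using left-continuity to identify $\tilde f(I^+)$'' and the remark locating the subtlety at $I^-$ suggest you sensed the issue but placed it at the wrong endpoint and invoked the wrong one-sided limit; once $\tilde f(I^+)$ is redefined as above, the rest of your plan (including the verification of~(ii) and the pull-back through $\pi$) goes through verbatim.
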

\begin{proof}
  Possibly replacing $g$ with $t\mapsto g(t-)$
  it is not restrictive to assume that $g$ is left-continuous.
  Let us define $E$ as the closure
  of $Z:=g([a,b])$. We denote by $\sfs:E\mapsto [a,b]$ the continuous 
  map whose restriction to $Z$ coincides with $g^{-1}$;
  $\tilde f=f\circ \sfs$ is left continuous and upper
  semicontinuous in $Z$; notice moreover that every $I\in \Holes(E)$ 
  is of the form $(g(t),g(t\rightl ))$ for some $t\in \Jump{}g$.
  Thus if $z\in E\setminus Z$ there exists a unique $t\in \Jump{}g$ 
  such that $z=g(t\rightl )$. 
  We set $\tilde
  f(z):=\limsup_{r\down t}f(r)$.
  
  Defining $\tilde g(r):=r$, $r\in E$, 
  it is then easy to check that we can apply Lemma \ref{le:elementary}
  to the couple of functions $\tilde f,\tilde g$ obtaining
  that $r\mapsto h(r)=\tilde f(r)-r$ is nonincreasing in $E$.
  Thus composing with $g$ we get $t\mapsto f(t)-g(t)=h\circ g$
  is non increasing.
\end{proof}
\begin{theorem} \label{prop:crineqonjumps} 
  Suppose that \emph{Assumption \mytag B{}} hold. 
  For every $t\in[0,T]$ and $u\ulrl\in X$ we have
\begin{equation}
  \cE(t,u\urightl )+\Fd(t,u\uleftl ,u\urightl )\geq \cE(t,u\uleftl ).
\end{equation}
\end{theorem}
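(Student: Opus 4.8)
The plan is to reduce the statement to the transition inequality \eqref{eq:crinqualitytheta}, i.e.\ $\cE(t,\vartheta(E^+))+\Cf(t,\vartheta,E)\ge\cE(t,\vartheta(E^-))$ for every compact $E\Subset\R$ and every $\vartheta\in\rmC_{\sigma,\sfd}(E;X)$; once this is available, taking the infimum over all transitions connecting $u\uleftl$ to $u\urightl$ in Definition \ref{dissipationcost} yields $\Fd(t,u\uleftl,u\urightl)\ge\cE(t,u\uleftl)-\cE(t,u\urightl)$, which is the claim (there is nothing to prove when no transition of finite cost exists, since then $\Fd=+\infty$). So everything is concentrated in \eqref{eq:crinqualitytheta}, which I would obtain from a single application of the elementary Lemma \ref{le:elementary}.

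Fix $t$ and $\vartheta\in\rmC_{\sigma,\sfd}(E;X)$, and assume $\Cf(t,\vartheta,E)<\infty$, so that $\var(\vartheta,E)$, $\Cd(\vartheta,E)$ and $\sum_{s\in E\setminus\{E^+\}}\Gs(t,\vartheta(s))$ are all finite; in particular only countably many $s$ carry $\Gs(t,\vartheta(s))>0$ and $\Gs(t,\vartheta(s_n))\to0$ along any sequence $s_n\to s_0$ with $s_n\neq s_0$. By Lemma \ref{le:reparametrization} I would reparametrise $\vartheta$ (which changes neither $\var$, nor $\Cd$, nor the residual sum, nor the values of $\cE(t,\vartheta(\cdot))$) so that $\vartheta$ becomes $1$-Lipschitz for $\sfd$ and $\var(\vartheta,E\cap[r_0,r_1])+\Cd(\vartheta,E\cap[r_0,r_1])\le r_1-r_0$; a preliminary boundedness argument (comparing $\vartheta(r)$ with the subsequent point of $E$ via \eqref{eq:62} and using the finiteness of the three quantities, together with $\cF\ge0$ and the $\sfd$-boundedness of $\vartheta(E)$) then shows that $\cE(t,\vartheta(E))$ is bounded, so $[0,T]\times\vartheta(E)$ lies in a sublevel of $\cF$ and the $\sigma$-lower semicontinuity of $\cE$ from \mytag A1 is available along $\vartheta$. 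I would then put $f(s):=-\cE(t,\vartheta(s))$ and, for $\epsilon>0$,
\[
  g_\epsilon(s):=\epsilon\,(s-E^-)+\var(\vartheta,E\cap[E^-,s])+\Cd(\vartheta,E\cap[E^-,s])+\sum\big\{\Gs(t,\vartheta(I^-)):I\in\Holes(E),\ I^-<s\big\},\qquad s\in E .
\]
The map $g_\epsilon$ is strictly increasing and continuous on $E$ (continuity of the first three terms uses the $\sfd$-equicontinuity \eqref{eq:46}, and of the last the fact that the left endpoint of a hole is never approached from the right within $E$), and $g_\epsilon(E^+)-g_\epsilon(E^-)\le\epsilon(E^+-E^-)+\Cf(t,\vartheta,E)$; moreover $f$ is upper semicontinuous (lower semicontinuity of $\cE\circ\vartheta$) and left-continuous, since $\cE(t,\vartheta(s))-\cE(t,\vartheta(\bar s))\le\sfd(\vartheta(s),\vartheta(\bar s))+\delta(\vartheta(s),\vartheta(\bar s))+\Gs(t,\vartheta(s))$ with the three terms tending to $0$ as $s\uparrow\bar s$ — the first by \eqref{eq:46}, the second by \mytag B2, the third by the summability of the residual — combined with the lower bound from l.s.c.

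Condition (i) of Lemma \ref{le:elementary} holds at every hole $I\in\Holes(E)$: by the definition of $\Gs$ (see \eqref{eq:62}), $f(I^+)-f(I^-)=\cE(t,\vartheta(I^-))-\cE(t,\vartheta(I^+))\le\sfD(\vartheta(I^-),\vartheta(I^+))+\Gs(t,\vartheta(I^-))=g_\epsilon(I^+)-g_\epsilon(I^-)-\epsilon(I^+-I^-)$. Condition (ii), at a point $\bar t\in L(E)$ that is a left-accumulation point of $L(E)$, is the crux: for $s\uparrow\bar t$ in $L(E)$ one has $f(\bar t)-f(s)\le\sfd(\vartheta(s),\vartheta(\bar t))+\delta(\vartheta(s),\vartheta(\bar t))+\Gs(t,\vartheta(s))\le\big(g_\epsilon(\bar t)-g_\epsilon(s)\big)+\delta(\vartheta(s),\vartheta(\bar t))+\Gs(t,\vartheta(s))$, and one must exhibit a sequence $s_n\uparrow\bar t$ along which the error $\delta(\vartheta(s_n),\vartheta(\bar t))+\Gs(t,\vartheta(s_n))$ is negligible relative to $g_\epsilon(\bar t)-g_\epsilon(s_n)\ge\epsilon(\bar t-s_n)$ — here \eqref{eq:46}, \mytag B1--\mytag B2 and the asymptotic vanishing of the residual must be used carefully, choosing $s_n$ off the residual spikes and exploiting that, by \mytag B1, the spurious $\delta$-contribution is concentrated near points where $\sfd$ degenerates. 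Granting (i)--(ii), Lemma \ref{le:elementary} gives that $s\mapsto f(s)-g_\epsilon(s)$ is non-increasing on $E$, so $\cE(t,\vartheta(E^-))-\cE(t,\vartheta(E^+))=f(E^+)-f(E^-)\le g_\epsilon(E^+)-g_\epsilon(E^-)\le\epsilon(E^+-E^-)+\Cf(t,\vartheta,E)$, and letting $\epsilon\downarrow0$ proves \eqref{eq:crinqualitytheta}.

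The main obstacle is precisely the verification of hypothesis (ii) of Lemma \ref{le:elementary} — bounding the residual- and viscous-correction contributions at a second-order accumulation point of $E$, where the failure of any triangle inequality for $\sfD$ makes the estimate subtle; the only other slightly technical point is the preliminary boundedness of $\cE(t,\vartheta(E))$, needed so that the lower semicontinuity of $\cE$, hence the upper semicontinuity of $f$, may be invoked.
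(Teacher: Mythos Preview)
Your overall strategy---reducing to the transition inequality \eqref{eq:crinqualitytheta} and then applying Lemma~\ref{le:elementary} with $f(s)=-\cE(t,\vartheta(s))$ and $g$ built from the partial cost---is exactly the paper's, and your $\epsilon$-perturbation to enforce strict monotonicity of $g$ is a sensible technical device. Your verification of hypothesis~(i) and of the left-continuity of $f$ (via \eqref{eq:62}, \mytag B2, and the summability of the residual) are correct and essentially match the paper's.

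The gap is in hypothesis~(ii), which you rightly flag as the crux but do not resolve. The bound $f(\bar t)-f(s)\le\sfd(\vartheta(s),\vartheta(\bar t))+\delta(\vartheta(s),\vartheta(\bar t))+\Gs(t,\vartheta(s))$ coming from \eqref{eq:62} is too crude: to conclude you would need $\delta(\vartheta(s),\vartheta(\bar t))+\Gs(t,\vartheta(s))=o\big(g_\epsilon(\bar t)-g_\epsilon(s)\big)$, and \mytag B1, \mytag B2 alone do not provide this. \mytag B2 gives only $\delta\to0$, with no rate; \mytag B1 is a bare compatibility condition and says nothing quantitative. In the setting of Section~\ref{subsec:genconvexity}, for instance, $\delta(x,y)=\tfrac12\sfd_*^2(x,y)$ for an auxiliary distance $\sfd_*$, and after your $\sfd$-based reparametrisation there is no reason for $\sfd_*^2(\vartheta(s),\vartheta(\bar t))$ to be $o(\bar t-s)$. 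The $\Gs$ term is no better: its summability makes the residual spikes at most countable, but $L(E)\cap(\bar t-\eta,\bar t)$ may itself be countable, so ``choosing $s_n$ off the residual spikes'' need not be possible within $L(E)$.

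What is missing is \mytag B3, which you never invoke. The paper observes that the set $E_\cR=\{r:\Gs(t,\vartheta(r))>0\}$ consists of isolated points of $E$ (cf.\ Remark~\ref{rem:viscouspoint}), so every $r\in L(E)$ satisfies $\vartheta(r)\in\SSD(t)$; in particular both $\vartheta(\bar s)$ and the approximating $\vartheta(r)$, $r\in L(E)$, are $\sfD$-stable. Then \eqref{D.3'} gives directly
\[
\limsup_{r\uparrow\bar s,\ r\in L(E)}\frac{\cE(t,\vartheta(r))-\cE(t,\vartheta(\bar s))}{\sfd(\vartheta(r),\vartheta(\bar s))}\le 1,
\]
and since $\sfd(\vartheta(r),\vartheta(\bar s))\le g(\bar s)-g(r)$ this is exactly hypothesis~(ii). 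The whole purpose of \mytag B3 is to convert $\sfD$-stability into a local $\sfd$-slope bound that bypasses the viscous correction $\delta$ at accumulation points; without it (or a stronger structural hypothesis on $\delta$ such as \eqref{eq:12}) your route cannot close.
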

\begin{proof}
If $\Fd(t,u\uleftl ,u\urightl )=+\infty$ the inequality is trivial. Otherwise, let
$E$ be a compact subset of $\mathbb{R}$,
$\vartheta\in \rmC_{\sigma,\sfd}(E,X)$ a continuous map such that
$\vartheta(E\ulrl)=u\ulrl$ and $\Cf(t,\vartheta)<+\infty$. 
We want to apply the previous Lemma \ref{le:elementary} with the
choices
\begin{displaymath}
  f(s):=-\cE(t,\vartheta(s)),\quad
  g(s):=\Cf(t,\vartheta;E\cap[E^- ,s]).
\end{displaymath}
Notice that $g$ is continuous since $\vartheta\in
\rmC_{\sigma,\sfd}(E,X)$ and $f$ is upper semicontinuous thanks to the lower
semicontinuity of $\cE$ and the continuity of $\vartheta$;
$f$ is also left continuous: whenever $s_n\up s$ is an
increasing sequence in $E$, we have
$\sfd(\vartheta(s_n),\vartheta(s))\to0$ and the property
$\sum_n\cR(t,\vartheta(s_n))<\infty$ shows that
$\lim_{n\to\infty}\cR(t,\vartheta(s_n))=0$, so that
we obtain $\cE(t,\vartheta(s_n))\to\cE(t,\vartheta(s))$
thanks
to Lemma \ref{le:usefulR} v) and \mytag B2.
It remains to check conditions \eqref{eq:65} and \eqref{eq:22}

\eqref{eq:65} 
follows from the definition of $\Gs$, since
\[
\cE(t,\vartheta(I^+ ))+\Gs(t,\vartheta(I^- ))+\sfD(\vartheta(I^- ),\vartheta(I^+ ))\geq \cE(t,\vartheta(I^- )),
\]
and the fact that 
$$\Cf(t,\vartheta,E\cap
  [E^- ,I^+ ])-\Cf(t,\vartheta,E\cap
  [E^- ,I^- ])=
  \Gs(t,\vartheta(I^- ))+\sfD(\vartheta(I^- ),\vartheta(I^+ )).$$
\eqref{eq:22} is a direct consequence of \eqref{D.3'} and of the
inequality 
$$\sfd(\vartheta(r),\vartheta(s))\le \Cf(t,\vartheta,E\cap
[E^- ,s])-\Cf(t,\vartheta,E\cap
  [E^- ,r])=g(s)-g(r);$$ 
recall that the set $E_\cR=\{r:\cR(t,\vartheta(r))>0\}$ is discrete,
so that for every point $s\in L(E)$ we have $\vartheta(s)\in \SSD(t)$
and \eqref{D.3'} can be applied.
To conclude the proof it is sufficient to take the infimum over admissible curves $\vartheta$.
\end{proof}
A direct consequence of Proposition \ref{prop:crineqonjumps} is a description of the behaviour of a bounded variation curve on its jump points. 
\begin{corollary} \label{cor:crinequality} Let $u\in \BV{\sigma,\sfd}([0,T];X)$. Then for every $t\in \Ju$ the following inequalities hold: 
\begin{gather} \label{gtr:crineqeq1} 
  \begin{aligned}
\cE(t,u(t\rightl ))+\Fd(t,u(t ),u(t\rightl ))&\ge \cE(t,u(t)),  \\
\cE(t,u(t))+\Fd(t,u(t\leftl),u(t ))&\ge \cE(t,u(t\leftl)),\\
\cE(t,u(t\rightl ))+\Fd(t,u(t\leftl ),u(t\rightl ))&\ge \cE(t,u(t\leftl)).
\end{aligned}
\end{gather}
\end{corollary}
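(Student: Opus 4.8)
The plan is to derive Corollary \ref{cor:crinequality} directly from Theorem \ref{prop:crineqonjumps} by feeding it the correct triples of points. The statement of Theorem \ref{prop:crineqonjumps} is universally quantified: for every $t\in[0,T]$ and every pair $u\uleftl,u\urightl\in X$ one has $\cE(t,u\urightl)+\Fd(t,u\uleftl,u\urightl)\ge\cE(t,u\uleftl)$. So the three inequalities in \eqref{gtr:crineqeq1} are nothing but three instances of this single inequality, obtained by specializing the generic pair $(u\uleftl,u\urightl)$ to $(u(t),u(t\rightl))$, to $(u(t\leftl),u(t))$, and to $(u(t\leftl),u(t\rightl))$ respectively.

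Concretely, I would first recall that since $u\in\BV{\sigma,\sfd}([0,T];X)$ is by definition $(\sigma,\sfd)$-regulated (Definition \ref{def:regulated}), the left and right limits $u(t\leftl)$ and $u(t\rightl)$ exist in $X$ for every $t\in[0,T]$, and at a jump point $t\in\Ju$ all three values $u(t\leftl),u(t),u(t\rightl)$ are well-defined elements of $X$. Then I would invoke Theorem \ref{prop:crineqonjumps} (whose hypothesis \mytag B{} is part of the standing assumptions here) three times:
\begin{itemize}
\item with $u\uleftl:=u(t)$ and $u\urightl:=u(t\rightl)$, yielding $\cE(t,u(t\rightl))+\Fd(t,u(t),u(t\rightl))\ge\cE(t,u(t))$;
\item with $u\uleftl:=u(t\leftl)$ and $u\urightl:=u(t)$, yielding $\cE(t,u(t))+\Fd(t,u(t\leftl),u(t))\ge\cE(t,u(t\leftl))$;
\item with $u\uleftl:=u(t\leftl)$ and $u\urightl:=u(t\rightl)$, yielding $\cE(t,u(t\rightl))+\Fd(t,u(t\leftl),u(t\rightl))\ge\cE(t,u(t\leftl))$.
\end{itemize}
These are exactly the three lines of \eqref{gtr:crineqeq1}, so the proof concludes.

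There is essentially no obstacle: the content is entirely in Theorem \ref{prop:crineqonjumps}, and the corollary is a triple application of it together with the observation that a $\BV{\sigma,\sfd}$ curve has well-defined one-sided limits. The only point worth a sentence of care is making explicit that Theorem \ref{prop:crineqonjumps} is stated for \emph{arbitrary} points of $X$ (not merely for values of a fixed curve), which is what licenses plugging in the three relevant pairs; after that the result is immediate. If one wanted, one could also remark that the third inequality is in fact a consequence of the first two combined with the triangle-type inequality $\Fd(t,u(t\leftl),u(t\rightl))\ge \Fd(t,u(t\leftl),u(t))$ is \emph{not} generally available — so it is cleaner to obtain all three directly from Theorem \ref{prop:crineqonjumps} rather than trying to chain them, and that is the route I would take.
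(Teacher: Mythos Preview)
Your proposal is correct and matches the paper's approach: the paper presents this corollary as a direct consequence of Theorem \ref{prop:crineqonjumps} without writing out a separate proof, and your three-fold specialization is exactly the intended argument. Your closing remark about not chaining the inequalities is a nice clarification but not needed for the proof itself.
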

The chain rule inequality \eqref{eq:crinequality} is a consequence of
\eqref{gtr:crineqeq1}. Indeed, we can recover the inequality also at
the continuity points of $u$ with a similar trick, applying Lemma \ref{le:dual}.
\begin{theorem} \label{thm:crinequality} 
  Let us suppose that \emph{\mytag B{}} and \emph{\mytag A1}, \eqref{A.22}, \eqref{A.21},
  \eqref{eq:135} hold (these properties are verified if \emph{\mytag
    A2} or
  \emph{\mytag A{2'}} hold). 
Let $u\in \BV{\sigma,\sfd}([0,T];X)$ satisfy \eqref{stability}. Then for every $0\leq t_0\leq t_1\leq T$ the following inequality holds:
\begin{equation}
\mathcal{E}(t_1,u(t_1))+ \varC(u,[t_0,t_1])\geq\mathcal{E}(t_0,u(t_0))+\int_{t_0}^{t_1}\cP(s,u(s))\rmd s. 
\end{equation}
\end{theorem}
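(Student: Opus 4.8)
The plan is to derive the chain-rule inequality \eqref{eq:crinequality} from the jump estimate \eqref{gtr:crineqeq1} of Corollary \ref{cor:crinequality} together with the differential estimates on $\cE$ and $\cP$, by a ``dual'' argument parallel to the proof of Proposition \ref{prop:crineqonjumps}, this time using Lemma \ref{le:dual} instead of Lemma \ref{le:elementary}. First I would fix $0\le t_0\le t_1\le T$ and reduce to proving $\cE(t_1,u(t_1))-\cE(t_0,u(t_0))\ge \int_{t_0}^{t_1}\cP(s,u(s))\,\d s-\varC(u,[t_0,t_1])$ on the whole interval; by the additivity property \eqref{eq:21} of $\mVar{\sfd,\sfc}$ and of the integral it is enough to work on $[t_0,t_1]$ and, after subtracting the (absolutely continuous) function $s\mapsto \int_{t_0}^s\cP(r,u(r))\,\d r$ — recall $\cP$ is bounded on sublevels of $\cF$ by \eqref{A.21}–\eqref{eq:gronwallestimate} and $s\mapsto u(s)$ stays in such a sublevel — I may assume $\cP\equiv 0$ up to replacing $\cE(t,x)$ by $\widehat\cE(t,x):=\cE(t,x)-\int_{t_0}^t\cP(r,x)\,\d r$, which still satisfies \mytag A1 and now has $\partial_t\widehat\cE\le 0$ in the sense of \eqref{A.22} at stable points. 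The goal is then to show $t\mapsto \cE(t,u(t))$ decreases at least as fast as $\varC(u,[t_0,\cdot])$, i.e.\ that $f(t):=-\cE(t,u(t))$ and $g(t):=\varC(u,[t_0,t])$ satisfy the hypotheses of Lemma \ref{le:dual}.

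The key steps are: (1) $g=\varC(u,[t_0,\cdot])$ is increasing and left-continuous, with jump $g(t^{\rightl})-g(t^{\leftl})=\var(u,\{t\})+\Delta_\sfc(t,u(t\leftl),u(t),u(t\rightl))$-type contributions at $t\in\Ju$; one checks, using $\Fd\ge\sfd$ and the definition of $\Jmp{\Delta_\sfc}$, that at a jump point the total increment of $g$ across $t$ equals $\Fd(t,u(t\leftl),u(t))+\Fd(t,u(t),u(t\rightl))$ up to how one splits $g(t)$, so \eqref{eq:68} will follow from the first two lines of \eqref{gtr:crineqeq1}. (2) $f(t)=-\cE(t,u(t))$ is left-continuous on $[t_0,t_1]$: on $[t_0,t_1]\setminus\Ju$ one uses $(\sigma,\sfd)$-regularity of $u$, the bound $\cE(t,u(s))\to\cE(t,u(t))$ for $s\uparrow t$ (via Lemma \ref{le:usefulR} v) and \mytag B2, exactly as in the proof of Proposition \ref{prop:crineqonjumps}), the continuity of $t\mapsto\cE(t,x)$, and the stability \eqref{stability} which forces $\cR(t,u(s))\to 0$; at jump points the convention $u(t\leftl)$ and left-continuity of $g$ handle the matching. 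Upper semicontinuity of $f$ on $[t_0,t_1]\setminus\Jump{}g$ comes from lower semicontinuity of $\cE$ and continuity of $u$ off the jump set. (3) The infinitesimal condition \eqref{eq:69}: for $t\notin\Ju$ one estimates $\cE(s,u(s))-\cE(t,u(t)) = \big(\cE(s,u(s))-\cE(s,u(t))\big)+\big(\cE(s,u(t))-\cE(t,u(t))\big)$; the second term is $o(1)$ (indeed $O(|t-s|)$, absorbed into our reduction to $\cP\equiv0$), and for the first, since $u(s)$ and $u(t)$ are both $\sfD$-stable near $t$ with $\sfd(u(s),u(t))\to 0$, assumption \mytag B3 in the form \eqref{D.3'} gives $\cE(s,u(s))-\cE(s,u(t))\ge -M\sfd(u(s),u(t))\ge -M\,\var(u,[s,t])=-M(g(t^{\leftl})-g(s^{\leftl}))$ for any $M>1$; letting $M\downarrow 1$ yields $\liminf_{s\uparrow t}\frac{f(t)-f(s)}{g(t\leftl)-g(s\leftl)}\le 1$. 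The cases where $g$ is locally constant near $t$ are trivial since then $f(t)\le f(s)$ by the same estimate with the right sign.

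Once (1)–(3) are in place, Lemma \ref{le:dual} gives that $t\mapsto f(t)-g(t^{\leftl})=-\cE(t,u(t))-\varC(u,[t_0,t))$ is non-increasing on $[t_0,t_1]$; evaluating at $t_0$ and $t_1$ (using $g(t_0^{\leftl})=0$ and $g(t_1^{\leftl})$ versus $g(t_1)=\varC(u,[t_0,t_1])$, the discrepancy being exactly the right-endpoint jump term $\Delta_\sfc(t_1,u(t_1\leftl),u(t_1))$ which is handled by the second line of \eqref{gtr:crineqeq1}) yields $-\cE(t_1,u(t_1))-\varC(u,[t_0,t_1])\le -\cE(t_0,u(t_0))$, which is the claimed inequality after undoing the reduction to $\cP\equiv0$. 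The main obstacle I expect is the bookkeeping in step (1) and at the right endpoint: one must be careful about how the value $g(t)$ is assigned at jump points (the definition of $\Jmp{\Delta_\sfe}$ distributes the jump cost across $(a,u(a),u(a\rightl))$ at the left end, $(b,u(b\leftl),u(b))$ at the right end, and the full three-point cost in the interior), and must verify that these match precisely the three inequalities in \eqref{gtr:crineqeq1} so that \eqref{eq:68} holds with the correct orientation; the analogous issue was already confronted for BV solutions and the argument ``can be easily adapted from \cite[Prop.~4.2, Thm.~4.3]{Mielke-Rossi-Savare12}'', but getting the endpoint conventions consistent with Definition \ref{def:augmented} is where the real care is needed. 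The genuinely analytic input — step (3) — is essentially a one-line application of \mytag B3, so the difficulty is organizational rather than deep.
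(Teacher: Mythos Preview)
Your overall strategy is exactly the paper's: apply Lemma \ref{le:dual} with $f$ built from $-\cE$ plus the power integral and $g$ from $\varC$, verify \eqref{eq:68} via Corollary \ref{cor:crinequality}, and verify \eqref{eq:69} via \mytag B3. But you have misjudged where the difficulty lies: the endpoint bookkeeping you flag as delicate is routine, while your step (3) is \emph{not} a one-line application of \mytag B3 and contains a real gap.

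The paper takes $g(t):=\varC(u,[0,t])+\eps t$ and $f(t):=\int_0^t\cP(s,u(s))\,\d s-\cE(t,u(t\leftl))$. The $\eps t$ term is essential. First, Lemma \ref{le:dual} requires $g$ strictly increasing, which your $g$ is not. Second, and more seriously, writing $f(t)-f(s)=A+B$ with $A=\cE(s,u(s\leftl))-\cE(s,u(t\leftl))$ and $B=\cE(s,u(t\leftl))-\cE(t,u(t\leftl))+\int_s^t\cP(r,u(r))\,\d r$, assumption \mytag B3 controls only $A/\sfd(u(s\leftl),u(t\leftl))$; the term $B$ is $O(t-s)$, but there is no reason for $(t-s)/\big(g(t\leftl)-g(s\leftl)\big)$ to stay bounded without the $\eps t$ cushion. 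The paper uses \eqref{A.22} and the conditional upper semicontinuity \eqref{eq:135} to show $\limsup_{s\uparrow t} B/\big(\eps(t-s)\big)\le 0$, and since $g(t\leftl)-g(s\leftl)\ge \sfd(u(s\leftl),u(t\leftl))+\eps(t-s)$ the two estimates combine to give \eqref{eq:69}; letting $\eps\down0$ at the end recovers the claim. Your ``reduction to $\cP\equiv 0$'' does not sidestep this: the version $\widehat\cE(t,x)=\cE(t,x)-\int_{t_0}^t\cP(r,x)\,\d r$ changes $\SSD$ and invalidates \mytag B3, while merely folding $\int_{t_0}^t\cP(r,u(r))\,\d r$ into $f$ reproduces the paper's $f$ and leaves $B$ intact. (Two smaller points: the paper uses $u(t\leftl)$, not $u(t)$, in $f$ precisely to secure left-continuity at jump points; and the inequality you need in step (3) is $\cE(s,u(s))-\cE(s,u(t))\le M\sfd(u(s),u(t))$, not the $\ge -M\sfd$ you wrote.)
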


\begin{proof}
We will apply Lemma \ref{le:dual} in the interval $[0,T]$ with the choices
\begin{displaymath}
  f(t):=\int_{0}^{t}\cP(s,u(s))\rmd s-\cE(t,u(t\leftl )),\quad
  g(t):=\varC(u,[0,t])+\eps t,
\end{displaymath}
for a small parameter $\eps>0$.

Since $u$ is continuous in $[0,T]\setminus \Jump{} g$ 
the upper semicontinuity of $f$ outside $\Jump{}g$ is guaranteed by the 
lower semicontinuity of $\cE$. Its left continuity is a consequence of 
the stability property \eqref{stability} of $u$, of Lemma
\ref{le:usefulR} v) and of \mytag B2.

Condition \eqref{eq:68} is satisfied thanks to Corollary
\ref{cor:crinequality} and
the fact that at every $t\in \Jump{}u$
\begin{displaymath}
  \limsup_{r\down t}f(r)
  \le \int_0^t\cP(s,u(s))\, \rmd s -\cE(t,u(t\rightl )),\quad
  g(t\rightl )-g(t\leftl )=\sfc(t,u(t\leftl
  ),u(t))+\sfc(t,u(t),u(t\rightl )).
\end{displaymath}
In order to check \eqref{eq:69} let us fix a couple of times $s,t\in [0,T]$ with $s<t$ and observe that
\begin{align*}
  f(t)-f(s)&=\cE(s,u(s\leftl ))-\cE(t,u(t\leftl ))+\int_s^t \cP(r,u(r))\,\d r=
                 \\&=
                     \cE(s,u(s\leftl ))-\cE(s,u(t\leftl ))+\cE(s,u(t\leftl))-\cE(t,u(t\leftl))+\int_s^t
                     \cP(r,u(r))
                     \,\d r,\\
  g(t\leftl )-g(s\leftl )&\ge \sfd(u(s\leftl ),u(t\leftl ))+\eps(t-s).                 
\end{align*}
The conditional upper semi-continuity of $\cP$ \eqref{eq:135} 
(recall that the energy is left-continuous) and \eqref{A.22} yield
\begin{align*}
  \limsup_{s\up t}&
                    \frac{1}{\eps(t-s)}\Big(\cE(s,u(t\leftl))-\cE(t,u(t\leftl))+\int_s^t
                    \cP(r,u(r))
                    \,\d
                    r\Big)
  \\&\le
      \frac 1\eps\bigg(-\liminf_{s\up t}
                    \frac{\cE(t,u(t\leftl))-\cE(s,u(t\leftl))}{(t-s)}+
      \limsup_{s\up t}\media_s^t
                    \cP(r,u(r))
                    \,\d
                    r\bigg)
      \\&\le 
          \frac 1\eps\Big(-\cP(t,u(t\leftl))+\cP(t,u(t\leftl))\Big)\le 0.
\end{align*}
On the other hand, 
from assumption \mytag B3 and the stability property \eqref{stability} we have
\[
\limsup_{s\up t}\frac{\cE(s,u(s\leftl ))-\cE(s,u(t\leftl ))}{\sfd(u(s- ),u(t\leftl ))}\le 1.
\]
\end{proof}

\section{Convergence proof for the discrete approximations}
\label{sec:convergenceproof}
In this section we will prove existence of a Visco-Energetic solution,
stated in Theorem \ref{thm:existence}. 
We will always suppose that the energy $\cE$ satisfies assumptions
\mytag A{} (where we will also consider the case \mytag A{2'}), that
the viscous correction $\delta$ is admissible according to
\mytag B{},
and that conditions \mytag C{} hold.

\subsection{Discrete estimates}
\label{subsec:discrete-estimates} 
Hereafter, $\tau$  will be a given partition of $[0,T]$.
 We obtain some preliminary estimates for the minimizing movement scheme.
\begin{theorem}[Discrete estimates] \label{thm:discretestimates} 
Let $U^0_\tau\in X$ be given so that 
\begin{equation}
  \label{eq:87}
  \cF_0(U^0_\tau)=\cE(0,U^0_\tau)+\sfd(x_o,U^0_\tau)+ F_o \le C_0.
\end{equation}
Then every solution $U_\tau^n$ of the incremental problem \eqref{ims}
starting from $U_\tau^0$ satisfies 
a discrete version of stability \eqref{stability} and energy balance \eqref{energybalance}, namely for every $n=1,\dots, N$ we have
\begin{gather}
\label{eq:29}
\mathcal{E}(t^n_\tau,U_\tau^n)\leq
\mathcal{E}(t^n_\tau,V)+\sfd(U_\tau^n,V)+\delta(U_\tau^{n-1},V),
\\
\label{eq:30}
\mathcal{E}(t^n_\tau,U_\tau^n)+\sfD(U_\tau^{n-1},U_\tau^n)
+\Gs(t^n_\tau,U_\tau^{n-1})= \cE(t^{n-1}_\tau, U_\tau^{n-1})+
\int_{t^{n-1}_\tau}^{t^n_\tau}\cP(s,U_\tau^{n-1})\rmd s. 
\end{gather}
Moreover, there exist constants $C_1,C_2$ depending only on $C_0,F_o$ 
(of
\eqref{eq:87}), on $C_P$ (of \eqref{A.21}),
and on $T$,
such that 
\begin{gather} \label{eq:aprioriestimates}
  \cF(t^n_\tau, U_\tau^n)\leq C_0
  \rme^{C_P t^n_\tau}\le C_0 \rme^{C_PT},\quad
  \sfd(x_o,U^n_\tau)\le C_1, \\
  \label{eq:88}
  \sum_{j=1}^N \sfD(U_\tau^{j-1},U_\tau^j)+
  \cR(t^j_\tau,U^{j-1}_\tau)\leq  C_2.
\end{gather}
\end{theorem}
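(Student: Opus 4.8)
The plan is to proceed by induction on $n$, proving the three groups of statements in the natural order: first the discrete stability \eqref{eq:29}, then the discrete energy identity \eqref{eq:30}, then the a priori estimates \eqref{eq:aprioriestimates}, and finally the summed bound \eqref{eq:88}. The starting point is simply that $U^n_\tau$ minimizes the functional $U\mapsto \sfd(U^{n-1}_\tau,U)+\delta(U^{n-1}_\tau,U)+\cE(t^n_\tau,U)$; existence of a minimizer is guaranteed by \mytag A1 since $\sfd,\delta$ are l.s.c.~and the sublevels of $x\mapsto \cE(t^n_\tau,x)+\sfD(U^{n-1}_\tau,x)$ are $\sigma$-sequentially compact, cf.~\eqref{eq:86}.

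\emph{Discrete stability \eqref{eq:29}.} This is the step where the triangle inequality for $\sfd$ and the compatibility assumption \mytag B1 for $\delta$ enter. First I would compare $U^n_\tau$ with an arbitrary competitor $V\in X$ in the minimality inequality, obtaining
\[
\cE(t^n_\tau,U^n_\tau)+\sfd(U^{n-1}_\tau,U^n_\tau)+\delta(U^{n-1}_\tau,U^n_\tau)\le
\cE(t^n_\tau,V)+\sfd(U^{n-1}_\tau,V)+\delta(U^{n-1}_\tau,V).
\]
Using the triangle inequality $\sfd(U^{n-1}_\tau,V)\le \sfd(U^{n-1}_\tau,U^n_\tau)+\sfd(U^n_\tau,V)$ and discarding the nonnegative term $\delta(U^{n-1}_\tau,U^n_\tau)$ yields \eqref{eq:29}. (In the general asymmetric setting one should be slightly careful: only the $\delta(U^{n-1}_\tau,V)$ term survives on the right, which is exactly what is claimed; no use of \mytag B1 is strictly needed for \eqref{eq:29} itself, though it is needed to identify \eqref{eq:29} with a genuine $\sfd$-stability statement in the limit.)

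\emph{Discrete energy identity \eqref{eq:30}.} Testing the minimality of $U^n_\tau$ with the competitor $V=U^{n-1}_\tau$ gives $\cE(t^n_\tau,U^n_\tau)+\sfD(U^{n-1}_\tau,U^n_\tau)\le \cE(t^n_\tau,U^{n-1}_\tau)$, hence the definition \eqref{eq:109} of $\cR$ together with the fact that $U^n_\tau\in\rmM(t^n_\tau,U^{n-1}_\tau)$ gives the \emph{exact} identity
\[
\cE(t^n_\tau,U^n_\tau)+\sfD(U^{n-1}_\tau,U^n_\tau)+\cR(t^n_\tau,U^{n-1}_\tau)=\cE(t^n_\tau,U^{n-1}_\tau).
\]
Then I would rewrite $\cE(t^n_\tau,U^{n-1}_\tau)-\cE(t^{n-1}_\tau,U^{n-1}_\tau)=\int_{t^{n-1}_\tau}^{t^n_\tau}\partial_s\cE(s,U^{n-1}_\tau)\,\d s=\int_{t^{n-1}_\tau}^{t^n_\tau}\cP(s,U^{n-1}_\tau)\,\d s$, which is licit because $s\mapsto\cE(s,x)$ is Lipschitz with a.e.~derivative $\cP(s,x)$ by the consequences of \mytag A2 discussed after \eqref{eq:gronwallestimate}. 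This gives \eqref{eq:30}.

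\emph{A priori estimates and the summed bound.} Here is where the Gronwall argument is deployed. Adding $\sfd(x_o,U^n_\tau)+F_o$ to both sides of \eqref{eq:30}, using $\sfD\ge\sfd$ and the triangle inequality $\sfd(x_o,U^n_\tau)\le\sfd(x_o,U^{n-1}_\tau)+\sfd(U^{n-1}_\tau,U^n_\tau)$, and dropping the nonnegative terms $\cR(t^n_\tau,U^{n-1}_\tau)$ and $\delta(U^{n-1}_\tau,U^n_\tau)$ on the left, one gets
\[
\cF(t^n_\tau,U^n_\tau)\le \cF(t^{n-1}_\tau,U^{n-1}_\tau)+\int_{t^{n-1}_\tau}^{t^n_\tau}\cP(s,U^{n-1}_\tau)\,\d s
\le \cF(t^{n-1}_\tau,U^{n-1}_\tau)+\int_{t^{n-1}_\tau}^{t^n_\tau}C_P\,\cF(s,U^{n-1}_\tau)\,\d s,
\]
using \eqref{A.21}; since by \eqref{eq:gronwallestimate} $\cF(s,U^{n-1}_\tau)\le\cF(t^{n-1}_\tau,U^{n-1}_\tau)\rme^{C_P|s-t^{n-1}_\tau|}$ on $[t^{n-1}_\tau,t^n_\tau]$, a discrete Gronwall iteration starting from \eqref{eq:87} produces $\cF(t^n_\tau,U^n_\tau)\le C_0\rme^{C_P t^n_\tau}\le C_0\rme^{C_PT}$, and in particular $\sfd(x_o,U^n_\tau)\le\cF(t^n_\tau,U^n_\tau)\le C_1:=C_0\rme^{C_PT}$, giving \eqref{eq:aprioriestimates}. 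Finally, summing the identity \eqref{eq:30} (rewritten as a telescoping relation in $\cE$) over $j=1,\dots,N$ and using the uniform energy bound just obtained together with $|\cP|\le C_P\cF\le C_PC_0\rme^{C_PT}$ to control $\int_0^T\cP\,\d s$, one bounds $\sum_{j=1}^N\big(\sfD(U^{j-1}_\tau,U^j_\tau)+\cR(t^j_\tau,U^{j-1}_\tau)\big)$ by $\cE(0,U^0_\tau)-\min\cE(\cdot,\cdot)+C_PC_0T\rme^{C_PT}$, which depends only on $C_0,F_o,C_P,T$; this is \eqref{eq:88}.

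\emph{Main obstacle.} None of the steps is deep; the only point requiring care is the measurability/integrability used to write $\cE(t^n_\tau,x)-\cE(t^{n-1}_\tau,x)=\int\cP(s,x)\,\d s$ — i.e.~justifying that $s\mapsto\cE(s,x)$ is absolutely continuous with a.e.-derivative $\cP(s,x)$ — but this has already been established as a consequence of \mytag A2 (or \mytag A{2'}) in the paragraphs following \eqref{eq:gronwallestimate}, so one may simply invoke it. The genuinely structural inputs are the triangle inequality of $\sfd$ (for \eqref{eq:29} and for the telescoping of $\sfd(x_o,\cdot)$) and the power-control bound \eqref{A.21} (for the Gronwall step); everything else is bookkeeping.
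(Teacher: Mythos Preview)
Your proposal is correct and follows essentially the same approach as the paper: minimality plus triangle inequality for \eqref{eq:29}, the identity characterization of $\cR$ via $\rmM(t,\cdot)$ for \eqref{eq:30}, and a discrete Gronwall iteration on $\cF$ for \eqref{eq:aprioriestimates}--\eqref{eq:88}. The only cosmetic differences are in the choice of constants: the paper defines $C_1:=\sup\{\sfd(x_o,v):\cF_0(v)\le C_0\rme^{2C_PT}\}$ rather than asserting $\sfd(x_o,\cdot)\le\cF$ directly (your inequality tacitly assumes $\cE+F_o\ge0$, which is not quite what $\cF\ge0$ says), and for \eqref{eq:88} the paper telescopes in $\cF$ rather than in $\cE$, arriving at $C_2=C_0\rme^{C_PT}+C_1$ instead of invoking a global lower bound on $\cE$; but these are bookkeeping variants, not substantive differences.
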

\begin{proof} Since $U^n_\tau$ is a minimizer for \eqref{ims}, the estimate
\[
\mathcal{E}(t^n_\tau,U_\tau^n)+\sfd(U_{\tau}^{n-1},U_\tau^{n})+\delta(U_\tau^{n-1},U_\tau^{n})\leq \mathcal{E}(t^n_\tau,V)+\sfd(U_\tau^{n-1},V)+\delta(U_\tau^{n-1},V),
\]
holds for every $V\in X$. Using the triangle inequality and
$\delta(U_\tau^{n-1},U_\tau^{n})\geq 0$, we have proved the discrete
stability
\eqref{eq:29}.

From the minimality of $U_\tau^n$ and the definition of $\cR$ we have:
\[
\Gs(t^n_\tau,U_\tau^{n-1})=\mathcal{E}(t^n_\tau,U_\tau^{n-1})-\mathcal{E}(t^n_\tau,U_\tau^n)-\sfD(U_\tau^{n-1},U_\tau^n)
\]
and since 
\[
\cE(t^n_\tau,U_\tau^{n-1})=\cE(t^{n-1}_\tau,U_\tau^{n-1})+\int_{t^{n-1}_\tau}^{t^n_\tau}\cP(s,U^{n-1}_\tau)\rmd s
\]
we have also proved the discrete energy balance \eqref{eq:30}.

Using \mytag A1 and 
\eqref{eq:gronwallestimate} in the power term and denoting by
$\tau^n:=t^n_\tau-t^{n-1}_\tau$ we get
\[
\int_{t^{n-1}_\tau}^{t^n}\cP(s,U^{n-1}_\tau) \rmd s\leq 
\Big(\sfd(x_o,U^{n-1}_\tau)+\cE(t^{n-1}_\tau,U^{n-1}_\tau)+F_o\Big)(\mathrm{e}^{ C_P\tau^n}-1).
\]
Then summing up $\sfd(x_o,U^{n-1}_\tau)+F_o$ to both terms of 
the inequality \eqref{eq:30} and using the triangle inequality \eqref{eq:16} we have
\[
\cE(t^n_\tau,U^n_\tau)+\sfd(x_o,U_\tau^n)+F_o\leq
\Big(\sfd(x_o,U^{n-1}_\tau)+\cE(t^{n-1}_\tau,U^{n-1}_\tau)+F_o
\Big)
\mathrm{e}^{C_P\tau^n}.
\]
A simple induction argument yields
\begin{displaymath}
\cE(t^n_\tau,U^n_\tau)+\sfd(x_o,U_\tau^n) +F_o\leq
\Big(\cE(0,U^0_\tau)+\sfd(x_o,U^0_\tau)+F_o\Big)
\mathrm{e}^{ C_Pt^n_\tau}.
\end{displaymath}
This also yields $\sfd(x_o,U^n_\tau)\le C_1$
where $C_1:=\sup\big\{\sfd(x_o,v):\cF_0(v)\le C_0\rme^{2 C_p T}\big\}$.

Finally, 
we estimate the dissipated energy via
\begin{align*}
  \sum_{j=1}^N \sfD(U_\tau^{j-1},U_\tau^j)&+
  \cR(t^j_\tau,U^{j-1}_\tau)
  \\&
      \le \cF_0(U^0_\tau)-\cF(t^N_\tau,U_\tau^N)+\sum_{j=1}^N
    \cF(t_\tau^{j-1},U_\tau^{j-1} )\big(\rme^{C_p\tau^j}-1\big)+\sfd(x_o,U^N_\tau)\\
  &\le \cF_0(U^0_\tau) +\cF_0(U^0_\tau)
  \sum_{j=1}^N(\mathrm{e}^{ C_Pt^j_\tau}-\mathrm{e}^{ C_P t^{j-1}_\tau})+C_1
  \le C_0
    \mathrm{e}^{ C_PT}+C_1
\end{align*}
and the proof is complete with $C_2:=C_0\mathrm{e}^{C_PT}+C_1.$
\end{proof}
\subsection{Compactness}
\label{subsec:compactness-proof}
We introduce the functions
\begin{equation}
  \label{eq:80}
  \sft_\tau(t):=t^n_\tau,\quad
  \tilde\sft_\tau(t):=t^{n+1}_\tau,\quad \overline{U}_\tau(t)=U^n_\tau,\quad
  \sfn_\tau(t):=n\quad
  \quad\text{whenever }t\in (t^{n-1}_\tau,t^n_\tau],
\end{equation}
so that \eqref{eq:30} can be rewritten as
\begin{equation}
  \label{eq:89}
  \begin{aligned}
    \cE(\sft_\tau(t),\overline U_\tau(t)) +\var(\overline
    U_\tau,[s,t])&+
    \sum_{j=\sfn_\tau(s)}^{\sfn_\tau(t)-1}\delta(U^{j}_\tau,U^{j+1}_\tau)+
    \cR(t^{j+1}_\tau,U^{j}_\tau)
    \\&= \cE(\sft_\tau(s),\overline U_\tau(s))+
    \int_{\sft_\tau(s)}^{\sft_\tau(t)}\cP(r,\overline
    U_\tau(r))\,\d r.
  \end{aligned}
\end{equation}
Notice that the variation function $V_\tau$ associated with $\overline
U_\tau$ can be written as
\begin{equation}
  \label{eq:97}
  V_\tau(t):=\var(\overline U_\tau,[0,t])=
  \sum_{j=0}^{\sfn_\tau(t)-1}\sfd(U^j_\tau,U^{j+1}_\tau).
\end{equation}
Similarly, we introduce the nondecreasing function $W_\tau:[0,T]\to [0,\infty)$ 
\begin{equation}
  \label{eq:93}
  W_\tau(t):=\sum_{j=0}^{\sfn_\tau(t)-1}\sfD(U^j_\tau,U^{j+1}_\tau)+\cR(t^{j+1}_\tau,U^j_\tau)
  =V_\tau(t)+\sum_{j=0}^{\sfn_\tau(t)-1}\delta(U^j_\tau,U^{j+1}_\tau)+\cR(t^{j+1}_\tau,U^j_\tau).
\end{equation}
Notice that $W_\tau-V_\tau$ is still a nonnegative and nondecreasing function.
\begin{theorem}[Compactness] \label{thm:compactness} Let $u_0\in X$ be
  fixed and let $(\overline{U}\kern-1pt_\tau)$ be a family of
  piecewise constant left-continuous
  interpolants of the discrete solutions $U^n_\tau$ of \eqref{ims} starting from $U^0_\tau\in X$, with
\begin{equation} \label{eq:initialdatum}
  \cF_0(U^0_\tau)\le C_0,\quad
  U^0_\tau\sigmato u_0\text{ in $X$},\quad \cE(0,U^0_\tau)\to \cE(0,u_0) \text{ as $\tau\downarrow0$}.
\end{equation}
Let $V_\tau,W_\tau$ be defined as in 
\eqref{eq:97} and \eqref{eq:93}.
Then for all sequences of partitions
$k\mapsto \tau(k)$ with $\lim_{k\to\infty}|\tau(k)|=0$ there exist
\\
- a (not relabeled) subsequence $k\mapsto \tau(k)$, \\
- a
 limit curve $u\in  \BV{\sigma,\sfd}([0,T];X)$,
\\
- nondecreasing functions ${V},W:[0,T]\rightarrow [0,+\infty)$ with
$W-V$ nondecreasing,\\
- a real function $\sfE\in \BV{}([0,T])$,\\
- a set
 $\CC\subset [0,T]$ 
 with $\Leb 1([0,T]\setminus
 \CC)=0$\\
such that 
 \begin{equation}
   \label{eq:81}
   V_{\tau(k)}(t)\rightarrow {V}(t),\quad
   W_{\tau(k)}(t)\rightarrow {W}(t),\quad
   \text{for every }t\in [0,T],
 \end{equation}
\begin{equation}
  \label{eq:98}
  \cR(\tilde\sft_{\tau(k)}(t)
  ,\overline U_{\tau(k)}(t))\to 0\quad\text{for
    every }t\in \CC,
\end{equation}
\begin{equation}
  \label{eq:82}
  \overline{U}\kern-1pt_{\tau(k)}(t)\sigmato u(t)\quad\text{for
    every }t\in \CC \cup\Jump{}W
\end{equation}
 \begin{equation}
   \label{eq:83}
   \sfd(u(s),u(t))\le V(t)-V(s),\quad
   \text{for
     every $0\le s\le t\le T,$}
 \end{equation}
 \begin{equation}
   \label{eq:84}
   u(t)\in \SSD(t)
   \quad\text{for every }t\not\in \Jump{}V,
 \end{equation}
 \begin{equation}
   \label{eq:99}
   \cE(\sft_{\tau(k)}(t),\overline U\kern-1pt_{\tau(k)}(t))\to
   \sfE(t)\ge \cE(t,u(t)),\quad
   \text{for every }t\in [0,T],
 \end{equation}
 where 
 \begin{equation}
   \label{eq:155}
   \sfE(t)=\cE(t,u(t))\quad\text{for every $t\in \CC
     $
   }
   \quad\text{if \emph{\mytag A{2'}} holds},
 \end{equation}
 \begin{equation}
   \label{eq:100}
   \sfE(t)+W(t)\le\sfE(s)+W(s)+\int_s^t\cP(r,u(t))\,\d r
   \quad\text{for every }0\le s\le t\le T.
 \end{equation}
 Moreover, for every further subsequence $k\mapsto\tau'(k)$ 
 \begin{equation}
   \label{eq:85}
   \lim_{k\to\infty}\cR(\tilde\sft_{\tau'(k)}(t),\overline
   U_{\tau'(k)}(t))=0\quad\Rightarrow\quad
   \lim_{k\to\infty}\overline U_{\tau'(k)}(t)=u(t)\quad
   \text{for every }t\not\in  \Jump{}V.
 \end{equation}
 Finally, for every $t\in \Jump{}{W}$ 
 there exist sequences $(t_k\ulrl)_k$ such that 
 $t_k\uleftl \up t$, $t_k\urightl \down t$ as $k\up\infty$ and
 \begin{equation}
   \label{eq:96}
   V_{\tau(k)}(t_k\ulrl)\to V(t\lrl),\quad
   W_{\tau(k)}(t_k\ulrl)\to W(t\lrl),\quad
   \overline U_{\tau(k)}(t_k\ulrl)\sigmato u(t\lrl).
 \end{equation}
\end{theorem}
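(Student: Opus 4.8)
The plan is to run the usual compactness scheme for rate-independent systems, the new point being the careful use of the limit Lemmas~\ref{le:obvious}--\ref{le:nondeg-regulated} to handle an asymmetric, merely lower semicontinuous, possibly non-separating $\sfd$. First I would exploit the discrete estimates of Theorem~\ref{thm:discretestimates}: all the interpolant values $(\sft_\tau(t),\overline U_\tau(t))$ stay in a fixed sublevel $\{\cF\le C_0\rme^{C_PT}\}$, which is $\sigma_\R$-sequentially compact by \mytag A1; the monotone functions $V_\tau,W_\tau$ of \eqref{eq:97}--\eqref{eq:93} are nondecreasing and bounded by $C_2$, with $W_\tau-V_\tau$ nondecreasing; and, by the discrete energy balance \eqref{eq:30} and the bound $|\cP|\le C_P\cF$ on that sublevel, the discrete energies $\sfE_\tau(t):=\cE(\sft_\tau(t),\overline U_\tau(t))$ have uniformly bounded variation. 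Helly's selection theorem then produces a subsequence along which $V_\tau\to V$, $W_\tau\to W$ and $\sfE_\tau\to\sfE$ pointwise on $[0,T]$, with $V,W$ nondecreasing, $W-V$ nondecreasing and $\sfE\in\BV{}([0,T])$ (this is \eqref{eq:81}), and with $\Jump{}V\subset\Jump{}W$ at most countable. Since, by \eqref{eq:88}, $\int_0^T\cR(\tilde\sft_\tau(t),\overline U_\tau(t))\,\d t=\sum_n\tau^n\,\cR(t^{n+1}_\tau,U^n_\tau)\le|\tau|\,C_2'\to0$, after a further (not relabeled) subsequence the integrand converges to $0$ for every $t$ in a set $\CC$ of full Lebesgue measure, which gives \eqref{eq:98}.

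Next I would construct the limit curve. A diagonal extraction over a countable set $D$ containing $\Jump{}W$ and a dense subset of $\CC$, using the sequential compactness of the sublevel, defines $u(t):=\lim_k\overline U_{\tau(k)}(t)$ for $t\in D$. The triangle inequality yields $\sfd(\overline U_\tau(s),\overline U_\tau(t))\le W_\tau(t)-W_\tau(s)$ for $s\le t$, which passes to the limit by lower semicontinuity of $\sfd$ and shows $u|_D\in\BV\sfd(D;X)$ with the jump set of its variation contained in $\Jump{}W\subset D$; moreover, for $t\in D\cap\CC$ the $\sigma$-lower semicontinuity of $\cR$ on sublevels of $\cF$ (assumption \mytag C1, i.e.~\eqref{eq:134}) together with $\tilde\sft_\tau(t)\to t$ forces $\cR(t,u(t))=0$, that is $u(t)\in\SSD(t)$, and the same holds at the remaining points of $D$ by closedness. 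Lemma~\ref{le:nondeg-regulated}, applied with $\UU:=\SSD\cap\{\cF\le C_0\rme^{C_PT}\}$ --- $\sigma_\R$-sequentially compact by \mytag C1 with $Q=0$ and $\sfdp$-separated by \mytag C2 --- then extends $u$ uniquely to a curve in $\BV{\sigma,\sfd}([0,T];X)$. Genuine pointwise convergence on $\CC\cup\Jump{}W$ is obtained next: for $t\in\Jump{}W$ it is built in, and for $t\in\CC$ every $\sigma$-limit point of $\overline U_{\tau(k)}(t)$ lies in $\SSD(t)$ by \mytag C1, hence, after comparing it with $u(t\pm)$ through the $W$-bound and the $(\sigma,\sfd)$-regularity of $u$, the separation \mytag C2 identifies it with $u(t)$; this is \eqref{eq:82}. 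Then \eqref{eq:83} follows from $\sfd(\overline U_\tau(s),\overline U_\tau(t))\le V_\tau(t)-V_\tau(s)$ and the lower semicontinuity of $\sfd$, first on $\CC\cup\Jump{}W$ and then, by density of $\CC$ and the regulated structure, for all $s\le t$; and \eqref{eq:84} follows by approximating any $t\notin\Jump{}V$ from $\CC$ and using that $\SSD\cap\{\cF\le C\}$ is $\sigma$-closed.

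For the energetic part, \eqref{eq:99} is the $\sigma$-lower semicontinuity of $\cE$ on sublevels together with the continuity in $t$ from \mytag A{}; under \mytag A{2'} (and the left $\sfd$-continuity, together with \mytag B2) Lemma~\ref{le:usefulR} v) upgrades this to $\cE(\tilde\sft_\tau(t),\overline U_\tau(t))\to\cE(t,u(t))$ on $\CC$, which is \eqref{eq:155}. Inequality \eqref{eq:100} comes from passing to the limit in \eqref{eq:89}: the dissipation--residual block equals $W_\tau(t)-W_\tau(s)\to W(t)-W(s)$, the endpoint energies converge to $\sfE(s)$ and $\sfE(t)$, and $\limsup_k\int_{\sft_{\tau}(s)}^{\sft_{\tau}(t)}\cP(r,\overline U_\tau(r))\,\d r\le\int_s^t\cP(r,u(r))\,\d r$ by the upper semicontinuity of $\cP$ on sublevels (conditional on the energy convergence \eqref{eq:155} if only \mytag A{2'} holds), the a.e.~convergence $\overline U_{\tau(k)}\to u$ on $\CC$ and the uniform bound $|\cP|\le C_P\cF$ (reverse Fatou); the inequality for all $s\le t$ follows by monotonicity. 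Statement \eqref{eq:85} is proven exactly as \eqref{eq:82}, now along an arbitrary further subsequence and using $t\notin\Jump{}V$ for the regularity of $u$, and \eqref{eq:96} is a diagonal choice of $t_k^\pm$ with $t_k^-\up t$, $t_k^+\down t$ for which $V_{\tau(k)}(t_k^\pm)$, $W_{\tau(k)}(t_k^\pm)$ and $\overline U_{\tau(k)}(t_k^\pm)$ converge to $V(t\pm)$, $W(t\pm)$, $u(t\pm)$, possible because $V,W$ are monotone with pointwise-convergent approximants and $u$ is regulated.

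The main obstacle I expect lies in the second step: since $\sfd$ is asymmetric, possibly $+\infty$-valued, only lower semicontinuous, and in general neither separates points nor generates $\sigma$, pointwise limits of $\overline U_\tau$ need not be well defined, and their existence, uniqueness and $(\sigma,\sfd)$-regularity rest on using simultaneously the monotone control $W$, the vanishing of the residual $\cR$ on $\CC$ (which confines all limit points to $\SSD$) and the separation assumption \mytag C2 --- exactly the hypotheses of Lemmas~\ref{le:obvious} and~\ref{le:nondeg-regulated}, which have to be invoked with the right sequentially compact $\sfdp$-separated set $\UU$. The other ingredients --- Helly's theorem, the $L^1$-smallness of $\cR$, and the semicontinuity passage in the discrete energy balance --- are comparatively routine once the limit curve is under control.
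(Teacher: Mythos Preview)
Your proposal is correct and follows essentially the same strategy as the paper: Helly's theorem for $V_\tau,W_\tau$ and the discrete energies, the $L^1$-smallness of $\cR$ to produce the full-measure set $\CC$, a diagonal extraction on a countable dense set together with Lemma~\ref{le:nondeg-regulated} (using the $\sfdp$-separation of $\SSD$ from \mytag C2) to build the $(\sigma,\sfd)$-regulated limit curve, and the semicontinuity passage in the discrete energy balance. Two minor points to tighten: your sentence ``the same holds at the remaining points of $D$ by closedness'' overstates things, since at points of $\Jump{}W$ the limit $u(t)$ need not lie in $\SSD(t)$ --- but Lemma~\ref{le:nondeg-regulated} only requires $u(t)\in\UU(t)$ off the jump set of $V_u$, so no harm is done; and the inequality $\sfE(t)\ge\cE(t,u(t))$ at points $t\notin\CC\cup\Jump{}W$ (where $\overline U_{\tau(k)}(t)$ is not yet known to converge) needs the small continuity argument the paper gives via \eqref{eq:156}.
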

\begin{proof}
Let us first observe that 
the values of $\overline U_\tau$ belong to the bounded sequentially
compact set $F:=\{v\in X:\cF_0(v)\le C_0\rme^{C_PT}\}$.
\\[6pt]\noindent 
$\vartriangleright$ \eqref{eq:81}: 
\eqref{eq:88} shows that the functions $W_\tau$ (and thus a fortiori
$V_\tau$)
are uniformly bounded in $[0,T]$,
so that Helly's theorem provides pointwise convergence (up to
a subsequence)
of $W_{\tau(k)},V_{\tau(k)}$ to some increasing functions $W,V$,
with $W-V$ also increasing and $\Jump{}V\subset \Jump{}W$.
\\[6pt]\noindent 
$\vartriangleright$ \eqref{eq:98}: 
\eqref{eq:88} yields
\begin{equation}
  \label{eq:90}
  \int_0^{T-|\tau|}\cR(\tilde\sft_\tau(s),\overline U_\tau(s))\,\d s\le C_2|\tau|,
\end{equation}
so that there exists a subsequence $k\mapsto \tau(k)$ 
and a subset $\CC\subset [0,T]$ of full measure such that
\eqref{eq:98} holds.
%
\\[6pt]\noindent 
$\vartriangleright$ \eqref{eq:82}, \eqref{eq:83}, \eqref{eq:84}, \eqref{eq:85}: 
By a standard diagonal argument, we can choose a dense countable set
$\CC_1\subset \CC,$ 
a subsequence (still denoted by
$\tau(k)$), and a limit function $u:\CC_1\cup \Jump{}W\to X$
such that 
\begin{displaymath}
  u_{\tau(k)}(t)\sigmato u(t),\quad
  \sfd(u(s),u(t))\le V(t)-V(s)\quad
  \text{for every }0\le s\le t\le T,\ s,t\in \CC_1\cup \Jump{}W.
\end{displaymath}
Clearly $\Jump{}{V_u}\subset \Jump{}V\subset \Jump{}W$ and 
$u(t)\in \SSD(t)$ for every $t\in \CC_1$ 
by \eqref{eq:98} and the lower semicontinuity of $\cR$.
Since $\SSD$ is separated by $\sfdp$, applying Lemma \ref{le:nondeg-regulated}
we can extend $u$ to a function (still denoted by $u$)
in $\BV{\sigma,\sfd}([0,T];X)$. The closure of $\SSD$ and the fact
that $\Jump{}{V_u}\subset \Jump{}V$ yield \eqref{eq:84}.

We can also prove that $\overline
U_{\tau(k)}(t)\to u(t)$ for every $t\in \CC\setminus \Jump{}V$: in fact, \eqref{eq:98}
and the lower semicontinuity of $\cR$
show that any limit point of the sequence $\{\overline
U_{\tau(k)}(t)\}_{k\in \N}$ is contained in $\SSD(t)$, which is
separated by $\sfd$.  If $v$ is an arbitrary limit point,
passing to the limit in the inequalities $\sfd(\overline
U_{\tau(k)}(s),\overline
U_{\tau(k)}(t) )\le V_k(t)-V_k(s)$ we get
$\sfd(u(s),v)\le V(t)-V(s)$ for every $s\in \CC_1$; passing to the
limit as $s\up t$, $s\in \CC_1$, we conclude that $\sfd(u(t),v)=0$ 
which yields $v=u(t)$ since $\SSD(t)$ is separated by $\sfd$.
The same argument yields \eqref{eq:85}.
\\[6pt]\noindent 
$\vartriangleright$ \eqref{eq:99}:  
We notice that \eqref{eq:89} yields for the constant $C:=C_PC_0\exp(C_PT)$
\begin{equation}
  \label{eq:94}
  \cE(\sft_\tau(t),\overline U_\tau(t)) +W_\tau(t)+C\, \sft_\tau(t)\le 
  \cE(\sft_\tau(s),\overline U_\tau(s)) +W_\tau(s)+C\, \sft_\tau(s)
\end{equation}
whenever $0\le s\le t\le T$. Since $W_{\tau(k)}(t)\to W(t)$ and 
$\sft_{\tau(k)}(t)\to t$ as $k\to\infty$, a further application of
Helly's Theorem (and a further extraction of a subsequence) yields
$\cE(\sft_{\tau(k)}(t),\overline U_{\tau(k)}(t))\to \sfE(t)$ for every
$t\in [0,T]$ and
\begin{equation}
  \label{eq:95}
  \sfE(t)\ge \cE(t,u(t))\quad\text{for every }t\in \CC\cup\Jump{}W,
\end{equation}
thanks to the lower semicontinuity of $\cE$. 
Since the uniform bound
\begin{equation}
  \label{eq:156}
  |\sfE(t)-\sfE(s)|\le W(t)-W(s)+C(t-s)
\end{equation}
obtained by passing to the limit in \eqref{eq:89} shows that
$\sfE$ is continuous outside $\Jump{}{W}$, 
we conclude that $\sfE(t)\ge \cE(t,u(t))$ holds everywhere in $[0,T]$.
\\[6pt]\noindent 
$\vartriangleright$ \eqref{eq:155}:
let us first notice that 
\begin{displaymath}
  \sfE(t)\le \limsup_{k\to\infty}
  \Big(\cE(\tilde\sft_{\tau(k)}(t),\overline
  U_{\tau(k)}(t))+C|\tau(k)|\Big)=
  \limsup_{k\to\infty}
  \cE(\tilde\sft_{\tau(k)}(t),\overline
  U_{\tau(k)}(t)).
\end{displaymath}
If \mytag A{2'} holds, by \eqref{eq:82}, \eqref{eq:98}, and Lemma
\ref{le:usefulR} v) we get 
\begin{equation}
  \label{eq:157}
  \lim_{k\to\infty}
  \cE(\tilde\sft_{\tau(k)}(t),\overline
  U_{\tau(k)}(t))= \cE(t,u(t)).
\end{equation}
%
$\vartriangleright$ \eqref{eq:100}:  
since $\overline
U_{\tau(k)}(t)\to u(t)$ for almost every $t\in [0,T]$, the upper
semicontinuity 
of $\cP$ and the fact that $\overline
U_{\tau(k)}(t)$ is contained in a sublevel of $\cF_0$ yield
\begin{equation}
  \label{eq:92}
  \limsup_{k\to\infty}\int_{\sft_\tau(s)}^{\sft_\tau(t)}\cP(r,\overline
  U_{\tau(k)}(r))\,\d r\le \int_s^t \cP(r,u(r))\,\d r\quad
  \text{for every }0\le s<t\le T.
\end{equation}
The same conclusion holds if we assume \mytag A{2'} instead of \mytag
A2, thanks to \eqref{eq:155}.
\eqref{eq:100} then follows by \eqref{eq:89}.
\\[6pt]\noindent 
$\vartriangleright$ \eqref{eq:96}:
this is a general property of double limits; let us check the case of
$t_k\leftl $.
We first select a 
fundamental sequence of open neighborhoods $N_n$
of $u(t\leftl )$, a decreasing vanishing sequence $\eps_n$
and an increasing sequence $(t_n)_n$ in $[0,t)\cap \CC_1$ so that
$t_n\up t$ as $n\up\infty$,
$u(t_n)\in N_n$ and $W(t)-W(t_n)<\eps_n$, $\overline{U}_{\tau(k)}(t_n)\to 
u(t_n)$, $W_{\tau(k)}(t_n)\to W(t_n)$ as $k\to\infty$.
We may find a strictly increasing sequence $n\mapsto \kappa(n)$ such that 
\begin{displaymath}
  |W_{\tau(k)}(t_n)-W(t)|<\eps_n,\quad
  \overline{U}_{\tau(k)}(t_n)\in N_n\quad\text{for every }k\ge \kappa(n).
\end{displaymath}
For every $k\ge \kappa(1)$ we can then define $n(k):=\min\big\{m\in
\N: k\ge \kappa(m)\big\} $; it is not difficult to check that
$n(k)\up\infty$ and 
the sequence $k\mapsto t_k^- :=t_{n(k)}$ satisfies \eqref{eq:96}.
\end{proof}

\subsection{Limit energy-dissipation inequality}
\label{subsec:limit-dissipation-inequality}
We can now prove the energy inequality on jumps.
\begin{lemma}
  \label{le:jump-inequality}
  Let $u_0$, $\overline U_{\tau(k)}$, $u$, $W,\sfE$ be as in the previous Theorem
  \ref{thm:compactness}.
  Then for every $t\in \Jump{}W$ we have
  \begin{equation}
    \label{eq:103}
    W(t)-W(t\leftl )\ge \sfc(t,u(t\leftl ),u(t)),\quad
    W(t\rightl )-W(t)\ge \sfc(t,u(t),u(t\rightl )).
  \end{equation}
\end{lemma}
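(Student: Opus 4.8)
The plan is to prove both inequalities in \eqref{eq:103} by the same device, so I focus on the first one, $W(t)-W(t\leftl)\ge \sfc(t,u(t\leftl),u(t))$; the second follows verbatim after exchanging the roles of $t\leftl$ and $t\rightl$ and using the sequences $t_k\urightl\down t$ instead of $t_k\uleftl\up t$. If $u(t\leftl)=u(t)$ there is nothing to prove since $\sfc(t,x,x)=0$, so assume $u(t\leftl)\ne u(t)$. First I would fix the sequences $t_k\uleftl\up t$ furnished by \eqref{eq:96} in Theorem \ref{thm:compactness}, along which $W_{\tau(k)}(t_k\uleftl)\to W(t\leftl)$ and $\overline U_{\tau(k)}(t_k\uleftl)\sigmato u(t\leftl)$, recalling in addition that $W_{\tau(k)}(t)\to W(t)$ by \eqref{eq:81} and $\overline U_{\tau(k)}(t)\sigmato u(t)$ by \eqref{eq:82} (valid since $t\in\Jump{}W$). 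Writing $m_k:=\sfn_{\tau(k)}(t_k\uleftl)$ and $n_k:=\sfn_{\tau(k)}(t)$ (so $m_k\le n_k$ for $k$ large), I would read off from the discrete solution the finite, hence compact, transitions $\vartheta_k\colon E_k:=\{m_k,\dots,n_k\}\to X$ and time labels $\sft_k\colon E_k\to[0,T]$ defined by $\vartheta_k(j):=U^j_{\tau(k)}$, $\sft_k(j):=t^{j+1}_{\tau(k)}$ for $j<n_k$ and $\sft_k(n_k):=t^{n_k}_{\tau(k)}$. By construction $\vartheta_k(E_k\uleftl)\sigmato u(t\leftl)$ and $\vartheta_k(E_k\urightl)\sigmato u(t)$; since $t_k\uleftl\up t$ and $|\tau(k)|\to0$ we get $\sup_{s\in E_k}|\sft_k(s)-t|\to0$; and the a priori estimate \eqref{eq:aprioriestimates} keeps all the values $\vartheta_k(j)$ inside the fixed $\cF_0$-sublevel $F$ used in the proof of Theorem \ref{thm:compactness}.

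The crux is an exact bookkeeping identity. Since $E_k$ is a block of consecutive integers one has $\var(\vartheta_k,E_k)=\sum_{j=m_k}^{n_k-1}\sfd(U^j_{\tau(k)},U^{j+1}_{\tau(k)})$, $\Holes(E_k)=\{(j,j+1):m_k\le j\le n_k-1\}$ so that $\Cd(\vartheta_k,E_k)=\sum_{j=m_k}^{n_k-1}\delta(U^j_{\tau(k)},U^{j+1}_{\tau(k)})$, and $\sum_{s\in E_k\setminus\{E_k\urightl\}}\cR(\sft_k(s),\vartheta_k(s))=\sum_{j=m_k}^{n_k-1}\cR(t^{j+1}_{\tau(k)},U^j_{\tau(k)})$; adding and comparing with the definition \eqref{eq:93} of $W_\tau$ yields
\begin{equation}
  \label{eq:jmpid}
  \var(\vartheta_k,E_k)+\Cd(\vartheta_k,E_k)+\sum_{s\in E_k\setminus\{E_k\urightl\}}\cR(\sft_k(s),\vartheta_k(s))=W_{\tau(k)}(t)-W_{\tau(k)}(t_k\uleftl).
\end{equation}
Its left-hand side is bounded by $C_2$ thanks to \eqref{eq:88}, so I would apply the reparametrization Lemma \ref{le:reparametrization} to replace $(\vartheta_k,\sft_k)$ by $(\tilde\vartheta_k,\tilde\sft_k:=\sft_k\circ\sfs_k)$ over a compact set $\tilde E_k\subset[0,C_2+1]$; by \eqref{eq:48} the new transitions are uniformly $\sfd$-equicontinuous (modulus $\omega(r)=r$), while by \eqref{eq:79} and \eqref{eq:138} the three summands of \eqref{eq:jmpid}, and hence \eqref{eq:jmpid} itself, are unchanged.

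Next I would hand $(\tilde\vartheta_k,\tilde\sft_k)$ to the compactness Theorem \ref{Kuratowski}: its hypotheses hold because $\cR$ is $\sigma$-l.s.c.\ on $F$ by \mytag C1, the zero set $\SSD(t)\cap F$ of $\cR(t,\cdot)$ is separated by $\sfd$ by \mytag C2, and \eqref{eq:43} follows from \eqref{eq:88} together with $\sup_{s}|\tilde\sft_k(s)-t|\to0$. This produces, along a subsequence, $\tilde E_k\stackrel K\to E$, a limit $\vartheta\in\rmC_{\sigma,\sfd}(E;X)$ with $\graph(\vartheta)\subset\Li_{k\to\infty}\graph(\tilde\vartheta_k)$, and $\tilde\vartheta_k(\tilde E_k\ulrl)\to\vartheta(E\ulrl)$, whence $\vartheta(E\uleftl)=u(t\leftl)$ and $\vartheta(E\urightl)=u(t)$. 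The lower semicontinuity Theorem \ref{costlsc} (parts a), b), c)) applied to $(\tilde\vartheta_k,\tilde\sft_k)$, combined with \eqref{eq:jmpid}, then gives
\[
  \Cf(t,\vartheta,E)\le\liminf_k\Big(\var(\tilde\vartheta_k,\tilde E_k)+\Cd(\tilde\vartheta_k,\tilde E_k)+\sum_{s\in \tilde E_k\setminus\{\tilde E_k\urightl\}}\cR(\tilde\sft_k(s),\tilde\vartheta_k(s))\Big)=W(t)-W(t\leftl),
\]
and since $\vartheta$ is an admissible transition between $u(t\leftl)$ and $u(t)$, the definition \eqref{eq:dissipationcost} of $\sfc$ yields $\sfc(t,u(t\leftl),u(t))\le\Cf(t,\vartheta,E)\le W(t)-W(t\leftl)$, as required.

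The whole argument is a repackaging of the preliminary results; the only genuinely delicate point is the identity \eqref{eq:jmpid} — one must choose the discrete transitions \emph{and} their time labels so that their (time-varying) transition cost equals, on the nose, the discrete dissipation increment $W_{\tau(k)}(t)-W_{\tau(k)}(t_k\uleftl)$, and then verify that the reparametrization of Lemma \ref{le:reparametrization} preserves this identity while supplying the uniform modulus of continuity required by Theorems \ref{Kuratowski} and \ref{costlsc}. The degenerate configurations (for instance $m_k=n_k$ along a subsequence, or $t=T$, where the right half is vacuous) are disposed of trivially.
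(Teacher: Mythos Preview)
Your proof is correct and follows essentially the same route as the paper's: construct the discrete transitions $\vartheta_k$ from the incremental solutions, record the bookkeeping identity expressing the transition cost as $W_{\tau(k)}(t)-W_{\tau(k)}(t_k^-)$, and pass to the limit. The only difference is packaging: where you spell out the reparametrization (Lemma~\ref{le:reparametrization}), compactness (Theorem~\ref{Kuratowski}) and lower semicontinuity (Theorem~\ref{costlsc}) in line, the paper bundles these three steps into a single invocation of Corollary~\ref{cor:asymptoticcost}, which is precisely the statement \eqref{eq:139} tailored to this situation.
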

\begin{proof}
  We will prove the first inequality of \eqref{eq:103}; the proof of
  the second
  inequality is completely analogous.

  Let us fix $t\in \Jump{}W$ and let us choose a sequence $t_k^- \up t$
  as in \eqref{eq:96} so that
  \begin{displaymath}
    \overline U_{\tau(k)}(t_k^- )\sigmato u(t\leftl ),\quad
    \overline U_{\tau(k)}(t)\sigmato u(t).
  \end{displaymath}
  For every $k\in \N$ we consider the compact set
  $E_k:=\{n\in \N:\sft_{\tau(k)}(t_k\uleftl )\le n\le \sft_{\tau(k)}(t)\}$
  and the discrete transition
  $\vartheta_k:E_k\to X$ defined by
  $\vartheta_k(n):=U^n_{\tau(k)}$, $n\in E_k$.
  By construction $\vartheta_k(E_k\uleftl )=\overline U_{\tau(k)}(t_k\uleftl )$
  and $\vartheta_k(E_k\urightl )=\overline U_{\tau(k)}(t)$.
  Moreover
  \begin{equation}
    \label{eq:104}
    \var(\vartheta_k,E_k)=\sum_{n\in E_k\setminus E_k\uleftl }
    \sfd(U^{n-1}_{\tau(k)},U^n_{\tau(k)}),\quad
    \Cd(\vartheta_k,E_k)=\sum_{n\in E_k\setminus E_k\uleftl }
    \delta(U^{n-1}_{\tau(k)},U^n_{\tau(k)})
  \end{equation}
  and
  \begin{equation}
    \label{eq:105}
    \sum_{n\in E_k\setminus E_k\urightl }\cR(t^{n+1}_{\tau(k)},\vartheta_k(n))=
    \sum_{n\in E_k\setminus E_k\urightl }\cR(t^{n+1}_{\tau(k)},U^n_{\tau(k)})
  \end{equation}
  so that 
  \begin{equation}
    \label{eq:106}
    \var(\vartheta_k,E_k)+\Cd(\vartheta_k,E_k)+\sum_{n\in E_k\setminus
      E_k\urightl }\cR(t^{n+1}_{\tau(k)},\vartheta_k(n))
    =W_{\tau(k)}(t)-W_{\tau(k)}(t_k\uleftl ).
  \end{equation}
  Passing to the limit and recalling Corollary \ref{cor:asymptoticcost} we conclude.
\end{proof}
\begin{corollary}
  \label{cor:energy-inequality}
  Let $u_0$, $\overline U_{\tau(k)}$, $u$, $V,W,\sfE$ be as in the previous Theorem
  \ref{thm:compactness}.
  Then
  \begin{equation}
    \label{eq:101}
    V(t)-V(s)\ge \var(u,[s,t]),\quad
    W(t)-W(s)\ge \varC(u,[s,t]).
  \end{equation}
  In particular
  \begin{equation}
    \label{eq:102}
    \cE(T,u(T))+\varC(u;[0,T])\le \cE(0,u_0)+\int_0^T\cP(r,u(r))\,\d r.
  \end{equation}
\end{corollary}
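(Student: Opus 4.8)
The plan is to deduce Corollary~\ref{cor:energy-inequality} by combining three facts that are already at our disposal: the distance estimate \eqref{eq:83} for $V$, the jump estimate of Lemma~\ref{le:jump-inequality} for $W$, and the limit energy inequality \eqref{eq:100} for $\sfE$ and $W$. The glue is that $W-V$ is nondecreasing and that $\Ju=\Jump{}{V_u}\subseteq\Jump{}V\subseteq\Jump{}W$, the last inclusions coming from the proof of Theorem~\ref{thm:compactness}.

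First I would prove $V(t)-V(s)\ge\var(u,[s,t])$: for any finite set $s\le \sigma_0<\dots<\sigma_m\le t$, \eqref{eq:83} gives $\sfd(u(\sigma_{j-1}),u(\sigma_j))\le V(\sigma_j)-V(\sigma_{j-1})$, and summing telescopes to $V(\sigma_m)-V(\sigma_0)\le V(t)-V(s)$; the supremum over partitions is the claim. A byproduct is that $V-V_u$, hence $V^{\mathrm c}-V_u^{\mathrm c}$ on the level of continuous parts, is nondecreasing, where $V_u$ is the variation function of $u$. Since $u$ is $(\sigma,\sfd)$-regulated, the jump of $V_u$ at an interior point $r\in\Ju$ equals $\sfd(u(r-),u(r))+\sfd(u(r),u(r+))$, and at $s$ (resp.\ $t$) equals $\sfd(u(s),u(s+))$ (resp.\ $\sfd(u(t-),u(t))$): one lets the partition endpoints approach $r$ from either side in the defining supremum and uses the l.s.c.\ of $\sfd$ together with \eqref{eq:2}. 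Consequently the increment of $V_u^{\mathrm c}$ on $[s,t]$ equals $\var(u,[s,t])$ with all of its $\sfd$-jump contributions subtracted.

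For $W(t)-W(s)\ge\varC(u,[s,t])$ I would split every monotone function into its continuous and jump parts. The jump part of $W$ on $[s,t]$ has increment at least $(W(s+)-W(s))+\sum_{r\in\Ju\cap(s,t)}(W(r+)-W(r-))+(W(t)-W(t-))$ (keeping only jumps at points of $\Ju\subseteq\Jump{}W$), and Lemma~\ref{le:jump-inequality} makes each such gap dominate the matching value of $\sfc$; so this increment is at least $\sfc(s,u(s),u(s+))+\sum_{r\in\Ju\cap(s,t)}\bigl(\sfc(r,u(r-),u(r))+\sfc(r,u(r),u(r+))\bigr)+\sfc(t,u(t-),u(t))$ (at non-jump endpoints the corresponding term is $\sfc(t,x,x)=0$). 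Since $W-V_u$ is nondecreasing, the increment of $W^{\mathrm c}$ on $[s,t]$ is at least that of $V_u^{\mathrm c}$, computed above. Adding the two bounds and recalling that $\Delta_\sfc=\sfc-\sfd$ in Definition~\ref{def:augmented}, the $\sfd$-jump gaps inside $\var(u,[s,t])$ cancel exactly against those inside $\Jmp{\Delta_\sfc}(u,[s,t])$, so the sum of the two lower bounds is precisely $\varC(u,[s,t])$; this gives the second inequality of \eqref{eq:101}. Taking $s=0$, $t=T$, \eqref{eq:100} and $\sfE(T)\ge\cE(T,u(T))$ from \eqref{eq:99} yield $\cE(T,u(T))+\bigl(W(T)-W(0)\bigr)\le \sfE(0)+\int_0^T\cP(r,u(r))\,\d r$; since $\sfE(0)=\lim_k\cE(0,U^0_{\tau(k)})=\cE(0,u_0)$ by \eqref{eq:initialdatum} and $W(T)-W(0)\ge\varC(u,[0,T])$ by what was just proved, \eqref{eq:102} follows.

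The only genuinely delicate step is the bookkeeping in the decomposition argument: one must check carefully that the jump of $V_u$ at $r$ is $\sfd(u(r-),u(r))+\sfd(u(r),u(r+))$ (this is where the asymmetry of $\sfd$ forces one to argue through \eqref{eq:2} and the lower semicontinuity of $\sfd$ rather than by a naive triangle inequality), that the conventions $u(s-):=u(s)$, $u(t+):=u(t)$ are honoured so that only the right jump at $s$ and the left jump at $t$ enter both $\var(u,[s,t])$ and $\Jmp{\Delta_\sfc}(u,[s,t])$, and that $\sfc(t,x,x)=0$ for the constant transition $\{0\}\to\{x\}$. Once these are in place the cancellation of the $\sfd$-terms is exact, the two monotone lower bounds assemble to $\varC(u,[s,t])$, and, as a side effect, one sees a posteriori that $\varC(u,[s,t])<\infty$ because $W$ is real-valued.
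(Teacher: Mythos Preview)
Your argument is correct. The paper, however, organises the second inequality of \eqref{eq:101} differently: instead of decomposing $W$ and $V_u$ into their continuous and jump parts, it fixes a finite ordered set $\{t_1,\dots,t_N\}\subset\Jump{}W$, writes
\[
\var(u,[0,T])+\sum_{j=1}^N\Delta_\sfc(t_j,u(t_j-),u(t_j),u(t_j+))
\]
as a telescoping sum of pieces $V(t_{j+1}-)-V(t_j+)$ on the ``between'' intervals and $\sfc$-costs at the $t_j$'s, bounds the former by the corresponding $W$-increments via $W-V$ nondecreasing and the latter by $W(t_j+)-W(t_j-)$ via Lemma~\ref{le:jump-inequality}, and telescopes to $W(T)-W(0)$; a final supremum over finite subsets of $\Jump{}W$ concludes. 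This avoids any appeal to the Lebesgue decomposition of monotone functions and, in particular, sidesteps your ``delicate'' verification that the jump of $V_u$ at $r$ equals $\sfd(u(r-),u(r))+\sfd(u(r),u(r+))$. Your route is cleaner once that identity is in hand and makes the role of the continuous part of $W$ explicit, while the paper's is more elementary (only finite sums and monotonicity) and slightly shorter. Both derivations of \eqref{eq:102} are identical.
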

\begin{proof}
  The first inequality of \eqref{eq:101} immediately follows from
  \eqref{eq:83}.

  We now consider an arbitrary ordered finite subset
  $\{t_1,t_2,\cdots,t_N\}$ of $\Jump{}W$. 
  Using the additivity of the total variation and the fact that
  $V(t)-V(s)\le W(t)-W(s)$ we have 
  \begin{align*}
    \var(&u,[0,T])+\sum_{j=1}^N\Delta_\sfc(t,u(t_j\leftl ),u(t_j))+
    \Delta_\sfc(t,u(t_j),u(t_j\rightl ))
                   \\&\le V(t_1\leftl )-V(0)+V(T)-V(t_N\rightl )+
                   \sum_{j=1}^{N-1} V(t_{j+1}\leftl )-V(t_{j}\rightl )
                  \\ &\qquad+\sum_{j=1}^N\sfc(t,u(t_j\leftl ),u(t_j))+
                   \sfc(t,u(t_j),u(t_j\rightl ))
                       \\&\le V(t_1\leftl )-V(0)+V(T)-V(t_N\rightl )+
                   \sum_{j=1}^{N-1} V(t_{j+1}\leftl )-V(t_{j}\rightl )
   \\&\qquad +\sum_{j=1}^NW(t_j\rightl )-W(t_j\leftl )
                           \\&\le W(t_1\leftl )-W(0)+W(T)-W(t_N\rightl )+
                   \sum_{j=1}^{N-1} W(t_{j+1}\leftl )-W(t_{j}\rightl )
    \\&\qquad+\sum_{j=1}^NW(t_j\rightl )-W(t_j\leftl))
        =W(T)-W(0). 
  \end{align*} 
  Taking the supremum with respect to all the finite subsets of
  $\Jump{}W$ we conclude.
\end{proof}
\subsection{Convergence: proof of Theorem \ref{thm:existence}}
\label{subsec:convergence}
We can now conclude the proof of our main Theorem \ref{thm:existence}.

Let $\overline U_\tau$ be a family of piecewise constant left-continuous
interpolants 
of the values $U^n_\tau$ of the incremental minimization scheme
\eqref{ims}
with $U^0_\tau$ satisfying \eqref{eq:initialdatum}
and let $k\mapsto \tau(k)$ be any sequence of partitions
with $|\tau(k)|\to 0$ as $k\to\infty$.

By Theorem \ref{thm:compactness} we can extract a subsequence
(not relabeled) such that $\overline U_{\tau(k)}$ pointwise converges to 
a function $u\in \BV{\sigma,\sfd}([0,T];X)$ 
in a set $\CC$ containing $\Jump{}u$ and 
with $\Leb 1\big([0,T]\setminus \CC\big)=0$.
$u$ satisfies the stability condition \eqref{stability}
by \eqref{eq:83}-\eqref{eq:84} and the
energy inequality \eqref{leqinequality} 
by Corollary \ref{cor:energy-inequality}: 
applying Proposition \ref{prop:leqinequality} we conclude
that $u$ is a VE solution to $(X,\cE,\sfd,\delta)$.

Combining \eqref{eq:99}, \eqref{eq:100} with $s=0$ and \eqref{eq:101}
we deduce that $\lim_{k\to\infty}\cE(\sft_{\tau(k)}(t),\overline
U_{\tau(k)}(t))=\sfE(t)=\cE(t,u(t))$ for 
every $t\in [0,T]$. In particular $t\mapsto \cE(t,u(t))$ is continuous
in $[0,T]\setminus \Jump{}V$.

Let us now prove that $\overline U_{\tau(k)}(t)\sigmato u(t)$ for every
$t\in [0,T]$; the thesis is already true in $\CC$, so we pick a point
$t\not\in \CC$ (in particular $t\not\in \Jump{}{V}$) and we want to show
that 
any limit $u'$ of a converging subsequence $\overline U_{\tau'(k)}(t)$ coincides with $u(t)$.
For every $r,s\in \CC$ with $r<t<s$ the lower semicontinuity of $\sfd$ yields
\begin{displaymath}
  \sfd(u(r),u')\le V(t)-V(r),\quad
  \sfd(u',u(s))\le V(s)-V(t)
\end{displaymath}
so that passing to the limit as $r\up t$ and $s\down t$ we get 
$\sfd(u(t),u')=\sfd(u',u(t))=0$; by the triangle inequality and \mytag B1
we have
\begin{displaymath}
  \sfd(u(t),v)\le \sfd(u',v),\quad \delta(u(t),v)\le \delta(u',v).
\end{displaymath}
Since $\cE(t,u(t))=\sfE(t)\ge \cE(t,u')$ and $u(t)\in \SSD(t)$ we get
for every $v\in X$
\begin{displaymath}
  \cE(t,u')\le \cE(t,u(t))\le \cE(t,v)+\sfd(u(t),v)+\delta(u(t),v)\le \cE(t,v)+\sfd(u',v)+\delta(u',v)
\end{displaymath}
so that $u'\in \SSD(t)$. Since $\sfd$ separates $\SSD(t)$ we conclude
that $u'=u(t)$.


\subsection{A uniform BV estimate for discrete Minimizing Movements}
\label{subsec:last}
\newcommand{\nomu}{}
\newcommand{\unomu}1
The aim of this section is to prove Theorem \ref{thm:bvestimate},
namely a uniform bound for all discrete Minimizing Movements, under
the stronger $\alpha-\Lambda$ convexity assumption of Section
\ref{subsec:genconvexity}
and a Lipschitz property of the power term.

Let us recall that we are considering the metric setting of Remark \ref{rem:metric}, with
\[
\delta(x,y)=\frac{\unomu}{2}\sfd_*^2(x,y)\qquad\text{for every $x,y\in X$},
\] 
where $\sfd_*$ is another continuous distance on $X$, the energy satisfies assumptions \mytag A{}, the generalized convexity \eqref{eq:genconvexity} and the power term is Lipschitz, according to \eqref{eq:powerlipschitz}. 

To prove Theorem \ref{thm:bvestimate} we combine two basic facts:
the first one is the discrete Gronwall-like lemma of \cite[Lemma 7.5]{Mielke-Rossi-Savare13}.
\begin{lemma}[A discrete Gronwall lemma] \label{lem:gronwall}Let $\gamma>0$ and let $(a_n),(b_n)\subset [0,+\infty)$ be positive sequences, satisfying
\[
(1+\gamma)^2a_n^2\leq a_{n-1}^2+b_n a_n\quad\forall n\ge 1.
\]
Then for all $k\in\N$ there holds
\[
\sum_{n=1}^ka_n\leq \frac{1}{\gamma}\left(a_0+\sum_{n=1}^k b_n\right).
\]
\end{lemma}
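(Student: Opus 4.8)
The plan is to reduce the quadratic recursive inequality to an affine one and then sum. First I would treat the hypothesis $(1+\gamma)^2 a_n^2 \le a_{n-1}^2 + b_n a_n$ as a quadratic inequality in the unknown $a_n\ge 0$: the polynomial $x\mapsto (1+\gamma)^2 x^2 - b_n x - a_{n-1}^2$ has a nonpositive constant term, so its two (real) roots have product $-a_{n-1}^2/(1+\gamma)^2\le 0$, hence the smaller root is $\le 0$; since the quadratic is $\le 0$ at $x=a_n\ge 0$, the value $a_n$ cannot exceed the larger root, i.e.
\[
a_n \le \frac{b_n + \sqrt{b_n^2 + 4(1+\gamma)^2 a_{n-1}^2}}{2(1+\gamma)^2}.
\]
Applying the elementary subadditivity $\sqrt{u+v}\le \sqrt u + \sqrt v$ with $u=b_n^2$ and $v=4(1+\gamma)^2a_{n-1}^2$ gives $\sqrt{b_n^2 + 4(1+\gamma)^2 a_{n-1}^2}\le b_n + 2(1+\gamma)a_{n-1}$, whence the affine recursion
\[
a_n \le \frac{a_{n-1}}{1+\gamma} + \frac{b_n}{(1+\gamma)^2}\qquad\text{for every }n\ge 1.
\]

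Next I would simply sum this over $n$. Writing $S_k:=\sum_{n=1}^k a_n$ and $B_k:=\sum_{n=1}^k b_n$, summation of the affine inequality for $n=1,\dots,k$ together with the identity $\sum_{n=1}^k a_{n-1}=a_0+S_{k-1}$ and the bound $S_{k-1}\le S_k$ (valid because $a_k\ge 0$) yields
\[
S_k \le \frac{a_0+S_k}{1+\gamma} + \frac{B_k}{(1+\gamma)^2}.
\]
Since $1-\frac{1}{1+\gamma}=\frac{\gamma}{1+\gamma}$, moving the $S_k$ on the right to the left-hand side and multiplying through by $\frac{1+\gamma}{\gamma}$ gives $S_k\le \frac{a_0}{\gamma}+\frac{B_k}{\gamma(1+\gamma)}$, and the trivial estimate $\frac{1}{\gamma(1+\gamma)}\le\frac1\gamma$ then produces exactly $S_k\le \frac1\gamma\bigl(a_0+B_k\bigr)$, which is the assertion (the case $k=0$ being trivial since $a_0\ge0$).

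There is no real obstacle in this argument; the only point that deserves a little attention is not to waste the factor $(1+\gamma)^2$ (rather than merely $(1+\gamma)$) appearing in the hypothesis, as it is precisely this extra power that makes both the $a_0$-contribution and the $b_n$-contribution come out with coefficient at most $1/\gamma$. This lemma coincides with \cite[Lemma 7.5]{Mielke-Rossi-Savare13}, and the above reproduces its short proof.
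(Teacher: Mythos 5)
Your proof is correct: the reduction of the quadratic inequality to the affine recursion $a_n\le \frac{a_{n-1}}{1+\gamma}+\frac{b_n}{(1+\gamma)^2}$ and the subsequent summation/absorption argument are all sound, and indeed yield the slightly stronger bound $S_k\le \frac{a_0}{\gamma}+\frac{B_k}{\gamma(1+\gamma)}$. The paper itself gives no proof but merely cites \cite[Lemma 7.5]{Mielke-Rossi-Savare13}, and your argument is essentially the standard one behind that reference, so there is nothing to add.
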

The second ingredient is provided by the following estimates of 
the residual stability functional. 
We set $\sfp(x,y):=\sfd(x,y)\sfd_*(x,y)$.
\begin{lemma}
  \label{le:Rprop}
  Let us assume that 
  $(\cE,\sfd,\sfd_*)$ satisfies the \emph{strong}
  $\alpha\text{-}\Lambda$ convexity property of Definition
  \ref{def:aLconvexity}.
  For every $t\in [0,T]$, $x\in X$ and $y\in \rmM(t,x)$ we have
  \begin{equation}
      \label{eq:203bis}
      2\cR(t,x)\ge (\alpha+\unomu)\sfd_*^2(x,y)-\Lambda \sfp(x,y)
    \end{equation}
    If moreover $x\in \rmM(s,v)$ for some $(s,v)\in [0,T]\times X$
    and \eqref{eq:powerlipschitz} hold,
  then
  \begin{equation}
    \label{eq:203tris}
    2\cR(t,x)\le -(\alpha+\unomu)\sfd_*^2(x,y)+\Lambda \sfp(x,y)+
    2\nomu\sfd_*(v,x)\sfd_*(x,y)+2L|t-s|\sfd_*(x,y)
  \end{equation}
  so that
  \begin{equation}
    \label{eq:204}
    (2\alpha+\unomu)\sfd_*^2(x,y)
    \le 2\Lambda \sfp(x,y)
    +\nomu\sfd_*^2(v,x)+2L|t-s|\sfd_*(x,y).
  \end{equation}
\end{lemma}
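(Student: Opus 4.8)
\textbf{Proof plan for Lemma \ref{le:Rprop}.}

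The plan is to exploit the strong $\alpha\text{-}\Lambda$ convexity property by testing the two relevant minimality conditions against interpolation curves. Since we are in the metric setting, the distances $\sfd$ and $\sfd_*$ are symmetric and the geodesic $\gamma$ connecting two points scales both distances linearly by \eqref{eq:202}.

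\emph{Step 1: proof of \eqref{eq:203bis}.} Fix $y\in \rmM(t,x)$ and let $\gamma:[0,1]\to X$ be the curve joining $y$ to $x$ provided by the strong convexity, so that $\gamma(0)=y$, $\gamma(1)=x$, and by \eqref{eq:202} $\sfd(y,\gamma(\theta))=\theta\sfd(y,x)$, $\sfd_*(y,\gamma(\theta))=\theta\sfd_*(y,x)$ for all $\theta\in[0,1]$. By \eqref{eq:115} with the pair $(x,y)$ (using $y\in \rmM(t,x)$, so equality holds), and then by \eqref{eq:115} tested with $\gamma(\theta)$ in place of $y$, we obtain
\begin{equation}
  \label{eq:Rplan1}
  \cE(t,x)=\cE(t,y)+\sfD(x,y)+\cR(t,x)=\cE(t,y)+\sfd(x,y)+\frac{\unomu}2\sfd_*^2(x,y)+\cR(t,x),
\end{equation}
and $\cE(t,\gamma(\theta))+\sfd(x,\gamma(\theta))+\frac{\unomu}2\sfd_*^2(x,\gamma(\theta))\ge \cE(t,x)$. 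Now apply \eqref{eq:genconvexity} to $\gamma$ (connecting $y$ and $x$, so $x\leftrightarrow y$ play the roles of the two endpoints): $\cE(t,\gamma(\theta))\le (1-\theta)\cE(t,y)+\theta\cE(t,x)-\tfrac12\theta(1-\theta)[\alpha\sfd_*^2(x,y)-\Lambda\sfd(x,y)\sfd_*(x,y)]$. Substituting this and the scaling of the distances, $\sfd(x,\gamma(\theta))=(1-\theta)\sfd(x,y)$, $\sfd_*(x,\gamma(\theta))=(1-\theta)\sfd_*(x,y)$, into the inequality $\cE(t,\gamma(\theta))+\sfd(x,\gamma(\theta))+\tfrac{\unomu}2\sfd_*^2(x,\gamma(\theta))\ge \cE(t,x)$, using \eqref{eq:Rplan1} to express $\cE(t,x)$, dividing by $(1-\theta)$ and letting $\theta\uparrow 1$ should leave exactly $2\cR(t,x)\ge(\alpha+\unomu)\sfd_*^2(x,y)-\Lambda\sfp(x,y)$. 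This is a routine manipulation of the two inequalities.

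\emph{Step 2: proof of \eqref{eq:203tris}.} Here we use the additional hypothesis $x\in \rmM(s,v)$ together with \eqref{eq:powerlipschitz}. The idea is symmetric: test the minimality of $x$ for the problem at $(s,v)$ against the curve $\gamma$, but now we must transfer the energy from time $s$ to time $t$. Writing $\cE(s,x)=\cE(s,v)+\sfd(v,x)+\tfrac{\unomu}2\sfd_*^2(v,x)+\cR(s,v)$ and, for $\gamma(\theta)$ on the geodesic from $y$ to $x$, $\cE(s,v)+\sfd(v,\gamma(\theta))+\tfrac{\unomu}2\sfd_*^2(v,\gamma(\theta))\ge\cE(s,x)$, one gets an upper bound on $\cE(s,x)-\cE(s,\gamma(\theta))$. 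Then estimate $\cE(s,x)-\cE(s,\gamma(\theta))=[\cE(t,x)-\cE(t,\gamma(\theta))]+\int_\gamma$--corrections, controlled via $|\cP(t,\cdot)-\cP(t,\cdot)|\le L\sfd_*(\cdot,\cdot)$ and \eqref{A.22}, which yields the term $2L|t-s|\sfd_*(x,y)$ after integrating over $\theta$ and optimizing. Combining with \eqref{eq:genconvexity} at time $t$, the identity \eqref{eq:Rplan1}, the triangle-type bound $\sfd_*(v,\gamma(\theta))\le\sfd_*(v,x)+\sfd_*(x,\gamma(\theta))$, dividing by $1-\theta$ and letting $\theta\uparrow1$ produces \eqref{eq:203tris}. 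Finally \eqref{eq:204} follows by adding \eqref{eq:203bis} and \eqref{eq:203tris}: the $2\cR(t,x)$ terms cancel, leaving $(2\alpha+2\unomu)\sfd_*^2(x,y)\le 2\Lambda\sfp(x,y)+2\nomu\sfd_*(v,x)\sfd_*(x,y)+2L|t-s|\sfd_*(x,y)$; then use Young's inequality $2\nomu\sfd_*(v,x)\sfd_*(x,y)\le \nomu\sfd_*^2(v,x)+\nomu\sfd_*^2(x,y)$ to absorb one $\nomu\sfd_*^2(x,y)$ on the left, giving the stated form with $(2\alpha+\unomu)$.

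\emph{Main obstacle.} The delicate step is \eqref{eq:203tris}: one has to move the energy between the two times $s$ and $t$ along the whole curve $\gamma$ and keep the error linear in $\sfd_*(x,y)$, which forces a careful use of the Lipschitz bound \eqref{eq:powerlipschitz} on $\cP$ combined with \eqref{A.22} (or the argument based on $t\mapsto\cE(t,\cdot)$ being Lipschitz with derivative $\cP$ established after \mytag A2). The bookkeeping of which distance is evaluated between which pair of points—and the fact that $\gamma$ here is the $y$-to-$x$ geodesic while the minimality at $(s,v)$ involves $v$—is the place where sign errors are easy, so I would set up all the elementary inequalities first and only then divide by $1-\theta$ and pass to the limit.
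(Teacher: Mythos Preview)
Your overall plan matches the paper's proof: both inequalities come from testing a minimality condition against the $\alpha\text{-}\Lambda$ geodesic and letting the competitor slide to the minimizer, and \eqref{eq:204} follows exactly as you say by adding the two bounds and using $2\sfd_*(v,x)\sfd_*(x,y)\le \sfd_*^2(v,x)+\sfd_*^2(x,y)$. Step~2 and the derivation of \eqref{eq:204} are essentially correct (there is no ``integrating over $\theta$'', though: one simply divides by the first-order factor and passes to the limit, as you yourself do at the end of the paragraph; the paper reparametrizes $\gamma$ from $x$ to $y$ and sends $\theta\downarrow0$, which is the same as your $\theta\uparrow1$ with the reversed orientation).

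There is, however, a genuine gap in Step~1. The inequality you write,
\[
\cE(t,\gamma(\theta))+\sfd(x,\gamma(\theta))+\tfrac{\unomu}{2}\sfd_*^2(x,\gamma(\theta))\ \ge\ \cE(t,x),
\]
is false in general: formula \eqref{eq:115} only gives $\ge \cE(t,x)-\cR(t,x)=\cY(t,x)$, which is nothing but the minimality of $y$ in $\rmM(t,x)$. And even with the corrected right-hand side your limiting direction is the wrong one: since equality in that inequality holds at $\theta=0$ (where $\gamma(\theta)=y$), dividing by $1-\theta$ and sending $\theta\uparrow1$ collapses to the triviality $\cR(t,x)\ge0$. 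The first-order information sits at the \emph{minimizer} $y$, so you must divide by $\theta$ and let $\theta\downarrow0$; the paper does exactly this and then the routine manipulation you describe does yield \eqref{eq:203bis}. The asymmetry with Step~2 is only apparent: there the minimizer is $x=\gamma(1)$, so with your parametrization $\theta\uparrow1$ is indeed the correct limit.
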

\begin{proof}
  If $y\in \rmM(t,x)$ and $\gamma$ is a curve
connecting $y$ to $x$ as \eqref{eq:genconvexity},
we get
\begin{align*}
\cE(t,y)&+\sfd(x,y)+\frac{\unomu}{2}\sfd_*^2(x,y)
      \le
\cE(t,\gamma(\theta))+\sfd(x,\gamma(\theta))+\frac{\unomu}{2}\sfd_*^2(x,\gamma(\theta))
  \\&\le
(1-\theta)\cE(t,y)+\theta\cE(t,x)-\frac \alpha2\theta(1-\theta)
      \sfd_*^2(x,y)
  \\&\qquad 
      +\frac \Lambda2\theta(1-\theta)\sfp(x,y)
      +\sfd(x,\gamma(\theta))+\frac{\unomu}{2}\sfd_*^2(x,\gamma(\theta)).
\end{align*} 
We obtain by \eqref{eq:202}
\begin{multline*}
\theta(1-\theta)\frac \alpha2\sfd_*^2(x,y)+
\frac{\unomu}{2}(2\theta-\theta^2)\sfd_*^2(x,y)\le 
\theta\Big(\cE(t,x)-\cE(t,y)-
\sfd(x,y)+\frac\Lambda2(1-\theta)\sfp(x,y)\Big)
\end{multline*}
Dividing by $\theta$ and passing to the limit as $\theta\down0$
\begin{displaymath}
\frac{\alpha+\unomu}2\sfd_*^2(x,y)\leq 
\cE(t,x)-\cE(t,y)-
\sfd(x,y)-\frac \unomu2\sfd_*^2(x,y)+
\frac\Lambda2\sfp(x,y)=
\cR(t,x)+\frac\Lambda2\sfp(x,y)
\end{displaymath}
which yields \eqref{eq:203bis}.

The proof of \eqref{eq:203tris} is similar, but now we start from 
the minimality of $x$ and consider the curve $\gamma$ connecting $x$
to $y$ obtaining
\begin{align*}
\cE(s,x)&+\sfd(v,x)+\frac{\unomu}{2}\sfd_*^2(v,x)
      \le
       \cE(s,\gamma(\theta))+\sfd(v,\gamma(\theta))+\frac{\unomu}{2}\sfd_*^2(v,\gamma(\theta))
  \\&\le
(1-\theta)\cE(s,x)+\theta\cE(s,y)-\frac \alpha2\theta(1-\theta)
      \sfd_*^2(x,y)
  \\&\qquad 
      +\frac \Lambda2\theta(1-\theta)\sfp(x,y)
      +\sfd(v,x)+\sfd(x,\gamma(\theta))+\frac{\unomu}{2}\sfd_*^2(v,\gamma(\theta)).
\end{align*} 
so that 
\begin{align*}
  0&\le \theta\Big(\cE(s,y)-\cE(s,x)-\frac\alpha2(1-\theta)\sfd_*^2(x,y)\Big)
     \\&\qquad+\theta\Big( \frac\Lambda 2(1-\theta)\sfp(x,y)+
      \sfd(x,y)+\frac \unomu2\sfd_*(x,y)\big(\sfd_*(v,x)+\sfd_*(v,\gamma(\theta))\big)\Big). 
\end{align*}
Dividing by $\theta$ and passing to the limit as $\theta\down0$ we get
\begin{displaymath}
  0\le \cE(s,y)-\cE(s,x)-\frac\alpha2\sfd_*^2(x,y)+
  \frac\Lambda 2\sfp(x,y)+
  \sfd(x,y)+\nomu\sfd_*(x,y)\sfd_*(v,x). 
\end{displaymath}
Adding $\cR(t,x)=\cE(t,x)-\cE(t,y)-\frac\unomu2\sfd_*^2(x,y)-\sfd(x,y)$ 
we get
\begin{align*}
  \cR(t,x)&\le -\frac{\alpha+\unomu}2\sfd_*^2(x,y)
  +\frac\Lambda 2\sfp(x,y)+\nomu\sfd_*(x,y)\sfd_*(v,x) 
            \\&\qquad+
  \Big(\cE(t,x)-\cE(s,x)\Big)-\Big(\cE(t,y)-\cE(s,y)\Big),
\end{align*}
and estimating the last term by \eqref{eq:powerlipschitz}
\begin{displaymath}
  \Big(\cE(t,x)-\cE(s,x)\Big)-\Big(\cE(t,y)-\cE(s,y)\Big)
  =\int_s^t \Big(\cP(r,x)-\cP(r,y)\Big)\,\d r
 \le L|t-s|\sfd_*(x,y) 
\end{displaymath}
we obtain \eqref{eq:203tris}.
\eqref{eq:204} follows by combining \eqref{eq:203bis} with
\eqref{eq:203tris} and using the elementary inequality
$2\sfd_*(v,x)\sfd_*(x,y)\le \sfd_*^2(v,x)+\sfd_*^2(x,y)$.
\end{proof}
\begin{proof}[Proof of Theorem \ref{thm:bvestimate}] 
If $U^n_\tau$ is a solution of the time incremental minimization
scheme,
we clearly have $U^{n+1}_\tau\in \rmM(t^{n+1}_\tau,U^n_\tau)$ so that
we
can apply the previous Lemma \ref{le:Rprop} with
$v:=U^{n-1}_\tau$, $x:=U^{n}_\tau$, $y:=U^{n+1}_\tau$ and
$s=t^n_\tau, t=t^{n+1}_\tau$ obtaining
\begin{equation}
  \label{eq:194}
  (2\alpha+1)\sfd_*^2(U^n_\tau,U^{n+1}_\tau)\le 
  \sfd_*^2(U^{n-1}_\tau,U^{n}_\tau)+
  \Big(2\Lambda\sfd(U^{n}_\tau,U^{n+1}_\tau)+
  2L\tau^{n+1}\Big)\sfd_*(U^n_\tau,U^{n+1}_\tau).
\end{equation}
We can apply the discrete Gronwall lemma \ref{lem:gronwall} with:
\[
a_n=\sfd_*(U^n_\tau,U^{n+1}_\tau), \quad b_n:=2\Lambda\sfd(U^{n}_\tau,U^{n+1}_\tau)+
2L\tau^{n+1}\quad \gamma:=2\alpha
\]
obtaining
\begin{displaymath}
  \sum_{n=1}^{N_\tau-1}\sfd_*(U^n_\tau,U^{n+1}_\tau)\le 
  \frac1{\alpha}
  \Big(\frac 12\sfd^2_*(U^0_\tau,U^1_\tau)+1+LT+
  \Lambda \sum_{n=1}^{N_\tau-1}\sfd(U^{n}_\tau,U^{n+1}_\tau)\Big)
  \le \frac{(\Lambda+1) C_2+1+LT}\alpha
\end{displaymath}
where $C_2$ is the constant of \eqref{eq:88}:
this estimate shows that the total variation 
$\Var{\sfd_*}(\overline U_\tau,[0,T])$ is uniformly bounded
w.r.t.~$\tau$,
so that any pointwise limit of $\overline U_\tau$ belongs to 
$\BV{\sfd_*}([0,T];X)$.
\end{proof}

\subsection*{Affiliations}
The second author is
a \emph{research associate}
of the Institute for Applied Mathematics and Information Technologies
"Enrico Magenes"
(IMATI-CNR) of Pavia.

\subsection*{Funding}

The second author has been partially supported by
PRIN10/11 grant from MIUR for the project \emph{Caculus of
  Variations}.

\subsection*{Conflict of interest}
The authors declare that they have no conflict of interest.


\begin{thebibliography}{10}

\bibitem{Agostiniani-Rossi16}
{\sc V.~{Agostiniani} and R.~{Rossi}}, {\em {Singular vanishing-viscosity
  limits of gradient flows: the finite-dimensional case}}, ArXiv
e-prints 1611.08105,
  (2016).

\bibitem{Ambrosio-Gigli-Savare08}
{\sc L.~Ambrosio, N.~Gigli, and G.~Savar{\'e}}, {\em Gradient flows in metric
  spaces and in the space of probability measures}, Lectures in Mathematics ETH
  Z\"urich, Birkh\"auser Verlag, Basel, second~ed., 2008.

\bibitem{Artina-Cagnetti-Fornasier-Solombrino15}
{\sc M.~{Artina}, F.~{Cagnetti}, M.~{Fornasier}, and F.~{Solombrino}}, {\em
  {Linearly constrained evolutions of critical points and an application to
  cohesive fractures}}, ArXiv e-prints 1508.02965,  (2015).

\bibitem{AuMiSt08RIMI}
{\sc F.~Auricchio, A.~Mielke, and U.~Stefanelli}, {\em A rate-independent model
  for the isothermal quasi-static evolution of shape-memory materials}, M$^3$AS
  Math. Models Meth. Appl. Sci., 18 (2008), pp.~125--164.

\bibitem{BoMiRo07?CDPS}
{\sc G.~Bouchitt{\'e}, A.~Mielke, and T.~Roub{\'{\i}}{\v{c}}ek}, {\em A
  complete-damage problem at small strains}, Z. Angew. Math. Phys., 60 (2009),
  pp.~205--236.

\bibitem{Castaing-Valadier77}
{\sc C.~Castaing and M.~Valadier}, {\em Convex analysis and measurable
  multifunctions}, Springer-Verlag, Berlin, 1977.
\newblock Lecture Notes in Mathematics, Vol. 580.

\bibitem{DalMaso93}
{\sc G.~{Dal Maso}}, {\em An Introduction to ${\Gamma}$-Convergence}, vol.~8 of
  Progress in Nonlinear Differential Equations and Their Applications,
  Birkh\"auser, Boston, 1993.

\bibitem{DaDeMo06QEPL}
{\sc G.~{Dal Maso}, A.~DeSimone, and M.~G. Mora}, {\em Quasistatic evolution
  problems for linearly elastic-perfectly plastic materials}, Arch. Ration.
  Mech. Anal., 180 (2006), pp.~237--291.

\bibitem{DaDeMoMo06}
{\sc G.~Dal~Maso, A.~DeSimone, M.~G. Mora, and M.~Morini}, {\em Globally stable
  quasistatic evolution in plasticity with softening}, Netw. Heterog. Media, 3
  (2008), pp.~567--614.

\bibitem{DalMaso-Francfort-Toader05}
{\sc G.~Dal~Maso, G.~A. Francfort, and R.~Toader}, {\em Quasistatic crack
  growth in nonlinear elasticity}, Arch. Ration. Mech. Anal., 176 (2005),
  pp.~165--225.

\bibitem{DalMaso-Toader02}
{\sc G.~Dal~Maso and R.~Toader}, {\em A model for the quasi-static growth of
  brittle fractures based on local minimization}, Math. Models Methods Appl.
  Sci., 12 (2002), pp.~1773--1799.

\bibitem{Efendiev-Mielke06}
{\sc M.~Efendiev and A.~Mielke}, {\em On the rate--independent limit of systems
  with dry friction and small viscosity}, J. Convex Analysis, 13 (2006),
  pp.~151--167.

\bibitem{Francfort-Marigo98}
{\sc G.~Francfort and J.~Marigo}, {\em Revisiting brittle fracture as an energy
  minimization problem}, J. Mech. Phys. Solids, 46 (1998), pp.~1319--1342.

\bibitem{Francfort-Mielke06}
{\sc G.~Francfort and A.~Mielke}, {\em Existence results for a class of
  rate-independent material models with nonconvex elastic energies}, J. Reine
  Angew. Math., 595 (2006), pp.~55--91.

\bibitem{Gal57}
{\sc I.~S. G{\'a}l}, {\em On the fundamental theorems of the calculus}, Trans.
  Amer. Math. Soc., 86 (1957), pp.~309--320.

\bibitem{Knees-Mielke-Zanini08}
{\sc D.~Knees, A.~Mielke, and C.~Zanini}, {\em On the inviscid limit of a model
  for crack propagation}, Math. Models Methods Appl. Sci., 18 (2008),
  pp.~1529--1569.

\bibitem{Knees-Negri15}
{\sc D.~Knees and M.~Negri}, {\em Convergence of alternate minimization schemes
  for phase field fracture and damage}, 
Math. Models Methods Appl. Sci. 27, (2017) pp.~1743--1794.

\bibitem{Knees-Zanini-Mielke10}
{\sc D.~Knees, C.~Zanini, and A.~Mielke}, {\em Crack growth in polyconvex
  materials}, Phys. D, 239 (2010), pp.~1470--1484.

\bibitem{KoMiRo06RIAD}
{\sc M.~Ko\v{c}vara, A.~Mielke, and T.~Roub{\'\i}{\v{c}}ek}, {\em A
  rate--independent approach to the delamination problem}, Math. Mech. Solids,
  11 (2006), pp.~423--447.

\bibitem{Krejci-Liero09}
{\sc P.~Krejc\'\i\ and M.~Liero}, {\em Rate independent {K}urzweil processes},
  Appl. Math., 54 (2009), pp.~117--145.

\bibitem{Kuratowski55}
{\sc K.~Kuratowski}, {\em Sur l'espace des fonctions partielles}, Ann. Mat.
  Pura Appl. (4), 40 (1955), pp.~61--67.

\bibitem{Larsen10}
{\sc C.~J. Larsen}, {\em Epsilon-stable quasi-static brittle fracture
  evolution}, Comm. Pure Appl. Math., 63 (2010), pp.~630--654.

\bibitem{Mainik-Mielke05}
{\sc A.~Mainik and A.~Mielke}, {\em Existence results for energetic models for
  rate-independent systems}, Calc. Var. Partial Differential Equations, 22
  (2005), pp.~73--99.

\bibitem{MaiMie08?GERI}
\leavevmode\vrule height 2pt depth -1.6pt width 23pt, {\em Global existence for
  rate-independent gradient plasticity at finite strain}, J. Nonlinear Science,
  19 (2009), pp.~221--248.

\bibitem{Miel03EFME}
{\sc A.~Mielke}, {\em Energetic formulation of multiplicative
  elasto--plasticity using dissipation distances}, Cont. Mech. Thermodynamics,
  15 (2003), pp.~351--382.

\bibitem{Miel04EMIE}
\leavevmode\vrule height 2pt depth -1.6pt width 23pt, {\em Existence of
  minimizers in incremental elasto--plasticity with finite strains}, SIAM J.
  Math. Analysis, 36 (2004), pp.~384--404.

\bibitem{Mielke11}
{\sc A.~Mielke}, {\em Complete-damage evolution based on energies and
  stresses}, Discrete Contin. Dyn. Syst. Ser. S, 4 (2011), pp.~423--439.

\bibitem{Mielke11-CIME}
\leavevmode\vrule height 2pt depth -1.6pt width 23pt, {\em Differential,
  energetic, and metric formulations for rate-independent processes}, in
  Nonlinear {PDE}'s and applications, vol.~2028 of Lecture Notes in Math.,
  Springer, Heidelberg, 2011, pp.~87--170.

\bibitem{Mielke-Rossi-Savare09}
{\sc A.~Mielke, R.~Rossi, and G.~Savar{\'e}}, {\em Modeling solutions with
  jumps for rate-independent systems on metric spaces}, Discrete and Continuous
  Dynamical Systems A, 25 (2009).

\bibitem{Mielke-Rossi-Savare12}
\leavevmode\vrule height 2pt depth -1.6pt width 23pt, {\em B{V} solutions and
  viscosity approximations of rate-independent systems}, ESAIM Control Optim.
  Calc. Var., 18 (2012), pp.~36--80.

\bibitem{MRS12}
\leavevmode\vrule height 2pt depth -1.6pt width 23pt, {\em Variational
  convergence of gradient flows and rate-independent evolutions in metric
  spaces}, Milan J. Math., 80 (2012), pp.~381--410.

\bibitem{Mielke-Rossi-Savare13}
\leavevmode\vrule height 2pt depth -1.6pt width 23pt, {\em Balanced viscosity
  {(BV)} solutions to infinite-dimensional rate-independent systems}, JEMS, to
  appear. ArXiv 1309.6291,  (2013).

\bibitem{MRS13}
\leavevmode\vrule height 2pt depth -1.6pt width 23pt, {\em Nonsmooth analysis
  of doubly nonlinear evolution equations}, Calc. Var. Partial Differential
  Equations, 46 (2013), pp.~253--310.

\bibitem{MRS16}
\leavevmode\vrule height 2pt depth -1.6pt width 23pt, {\em {Global existence
  results for viscoplasticity at finite strain}}, ArXiv e-prints,  (2016).

\bibitem{MieRou06RIDP}
{\sc A.~Mielke and T.~Roub{\'{\i}}{\v{c}}ek}, {\em Rate-independent damage
  processes in nonlinear elasticity}, M$^3\!$AS Math. Models Methods Appl.
  Sci., 16 (2006), pp.~177--209.

\bibitem{Mielke-Roubicek15}
\leavevmode\vrule height 2pt depth -1.6pt width 23pt, {\em Rate-independent
  systems}, vol.~193 of Applied Mathematical Sciences, Springer, New York,
  2015.
\newblock Theory and application.

\bibitem{Mielke-Theil04}
{\sc A.~Mielke and F.~Theil}, {\em On rate-independent hysteresis models},
  NoDEA Nonlinear Differential Equations Appl., 11 (2004), pp.~151--189.


\bibitem{Mielke-Theil-Levitas02}
{\sc A.~Mielke, F.~Theil, and V.~I. Levitas}, {\em A variational formulation of
  rate-independent phase transformations using an extremum principle}, Arch.
  Ration. Mech. Anal., 162 (2002), pp.~137--177.


\bibitem{MieTim06EMMT}
{\sc A.~Mielke and A.~Timofte}, {\em An energetic material model for
  time-dependent ferroelectric behavior: existence and uniqueness}, Math. Meth.
  Appl. Sciences, 29 (2006), pp.~1393--1410.

\bibitem{Mielke-Zelik14}
{\sc A.~Mielke and S.~Zelik}, {\em On the vanishing-viscosity limit in
  parabolic systems with rate-independent dissipation terms}, Ann. Sc. Norm.
  Super. Pisa Cl. Sci. (5), 13 (2014), pp.~67--135.

\bibitem{Minotti16}
{\sc L.~Minotti}, {\em Visco-energetic solutions to 1-dimensional
  rate-independent problems}, ArXiv e-prints 1610.00507,  (2016).
To appear on Discrete Contin. Dyn.
  Syst. Ser. A.

\bibitem{Minotti16T}
\leavevmode\vrule height 2pt depth -1.6pt width 23pt, {\em Visco-Energetic
  Solutions to Rate-Independent Evolution Problems}, PhD thesis, Pavia, 2016.

\bibitem{Negri16}
{\sc M.~Negri}, {\em An ${L}^2$ gradient flow and its quasi-static limit in
  phase-field fracture by alternate minimization}, 2016.

\bibitem{NegOrt07?QSCP}
{\sc M.~Negri and C.~Ortner}, {\em Quasi-static crack propagation by
  {G}riffith's criterion}, Math. Models Methods Appl. Sci., 18 (2008),
  pp.~1895--1925.

\bibitem{Rindler15}
{\sc F.~{Rindler}}, {\em {A two-speed model for finite-strain
  elasto-plasticity}}, ArXiv e-prints 1512.05928,  (2015).

\bibitem{Rossi-Mielke-Savare08}
{\sc R.~Rossi, A.~Mielke, and G.~Savar\'e}, {\em A metric approach to a class
  of doubly nonlinear evolution equations and applications}, Ann. Sc. Norm.
  Super. Pisa Cl. Sci. (5), 7 (2008), pp.~97--169.

\bibitem{Rossi-Savare13}
{\sc R.~Rossi and G.~Savar{\'e}}, {\em A characterization of energetic and {BV}
  solutions to one-dimensional rate-independent systems}, Discrete Contin. Dyn.
  Syst. Ser. S, 6 (2013), pp.~167--191.

\bibitem{Rossi-Savare17-preprint}
{\sc R.~Rossi and G.~Savar\'e}, {\em {From Visco-Energetic to Energetic and
  Balanced Viscosity solutions of rate-independent systems}}, ArXiv
e-prints 1702.00136,
  (2017).

\bibitem{Roubicek15}
{\sc T.~Roub{\'{\i}}{\v{c}}ek}, {\em Maximally-dissipative local solutions to
  rate-independent systems and application to damage and delamination
  problems}, Nonlinear Anal., 113 (2015), pp.~33--50.

\bibitem{SchMie05VPSC}
{\sc F.~Schmid and A.~Mielke}, {\em Vortex pinning in super-conductivity as a
  rate-independent process}, Europ. J. Appl. Math., 16 (2005), pp.~799--808.

\bibitem{Simons98}
{\sc S.~Simons}, {\em Minimax and monotonicity}, vol.~1693 of Lecture Notes in
  Mathematics, Springer-Verlag, Berlin, 1998.

\bibitem{Stefanelli09}
{\sc U.~Stefanelli}, {\em A variational characterization of rate-independent
  evolution}, Math. Nachr., 282 (2009), pp.~1492--1512.

\end{thebibliography}
\end{document}